\newcommand{\la}{\langle}
\newcommand{\ra}{\rangle}
\newtheorem{theorem}{Theorem}
\newtheorem{proposition}[theorem]{Proposition}
\newtheorem{lemma}[theorem]{Lemma}
\newtheorem{corollary}[theorem]{Corollary}
\theoremstyle{remark}
\numberwithin{equation}{section}
\numberwithin{theorem}{section}
\numberwithin{table}{section}
\numberwithin{figure}{section}
\title[Linear Stability Analysis of the RVM in an Axisymmetric Domain]{Linear Stability Analysis of the Relativistic Vlasov-Maxwell System in an Axisymmetric Domain}
\author{Katherine Zhiyuan Zhang}
\address{Brown University}
\begin{document}

\maketitle

\begin{abstract}
We consider the plasma confined in a general axisymmetric spatial domain with perfect conducting boundary which reflects particles specularly, and look at a certain class of equilibria, assuming axisymmetry in the problem. We prove a sharp criterion of spectral stability under these settings. Moreover, we provide several explicit families of stable/unstable equilibria using this criterion. 
\end{abstract}

\section{Introduction}

In plasma theory, an important goal is to study the stability properties of plasmas. The study of the stability properties of macroscopic systems like MHD and other fluid-like models has been carried out a lot (for example \cite{Friedberg1}, \cite{Nicholson1}). However, many plasma phenomena are microscopic so one must consider kinetic models, including the (relativistic and nonrelativistic) Vlasov-Maxwell system, Vlasov-Poisson system, Boltzmann equation, etc. (see \cite{Friedberg1}, \cite{Nicholson1}).

When the temperature is high or the density is low, the effect of collisions becomes minor compared to the effect of the electromagnetic forces. Such plasmas are modeled by the relativistic Vlasov-Maxwell system (RVM). The stability of the RVM has been studied a lot in the physics literature. The simplest case is a spatially homogeneous equilibrium with vanishing magnetic fields (for example \cite{G1}, \cite{GS3}). One of the most important results is Penrose's sharp criterion on linear stability for a spatially homogeneous equilibrium of the Vlasov-Poisson system (\cite{Penrose1}). In \cite{LS1}, \cite{LS3} and \cite{LS2}, the analysis of a spatially inhomogeneous equilibrium was carried out in domains without any spatial boundaries (i.e. whole space or periodic setting). A sharp criterion for spectral stability was given in \cite{LS2}, with some families of stable and unstable examples provided. The question of nonlinear stability is much more difficult, see, for example, \cite{LS3}.

On the other hand, in many real world applications, the plasma is confined to a bounded region. A typical example is the tokamak, which is one of the main foci of research in fusion energy. Therefore, an important topic is to understand the stability properties of a confined plasma. In \cite{HK1}, the confinement of a tokamak plasma is discussed using some fluid models and the role of different parts of the boundary are explored. For the microscopic model RVM, there are very few rigorous studies in bounded domains. A key paper in this direction is \cite{NS1}, in which the authors considered the case when the spatial domain is a solid torus (like a tokamak), and toroidal symmetry is assumed. A sharp criterion of spectral stability is obtained, thus reducing the problem of determining the linear stability to the positivity of a simpler self-adjoint operator $\mathcal{L}^0$.

However, there are other domains worthwhile studying. We want to investigate how the geometric structure of the domain influences the stability of the plasma. In this paper, we consider the RVM on a general axisymmetric spatial domain. We consider a certain class of equilibria, assuming axisymmetry in the problem. It is surprising that a sharp criterion of spectral stability can be proven, not just for a torus, but for any axisymmetric domain. In \cite{NS1} the domain $\Omega$ is exactly a torus and the authors used toroidal coordinates. Here in this paper we show that $\Omega$ can be any axisymmetric domain using cylindrical coordinates $(r, \varphi, z)$. An example here is the case when $\Omega$ is a solid ball.

A major difficulty in our setting is that $\Omega$ might include part of the $z$-axis, which creates a singularity in the problem. Namely, an operator $-\Delta + \frac{1}{r^2}$ appears in the analysis, which gives a singularity at $r=0$. This difficulty is surmounted by a trick applied in \cite{LS1}. The main point is to use the identity $-\Delta (g e^{i\varphi}) = (- \Delta + \frac{1}{r^2} ) g e^{i \varphi} $, which is valid for any $\varphi$-independent function $g$. We consider perturbations that are independent of $\varphi$, for which the operator $-\Delta + \frac{1}{r^2}$ acts nicely. This permits the use of a convenient Hilbert space for the axisymmetric functions, see \eqref{Hkdaggerdef}. It also allows us to use the standard elliptic theory in Lemma \ref{regularity}.

We prove that the linear stability of the equilibrium is equivalent to the positivity of a certain self-adjoint operator $\mathcal{L}^0$ (see \eqref{L0definition}), which acts only on scalar functions (see Theorem \ref{mainresult}). Moreover, we use $\mathcal{L}^0$ to provide several explicit examples of stable and unstable equilibria, as summarized in Theorem \ref{mainresultexample}. We give explicit inequalities that determine the stability. They contain information on the domain and hence enable a partial analysis of the effect of the geometry, see Theorem \ref{stableexample} and Corollary \ref{stableexampleshape}. For example, a thin torus with large major radius tends to favor instability for the equilibria rather than one with smaller radius. This is the first result on this question for the RVM model, and shows plenty of possiblities to carry out deeper investigations. On the other hand, we obtain instability for a family of equilibria that depend strongly on the angular momentum. In contrast to Section 5.2 in \cite{NS1}, we allow the equilibria to have electric as well as magnetic potentials, and therefore we are able to prove instability for a larger family of equilibria, see Proposition \ref{unstableexample2}. In addition, we show that under some constraint on the shape of the domain and some smallness assumption on the steady density distribution, such unstable equilibria can be constructed explicitly (see Theorem \ref{unstableexampleexistence}). 

The system RVM is
\begin{equation}
\partial_t f^{\pm} + \hat{v} \cdot \nabla_x f^{\pm} \pm (\textbf{E} + \hat{v} \times \textbf{B} ) \cdot \nabla_v f^{\pm} =0 \ ,
\end{equation}
\begin{equation}
\nabla_x \cdot \textbf{E} = \rho = \int_{\mathbb{R}^3} (f^+ - f^-)dv, \  \nabla_x \cdot \textbf{B} =0 \ ,
\end{equation}
\begin{equation}
\partial_t \textbf{E} -\nabla_x \times \textbf{B} = -\textbf{j} = -  \int_{\mathbb{R}^3} \hat{v} (f^+ - f^-)dv, \  \partial_t \textbf{B} + \nabla_x \times \textbf{E} =0
\end{equation}

In this system. $f^\pm (t, x, v) \geq 0$ is the density distribution of ions ($+$) and electrons ($-$). We confine the plasma in a region $ \Omega \subset \mathbb{R}^3$, so that $x \in \Omega$ is the particle position. $v \in \mathbb{R}^3$ is the particle momentum, $\la v\ra = \sqrt{1+ v^2}$ is the particle energy, and $\hat{v} = v/ \la v \ra$ is the particle velocity. Also, $\textbf{E}$ is the electric field, $\textbf{B}$ is the magnetic field, and therefore $ \pm (\textbf{E} + \hat{v} \times \textbf{B} ) $ is the electromagnetic force. Moreover, the charge density $\rho$ and the current density $\textbf{j}$ are defined as
\begin{equation}
\rho = \int_{\mathbb{R}^3} (f^+ - f^-) dv , \qquad \textbf{j} =  \int_{\mathbb{R}^3} \hat{v} (f^+ - f^-) dv  \ .
\end{equation}
At the boundary we impose the specular condition (which means that $f^\pm$ is even with respect to  $v_n= v \cdot e_n $ on $\partial \Omega$, with $e_n$ being the outward normal vector of $\partial \Omega$ at $x$): 
\begin{equation}
f^{\pm} (t,x,v) = f^{\pm} (t,x,v-2(v \cdot e_n (x))e_n(x) ), e_n(x) \cdot v <0,  \  \forall x \in \partial \Omega \ ,
\end{equation}
as well as the perfect conductor boundary condition
\begin{equation} 
\textbf{E}(t,x) \times n(x) =0,  \ \textbf{B} (t,x) \cdot n(x) =0, \ \forall x \in \partial\Omega \ .
\end{equation}
The system RVM with these boundary conditions enjoys the conservation of the total energy
\begin{equation}
\mathcal{E} (t) = \int_\Omega \int_{\mathbb{R}^3} \la v \ra (f^+ + f^-) dv dx + \frac{1}{2} \int_\Omega (|\textbf{E}|^2  + |\textbf{B}|^2 )  dx  \ .
\end{equation}

The contents in the paper are arranged as follows. It is natural to use cylindrical coordinates here. In Section 2, we set up the problem in cylindrical coordinates, including the coordinates and the symmetry assumptions. Section 3 gives the description of the particle trajectories and the family of equilibria we consider in this paper. We defined functional spaces for the functios to lie in. The difficulty caused by the singularity at $r=0$ is avoided by using the space $H^{k \dagger}$ (see \eqref{Hkdaggerdef}). Also, we linearize around the equilibria, defined the key operators $\mathcal{P}^\pm$, $\mathcal{A}^0_1$, $\mathcal{A}^0_2$, $\mathcal{B}^0$ (see \eqref{A01definition}, \eqref{A02definition}, \eqref{B0definition}), and give a precise statement of the main results. Section 4 is devoted to the description of the boundary conditions on the linearized problem. These conditions are written in terms of the electric and magnetic potentials, see \eqref{fullboundarycondition}. For the first main result (Theorem \ref{mainresult}), the proof of the stability part is given in Section 5 using linearized energy invariants and Casimir type invariants of the linearized RVM system (see \eqref{invarianceI} and \eqref{invarianceKg}). A rather delicate minimization (see Lemma \ref{minimization1} and Lemma \ref{minimization2}) leads to stability provided $\mathcal{L}^0 \geq 0$. For the proof of the instability part, which is given in Section 6, we express $f^\pm$ in terms of the electric and magnetic potentials by integrating the Vlasov equation along the particle trajectories, then plugging it into the linearized Maxwell system to obtain a matrix equation on the potentials (see \eqref{MaxwellMatrixEqn}). This equation involve several linear operators $\mathcal{Q}^\pm_\lambda$, $\mathcal{A}^\lambda_1$, $\mathcal{A}^\lambda_2$, $\mathcal{B}^\lambda$. This enables us to obtain growing modes by a continuation argument on the corresponding self-adjoint operators. A key step here is to investigate the limit of these operators when $\lambda \rightarrow 0$ and $\lambda \rightarrow + \infty$. Section 7 is devoted to some analysis on the operator $\mathcal{L}^0$ that determines the stability of some equilibria (Theorem \ref{stableexample}), as well as an example analyzing the effect of the geometry (Corollary \ref{stableexampleshape}). At last, in Section 8, we derive an explicit sufficient condition for instability by a more detailed study on $\mathcal{L}^0$ (Proposition \ref{unstableexample2}). We use some scaling techniques to enhance the dependence of the equilibrium on the angular momentum, which is the key to the instability. We also construct a family of unstable equilibria using this condition by solving the coupled elliptic system satisfied by the electric and magnetic potentials via a fixed point argument (Theorem \ref{unstableexampleexistence}).

\section{Coordinates and Symmetry}

We deal with the equation in the language of cylindrical coordinates $(r, \varphi , z )$, and consider the plasma constrained inside a region $\Omega  $, which is a $C^1$ axisymmetric (with respect to the $z$-axis) domain in $\mathbb{R}^3$, i.e. rotational invariant around the $z$-axis. $\Omega$ can be viewed as a solid of revolution (see the picture below for an example), determined as follows: Consider a counterclockwisely parametrized closed $C^1$ curve $\mathcal{C}$ in the plane $\{  \varphi = 0\}$, where $\beta$ is the arclength parameter: (For simplicity, we normalize the total arclength of $\mathcal{C}$ to be $1$. ) 

\vskip 0.75cm 

\begin{center}
\includegraphics[width=0.45\textwidth]{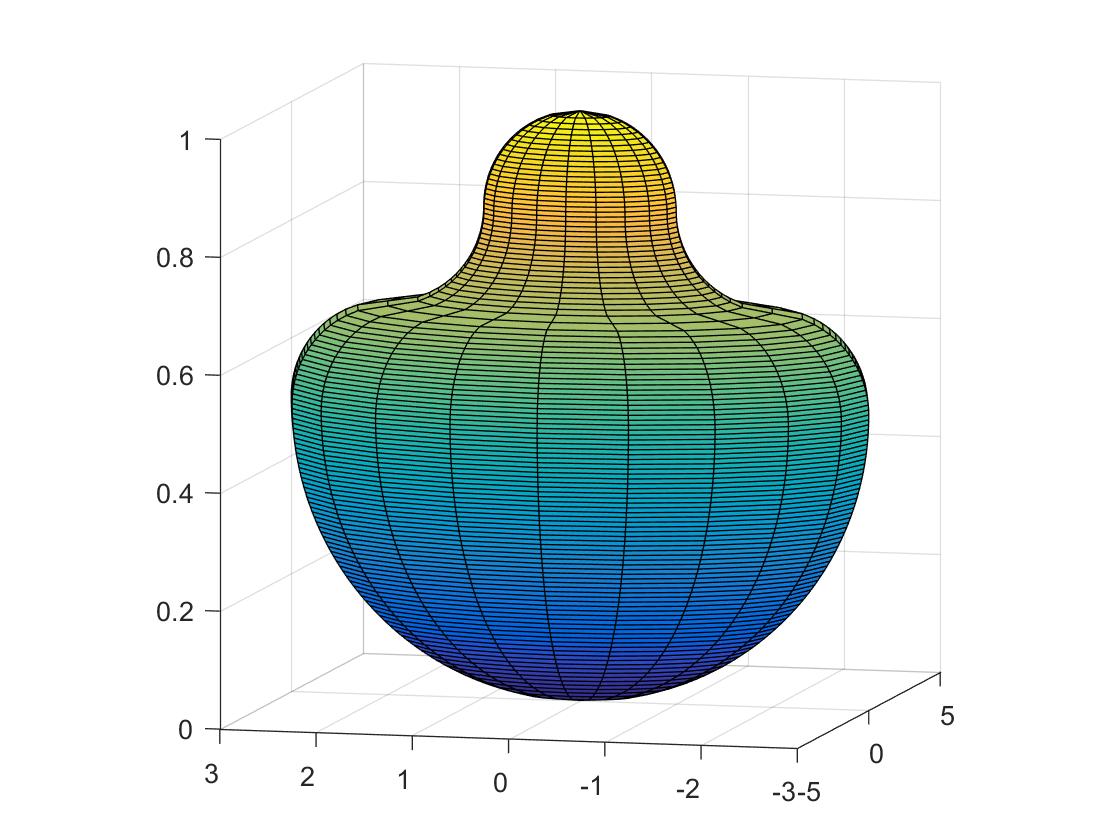}
\end{center}

\begin{equation} 
r = \tilde{r}(\beta), \quad z = \tilde{z} (\beta ), \quad \beta \in [0,1], \quad \tilde{r} \geq 0  \ for \ all \ \beta \in (0, 1) , \quad \tilde{r}, \tilde{\beta} \in C^1 (0, 1)  \ ,
\end{equation}

with either
\begin{equation}
\tilde{r} (0) = \tilde{r} (1) > 0 ,  \quad \tilde{z} (0) = \tilde{z} (1) \ ,
\end{equation}
or, 
\begin{equation} \label{curveconstraint}
\tilde{r}(0) =\tilde{r} (1) =0  ,  \quad   \tilde{z}' / \tilde{r}' =0 \ at \ \beta = 0, 1  \ .
\end{equation}
Let $\partial \Omega$ be the surface obtained by rotating $\mathcal{C}$ around the $z$-axis. Then the boundary $\partial \Omega$ is $C^1$ smooth. The last condition in \eqref{curveconstraint} implies that $\partial \Omega$ is $C^1$ smooth even if it touches the $z$-axis.

Let $e_r$, $e_\varphi$ and $e_z$ be the unit vectors in the cylindrical coordinate system (see Appendix A), and $e_n$, $e_{tg}$ to be the unit vectors in outward normal direction and tangential direction orthogonal to $e_\varphi$ on $\partial \Omega$, respectively. Then on $\partial \Omega$, the outward unit normal vector $e_n (x)  = ( \tilde{z}' e_r - \tilde{r}' e_z)/\sqrt{\tilde{z}'^2+\tilde{r}'^2} $, and $e_{tg} (x) =  ( - \tilde{r}'  e_r - \tilde{z}' e_z)/\sqrt{\tilde{z}'^2+\tilde{r}'^2} $.

On the boundary, we assume the specular condition on the density function $f^\pm$. This means that $f^\pm$ is even with respect to  $v_n= v \cdot e_n/|e_n|$ on $\partial \Omega$, i.e.
\begin{equation}
f^{\pm} (t,x,v) = f^{\pm} (t,x,v-2(v \cdot e_n (x)) e_n (x) ), \  e_n(x) \cdot v <0,  \forall x \in \partial \Omega, \forall v \in \mathbb{R}^3  
\end{equation}
as well as the perfect conductor boundary condition on the electric and magnetic fields:
\begin{equation} \label{perfectconductorboundary}
\textbf{E}(t,x) \times e_n(x) =0, \  \textbf{B} (t,x) \cdot e_n(x) =0, \forall x \in \partial\Omega \ .
\end{equation}
We also introduce the electric potential $\phi$ and the magnetic potential $\textbf{A}$:
\begin{equation} \label{potentialdefinition}
\textbf{E} = -\nabla \phi -\partial_t \textbf{A}, \  \textbf{B} = \nabla \times \textbf{A}
\end{equation} 
and impose the Coulomb gauge
\begin{equation}
\nabla \cdot \textbf{A} =0 \ . 
\end{equation}

\textit{Remark} The choices of $\phi$ and $\textbf{A}$ are not unique. Actually the choice of $\textbf{A}$ can differ by the gradient of a harmonic function.

For any vector-valued function $\textbf{g}$, we denote $\tilde{\textbf{g}} = g_r e_r + g_z e_z$. The Maxwell system then becomes

\begin{equation} \label{system01}
-\Delta \phi = \rho  \ , 
\end{equation}
\begin{equation} \label{system02}
(\partial_t^2 -\Delta + \frac{1}{r^2}) A_\varphi  = j_\varphi  \ , 
\end{equation}
\begin{equation} \label{system03}
(\partial_t^2 -\Delta ) \tilde{\textbf{A}} + \partial_t \nabla \phi  = \tilde{\textbf{j}} \ .
\end{equation}

Using cylindrical coordinates $(r, \varphi , z)$, we make the axisymmetry assumption:
\begin{equation}  \label{varphiindependenceassumption}
\phi, \ A_r, \  A_z,  \ A_\varphi \ are \ independent  \ of \ \varphi.
\end{equation}
Therefore, $f^{\pm}$ does not depend \textit{explicitly} on $\varphi$, although it might depend on it \textit{implicitly} through the components of $v$.

\section{Equilibrium, Linearization and Main Result}

We consider an equilibrium such that $B^0_\varphi =0$, $\textbf{A}^0 = A^0_\varphi e_\varphi $. Then the equilibrium field is
\begin{equation}
\textbf{E}^0 = -\nabla \phi^0 = -\frac{\partial \phi^0}{\partial r} e_r    - \frac{\partial \phi^0}{ \partial z} e_z   \ , 
\end{equation}
\begin{equation}
\begin{split}
\textbf{B}^0 
& =  - \frac{\partial A^0_\varphi}{\partial z} e_r +  \frac{1}{r}  \frac{\partial ( r A^0_\varphi )}{\partial r} e_z   \ .  \\
\end{split}
\end{equation}
We define the particle trajectories as
\begin{equation}  \label{particletrajectoryODE}
\dot{X}^\pm = \hat{V}^\pm,  \  \dot{V}^\pm =  \pm \textbf{E}^0 (X^\pm) \pm \hat{V}^\pm \times \textbf{B}^0 (X^\pm)
\end{equation}
with initial values $(X^\pm (0;x,v), V^\pm (0;x,v) ) = (x,v) $. Each particle trajectory exists and preserves the axisymmetry up to the first time it meets the boundary. Let $s_0$ be a time when the trajectory $X^\pm (s_0 -; x,v)$ hits the boundary $\partial \Omega$. Recall that $v_n  = v \cdot e_n = \frac{\tilde{z}' v_r - \tilde{r}' v_z}{\sqrt{ \tilde{z}'^2 + \tilde{r}'^2 }}$, $v_{tg} = v \cdot e_{tg}  = \frac{- \tilde{z}' v_z - \tilde{r}' v_r}{\sqrt{\tilde{z}'^2 + \tilde{r}'^2}}$. For any given $(x, v)$ and $(X^\pm, V^\pm)$ with $x$ and $X^\pm$ on $\partial \Omega$, we re-decompose $v$ and $V^\pm$ into their $n$-component, $tg$-component and $\varphi$- component: $v =  v_n e_n + v_{tg} e_{tg} +  v_\varphi e_\varphi   $, $V^\pm =  V^\pm_n e_n + V^\pm_{tg} e_{tg} +  V^\pm_\varphi e_\varphi   $, and define 
\begin{equation}  \label{reflectedvelocity}
v_* = - v_n e_n + v_{tg} e_{tg} +v_\varphi e_\varphi \ , \ V^\pm_* = - V^\pm_n e_n + V^\pm_{tg} e_{tg} +  V^\pm_\varphi e_\varphi \ .
\end{equation}
Thus from the specular boundary condition, the trajectory can be continued by the rule
\begin{equation}   \label{particlereflection}
( X^\pm (s_0 + ; x,v), V^\pm (s_0 + ; x,v)  ) = ( X^\pm (s_0 - ; x,v), V_*^\pm (s_0 - ; x,v)  )  \ .
\end{equation}

Furthermore we assume the equilibrium has a particle density of the form $f^{0,\pm}  (x,v) = \mu^\pm (e^\pm (x,v), p^\pm (x,v)  )  $, where 
\begin{equation}  \label{epdefinition}
e^\pm (x,v) = \la v \ra \pm \phi^0 (r, z)  ,   \  p^\pm (x,v) = r  ( v_\varphi  \pm A^0_\varphi (r, z))  \ .
\end{equation}
$e^\pm$ and $p^\pm$ are invariant along the particle trajectories. (The proof of the invariance of $e^\pm$ and $p^\pm$ can be found in Appendix B.) We assume that $\mu^\pm (e^\pm,p^\pm)$ are non-negative $C^1$ functions which satisfy
\begin{equation} \label{decayassumption}
\begin{split}
\mu^\pm_e (e, p) <  0, \   |\mu^\pm_p (e , p )| +|\mu^\pm_e (e, p)| \leq \frac{C_\mu }{1+ |e|^\gamma}, \ \gamma > 3 .
\end{split}
\end{equation}

Denote the transport operator by
\begin{equation}
D^\pm = \hat{v} \cdot \nabla_x \pm (\textbf{E}^0  + \hat{v} \times \textbf{B}^0 ) \cdot \nabla_v  \ .
\end{equation}
Linearizing around the equilibrium $\mu^\pm$, we obtain the linearized Vlasov equation
\begin{equation} \label{linearizedvlasov}
\partial_t f^\pm + D^\pm f^\pm = \mp (\textbf{E} + \hat{v} \times\textbf{B} ) \cdot \nabla_v f^{0,\pm}  \ , 
\end{equation}
which can be written in cylindrical coordinates as
\begin{equation}
\partial_t f^\pm + D^\pm f^\pm =  \pm (  \mu^\pm_e D^\pm \phi +  \mu^\pm_e \hat{v} \cdot \partial_t \textbf{A} + r \mu^\pm_p \partial_t A_\varphi + \mu^\pm_p D^\pm (r A_\varphi)  )  \ .
\end{equation}
As we will mention in Section 4, in the linearized problem we can assume the homogeneous Dirichlet condition for $\phi$ and $A_\varphi$.

We use the letter $\tau$ to denote the axisymmetric constraint for functions. Denote $Y =L^2_{1/r^2 } (\Omega)$, i.e. the weighted-$L^2$ space with weight $1/r^2$. This weight gives some singularity at $r=0$ if $\Omega$ touches the $z$-axis. For this, we define  
\begin{equation} \label{Hkdaggerdef}
H^{k \dagger}  (\Omega) := \{  g \in L^{2, \tau} (\Omega) |  e^{i\varphi} g \in H^k (\Omega)   \}  (k=1,2)  \ . 
\end{equation}
Then $H^{2 \dagger} (\Omega) \subset Y$ because the identity 
\begin{equation} \label{Deltagexp}
-\Delta (g e^{i\varphi}) = (- \Delta + \frac{1}{r^2} ) g e^{i \varphi} 
\end{equation}  
holds for any $\varphi$-independent function $g$. Indeed, 
\begin{equation}
\begin{split}
-\Delta (g e^{i\varphi})
& = - \frac{1}{r} \frac{\partial}{\partial r} (r \frac{\partial g}{\partial r}) e^{i\varphi} - \frac{\partial^2 g}{\partial z^2} e^{i\varphi} - \frac{1}{r^2} \frac{\partial^2 (e^{i\varphi})}{\partial \varphi^2} g \\ 
& = - \frac{1}{r} \frac{\partial}{\partial r} (r \frac{\partial g}{\partial r}) e^{i\varphi} - \frac{\partial^2 g}{\partial z^2} e^{i\varphi} + \frac{1}{r^2} g e^{i\varphi} \\ 
& = (-\Delta + \frac{1}{r^2}) g e^{i\varphi} \ . \\
\end{split}
\end{equation}

We will show in Lemma \ref{regularity} that $A_\varphi$ is automatically in $H^{2 \dagger } (\Omega) $ once we assume $\textbf{E}$, $\textbf{B} \in L^2 (\Omega)$. Let $\mathcal{X} $ be the space consisting of all the scalar functions in $ \in H^{2, \tau} (\Omega) \cap H^{2 \dagger }  (\Omega)$ that satisfy the Dirichlet boundary condition. We define the weighted-$L^2$ spaces $\mathcal{H}^\pm$ as
$$\mathcal{H}^\pm : = L^2_{|\mu^\pm_e |} (\Omega \times \mathbb{R}^3) \ . $$
 
By saying that an equilibrium is spectrally unstable, we mean that the linearized system with the boundary conditions admits a \textit{growing mode}, which is defined to be a solution of the form $( e^{\lambda t} f^\pm  ,  e^{\lambda t}  \textbf{E} ,  e^{\lambda t} \textbf{B} )$ with $Re \lambda >0$, $f^\pm \in \mathcal{H}^\pm$, and $\textbf{E}$, $\textbf{B} \in L^{2, \tau} (\Omega ; \mathbb{R}^3) $.

Let $\mathcal{P}^\pm$ be the orthogonal projection on the kernel of $D^\pm$ in the space $\mathcal{H}^\pm $. Formally, we define
\begin{equation}  \label{A01definition}
\mathcal{A}^0_1 h = \Delta h + \sum_\pm \int_{\mathbb{R}^3} \mu^\pm_e (1- \mathcal{P}^\pm) h dv \ ,
\end{equation}
\begin{equation}  \label{A02definition}
\mathcal{A}^0_2 h = (-\Delta + \frac{1}{r^2  }) h - \sum_\pm \int_{\mathbb{R}^3}  \hat{v}_\varphi \big( \mu^\pm_p r   h   + \mu^\pm_e \mathcal{P}^\pm(\hat{v}_\varphi h ) \big) dv  \ , 
\end{equation}
\begin{equation}  \label{B0definition}
\mathcal{B}^0 h = - \sum_\pm \int_{\mathbb{R}^3} \hat{v}_\varphi \mu^\pm_e (1 - \mathcal{P}^\pm) h dv   \ ,
\end{equation}
These operators are naturally derived from integration of the Vlasov equation along the particle trajectories. Now we can state our first main result as follows.

\begin{theorem} \label{mainresult}
Let $(f^{0, \pm},\textbf{E}^0, \textbf{B}^0)$ be an equilibrium of the relativistic Vlasov-Maxwell system satisfying $ f^{0, \pm} (x, v) = \mu^\pm (e^\pm, p^\pm) \geq 0$ and $\mu^\pm_e (e, p) < 0$, $   |\mu^\pm_p| +|\mu^\pm_e| \leq \frac{C_{\mu} }{1+ |e|^\gamma}$ with $ \gamma> 3$, $\mu^\pm \in C^1$, $\phi^0 \in C (\bar{\Omega})$, $A^0_\varphi \in C (\bar{\Omega})$. Then the operator
\begin{equation} \label{L0definition}
\mathcal{L}^0 = \mathcal{A}^0_2 - \mathcal{B}^0 (\mathcal{A}^0_1)^{-1} (\mathcal{B}^0)^* 
\end{equation}
on $\mathcal{X}$ is self-adjoint. Also, we have

(i) If $\mathcal{L}^0 \geq 0$, there exists no growing mode of the linearized equation (\ref{linearizedvlasov}).

(ii) Any growing mode, if it exists, must be purely growing, i.e. the exponent $\lambda$ of instability must be a real number.
 
(iii) If $\mathcal{L}^0 \ngeq 0 $, there exists a growing mode of the linearized Vlasov equation (\ref{linearizedvlasov}) and the linearized Maxwell system with the boundary conditions.
\end{theorem}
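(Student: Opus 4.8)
The plan is to follow the general scheme of energy–Casimir arguments for the RVM (as in \cite{LS1}, \cite{LS2}, \cite{NS1}), adapted to the axisymmetric domain $\Omega$ via the space $H^{k\dagger}(\Omega)$. First I would establish the self-adjointness of $\mathcal{L}^0$: one shows $\mathcal{A}^0_1$ is self-adjoint and strictly negative on $L^{2,\tau}(\Omega)$ (its principal part is $\Delta$ with Dirichlet data, and the integral correction $\sum_\pm \int \mu^\pm_e(1-\mathcal{P}^\pm)h\,dv$ is a bounded symmetric nonpositive operator because $\mu^\pm_e<0$ and $\mathcal{P}^\pm$ is an orthogonal projection in $\mathcal{H}^\pm$), so $(\mathcal{A}^0_1)^{-1}$ exists and is bounded; then $\mathcal{B}^0(\mathcal{A}^0_1)^{-1}(\mathcal{B}^0)^*$ is bounded and symmetric, and $\mathcal{A}^0_2$ is self-adjoint on $\mathcal{X}$ — here the identity \eqref{Deltagexp} is what makes $-\Delta+\frac1{r^2}$ behave like a genuine Dirichlet Laplacian acting on $ge^{i\varphi}$, so the usual elliptic self-adjointness (Lemma \ref{regularity}) applies despite the $r=0$ singularity. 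The decay \eqref{decayassumption} with $\gamma>3$ is exactly what makes all the $v$-integrals converge and the correction operators bounded.

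For part (i), the strategy is to use the conserved linearized energy together with Casimir-type invariants $\mathcal{I}$ and $\mathcal{K}_g$ (see \eqref{invarianceI}, \eqref{invarianceKg}) to build a functional that is conserved by the linearized flow and whose sign is controlled by $\mathcal{L}^0$. Concretely, one writes the linearized free energy as a quadratic form in $(f^\pm, \textbf{E}, \textbf{B})$, uses the Casimirs to eliminate the part of $f^\pm$ lying in $\ker D^\pm$ (projected out by $\mathcal{P}^\pm$), and performs the delicate minimization over the remaining degrees of freedom (the minimizations in Lemma \ref{minimization1}, \ref{minimization2}) — minimizing first in $f^\pm$ for fixed potentials, then in the ``electric'' potential $\phi$, which is where the Schur complement $\mathcal{A}^0_2-\mathcal{B}^0(\mathcal{A}^0_1)^{-1}(\mathcal{B}^0)^*=\mathcal{L}^0$ emerges. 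If $\mathcal{L}^0\geq 0$ the minimized functional is $\geq 0$, so no exponentially growing solution can exist (a growing mode would force this conserved nonnegative quantity to both be zero and to grow). For part (ii), one observes that a growing mode $(\mathbf{E},\mathbf{B},f^\pm)e^{\lambda t}$ gives, after integrating the Vlasov equation along trajectories, a family of equations in $\lambda$; taking real/imaginary parts and pairing against the mode, the anti-self-adjoint (in $\lambda$, via the $\hat v\cdot\nabla_x$ transport) contributions force $\Im\lambda=0$ — essentially because $e^\pm$ is real and $\mu^\pm_e<0$ makes a certain quadratic form definite, exactly as in \cite{LS2}. For part (iii), the plan is the converse: assuming $\mathcal{L}^0\ngeq 0$, one parametrizes putative growing modes by $\lambda>0$, writes $f^\pm$ in terms of $(\phi,A_\varphi)$ using the resolvent-type operators $\mathcal{Q}^\pm_\lambda$ of the transport operator $\lambda+D^\pm$, substitutes into the linearized Maxwell system to get the matrix equation \eqref{MaxwellMatrixEqn} with self-adjoint operators $\mathcal{A}^\lambda_1,\mathcal{A}^\lambda_2,\mathcal{B}^\lambda$, forms the $\lambda$-dependent Schur complement $\mathcal{L}^\lambda$, and runs a continuation/intermediate-value argument: show $\mathcal{L}^\lambda\to\mathcal{L}^0$ (in an appropriate sense) as $\lambda\to 0^+$ and $\mathcal{L}^\lambda\to$ a positive operator as $\lambda\to+\infty$, so that if $\mathcal{L}^0$ has negative spectrum then for some intermediate $\lambda_*$ the operator $\mathcal{L}^{\lambda_*}$ has nontrivial kernel, yielding an actual growing mode with exponent $\lambda_*$. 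One must also check that the constructed mode satisfies the boundary conditions \eqref{fullboundarycondition} — this is handled by working throughout in $\mathcal{X}$ and $H^{2\dagger}$, which bake in the Dirichlet conditions, plus the reflection rule \eqref{particlereflection} for the trajectory integrals.

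The main obstacle I expect is twofold. First, in part (iii), the limit $\lambda\to 0^+$ of $\mathcal{Q}^\pm_\lambda$ and hence of $\mathcal{L}^\lambda$ is subtle: $(\lambda+D^\pm)^{-1}$ does not converge to a bounded operator (the kernel of $D^\pm$ is nontrivial, which is precisely why $\mathcal{P}^\pm$ appears), so one needs careful ergodic-averaging estimates along the specularly-reflected trajectories to show that the pieces that do survive in the limit reassemble into $\mathcal{A}^0_1,\mathcal{A}^0_2,\mathcal{B}^0$ — the specular reflection \eqref{particlereflection} makes the trajectory structure more delicate than in the boundaryless case of \cite{LS2}, though less so than the toroidal-coordinate computation of \cite{NS1}. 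Second, throughout both the stability minimization and the instability continuation, one must control the $r=0$ singularity uniformly; the point is that restricting to $\varphi$-independent perturbations and using \eqref{Deltagexp} converts every occurrence of $-\Delta+\frac1{r^2}$ into an honest Laplacian on $H^k(\Omega)$, so all the elliptic estimates and compactness arguments (needed for the spectral/continuation step and for the minimizers to be attained) go through — verifying this reduction rigorously at each step, rather than just formally, is the technical heart of the argument.
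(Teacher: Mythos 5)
Your proposal follows essentially the same route as the paper: self-adjointness via the negativity and invertibility of $\mathcal{A}^0_1$ together with the identity \eqref{Deltagexp}; stability via the invariants $\mathcal{I}$, $\mathcal{K}^\pm_g$ and the two-step minimization (Lemmas \ref{minimization1}, \ref{minimization2}) that produces the Schur complement $\mathcal{L}^0$; purely-growing modes via a parity/imaginary-part argument (the paper uses the even/odd splitting of $F^\pm$ in $(v_r,v_z)$); and instability via trajectory integration, the matrix equation \eqref{MaxwellMatrixEqn}, and a continuation argument between the limits $\lambda\to 0^+$ and $\lambda\to+\infty$. The one point to watch is that in part (iii) the continuation and eigenvalue-counting must be run on the full reduced operator $\mathcal{M}^\lambda$ acting on $(A_\varphi,\tilde{\textbf{A}})$ (so one also needs the negativity and invertibility of $\tilde{\mathcal{U}}^\lambda$ at both ends), not on $\mathcal{L}^\lambda$ alone, since the growing mode is recovered from a nontrivial kernel of $\mathcal{M}^{\lambda_0}$.
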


In other words, the equilibrium is spectrally stable if and only if $\mathcal{L}^0 \geq 0$.

Our second main result provides some explicit examples of spectrally stable/unstable equilibria using the criterion provided in Theorem \ref{mainresult}. We give a summarized version of the theorem here, and the precise statements will come in later.

\begin{theorem} \label{mainresultexample}
Let $ (\mu^\pm, \textbf{E}^0, \textbf{B}^0)$ be an equilibrium, with $\mu^\pm $ satisfying the decaying assumption \eqref{decayassumptionexamplepart} (which is slightly stronger than (\ref{decayassumption})). 

(i) If $p \mu^\pm_p (e, p) \leq 0$, and both $|A^0_\varphi|$ and $|\phi^0|$ satisfy some smallness condition \eqref{stableexamplecondition01} or \eqref{stableexamplecondition02}, then $\mathcal{L}^0 \geq 0$ and hence the equilibrium is spectrally stable. The smallness condition on $|A^0_\varphi|$ and $|\phi^0|$ depends on the shape of the domain. (See Corollary \ref{stableexampleshape}.)
 
(ii) If $\sup_{x \in \Omega} r(x) >1$, and $\mu^\pm$ satisfies $ p \mu^\pm_p \geq C'_\mu |p| \la p  \ra^{-\epsilon} \nu (e) $ together with some function $\nu (e)$ which satisfies $\nu (e) \geq C_\nu \exp (-e)$ for some positive constants $C'_\mu$ and $C_\nu$, then we obtain a spectrally unstable equilibrium from $(\mu^\pm, \textbf{E}^0, \textbf{B}^0)$ by suitable scaling. In particular, in the case that $C_\mu$ satisfies some smallness condition, such unstable equilibrium is constructed explicitly.

\end{theorem}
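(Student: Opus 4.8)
\emph{Proof plan.} The whole statement concerns the sign of the self-adjoint operator $\mathcal{L}^0$ of \eqref{L0definition}, since by Theorem \ref{mainresult} the equilibrium is spectrally stable if and only if $\mathcal{L}^0 \geq 0$. The key preliminary reduction is that $\mathcal{A}^0_1$ is negative definite: pairing \eqref{A01definition} with $h$ and using the homogeneous Dirichlet condition together with $\mu^\pm_e<0$ and the fact that $1-\mathcal{P}^\pm$ is an orthogonal projection in $\mathcal{H}^\pm$ gives
\[
\langle \mathcal{A}^0_1 h, h \rangle = -\|\nabla h\|_{L^2}^2 - \sum_\pm \|(1-\mathcal{P}^\pm)h\|_{\mathcal{H}^\pm}^2 < 0 \qquad (h \neq 0).
\]
Hence $(\mathcal{A}^0_1)^{-1} \leq 0$, so $-\mathcal{B}^0 (\mathcal{A}^0_1)^{-1}(\mathcal{B}^0)^* \geq 0$ and therefore $\mathcal{L}^0 \geq \mathcal{A}^0_2$ as quadratic forms on $\mathcal{X}$. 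A second computation, again using $\mu^\pm_e<0$ and orthogonality of $\mathcal{P}^\pm$, gives
\[
\langle \mathcal{A}^0_2 h, h \rangle = \|\nabla h\|_{L^2}^2 + \|h/r\|_{L^2}^2 + \sum_\pm \|\mathcal{P}^\pm(\hat v_\varphi h)\|_{\mathcal{H}^\pm}^2 - \sum_\pm \int_\Omega \int_{\mathbb{R}^3} \hat v_\varphi \, \mu^\pm_p \, r \, |h|^2 \, dv \, dx ,
\]
and, inserting $r v_\varphi = p^\pm \mp r A^0_\varphi$ from \eqref{epdefinition}, one splits the last integrand as $\hat v_\varphi r \mu^\pm_p = \langle v\rangle^{-1} p^\pm \mu^\pm_p \mp \langle v\rangle^{-1} r A^0_\varphi \mu^\pm_p$ into a term whose sign is governed by that of $p\mu^\pm_p$ and an error term proportional to $|A^0_\varphi|$.

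\emph{Part (i).} If $p\mu^\pm_p \leq 0$ then $-\sum_\pm \int_\Omega \int_{\mathbb{R}^3} \langle v\rangle^{-1} p^\pm \mu^\pm_p |h|^2 \, dv\, dx \geq 0$, while the error term is bounded by $(\sup_\Omega r)\,\|A^0_\varphi\|_{L^\infty} \sum_\pm \sup_{(r,z)} \int_{\mathbb{R}^3} \langle v\rangle^{-1} |\mu^\pm_p| \, dv \cdot \|h\|_{L^2}^2$, the $v$-integral being finite by the strengthened decay assumption \eqref{decayassumptionexamplepart} (and bounded in terms of $\|\phi^0\|_{L^\infty}$ since $e^\pm = \langle v\rangle \pm \phi^0$). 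Combining with the Poincar\'e inequality $\|\nabla h\|_{L^2}^2 \geq \lambda_1(\Omega)\|h\|_{L^2}^2$ (and, if helpful, $\|h/r\|_{L^2}^2 \geq (\sup_\Omega r)^{-2}\|h\|_{L^2}^2$) yields $\langle \mathcal{A}^0_2 h, h\rangle \geq \big(\lambda_1(\Omega) - C(\Omega,\|\phi^0\|_{L^\infty})\|A^0_\varphi\|_{L^\infty}\big)\|h\|_{L^2}^2 \geq 0$ once both potentials are small; then $\mathcal{L}^0 \geq \mathcal{A}^0_2 \geq 0$. This proves Theorem \ref{stableexample}: the admissible smallness in \eqref{stableexamplecondition01}--\eqref{stableexamplecondition02} depends on the domain only through $\lambda_1(\Omega)$ and $\sup_\Omega r$, and Corollary \ref{stableexampleshape} follows by inserting the dependence of these two quantities on the geometry (a thin torus of large major radius has small $\lambda_1(\Omega)$ and large $\sup_\Omega r$, hence tolerates less).

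\emph{Part (ii).} Now $p\mu^\pm_p \geq C'_\mu|p|\langle p\rangle^{-\epsilon}\nu(e) > 0$, so the sign-governed term is strictly negative; but because $\mathcal{L}^0 \geq \mathcal{A}^0_2$, to produce a growing mode I must exhibit $h$ with $\langle \mathcal{A}^0_2 h, h\rangle$ not merely negative but more negative than the nonnegative correction $-\langle \mathcal{B}^0 (\mathcal{A}^0_1)^{-1}(\mathcal{B}^0)^* h, h\rangle$. I would take $h$ supported in a fixed thin axisymmetric tube around a point where $r(x) = \sup_\Omega r > 1$, so that $\|\nabla h\|_{L^2}^2 + \|h/r\|_{L^2}^2$ is a fixed multiple of $\|h\|_{L^2}^2$; on such $h$ the destabilizing term is bounded below by $C'_\mu (\sup_\Omega r)\, \inf_{\mathrm{tube}} \sum_\pm \int_{\mathbb{R}^3} \langle v\rangle^{-1} |p^\pm|\langle p^\pm\rangle^{-\epsilon}\nu(e^\pm)\, dv \cdot \|h\|_{L^2}^2$ minus a small multiple of $\|A^0_\varphi\|_{L^\infty}\|h\|_{L^2}^2$. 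The role of the scaling is to enhance the $p$-dependence of $\mu^\pm$ (enlarging this lower bound) while keeping $\mu^\pm_e$ comparatively small --- so that $\mathcal{B}^0$, built from $\mu^\pm_e$ in \eqref{B0definition}, and hence the correction, stays bounded --- and while preserving \eqref{decayassumptionexamplepart}; the hypothesis $\nu(e) \geq C_\nu \exp(-e)$ is what keeps the displayed $v$-integral bounded below after the $e$-scaling (a merely polynomial lower bound would be destroyed). With the scaling so chosen, $\langle \mathcal{A}^0_2 h, h\rangle$ becomes arbitrarily negative on the fixed test function while the correction remains $O(1)$, so $\langle \mathcal{L}^0 h, h\rangle < 0$ and $\mathcal{L}^0 \not\geq 0$; this is Proposition \ref{unstableexample2}.

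\emph{Explicit construction and main obstacle.} For Theorem \ref{unstableexampleexistence} one must produce an actual equilibrium carrying the scaled profile, i.e.\ solve the self-consistent system $-\Delta \phi^0 = \sum_\pm \pm \int_{\mathbb{R}^3} \mu^\pm(\langle v\rangle \pm \phi^0, r(v_\varphi \pm A^0_\varphi))\, dv$ and $(-\Delta + \frac{1}{r^2}) A^0_\varphi = \sum_\pm \pm \int_{\mathbb{R}^3} \hat v_\varphi \mu^\pm(\cdot)\, dv$ with homogeneous Dirichlet data; I would do this by a Banach fixed-point argument in the $H^{2\dagger}$ framework of \eqref{Hkdaggerdef}, where $-\Delta$ and $-\Delta + \frac{1}{r^2}$ are invertible (cf.\ Lemma \ref{regularity}), which closes when $C_\mu$ is small --- the same smallness that, with the decay, forces $\|\phi^0\|_{L^\infty}$ and $\|A^0_\varphi\|_{L^\infty}$ small, as used above to discard the $A^0_\varphi$-error. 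I expect the crux to be exactly this balancing: the scaling must boost $\mu^\pm_p$ enough to overpower both the spectral term coming from $-\Delta + \frac{1}{r^2}$ and the correction $-\mathcal{B}^0 (\mathcal{A}^0_1)^{-1}(\mathcal{B}^0)^*$ without enlarging $\mathcal{B}^0$ or violating \eqref{decayassumptionexamplepart}, while leaving a profile still admissible for the fixed-point construction --- and extracting a clean threshold such as $\sup_\Omega r > 1$ out of the comparison of $\int r\,\hat v_\varphi \mu^\pm_p |h|^2$ against the spectral term is where the care goes.
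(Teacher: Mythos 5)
Your plan follows essentially the same route as the paper: reduce to the sign of $\mathcal{A}^0_2$ via the negative definiteness of $\mathcal{A}^0_1$, split $r\hat v_\varphi\mu^\pm_p$ through $p^\pm=r(v_\varphi\pm A^0_\varphi)$ and absorb the $A^0_\varphi$-error into $\|\nabla h\|_{L^2}^2$ or $\|h/r\|_{L^2}^2$ for stability, and for instability rescale $\mu^\pm$ in $p$ (the paper takes $\mu^{K,\pm}(e,p)=K^{-\delta}\mu^\pm(e,Kp)$), test against an $h$ with mass in $\{r\ge 1\}$, check the correction $\mathcal{B}^0(\mathcal{A}^0_1)^{-1}(\mathcal{B}^0)^*$ stays controlled, and build the self-consistent potentials by a contraction argument under smallness of $C_\mu$ (the paper works in $C^\alpha$ rather than $H^{2\dagger}$, an immaterial difference). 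The only inaccuracy is motivational: there is no scaling in $e$, and the hypothesis $\nu(e)\ge C_\nu e^{-e}$ is used merely to get a $K$-uniform positive lower bound on the destabilizing $v$-integral given $\|\phi^{K,0}\|_{L^\infty}\le 1/2$, not to survive an $e$-rescaling.
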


\section{Boundary Conditions}

In this section, we derive from \eqref{perfectconductorboundary} the explicit boundary conditions in cylindrical coordinates for the linearized electric and magnetic fields.

From (\ref{potentialdefinition}), we have 
\begin{equation} \label{Erepresentation}
\begin{split}
\textbf{E}
&  = e_r (-\frac{\partial \phi}{\partial r} -  \frac{\partial A_r}{\partial t } ) + e_\varphi (-  \frac{\partial A_\varphi}{\partial t} ) + e_z ( - \frac{\partial \phi}{ \partial z} - \frac{\partial A_z}{\partial t}    )  \\
\end{split}
\end{equation}
\begin{equation}   \label{Brepresentation}
\begin{split}
\textbf{B}
&  = e_r ( - \frac{\partial A_\varphi}{\partial z} ) + e_\varphi (  \frac{\partial A_r}{\partial z} -  \frac{\partial A_z}{\partial r} ) + e_z \frac{1}{r}  \frac{\partial ( r A_\varphi )}{\partial r}  \ .   \\
\end{split}
\end{equation}
See Appendix A for detailed explanation to the coordinates. On $\partial \Omega$, (\ref{perfectconductorboundary}) becomes
\begin{equation}  \label{perfectconductorcoordinates1}
-  \tilde{r}' E_\varphi e_r +  ( \tilde{r}' E_r   + \tilde{z}' E_z)  e_\varphi - \tilde{z}' E_\varphi e_z   = 0, \   \forall x \in \partial \Omega
\end{equation}
and 
\begin{equation}    \label{perfectconductorcoordinates2}
\tilde{z}' B_r - \tilde{r}' B_z    =0, \  \forall x \in \partial \Omega \ ,
\end{equation}
where $'$ denotes the derivative with respect to $s$. Assuming that the boundary conditions are time-independent and plugging \eqref{Erepresentation} and \eqref{Brepresentation} into \eqref{perfectconductorcoordinates1} and \eqref{perfectconductorcoordinates2}, we can write \eqref{perfectconductorcoordinates1} and \eqref{perfectconductorcoordinates2} in terms of the potentials as
\begin{equation}
\begin{split}
& (\tilde{r}' \frac{\partial}{\partial r} + \tilde{z}' \frac{\partial}{\partial z}) \phi = 0  \ ,  \\
& (\tilde{r}' \frac{\partial}{\partial r} + \tilde{z}' \frac{\partial}{\partial z} +\tilde{r}'  \frac{1}{r}) A_\varphi = 0   \\
\end{split}
\end{equation}
for all $x \in \partial \Omega$. The first line together with $\partial \phi /\partial \varphi =0$ from \eqref{varphiindependenceassumption} implies that $\phi$ is a constant along the surface $\partial \Omega $. As for the second line, for each $x \in \partial \Omega$, let $\Gamma_x$ be the curve $\{ y \in \partial \Omega : \varphi (y) = \varphi (x) \}$. Then the second line becomes
$$
\frac{\partial}{\partial s}  \big( \exp (\int^s_1 \frac{\tilde{r}' (\sigma)}{\tilde{r} (\sigma)} d\sigma ) A_\varphi  \big) =0 \ ,
$$
after being multiplied by $\exp (\int^s_1 \frac{\tilde{r}' (\sigma)}{\tilde{r} (\sigma)} d\sigma )$. This implies
\begin{equation}
A_\varphi = C \exp \big(- \int^s_1 \frac{\tilde{r}' (\sigma)}{\tilde{r} (\sigma)} d\sigma \big) = \frac{C}{\tilde{r} (\beta)} \ .
\end{equation}
Since the choice of $\textbf{A}$ can differ by the gradient of a harmonic function, we can remove the singularity in the boundary condition on $\textbf{A}$ by adding to it the gradient of a harmonic function with the same boundary condition. The linearized problem must then satisfy 
\begin{equation}
A_\varphi =0 
\end{equation}
on $\partial \Omega$.

The Coulomb gauge becomes
\begin{equation}
\frac{1}{r} \frac{\partial (r A_r)}{\partial r} + \frac{1}{r} \frac{\partial A_\varphi}{\partial \varphi} + \frac{\partial A_z}{\partial z} = \frac{1}{r} \frac{\partial (r A_r)}{\partial r}  + \frac{\partial A_z}{\partial z} = 0 \ . 
\end{equation}
On the boundary, we use subscripts $n$ and $tg$ to denote the normal and tangential components, respectively. Then
\begin{equation}
\partial_z = \frac{ - \tilde{z}'  \partial_{tg} - \tilde{r}' \partial_n}{\sqrt{\tilde{z}'^2 + \tilde{r}'^2 }}, \qquad  \partial_r = \frac{- \tilde{r}'  \partial_{tg} +  \tilde{z}' \partial_n}{\sqrt{ \tilde{z}'^2 + \tilde{r}'^2}} 
\end{equation}
and 
\begin{equation}
A_z = \frac{- \tilde{z}' A_{tg} - \tilde{r}' A_n}{\sqrt{\tilde{z}'^2 + \tilde{r}'^2}}, \qquad  A_r = \frac{- \tilde{r}' A_{tg} + \tilde{z}' A_n}{\sqrt{\tilde{z}'^2 + \tilde{r}'^2}}  \ .
\end{equation}
We rewrite the Coulomb gauge in terms of the normal and tangential components on $\partial \Omega$ as
\begin{equation}
( \partial_n A_n + \frac{\tilde{z}'}{\tilde{r} \sqrt{\tilde{z}'^2 + \tilde{r}'^2} } A_n  ) + ( \partial_{tg} A_{tg} - \frac{\tilde{r}'}{\tilde{r} \sqrt{\tilde{z}'^2 + \tilde{r}'^2} } A_{tg} )  = 0 \ . 
\end{equation}
Thus we assume
\begin{equation} \label{CoulombgaugeAn}
\partial_n A_n + \frac{\tilde{z}'}{\tilde{r} \sqrt{\tilde{z}'^2 + \tilde{r}'^2} } A_n  = 0  
\end{equation}
and   \label{CoulombgaugeAtg}
\begin{equation}
\partial_{tg} A_{tg} - \frac{\tilde{r}'}{\tilde{r} \sqrt{\tilde{z}'^2 + \tilde{r}'^2} } A_{tg}   = 0  \ . 
\end{equation}
Again, for each $x \in \partial \Omega$, let $\Gamma_x$ be the curve $\varphi = \varphi (x) = const.$ and $s$ be its arc length parameter. We obtain 
$$
A_{tg} = C \exp \big(   \int^s_1 \frac{\tilde{r}' (\sigma)}{\tilde{r} (\sigma) \sqrt{   \tilde{z}'^2 (\sigma) +  \tilde{r}'^2 (\sigma)  }  }  d \sigma   \big)  \ . 
$$
Thus the linearized problem must satisfy
\begin{equation}
A_{tg} =0 
\end{equation}
on $\partial \Omega$.
The condition (\ref{CoulombgaugeAn}) is equivalent to 
\begin{equation}
\tilde{z}' \partial_r A_n - \tilde{r}' \partial_z A_n + \frac{\tilde{z}'}{\tilde{r}} A_n =0 \ . 
\end{equation}

To summarize, we consider the linearized problem with the following boundary condition on $\phi$ and $\textbf{A}$:
\begin{equation}  \label{fullboundarycondition}
\phi=0, \qquad  A_\varphi =0, \qquad   A_{tg} =0, \qquad  \tilde{z}' \partial_r A_n - \tilde{r}' \partial_z A_n + \frac{\tilde{z}'}{\tilde{r}} A_n =0, \qquad x \in \partial \Omega \ . 
\end{equation}

\section{Linear Stability}

In this section, we prove the first part of Theorem \ref{mainresult}. Firstly we introduce some property of $D^\pm$. 
\begin{lemma}
Let $g(x,v)  = g(r, z, v_r, v_\varphi, v_z  ) $ be a $ C^1$ axisymmetric function on $\bar{\Omega} \times \mathbb{R}^3$, then $g$ satisfies the specular boundary condition iff 
\begin{equation}
\int_\Omega \int_{\mathbb{R}^3} g D^\pm h dv dx =  - \int_\Omega \int_{\mathbb{R}^3} h D^\pm g dv dx
\end{equation}
holds for all $ C^1$ axisymmetric function $h$ with $v$-compact support that satisfy the specular boundary condition (denote this set of functions by $\mathcal{C}$).
\end{lemma}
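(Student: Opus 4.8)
The statement is an integration-by-parts (skew-adjointness) characterization of the specular boundary condition for the stationary transport operator $D^\pm = \hat v\cdot\nabla_x \pm(\mathbf E^0 + \hat v\times\mathbf B^0)\cdot\nabla_v$. The plan is to compute $\int_\Omega\int_{\mathbb R^3}\big(g D^\pm h + h D^\pm g\big)\,dv\,dx$ directly and isolate the boundary term, showing it vanishes for all test $h\in\mathcal C$ precisely when $g$ is specular. First I would note that $D^\pm(gh) = g D^\pm h + h D^\pm g$ since $D^\pm$ is a first-order derivation, so the sum equals $\int_\Omega\int_{\mathbb R^3} D^\pm(gh)\,dv\,dx$. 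The velocity-gradient part, $\pm(\mathbf E^0 + \hat v\times\mathbf B^0)\cdot\nabla_v(gh)$, integrates to zero in $v$ over $\mathbb R^3$ (no boundary in $v$; use the compact $v$-support of $h$ and the decay to justify it, plus the fact that $\nabla_v\cdot(\hat v\times\mathbf B^0)=0$). For the spatial part, $\hat v\cdot\nabla_x(gh) = \nabla_x\cdot(\hat v\, gh)$ because $\hat v$ does not depend on $x$; applying the divergence theorem on $\Omega$ gives

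\begin{equation}
\int_\Omega\int_{\mathbb R^3}\big(g D^\pm h + h D^\pm g\big)\,dv\,dx = \int_{\partial\Omega}\int_{\mathbb R^3} (\hat v\cdot e_n)\, g(x,v)\, h(x,v)\,dv\,dS(x).
\end{equation}

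Next I would analyze this boundary integral. Since $\hat v = v/\la v\ra$, the sign of $\hat v\cdot e_n$ is the sign of $v_n = v\cdot e_n$. Split the $v$-integral into $\{v_n>0\}$ and $\{v_n<0\}$ and, on the incoming part $\{v_n<0\}$, substitute $v\mapsto v_* = v - 2(v\cdot e_n)e_n$, the specular reflection; this is a measure-preserving involution on $\mathbb R^3$ that flips the sign of $v_n$ and fixes $v_{tg}, v_\varphi$, and it fixes $|v|$ hence $\la v\ra$. Under this change of variables the incoming integral becomes an integral over $\{v_n>0\}$ of $-(\hat v\cdot e_n)\, g(x,v_*)\,h(x,v_*)$. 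Using that $h$ is specular, $h(x,v_*) = h(x,v)$, so the boundary term collapses to
\begin{equation}
\int_{\partial\Omega}\int_{\{v_n>0\}} (\hat v\cdot e_n)\,\big(g(x,v) - g(x,v_*)\big)\, h(x,v)\,dv\,dS(x).
\end{equation}
Now the equivalence is immediate: if $g$ is specular then $g(x,v) = g(x,v_*)$ on $\partial\Omega$ and the integrand vanishes identically, giving the "only if" (really the easy) direction; conversely, if the integral vanishes for every $h\in\mathcal C$, then since $h(x,v)$ for $v_n>0$ can be taken to approximate an arbitrary continuous function with compact $v$-support on $\partial\Omega\times\{v_n>0\}$ (and $(\hat v\cdot e_n)>0$ there), the factor $g(x,v) - g(x,v_*)$ must vanish on $\{v_n>0\}$, i.e. $g$ is specular on $\partial\Omega$.

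The main technical obstacle is justifying the divergence-theorem step on the possibly non-smooth, axis-touching domain $\Omega$ (only $C^1$, and $\partial\Omega$ may meet the $z$-axis), together with the integrability of the boundary integral: one must check that $(\hat v\cdot e_n)\,gh$ is integrable on $\partial\Omega\times\mathbb R^3$. Here the $C^1$ regularity of $\mathcal C$ and $\partial\Omega$ from Section 2, the compact $v$-support of $h$, and the fact that $g$ is $C^1$ on $\bar\Omega\times\mathbb R^3$ suffice — on the axis $r=0$ the surface measure degenerates like $r\,d\beta\,d\varphi$ so there is no singularity from that side, and the vector field $\hat v\, gh$ is $C^1$ up to the boundary, so the standard $C^1$ divergence theorem applies. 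A secondary point worth stating carefully is the density/approximation argument in the converse direction: given $x_0\in\partial\Omega$ and $v_0$ with $v_0\cdot e_n(x_0)>0$ at which $g(x_0,v_0)\neq g(x_0,(v_0)_*)$, one builds $h\in\mathcal C$ supported near $(x_0,v_0)$ (and near its specular image, to keep $h$ specular) with $h\geq 0$ and $h(x_0,v_0)>0$; continuity forces the integral to be nonzero, a contradiction. I would remark that axisymmetry of $g$ and $h$ plays no essential role in the computation beyond ensuring we stay in the relevant function class, so the lemma is really the classical skew-adjointness of the stationary Vlasov operator under specular reflection, adapted to this geometry.
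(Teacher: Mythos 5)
Your proposal is correct and follows essentially the same route as the paper: integrate by parts in $x$ and $v$ to reduce the identity to the vanishing of the boundary term $\int_{\partial\Omega}\int_{\mathbb{R}^3}(\hat v\cdot e_n)\,gh\,dv\,dS_x$, then argue by parity in $v_n$ (your reflection change of variables $v\mapsto v_*$ is just a reformulation of the paper's even/odd decomposition) for both directions. The extra care you take with the divergence theorem near the axis and with the density argument in the converse direction is consistent with, and slightly more detailed than, the paper's treatment.
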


\begin{proof}
Integrating by parts in $x$ and $v$, we obtain
$$
\int_\Omega \int_{\mathbb{R}^3} (g D^\pm h + h D^\pm g )  dv dx = 2 \pi \int^1_0 \int_{\mathbb{R}^3} r gh \hat{v} \cdot e_n dv dz |_{r = \tilde{r} (\beta), z=\tilde{z} (\beta)}  \ .
$$
If $g$ satisfies the specular boundary condition, then both $g$ and $h$ are even functions of $v_n= v \cdot n/|n|$ on $\partial \Omega$, and therefore the right side of the equality vanishes. 
Conversely, if the right side vanishes for all $h$, then 
$$
\int^1_0 \int_{\mathbb{R}^3} g k (r, z, v_r,  v_\varphi, v_z ) dv dz |_{r = \tilde{r} (\beta), z=\tilde{z} (\beta)} =0
$$
for all test functions $k$ which are odd in $v_n = v \cdot e_n $. So $g( r, z, v_r, v_z, v_\varphi ) |_{r = \tilde{r} (\beta), z=\tilde{z} (\beta)}$ is even in $v_n$, which gives the specular boundary condition. 
\end{proof}

Define
\begin{equation}
dom(D^\pm) =\{ g\in \mathcal{H}^\pm | D^\pm g \in \mathcal{H}^\pm, \la D^\pm g, h \ra_{\mathcal{H}^\pm} = - \la g , D^\pm h \ra_{\mathcal{H}^\pm} , \forall h \in \mathcal{C}  \} \ .
\end{equation}
Then $dom(D^\pm)$ are dense in $\mathcal{H}^\pm$, $D^\pm$ are skew-adjoint on $\mathcal{H}^\pm$.

Denote $F^\pm =  f \mp r  \mu^\pm_p A_\varphi $. Then $F^\pm$ satisfies

\begin{equation}
(\partial_t + D^\pm) F^\pm = \mp \mu^\pm_e \hat{v}\cdot \textbf{E} \ .
\end{equation}

Next we prove that the functions and fields are in the right spaces that we expect them to live in.

\begin{lemma} \label{regularity}
Let $( e^{\lambda t} f^\pm , e^{\lambda t} \textbf{E}, e^{\lambda t} \textbf{B} ) $ be a growing mode (then by definition $f^\pm \in \mathcal{H}^\pm$, $\textbf{E}, \textbf{B} \in L^2 (\Omega;\mathbb{R}^3)$) and define $F^\pm =  f \mp r  \mu^\pm_p A_\varphi$. Then $\textbf{E}, \textbf{B} \in H^1 (\Omega;\mathbb{R}^3) $, 
\begin{equation}
\int_\Omega \int_{\mathbb{R}^3} \frac{1}{|\mu^\pm_e|} ( |F^\pm|^2 +|D^\pm F^\pm|^2 ) dv dx < +\infty  \ .
\end{equation}
Moreover, we have $A_\varphi  \in  H^{2 \dagger}  (\Omega) \subset  L^2_{1/r^2 } (\Omega)$. 
\end{lemma}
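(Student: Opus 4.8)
The plan is a regularity bootstrap starting from the given information $\textbf{E},\textbf{B}\in L^2(\Omega)$, using the skew-adjointness of $D^\pm$, the fact that $\mu^\pm_e$ is constant along the particle trajectories, and standard elliptic theory. Write the growing mode so that $\partial_t$ acts as multiplication by $\lambda$ (with $\Re\lambda>0$). From the $e_\varphi$-component of \eqref{Erepresentation} we get immediately $A_\varphi=-E_\varphi/\lambda\in L^2(\Omega)$, and $F^\pm=f\mp r\mu^\pm_p A_\varphi$ solves $(\lambda+D^\pm)F^\pm=\mp\mu^\pm_e\,\hat v\cdot\textbf{E}$. Since $\phi^0\in C(\bar\Omega)$ is bounded and $|\mu^\pm_e|+|\mu^\pm_p|\le C_\mu(1+|e|^\gamma)^{-1}$ with $\gamma>3$, one has $\sup_{x\in\Omega}\int_{\mathbb{R}^3}\big(|\mu^\pm_e|+|\mu^\pm_p|\big)\,dv<\infty$; in particular $\hat v\cdot\textbf{E}\in\mathcal{H}^\pm$ with $\|\hat v\cdot\textbf{E}\|_{\mathcal{H}^\pm}\le C\|\textbf{E}\|_{L^2}$.

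\emph{Step 1 (the second assertion).} Because $D^\pm$ is skew-adjoint on $\mathcal{H}^\pm$ and $\Re\lambda>0$, the operator $\lambda+D^\pm$ is invertible with $\|(\lambda+D^\pm)^{-1}\|\le(\Re\lambda)^{-1}$, so $G^\pm:=(\lambda+D^\pm)^{-1}(\hat v\cdot\textbf{E})\in\mathcal{H}^\pm$. Since $e^\pm$ and $p^\pm$, and therefore the bounded function $\mu^\pm_e$, are invariant along the particle trajectories, multiplication by $\mu^\pm_e$ preserves $dom(D^\pm)$ and commutes with $\lambda+D^\pm$; hence $\mp\mu^\pm_e G^\pm$ solves $(\lambda+D^\pm)(\,\cdot\,)=\mp\mu^\pm_e\,\hat v\cdot\textbf{E}$, and by injectivity $F^\pm=\mp\mu^\pm_e G^\pm$. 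Consequently $\int_\Omega\int_{\mathbb{R}^3}|\mu^\pm_e|^{-1}|F^\pm|^2\,dv\,dx=\|G^\pm\|_{\mathcal{H}^\pm}^2<\infty$, and $D^\pm F^\pm=\mp\mu^\pm_e\hat v\cdot\textbf{E}-\lambda F^\pm\in\mathcal{H}^\pm$, which gives the stated finiteness.

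\emph{Step 2 ($\rho,\textbf{j}\in L^2$).} Write $f^\pm=\mp\mu^\pm_e G^\pm\pm r\mu^\pm_p A_\varphi$ and substitute into $\rho=\int(f^+-f^-)\,dv$ and $\textbf{j}=\int\hat v(f^+-f^-)\,dv$. For terms of the first type, Cauchy--Schwarz in $v$ against the \emph{integrable} weight $|\mu^\pm_e|$ gives $\big|\int_{\mathbb{R}^3}\mu^\pm_e G^\pm\,dv\big|^2\le\big(\int|\mu^\pm_e|\,dv\big)\big(\int|\mu^\pm_e||G^\pm|^2\,dv\big)$, which integrates in $x$ to at most $C\|G^\pm\|_{\mathcal{H}^\pm}^2<\infty$. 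Terms of the second type equal $rA_\varphi\int\mu^\pm_p\,dv$ (resp.\ $rA_\varphi\int\hat v\,\mu^\pm_p\,dv$), which lie in $L^2_x$ because $r$ is bounded on $\bar\Omega$, $\int|\mu^\pm_p|\,dv<\infty$, and $A_\varphi\in L^2(\Omega)$; the bound $|\hat v|<1$ handles $\textbf{j}$. Hence $\rho,\textbf{j}\in L^2(\Omega)$.

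\emph{Step 3 (fields and $A_\varphi$).} For the growing mode the first-order Maxwell system reads $\nabla\cdot\textbf{E}=\rho$, $\nabla\times\textbf{E}=-\lambda\textbf{B}$, $\nabla\cdot\textbf{B}=0$, $\nabla\times\textbf{B}=\lambda\textbf{E}+\textbf{j}$, together with $\textbf{E}\times e_n=0$ and $\textbf{B}\cdot e_n=0$ on $\partial\Omega$. The divergences and curls are now in $L^2(\Omega)$ by Step 2 and the hypotheses, so the standard div--curl regularity estimate on the bounded domain $\Omega$ with these (tangential, resp.\ normal) boundary conditions yields $\textbf{E},\textbf{B}\in H^1(\Omega;\mathbb{R}^3)$. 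For $A_\varphi$, applying the identity \eqref{Deltagexp} to \eqref{system02} with $\partial_t=\lambda$ gives $-\Delta(A_\varphi e^{i\varphi})=(j_\varphi-\lambda^2 A_\varphi)e^{i\varphi}\in L^2(\Omega)$, while $A_\varphi=0$ on $\partial\Omega$ from \eqref{fullboundarycondition} means $A_\varphi e^{i\varphi}\in H^1_0(\Omega)$; $H^2$-regularity for the Dirichlet Laplacian then gives $A_\varphi e^{i\varphi}\in H^2(\Omega)$, i.e.\ $A_\varphi\in H^{2\dagger}(\Omega)$, and $H^{2\dagger}(\Omega)\subset L^2_{1/r^2}(\Omega)$ because $r^{-1}A_\varphi e^{i\varphi}$ is, up to a unimodular factor, the $e_\varphi$-component $r^{-1}\partial_\varphi(A_\varphi e^{i\varphi})$ of $\nabla(A_\varphi e^{i\varphi})\in L^2$.

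\emph{Main obstacle.} The crux is Step 2: pairing $f^\pm\in\mathcal{H}^\pm$ naively against $1$ or $\hat v$ fails, because $|\mu^\pm_e|^{-1}$ is not integrable in $v$. The resolution is the factorization $F^\pm=\mp\mu^\pm_e G^\pm$, available precisely because $\mu^\pm_e$ is a trajectory invariant, which replaces the bad weight by the good, integrable one $|\mu^\pm_e|$ — and this integrability is exactly where the decay exponent $\gamma>3$ is used. The remaining issues — invoking div--curl and Dirichlet elliptic regularity up to the boundary and handling the apparent singularity at $r=0$ — are routine, the latter being absorbed by the $e^{i\varphi}$-trick and the space $H^{2\dagger}$.
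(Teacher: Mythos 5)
Your proof is correct and reaches all three conclusions, but it takes a genuinely different route from the paper in two places. For the weighted bound $\int |\mu^\pm_e|^{-1}|F^\pm|^2 < \infty$, the paper does not invert $\lambda + D^\pm$; instead it sets $k^\pm = F^\pm/|\mu^\pm_e|$, regularizes with the cutoff $w_\epsilon = |\mu^\pm_e|/(\epsilon + |\mu^\pm_e|)$ so that $k^\pm_\epsilon = w_\epsilon k^\pm$ is manifestly in $\mathcal{H}^\pm$, pairs the equation with $k^\pm_\epsilon$, uses skew-symmetry to kill the $D^\pm$ term, and passes to the limit $\epsilon \to 0$. Your alternative --- defining $G^\pm = (\lambda+D^\pm)^{-1}(\hat v\cdot\textbf{E})$ via the resolvent bound $\|(\lambda+D^\pm)^{-1}\| \le (\Re\lambda)^{-1}$ and identifying $F^\pm = \mp\mu^\pm_e G^\pm$ by uniqueness --- is cleaner and foreshadows the operator $\mathcal{Q}^\pm_\lambda$ of Section 6 (indeed $G^\pm = \lambda^{-1}\mathcal{Q}^\pm_\lambda(\hat v\cdot\textbf{E})$); the one step you should justify more carefully is that multiplication by $\mu^\pm_e$ commutes with $(\lambda+D^\pm)^{-1}$ on the abstractly defined domain, which is most safely done through the unitary group $U(s)g = g\circ\Phi_s$ generated by $D^\pm$ (noting $\mu^\pm_e\circ\Phi_s = \mu^\pm_e$), since $\mu^\pm_e h$ need not belong to the $C^1$ test class $\mathcal{C}$ used to define $dom(D^\pm)$. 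For $\textbf{E},\textbf{B}\in H^1$, the paper works component by component on the potentials --- $H^2$ regularity for $\phi$, $A_{tg}$, the $e^{i\varphi}$-trick for $A_\varphi$, and an integration-by-parts estimate for $A_n$ with its Robin-type condition --- whereas you invoke a div--curl estimate directly on the fields with the perfect-conductor boundary conditions; this is shorter but requires the same boundary regularity ($C^{1,1}$ or convexity, strictly more than the stated $C^1$) that the paper's elliptic estimates implicitly assume, so the two routes are on equal footing there. Your treatment of $\rho,\textbf{j}\in L^2$ and of $A_\varphi\in H^{2\dagger}\subset L^2_{1/r^2}$ matches the paper's.
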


\begin{proof}
Plug in $( e^{\lambda t} f^\pm , e^{\lambda t} \textbf{E}, e^{\lambda t} \textbf{B} ) $, we obtain
\begin{equation}
(\lambda + D^\pm) F^\pm = \mp \mu^\pm_e \hat{v}\cdot \textbf{E} \ .
\end{equation}
Since $f^\pm \in \mathcal{H}^\pm $, $A_\varphi \in L^2 (\Omega)$, $\sup_x \int_{\mathbb{R}^3} |\mu^\pm_p| dv < + \infty$, and $\hat{v} \cdot \textbf{E} \in \mathcal{H}^\pm $ (This is because we have the decaying assumption $|\mu^\pm_e (x, v)| \lesssim \frac{1}{1+|e^\pm(x, v)|^\gamma} $, which gives that $\int_\Omega \int_{\mathbb{R}^3} |\hat{v} \cdot \textbf{E}|^2 |\mu^\pm_e| dx dv \lesssim \int_\Omega \int_{\mathbb{R}^3} |\hat{v} \cdot \textbf{E}|^2 \frac{1}{1+|e^\pm|^\gamma} dx dv \lesssim \|\textbf{E} \|^2_{H^1 (\Omega ; \mathbb{R}^3)}  $), we obtain that $D^\pm F^\pm \in \mathcal{H}^\pm$. Also, $F^\pm$ satisfies the specular boundary condition in the weak sense. 

Let $k^\pm = F^\pm/|\mu^\pm_e|$, then the equation for $F^\pm$ is equivalent to $(\lambda+D^\pm) k^\pm = \mp \hat{v} \cdot \textbf{E}$. Let $\epsilon$ be any positive number, $w_\epsilon = \frac{|\mu^\pm_e|}{\epsilon+|\mu^\pm_e|}$, $k^\pm_\epsilon = w_\epsilon k^\pm = F^\pm/(\epsilon+|\mu^\pm_e|)$, then multiplying the equation $(\lambda+D^\pm) k^\pm = \mp \hat{v} \cdot \textbf{E}$ by $w_\epsilon$ and pair with $k_\epsilon$ gives
$$
\la \lambda k_\epsilon + D^\pm k_\epsilon, k_\epsilon \ra_{\mathcal{H}^\pm}  = \mp \la w_\epsilon \hat{v} \cdot \textbf{E}, k_\epsilon \ . \ra_{\mathcal{H}^\pm}
$$
We have $k_\epsilon \in dom(D^\pm)$, so by the skew-symmetry of $D^\pm$ we deduce that $\la D^\pm k_\epsilon, k_\epsilon \ra_{\mathcal{H}^\pm} =0$. The equation becomes
$$
|\lambda| \|k_\epsilon \|^2_{\mathcal{H}^\pm} = |\la w_\epsilon \hat{v} \cdot \textbf{E}, k_\epsilon \ra_{\mathcal{H}^\pm} | \leq \|\textbf{E} \|_{\mathcal{H}^\pm} \|k_\epsilon\|_{\mathcal{H}^\pm}  \ .
$$
Letting $\epsilon \rightarrow 0$, we obtain that $k^\pm \in \mathcal{H}^\pm$, $ \int_\Omega \int_{\mathbb{R}^3} \frac{|F^\pm|^2}{|\mu^\pm_e|} dv dx < +\infty$.

We substitute $( e^{\lambda t} f^\pm , e^{\lambda t} \textbf{E}, e^{\lambda t} \textbf{B} ) $ as well as $\textbf{A}$ and $\phi$ defined as in \eqref{potentialdefinition} into the Maxwell system together with the boundary conditions. Note that by the definition of $F^\pm$ the charge density and the current density of the system are:
\begin{equation}
\begin{split}
& \rho = \int_{\mathbb{R}^3} (F^+ -F^-) dv + r  A_\varphi \int_{\mathbb{R}^3} ( \mu^+_p + \mu^-_p) dv  \\
& \textbf{j} = \int_{\mathbb{R}^3} \hat{v} (F^+ -F^-) dv + r   A_\varphi \int_{\mathbb{R}^3} \hat{v} (\mu^+_p + \mu^-_p) dv   \ .
\end{split}
\end{equation}
Since we already have that $\int_\Omega \int_{\mathbb{R}^3} \frac{|F^\pm|^2}{|\mu^\pm_e|}  dv dx < +\infty$, $\sup_x \int_{\mathbb{R}^3} (|\mu^\pm_e| +| \mu^\pm_p|) dv  < +\infty$, $A_\varphi \in L^2 (\Omega)$, we now know that $\rho$ and $\textbf{j} $ are finite almost everywhere and they are in $L^2 (\Omega)$. 

$A_\varphi$ satisfies the equation $(\lambda^2 -\Delta + \frac{1}{r^2}) A_\varphi = j_\varphi$. Since $\Omega$ may meet the $z$-axis, we mimic the process in \cite{LS1} to deal with the singularity at $r=0$. Recalling the identity \eqref{Deltagexp}, we can apply the standard elliptic theory to the equation $\lambda^2 A_\varphi e^{i \varphi} - \Delta (A_\varphi e^{i\varphi}) = j_\varphi e^{i \varphi} $ to conclude that $A_\varphi e^{i\varphi} \in H^2 (\Omega) $, $A_\varphi \in H^{2 \dagger}  (\Omega) $.
 
For $A_n$ we have
$$
(\lambda^2 -\Delta )A_n + \lambda ( \nabla \phi )_n  = \sum_\pm \int_{\mathbb{R}^3}  \frac{v_n}{\la v \ra}  ( \mu^\pm_e (1-\mathcal{Q}^\pm_\lambda) \phi  + \mu^\pm_p r  A_\varphi + \mu^\pm_e \mathcal{Q}^\pm_\lambda (\hat{v} \cdot \textbf{A})   ) dv  \ .
$$
Therefore
$$
 -\Delta A_n =  - \lambda ( \nabla \phi )_n - \lambda^2 A_n + \sum_\pm \int_{\mathbb{R}^3}  \frac{v_n}{\la v \ra}  ( \mu^\pm_e (1-\mathcal{Q}^\pm_\lambda) \phi  + \mu^\pm_p r  A_\varphi + \mu^\pm_e \mathcal{Q}^\pm_\lambda (\hat{v} \cdot \textbf{A})   ) dv  \ . 
$$
Taking the square of both sides and integrating on $\Omega$, we compute the left side:
\begin{equation}
\begin{split}
\int_\Omega (\Delta A_n)^2 dx  
& = - \int_\Omega  \nabla (\Delta A_n) \cdot \nabla A_n dx + \int_{\partial \Omega} (\Delta A_n) \frac{\partial A_n}{\partial n} \cdot dS_x \\
& =  \int_\Omega |D^2 A_n|^2 dx - \int_{\partial \Omega} | \nabla (\frac{\partial A_n}{\partial n}) \cdot \nabla A_n dS_x   | + \int_{\partial \Omega} (\Delta A_n) \frac{\partial A_n}{\partial n} \cdot dS_x \\
& =  \int_\Omega |D^2 A_n|^2 dx + \int_{\partial \Omega} | \frac{\partial A_n}{\partial n} (\Delta A_n) dS_x   | + \int_{\partial \Omega} (\Delta A_n) \frac{\partial A_n}{\partial n} \cdot dS_x \\
& =  \int_\Omega |D^2 A_n|^2 dx - \int_{\partial \Omega} |  A_n (\Delta A_n) \cdot \frac{-\tilde{z}'}{\tilde{r} \sqrt{\tilde{z}'^2 + \tilde{r}'^2 }} r d \varphi dx_{tg}   |  \\
& + \int_{\partial \Omega} (\Delta A_n) \frac{-\tilde{z}'}{\tilde{r} \sqrt{ \tilde{z}'^2 + \tilde{r}'^2 }} \cdot  r d \varphi dx_{tg}   \ .  \\
\end{split}
\end{equation}
The boundary terms are well-defined integrals and they can be controlled by a constant times $\|A_n \|^2_{H^1}$. Hence
\begin{equation}
\| A_n \|^2_{H^2} < +\infty  \ .
\end{equation}
Therefore $A_n$ is in $H^2 (\Omega )$.

Also, $\phi$ is in $H^2 (\Omega)$ and $A_{tg} \in H^2 (\Omega) $, according to elliptic theory (with Dirichlet boundary condition), so $\tilde{\textbf{A}} \in H^2 (\Omega;\tilde{ \mathbb{R}}^2) $. Hence $\textbf{E}, \textbf{B} \in H^1 (\Omega; \mathbb{R}^3)$. We also obtain from the equation $(\lambda + D^\pm) F^\pm = \mp \mu^\pm_e \hat{v}\cdot \textbf{E}$ that $ \int_\Omega \int_{\mathbb{R}^3} \frac{|D^\pm F^\pm|^2 }{|\mu^\pm_e|} dv dx < +\infty  $.

\end{proof}

Next we prove the adjointness properties for the operators $\mathcal{A}^0_1$, $\mathcal{A}^0_2$ and $\mathcal{B}^0$. Recall that $\mathcal{X} $ is the space consisting of all the scalar functions in $ H^{2, \tau} (\Omega) \cap H^{2 \dagger }  (\Omega)$ that satisfy the Dirichlet boundary condition.  

\begin{lemma}
(i) $\mathcal{A}^0_1$ and $\mathcal{A}^0_2 $ are self-adjoint on $L^{2, \tau} (\Omega)$ with domain $\mathcal{X}$.
(ii) $\mathcal{B}^0$ is well-defined on $\mathcal{X}$. The adjoint operator of $\mathcal{B}^0$ is 
\begin{equation}
(\mathcal{B}^0)^* h =  \sum_\pm \int_{\mathbb{R}^3}  ( \mu^\pm_e r  h + \mu^\pm_e \mathcal{P}^\pm ( \hat{v}_\varphi h ) )  dv  \ .
\end{equation}
\end{lemma}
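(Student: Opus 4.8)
The plan is to verify symmetry of $\mathcal{A}^0_1$ and $\mathcal{A}^0_2$ on $L^{2,\tau}(\Omega)$ directly, identify the defining quadratic forms, and then invoke the self-adjointness of a lower-order perturbation of a self-adjoint operator. For part (i), I would take $h_1, h_2 \in \mathcal{X}$ and compute $\langle \mathcal{A}^0_1 h_1, h_2\rangle_{L^{2,\tau}}$. The $\Delta$ term is symmetric because $h_1, h_2$ satisfy the homogeneous Dirichlet condition (Green's identity, with the boundary term vanishing), and since $\mathcal{X} \subset H^{2,\tau}(\Omega) \cap H^{2\dagger}(\Omega)$, the $-\Delta$ with Dirichlet data is self-adjoint with domain exactly $\mathcal{X}$ by Lemma \ref{regularity}-type elliptic theory. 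The remaining term $\sum_\pm \int_{\mathbb{R}^3} \mu^\pm_e (1-\mathcal{P}^\pm) h\, dv$ is a bounded operator on $L^{2,\tau}(\Omega)$: indeed $\int_{\mathbb{R}^3}|\mu^\pm_e|\,dv$ is bounded in $x$ by the decay assumption \eqref{decayassumption}, so $h \mapsto \int \mu^\pm_e (1-\mathcal{P}^\pm)h\,dv$ maps $L^{2,\tau}$ into itself continuously. Its symmetry follows because $\mathcal{P}^\pm$ is the orthogonal projection onto $\ker D^\pm$ in $\mathcal{H}^\pm = L^2_{|\mu^\pm_e|}$, so $1-\mathcal{P}^\pm$ is self-adjoint on $\mathcal{H}^\pm$, and $\langle \int \mu^\pm_e(1-\mathcal{P}^\pm)h_1\,dv, h_2\rangle_{L^{2,\tau}} = -\langle (1-\mathcal{P}^\pm)h_1, h_2\rangle_{\mathcal{H}^\pm}$ (the constant function $h_2(x)$ lies in $\ker D^\pm$ when viewed on phase space, but more simply one just unfolds the pairing) $= -\langle h_1, (1-\mathcal{P}^\pm)h_2\rangle_{\mathcal{H}^\pm}$, which is symmetric in $h_1,h_2$. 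A bounded symmetric perturbation of a self-adjoint operator is self-adjoint on the same domain (Kato-Rellich), giving self-adjointness of $\mathcal{A}^0_1$.

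For $\mathcal{A}^0_2$ the argument is parallel but one must handle the singular term $\frac{1}{r^2}$. Here the point is exactly the device recorded in \eqref{Deltagexp}: on $\mathcal{X} \subset H^{2\dagger}(\Omega)$, the operator $-\Delta + \frac{1}{r^2}$ acting on $g$ corresponds under $g \mapsto g e^{i\varphi}$ to the ordinary Dirichlet Laplacian $-\Delta$ acting on $g e^{i\varphi} \in H^2(\Omega)$, which is self-adjoint; hence $-\Delta + \frac{1}{r^2}$ with domain $\mathcal{X}$ is self-adjoint on $L^{2,\tau}(\Omega) \cong e^{i\varphi}L^{2,\tau}(\Omega) \subset L^2(\Omega)$. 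The two remaining terms, $\int \hat{v}_\varphi \mu^\pm_p r\, h\, dv$ and $\int \hat{v}_\varphi \mu^\pm_e \mathcal{P}^\pm(\hat{v}_\varphi h)\, dv$, are again bounded on $L^{2,\tau}$ — the first because $|\hat v_\varphi| \le 1$ and $\sup_x \int r|\mu^\pm_p|\,dv < \infty$ (note $r$ is bounded on $\bar\Omega$), the second because $\mathcal{P}^\pm$ is a contraction on $\mathcal{H}^\pm$ and $|\hat v_\varphi|\le 1$. Symmetry of the second term uses self-adjointness of $\mathcal{P}^\pm$ on $\mathcal{H}^\pm$: $\langle \int \hat v_\varphi \mu^\pm_e \mathcal{P}^\pm(\hat v_\varphi h_1)\,dv, h_2\rangle_{L^{2,\tau}} = -\langle \mathcal{P}^\pm(\hat v_\varphi h_1), \hat v_\varphi h_2\rangle_{\mathcal{H}^\pm} = -\langle \hat v_\varphi h_1, \mathcal{P}^\pm(\hat v_\varphi h_2)\rangle_{\mathcal{H}^\pm}$. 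The first term is manifestly symmetric since $\mu^\pm_p r$ is a real multiplier. Thus $\mathcal{A}^0_2$ is a bounded symmetric perturbation of a self-adjoint operator, hence self-adjoint on $\mathcal{X}$.

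For part (ii), I would first check $\mathcal{B}^0$ is well-defined on $\mathcal{X}$: for $h \in \mathcal{X} \subset L^{2,\tau}(\Omega)$, $\mathcal{B}^0 h = -\sum_\pm \int \hat v_\varphi \mu^\pm_e (1-\mathcal{P}^\pm)h\,dv$, and the same boundedness estimate as above (using $|\hat v_\varphi|\le 1$, $\sup_x\int|\mu^\pm_e|\,dv<\infty$, and $\|1-\mathcal{P}^\pm\|_{\mathcal{H}^\pm\to\mathcal{H}^\pm}\le 1$) shows $\mathcal{B}^0$ is a bounded operator from $L^{2,\tau}(\Omega)$ to itself, in particular defined on all of $\mathcal{X}$. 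To compute the adjoint, take $h \in \mathcal{X}$ and $g \in L^{2,\tau}(\Omega)$ and unfold:
\begin{equation*}
\langle \mathcal{B}^0 h, g\rangle_{L^{2,\tau}} = -\sum_\pm \int_\Omega \int_{\mathbb{R}^3} \hat v_\varphi \mu^\pm_e (1-\mathcal{P}^\pm)h \cdot g \, \frac{dv\,dx}{r^2} = \sum_\pm \langle (1-\mathcal{P}^\pm)h, \hat v_\varphi g\rangle_{\mathcal{H}^\pm},
\end{equation*}
using $\mu^\pm_e = -|\mu^\pm_e|$. Now move $(1-\mathcal{P}^\pm)$ onto the other factor (self-adjointness on $\mathcal{H}^\pm$) and note $(1-\mathcal{P}^\pm)h = h - \mathcal{P}^\pm h$; since $h=h(x)$ is independent of $v$ it lies in $\ker D^\pm$, so $\mathcal{P}^\pm h = h$ and $(1-\mathcal{P}^\pm)h = 0$... wait — that is not right, because $h$ as a function on $\Omega\times\mathbb{R}^3$ constant in $v$ need not lie in $\ker D^\pm$ since $D^\pm h = \hat v\cdot\nabla_x h \ne 0$ in general. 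So instead I keep $(1-\mathcal{P}^\pm)$ on $h$ and pair against $\mathcal{P}^\pm(\hat v_\varphi g) + (1-\mathcal{P}^\pm)(\hat v_\varphi g)$; since $\langle(1-\mathcal{P}^\pm)h, \mathcal{P}^\pm(\hat v_\varphi g)\rangle_{\mathcal{H}^\pm}=0$ by orthogonality, only the $(1-\mathcal{P}^\pm)$ part survives, i.e. $\langle(1-\mathcal{P}^\pm)h, \hat v_\varphi g\rangle_{\mathcal{H}^\pm} = \langle h, (1-\mathcal{P}^\pm)(\hat v_\varphi g)\rangle_{\mathcal{H}^\pm} = \langle h, \hat v_\varphi g\rangle_{\mathcal{H}^\pm} - \langle h, \mathcal{P}^\pm(\hat v_\varphi g)\rangle_{\mathcal{H}^\pm}$. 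Translating back, $\langle h, \hat v_\varphi g\rangle_{\mathcal{H}^\pm} = -\int_\Omega\int_{\mathbb{R}^3}\mu^\pm_e \hat v_\varphi h g\,\frac{dv\,dx}{r^2}$, but I want the claimed formula $(\mathcal{B}^0)^*h = \sum_\pm\int(\mu^\pm_e r h + \mu^\pm_e \mathcal{P}^\pm(\hat v_\varphi h))\,dv$; matching this requires rewriting $\hat v_\varphi = v_\varphi/\langle v\rangle$ and using $r\hat v_\varphi$-type identities together with the structure of $p^\pm = r(v_\varphi \pm A^0_\varphi)$ and the fact that $\mathcal{P}^\pm$ commutes appropriately — the main obstacle is precisely this bookkeeping of the $\varphi$-component weights ($r$ versus $\hat v_\varphi$) and correctly tracking the sign conventions from $\mu^\pm_e<0$, so I would do that reduction carefully, and then conclude by density of $\mathcal{X}$ in $L^{2,\tau}(\Omega)$ that the displayed formula indeed gives $(\mathcal{B}^0)^*$.
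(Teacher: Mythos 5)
Part (i) is essentially correct and follows the route the paper intends (the paper's own proof is one line: self-adjointness of $\mathcal{P}^\pm$ on $\mathcal{H}^\pm$ plus the Dirichlet condition); your Kato--Rellich framing of the zeroth-order terms as bounded symmetric perturbations, and the $e^{i\varphi}$ unitary identification for the $-\Delta+1/r^2$ part, are both fine. One small slip there: the pairing on $L^{2,\tau}(\Omega)$ is the \emph{unweighted} $\int_\Omega \cdot\,dx$ (the weighted space $L^2_{1/r^2}$ is called $Y$ in the paper and is not the space on which self-adjointness is asserted), so the factor $\tfrac{dv\,dx}{r^2}$ you insert when unfolding $\langle \mathcal{B}^0h,g\rangle$ should not be there; luckily nothing downstream uses it.

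The genuine gap is in part (ii): you stop exactly at the step that produces the asserted formula for $(\mathcal{B}^0)^*$. After writing $\langle(1-\mathcal{P}^\pm)h,\hat v_\varphi g\rangle_{\mathcal{H}^\pm}=\langle h,\hat v_\varphi g\rangle_{\mathcal{H}^\pm}-\langle h,\mathcal{P}^\pm(\hat v_\varphi g)\rangle_{\mathcal{H}^\pm}$, the second term already gives $\int\!\int\mu^\pm_e\,h\,\mathcal{P}^\pm(\hat v_\varphi g)\,dv\,dx$, and the first term is handled not by ``$r\hat v_\varphi$-type identities'' in general but by the single identity
\begin{equation*}
0=\int_{\mathbb{R}^3}\partial_{v_\varphi}\mu^\pm(e^\pm,p^\pm)\,dv=\int_{\mathbb{R}^3}\bigl(\mu^\pm_e\,\hat v_\varphi+r\,\mu^\pm_p\bigr)\,dv ,
\end{equation*}
which follows from $\partial_{v_\varphi}e^\pm=\hat v_\varphi$ and $\partial_{v_\varphi}p^\pm=r$ and is used repeatedly in the paper (e.g.\ in the proof of Lemma \ref{minimization1}). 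Since $h,g$ depend only on $x$, this converts $\langle h,\hat v_\varphi g\rangle_{\mathcal{H}^\pm}=-\int_\Omega\bigl(\int\mu^\pm_e\hat v_\varphi\,dv\bigr)hg\,dx$ into $\int_\Omega\bigl(\int r\mu^\pm_p\,dv\bigr)hg\,dx$, yielding
\begin{equation*}
(\mathcal{B}^0)^*g=\sum_\pm\int_{\mathbb{R}^3}\bigl(\mu^\pm_p\,r\,g+\mu^\pm_e\,\mathcal{P}^\pm(\hat v_\varphi g)\bigr)\,dv .
\end{equation*}
Note that the first term carries $\mu^\pm_p$, not $\mu^\pm_e$ as printed in the lemma statement; the printed $\mu^\pm_e r h$ appears to be a typo, as the $\lambda$-version $(\mathcal{B}^\lambda)^*$ defined in Section 6 and the use of $(\mathcal{B}^0)^*$ in the proof of Lemma \ref{minimization2} both have $\mu^\pm_p\,r\,h$. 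Your instinct that the remaining work is ``bookkeeping'' is right, but the bookkeeping is the content of the claim, so leaving it as ``I would do that reduction carefully'' leaves the lemma unproved.
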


\begin{proof}
The conclusion follows from the fact that $\mathcal{P}^\pm$ is self-adjoint on $\mathcal{H}^\pm$ and the Dirichlet boundary conditions.
\end{proof}

Note that $\mathcal{P}^\pm$ is bounded with norm no larger than $ \sup_x \int_{\mathbb{R}^3} |\mu^\pm_e| dv$, and that $\mathcal{A}^0_1 - \Delta $, $\mathcal{A}^0_2 + \Delta $ and $ \mathcal{B}^0 $ are bounded $\mathcal{X} \rightarrow L^2 (\Omega)$. Therefore $\mathcal{A}^0_1$ and $\mathcal{A}^0_2$ are well-defined on $\mathcal{X} $. Also, noticng that $\mathcal{P}^\pm$ are orthogonal projections and the Dirichlet boundary condition, we have
\begin{equation}
-\la \mathcal{A}^0_1 h, h\ra_{L^2} = - \la \Delta h, h \ra_{L^2} + \sum_\pm \| (1- \mathcal{P}^\pm)  h \|^2_{\mathcal{H}^\pm} \geq \|\nabla h \|^2_{L^2} \geq c_P^{-1} \| h \|_{L^2},  \  \forall h \in \mathcal{X}
\end{equation}	
with $c_P = c_P (\Omega)$ being the square of the Poincar\'{e} constant of $\Omega$. Therefore we have

\begin{lemma}  \label{A01invertibility}
$(\mathcal{A}^0_1)^{-1}$ exists and it is bounded $L^{2, \tau} (\Omega)  \rightarrow \mathcal{X}$. Moreover, its bound is no larger than the square of the Poincar\'{e} constant of $\Omega$, which we denote by $c_P $. Hence we can define the operator
\begin{equation}
\mathcal{L}^0 =\mathcal{A}^0_2 - \mathcal{B}^0 (\mathcal{A}^0_1)^{-1} (\mathcal{B}^0)^*
\end{equation}
on $\mathcal{X}$, and it is self-adjoint on $L^{2, \tau} (\Omega)$ with domain $\mathcal{X}$.
\end{lemma}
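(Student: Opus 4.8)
The plan is to establish the existence and boundedness of $(\mathcal{A}^0_1)^{-1}$ via the Lax--Milgram theorem applied to the bilinear form associated with $-\mathcal{A}^0_1$, and then to deduce the self-adjointness of $\mathcal{L}^0$ from the already-established self-adjointness of $\mathcal{A}^0_1$, $\mathcal{A}^0_2$, the adjoint relation between $\mathcal{B}^0$ and $(\mathcal{B}^0)^*$, and the boundedness of $(\mathcal{A}^0_1)^{-1}$. The key coercivity estimate is already displayed just above the lemma: for all $h \in \mathcal{X}$,
\[
-\la \mathcal{A}^0_1 h, h\ra_{L^2} \geq \|\nabla h\|^2_{L^2} \geq c_P^{-1} \|h\|^2_{L^2},
\]
where the first inequality uses that $\mathcal{P}^\pm$ is an orthogonal projection (so $\|(1-\mathcal{P}^\pm)h\|^2_{\mathcal{H}^\pm} \geq 0$) together with the Dirichlet boundary condition for integration by parts, and the second is the Poincar\'e inequality on the bounded domain $\Omega$ with $c_P$ the square of the Poincar\'e constant.

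First I would note that the bilinear form $a(h,k) := -\la \mathcal{A}^0_1 h, k\ra_{L^2}$ is bounded on $H^1_0$ (the $-\Delta$ part is the Dirichlet form, and the remaining zeroth-order part $-\sum_\pm \int \mu^\pm_e(1-\mathcal{P}^\pm)h\, dv$ is bounded $L^2 \to L^2$ since $\sup_x \int_{\mathbb{R}^3}|\mu^\pm_e|\,dv < \infty$ by the decay assumption \eqref{decayassumption} and $\|\mathcal{P}^\pm\| \le \sup_x \int |\mu^\pm_e| dv$), and coercive by the displayed estimate. Hence for any $F \in L^{2,\tau}(\Omega)$, Lax--Milgram yields a unique $h \in H^1_0$ with $a(h,\cdot) = \la F, \cdot\ra$, and $\|h\|_{H^1} \lesssim \|F\|_{L^2}$; testing against $h$ itself gives $\|h\|_{L^2} \le c_P \|F\|_{L^2}$, which is the claimed bound on the operator norm of $(\mathcal{A}^0_1)^{-1}$ as a map into $L^2$. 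To upgrade $h$ to lie in $\mathcal{X} = H^{2,\tau}(\Omega)\cap H^{2\dagger}(\Omega)$ with Dirichlet data, I would rewrite the equation as $-\Delta h = F + \sum_\pm \int \mu^\pm_e (1-\mathcal{P}^\pm)h\, dv \in L^2(\Omega)$ and invoke standard elliptic regularity for the Dirichlet problem on $\Omega$ (the same route used in Lemma \ref{regularity}, together with the identity \eqref{Deltagexp} handling the possible contact with the $z$-axis so that membership in $H^{2\dagger}$ follows); this shows $(\mathcal{A}^0_1)^{-1}\colon L^{2,\tau}(\Omega)\to\mathcal{X}$ is well-defined.

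Finally, with $(\mathcal{A}^0_1)^{-1}$ a bounded self-adjoint operator on $L^{2,\tau}(\Omega)$ (self-adjointness of the inverse follows from self-adjointness of $\mathcal{A}^0_1$ established in the previous lemma), the operator $\mathcal{B}^0 (\mathcal{A}^0_1)^{-1}(\mathcal{B}^0)^*$ is a bounded self-adjoint operator on $L^{2,\tau}(\Omega)$: it is the composition $T^* S T$ with $T = (\mathcal{B}^0)^*$ bounded and $S = (\mathcal{A}^0_1)^{-1}$ bounded self-adjoint, so $(T^*ST)^* = T^*S^*T = T^*ST$. Adding the self-adjoint operator $\mathcal{A}^0_2$ with domain $\mathcal{X}$ and the bounded self-adjoint perturbation $-\mathcal{B}^0(\mathcal{A}^0_1)^{-1}(\mathcal{B}^0)^*$, the Kato--Rellich theorem (or simply the fact that a self-adjoint operator plus a bounded self-adjoint operator is self-adjoint on the same domain) gives that $\mathcal{L}^0 = \mathcal{A}^0_2 - \mathcal{B}^0(\mathcal{A}^0_1)^{-1}(\mathcal{B}^0)^*$ is self-adjoint on $L^{2,\tau}(\Omega)$ with domain $\mathcal{X}$. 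The main obstacle, and the step requiring the most care, is verifying that $(\mathcal{A}^0_1)^{-1}$ genuinely maps into $\mathcal{X}$ rather than merely into $H^1_0$: one must confirm that the zeroth-order term $\sum_\pm \int \mu^\pm_e(1-\mathcal{P}^\pm)h\,dv$ lies in $L^{2,\tau}(\Omega)$ (which follows from the decay bound and boundedness of $\mathcal{P}^\pm$ on $\mathcal{H}^\pm$) and that the elliptic regularity and the $H^{2\dagger}$-membership argument via \eqref{Deltagexp} go through on the possibly-singular axisymmetric domain $\Omega$.
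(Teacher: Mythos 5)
Your proposal is correct and follows essentially the same route as the paper, which derives the lemma directly from the coercivity display $-\la \mathcal{A}^0_1 h, h\ra_{L^2} \geq \|\nabla h\|^2_{L^2} \geq c_P^{-1}\|h\|^2_{L^2}$ stated immediately before it; your Lax--Milgram argument, the elliptic-regularity upgrade to $\mathcal{X}$, and the bounded-self-adjoint-perturbation step are exactly the standard details the paper leaves implicit. The only point worth noting is that the $c_P$ bound is indeed meant as the $L^2\to L^2$ operator norm (as your test-against-$h$ computation gives), which matches how the bound is used later in Lemma \ref{A0K1invertibility}.
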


The following couple of lemmas are the same as the ones stated in Section 3.1 in \cite{NS1}, simply differing by a change of coordinates, so we omit the proofs.

\begin{lemma}  \label{invarianceI}
Let $(f^\pm , \textbf{E}, \textbf{B})$ be a solution of the linearized system, and suppose that $F^\pm \in C^1( \mathbb{R},L^2_{1/|\mu^\pm_e|} (\Omega \times \mathbb{R}^3) )$, $ \textbf{E}, \textbf{B} \in C^2( \mathbb{R}, H^2 (\Omega)) $. Then the linearized energy functional
\begin{equation}
\mathcal{I} (f^\pm, \textbf{E}, \textbf{B}) = \sum_\pm \int_\Omega \int_{\mathbb{R}^3} ( \frac{1}{|\mu^\pm_e|} |F^\pm|^2 - r   \mu^\pm_p \hat{v}_\varphi |A_\varphi|^2 ) dv dx + \int_\Omega (|\textbf{E}|^2 + |\textbf{B}|^2 ) dx
\end{equation}
is time-invariant.
\end{lemma}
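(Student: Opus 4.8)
The plan is to prove $\frac{d}{dt}\,\mathcal{I}\big(f^\pm(t),\textbf{E}(t),\textbf{B}(t)\big)=0$ by differentiating under the integral sign (legitimate by the assumed $C^1$ regularity of $F^\pm$ and $C^2$ regularity of $\textbf{E},\textbf{B}$ in $t$) and checking that the three groups of terms in $\mathcal{I}$ contribute quantities that cancel. The three ingredients I will feed in are: the linearized Vlasov equation $(\partial_t+D^\pm)F^\pm=\mp\mu^\pm_e(\hat{v}\cdot\textbf{E})$; the linearized Maxwell equations $\partial_t\textbf{E}=\nabla\times\textbf{B}-\textbf{j}$, $\partial_t\textbf{B}=-\nabla\times\textbf{E}$; and the formula $\textbf{j}=\int_{\mathbb{R}^3}\hat{v}(F^+-F^-)\,dv+rA_\varphi\int_{\mathbb{R}^3}\hat{v}(\mu^+_p+\mu^-_p)\,dv$ from Lemma \ref{regularity}.

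For the kinetic part $\frac{d}{dt}\sum_\pm\int_\Omega\int_{\mathbb{R}^3}\frac{1}{|\mu^\pm_e|}|F^\pm|^2$, the key point is that $e^\pm$ and $p^\pm$ are invariant along the trajectories, so $D^\pm$ kills every function of $(e^\pm,p^\pm)$; in particular $D^\pm\big(1/|\mu^\pm_e|\big)=0$, hence $\int_\Omega\int_{\mathbb{R}^3}\frac{1}{|\mu^\pm_e|}F^\pm D^\pm F^\pm\,dv\,dx=\langle k^\pm,D^\pm k^\pm\rangle_{\mathcal{H}^\pm}=0$ for $k^\pm:=F^\pm/|\mu^\pm_e|$, by the skew-adjointness of $D^\pm$ on $\mathcal{H}^\pm$. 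To invoke this I will check that $k^\pm\in dom(D^\pm)$: $D^\pm k^\pm\in\mathcal{H}^\pm$ is exactly the bound $\int_\Omega\int_{\mathbb{R}^3}\frac{|D^\pm F^\pm|^2}{|\mu^\pm_e|}<\infty$ of Lemma \ref{regularity}, and the weak skew-adjointness pairing holds because $F^\pm$ — equivalently $k^\pm$, since $|\mu^\pm_e|$ is even in $v_n$ — satisfies the specular condition. What survives is $2\int_\Omega\textbf{E}\cdot\big(\int_{\mathbb{R}^3}\hat{v}(F^+-F^-)\,dv\big)\,dx$.

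Next I rewrite $\int\hat{v}(F^+-F^-)\,dv=\textbf{j}-rA_\varphi\int\hat{v}(\mu^+_p+\mu^-_p)\,dv$ via the current formula. Since $\mu^\pm_p(e^\pm,p^\pm)$ is even in $v_r$ and in $v_z$ ($e^\pm$ depends on these only through $\langle v\rangle$, and $p^\pm$ not at all), the $r$- and $z$-components of $\int\hat{v}(\mu^+_p+\mu^-_p)\,dv$ vanish and only $W(r,z):=\int_{\mathbb{R}^3}\hat{v}_\varphi(\mu^+_p+\mu^-_p)\,dv$ remains, so $\textbf{E}\cdot\int\hat{v}(\mu^+_p+\mu^-_p)\,dv=E_\varphi W=-(\partial_tA_\varphi)\,W$. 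As $W$ is $t$-independent, this contributes $+\frac{d}{dt}\int_\Omega rW|A_\varphi|^2\,dx$, which is precisely minus the time-derivative of the second group $-\sum_\pm\int_\Omega\int_{\mathbb{R}^3}r\mu^\pm_p\hat{v}_\varphi|A_\varphi|^2\,dv\,dx=-\int_\Omega rW|A_\varphi|^2\,dx$; hence the first two groups together contribute $2\int_\Omega\textbf{E}\cdot\textbf{j}\,dx$. Indeed, the role of the extra $-r\mu^\pm_p\hat{v}_\varphi|A_\varphi|^2$ term in $\mathcal{I}$ is exactly to absorb the $A_\varphi$-correction produced by passing from $f^\pm$ to $F^\pm$.

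Finally, for the field energy I plug the linearized Maxwell equations into $\frac{d}{dt}\int_\Omega(|\textbf{E}|^2+|\textbf{B}|^2)\,dx$ and use $\textbf{E}\cdot(\nabla\times\textbf{B})-\textbf{B}\cdot(\nabla\times\textbf{E})=-\nabla\cdot(\textbf{E}\times\textbf{B})$ to get $-2\int_\Omega\textbf{E}\cdot\textbf{j}\,dx-2\int_{\partial\Omega}(\textbf{E}\times\textbf{B})\cdot e_n\,dS$; the surface term vanishes since the perfect conductor condition $\textbf{E}\times e_n=0$ makes $\textbf{E}$ normal on $\partial\Omega$, whence $(\textbf{E}\times\textbf{B})\cdot e_n=0$. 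Adding the two contributions gives $\frac{d}{dt}\mathcal{I}=2\int_\Omega\textbf{E}\cdot\textbf{j}-2\int_\Omega\textbf{E}\cdot\textbf{j}=0$. I expect the only delicate points to be the two integrations by parts — in $(x,v)$ against $D^\pm$ and in $x$ for the Poynting identity — where the $H^2$-regularity of the fields and the membership $F^\pm\in dom(D^\pm)$ from Lemma \ref{regularity} are needed both to justify the manipulations and to make the boundary terms drop out via the specular and perfect-conductor conditions; the remaining bookkeeping is as in Section 3.1 of \cite{NS1} modulo the change to cylindrical coordinates.
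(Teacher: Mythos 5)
Your proof is correct, and it is the argument the paper intends: the paper omits the proof of Lemma \ref{invarianceI}, deferring to Section 3.1 of \cite{NS1}, and your computation — differentiating $\mathcal{I}$ in time, killing the transport term via $D^\pm(e^\pm,p^\pm)$-invariance and skew-adjointness, using the $F^\pm$-form of the current so that the $-r\mu^\pm_p\hat v_\varphi|A_\varphi|^2$ term absorbs the $A_\varphi$-correction, and cancelling $2\int\textbf{E}\cdot\textbf{j}$ against the Poynting identity with the perfect-conductor boundary condition — is exactly that standard energy argument transplanted to cylindrical coordinates. The sign bookkeeping ($\mu^\pm_e/|\mu^\pm_e|=-1$, evenness of $\mu^\pm_p$ in $(v_r,v_z)$, $E_\varphi=-\partial_tA_\varphi$) all checks out.
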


\begin{lemma} \label{invarianceKg}
By the same assumption as in Lemma \ref{invarianceI}, the functionals
\begin{equation}
\mathcal{K}^\pm_g (f^\pm, \textbf{A}) =  \int_\Omega \int_{\mathbb{R}^3} ( F^\pm \mp \mu^\pm_e \hat{v} \cdot \textbf{A} ) g dv dx 
\end{equation}
are time-invariant for all $g \in ker D^\pm$. In particular, we can deduce that $\int_\Omega \int_{\mathbb{R}^3} f^\pm (t,x, v)  dv dx$ are time-invariant by taking $g =1$.
\end{lemma}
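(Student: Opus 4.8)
The statement is asserted in \cite{NS1} up to a change of coordinates; the plan below is how I would carry out the argument directly in the cylindrical setting. The idea is to differentiate $\mathcal{K}^\pm_g$ in $t$, insert the evolution equation for $F^\pm$ together with $\textbf{E}=-\nabla\phi-\partial_t\textbf{A}$, telescope the integrand into $-D^\pm$ of a single function, and annihilate it using $g\in\ker D^\pm$ and the skew-adjointness of $D^\pm$. Under the regularity carried over from Lemma \ref{invarianceI} — $F^\pm\in C^1(\mathbb{R},L^2_{1/|\mu^\pm_e|}(\Omega\times\mathbb{R}^3))$ and $\textbf{E},\textbf{B}\in C^2(\mathbb{R},H^2(\Omega))$, hence $\phi$ and $\textbf{A}$ are $C^1$ in time into $H^2$ by the gauge equations and elliptic theory — the map $t\mapsto\mathcal{K}^\pm_g(t)$ is $C^1$ and differentiation under the integral sign is justified. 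Since $\mu^\pm_e<0$, it is convenient to put $k^\pm=F^\pm/|\mu^\pm_e|$; the map $F^\pm\mapsto k^\pm$ is an isometry of $L^2_{1/|\mu^\pm_e|}$ onto $\mathcal{H}^\pm$, so $k^\pm\in C^1(\mathbb{R},\mathcal{H}^\pm)$ and $\mathcal{K}^\pm_g=\la k^\pm\pm\hat v\cdot\textbf{A},\,g\ra_{\mathcal{H}^\pm}$, where $\la\cdot,\cdot\ra_{\mathcal{H}^\pm}$ is the inner product weighted by $|\mu^\pm_e|$.

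Next I would rewrite the dynamics in terms of $k^\pm$. Because $e^\pm$ and $p^\pm$ are invariant along the trajectories (Appendix B), $D^\pm\mu^\pm_e=0$, so $D^\pm$ commutes with division by $|\mu^\pm_e|$; dividing $(\partial_t+D^\pm)F^\pm=\mp\mu^\pm_e\hat v\cdot\textbf{E}$ by $|\mu^\pm_e|$ gives $(\partial_t+D^\pm)k^\pm=\pm\hat v\cdot\textbf{E}$. From $\textbf{E}=-\nabla\phi-\partial_t\textbf{A}$ one gets $\pm\hat v\cdot\partial_t\textbf{A}=\mp\hat v\cdot\textbf{E}\mp\hat v\cdot\nabla\phi$, and since $\phi$ is $v$-independent, $\hat v\cdot\nabla\phi=\hat v\cdot\nabla_x\phi=D^\pm\phi$. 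Adding these, the $\hat v\cdot\textbf{E}$ contributions cancel and one obtains $\partial_t\big(k^\pm\pm\hat v\cdot\textbf{A}\big)=-D^\pm\big(k^\pm\pm\phi\big)$, whence $\frac{d}{dt}\mathcal{K}^\pm_g=-\la D^\pm(k^\pm\pm\phi),\,g\ra_{\mathcal{H}^\pm}$.

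To finish, I would invoke skew-adjointness: $g\in\ker D^\pm\subset\mathrm{dom}(D^\pm)$ with $D^\pm g=0$, and, by Lemma \ref{regularity} (which gives $k^\pm,D^\pm k^\pm\in\mathcal{H}^\pm$ and the specular condition in the weak sense) together with the Dirichlet condition and elliptic regularity for $\phi$, the function $k^\pm\pm\phi$ also lies in $\mathrm{dom}(D^\pm)$. Therefore $\la D^\pm(k^\pm\pm\phi),g\ra_{\mathcal{H}^\pm}=-\la k^\pm\pm\phi,D^\pm g\ra_{\mathcal{H}^\pm}=0$, so $\mathcal{K}^\pm_g$ is time-invariant. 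I expect this last, purely functional-analytic step — the membership of $k^\pm\pm\phi$ in $\mathrm{dom}(D^\pm)$ so that the pairing identity $\la D^\pm u,g\ra=-\la u,D^\pm g\ra$ is legitimate — to be the only genuinely delicate point; everything else is a direct computation, and passing from the toroidal coordinates of \cite{NS1} to the cylindrical ones used here only alters the explicit forms of $D^\pm$, $e^\pm$, $p^\pm$, not the structure of the argument.

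Finally, for the ``in particular'' claim, take $g\equiv1$, which lies in $\ker D^\pm$ and, by \eqref{decayassumption}, in $\mathcal{H}^\pm$. Expanding $F^\pm=f^\pm\mp r\mu^\pm_p A_\varphi$ gives $\mathcal{K}^\pm_1=\int_\Omega\int_{\mathbb{R}^3}f^\pm\,dv\,dx\mp\int_\Omega\int_{\mathbb{R}^3}\big(r\mu^\pm_p A_\varphi+\mu^\pm_e\hat v\cdot\textbf{A}\big)\,dv\,dx$. Now $\nabla_v f^{0,\pm}=\mu^\pm_e\hat v+r\mu^\pm_p e_\varphi$, and $\int_{\mathbb{R}^3}\nabla_v f^{0,\pm}\,dv=0$ since the integrand is a perfect $v$-gradient of an $L^1_v$ function (indeed $|\nabla_v f^{0,\pm}|\lesssim(1+|e^\pm|)^{-\gamma}$ with $\gamma>3$); the $e_r$- and $e_z$-components give $\int\mu^\pm_e\hat v_r\,dv=\int\mu^\pm_e\hat v_z\,dv=0$, while the $e_\varphi$-component gives $\int\mu^\pm_e\hat v_\varphi\,dv=-\int r\mu^\pm_p\,dv$. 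Since $A_r,A_\varphi,A_z$ and $r$ are $v$-independent, the correction integral collapses to $\int_\Omega A_\varphi\big(\int r\mu^\pm_p\,dv+\int\mu^\pm_e\hat v_\varphi\,dv\big)\,dx=0$, so $\mathcal{K}^\pm_1=\int_\Omega\int_{\mathbb{R}^3}f^\pm\,dv\,dx$, which is therefore time-invariant.
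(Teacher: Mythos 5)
Your argument is correct and is essentially the proof the paper defers to: the paper omits it, citing Section 3.1 of \cite{NS1}, and your computation (rewrite $\mathcal{K}^\pm_g$ as $\la k^\pm\pm\hat v\cdot\textbf{A},g\ra_{\mathcal{H}^\pm}$ with $k^\pm=F^\pm/|\mu^\pm_e|$, telescope $\partial_t(k^\pm\pm\hat v\cdot\textbf{A})=-D^\pm(k^\pm\pm\phi)$, and kill the pairing by skew-adjointness against $g\in\ker D^\pm$) is exactly that argument transplanted to cylindrical coordinates. The treatment of the $g\equiv 1$ case via $\int_{\mathbb{R}^3}\partial_{v_\varphi}\mu^\pm\,dv=0$ likewise matches the identity the paper itself uses in Lemma \ref{minimization1}.
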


\begin{proposition}  \label{purelygrowing}
If $( e^{\lambda t} f^\pm, e^{\lambda t} \textbf{E} , e^{\lambda t} \textbf{B} ) $ is a complex growing/decaying mode with $Re \lambda \neq 0 $, then $ \lambda $ must be real, i.e. the mode is purely growing/decaying.
\end{proposition}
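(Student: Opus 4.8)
The plan is a quadratic–energy argument: combine the conserved functionals $\mathcal{I}$ and $\mathcal{K}^\pm_g$ of Lemmas~\ref{invarianceI} and~\ref{invarianceKg} with an energy identity read off directly from the mode equations, and then take the imaginary part. Throughout, the regularity supplied by Lemma~\ref{regularity} is what lets us apply those invariance lemmas to the mode, and it is the $H^{k\dagger}$-framework that keeps the $A_\varphi$-terms meaningful when $\Omega$ touches the $z$-axis. Since the mode $(e^{\lambda t}f^\pm, e^{\lambda t}\textbf{E}, e^{\lambda t}\textbf{B})$ has $\Re\lambda\neq0$, the functional $\mathcal{I}$ scales along it as $e^{2\Re\lambda t}$ and $\mathcal{K}^\pm_g$ as $e^{\lambda t}$; time-invariance forces both to vanish at $t=0$. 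Writing $k^\pm = F^\pm/|\mu^\pm_e|$, this says
\[
\sum_\pm\|k^\pm\|^2_{\mathcal{H}^\pm} - \sum_\pm\int_\Omega\!\!\int_{\mathbb{R}^3} r\mu^\pm_p\hat v_\varphi|A_\varphi|^2\,dv\,dx + \|\textbf{E}\|^2_{L^2}+\|\textbf{B}\|^2_{L^2}=0 ,
\]
and $\mathcal{P}^\pm\!\big(k^\pm\pm\hat v\cdot\textbf{A}\big)=0$, i.e. the $\ker D^\pm$-component of $k^\pm$ is pinned down by $\textbf{A}$.

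Next I would produce the energy identity. From the linearized Vlasov equation $(\lambda+D^\pm)F^\pm=\mp\mu^\pm_e\hat v\cdot\textbf{E}$ one solves for $\hat v\cdot\textbf{E}$ and computes $\int_\Omega\textbf{j}\cdot\overline{\textbf{E}}\,dx$ in terms of the $k^\pm$; skew-adjointness of $D^\pm$ on $\mathcal{H}^\pm$ makes each $\langle k^\pm,D^\pm k^\pm\rangle_{\mathcal{H}^\pm}$ purely imaginary, and the symmetry of the equilibrium — $\mu^\pm_p(e^\pm,p^\pm)$ is even in $v_r,v_z$ while $\hat v_r,\hat v_z$ are odd, so $\int\mu^\pm_p\hat v_r\,dv=\int\mu^\pm_p\hat v_z\,dv=0$ — isolates the $A_\varphi$-part of $\textbf{j}$ in that computation. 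On the Maxwell side, pairing $\lambda\textbf{E}-\nabla\times\textbf{B}=-\textbf{j}$ with $\overline{\textbf{E}}$ (using $\nabla\times\textbf{E}=-\lambda\textbf{B}$ and the perfect-conductor condition $\textbf{E}\times e_n=0$, which annihilates the Poynting boundary flux) gives $\int_\Omega\textbf{j}\cdot\overline{\textbf{E}}\,dx=-\lambda\|\textbf{E}\|^2_{L^2}-\bar\lambda\|\textbf{B}\|^2_{L^2}$. Equating the two expressions for $\int_\Omega\textbf{j}\cdot\overline{\textbf{E}}$, the real part reproduces the vanishing of $\mathcal{I}$ above, while the imaginary part yields
\[
\sum_\pm\Im\langle k^\pm, D^\pm k^\pm\rangle_{\mathcal{H}^\pm} \;=\; -\,2\,\Im\lambda\,\|\textbf{E}\|^2_{L^2}.
\]

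Finally I would insert the orthogonal splitting $k^\pm=\mathcal{P}^\pm k^\pm+g^\pm$. Since $D^\pm$ kills $\ker D^\pm$ and its range is orthogonal to $\ker D^\pm$, $\langle k^\pm,D^\pm k^\pm\rangle=\langle g^\pm,D^\pm g^\pm\rangle$; projecting $(\lambda+D^\pm)k^\pm=\pm\hat v\cdot\textbf{E}$ onto $\ker D^\pm$ gives $\mathcal{P}^\pm k^\pm=\pm\tfrac1\lambda\,\mathcal{P}^\pm(\hat v\cdot\textbf{E})$, which together with $\mathcal{P}^\pm(\hat v\cdot\textbf{E})=-\lambda\mathcal{P}^\pm(\hat v\cdot\textbf{A})$ (because $\hat v\cdot\nabla\phi=D^\pm\phi$ is orthogonal to $\ker D^\pm$) is consistent with the $\mathcal{K}$-relation. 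Projecting onto the complement, pairing the resulting equation for $g^\pm$ with itself, then feeding the Maxwell identity back in for the current attached to $g^\pm$ and re-using the vanishing of $\mathcal{I}$, all the terms telescope and one is left with
\[
\Im\lambda\cdot\sum_\pm\|\mathcal{P}^\pm k^\pm\|^2_{\mathcal{H}^\pm}=0
\]
(the same conclusion follows, in the language of Section~6, from the identity $\langle\textbf{M}^\lambda\Psi,\Psi\rangle=\langle\textbf{M}^\lambda_{\mathrm{sa}}\Psi,\Psi\rangle+i\,\Im\lambda\,\langle\textbf{M}'\Psi,\Psi\rangle$ with $\textbf{M}'$ of one sign). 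The main obstacle is to pass from here to $\Im\lambda=0$, i.e. to rule out the degenerate case $\mathcal{P}^\pm k^\pm\equiv0$ with $\Im\lambda\neq0$: I would handle it either by checking that in that case the chain of identities forces $\textbf{E}\equiv\textbf{B}\equiv0$ and hence $F^\pm\in\ker D^\pm$ with $\mathcal{P}^\pm k^\pm=0$, so $F^\pm\equiv0$ and the mode is trivial, or, more robustly, by invoking the invertibility of $\mathcal{A}^0_1$ (Lemma~\ref{A01invertibility}) to reduce to the essential potential and argue there. The remaining points are bookkeeping — the exact cancellation of the indefinite $\int r\mu^\pm_p\hat v_\varphi|A_\varphi|^2$ term against the $A_\varphi$-coupling in $\textbf{j}$, and the identification of the boundary contributions — and this part is structurally the same as the torus argument in \cite{NS1}.
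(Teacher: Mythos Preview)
Your approach is genuinely different from the paper's (and from the \cite{NS1} argument it follows). The paper does \emph{not} invoke $\mathcal I$, $\mathcal K^\pm_g$, or the projectors $\mathcal P^\pm$ here; those appear only later, in the stability proof. Instead it splits $F^\pm$ into parts even and odd in $(v_r,v_z)$, uses that $D^\pm$ swaps parity to derive the second-order equation
\[
(\lambda^2 - D^{\pm2})F^\pm_{od}=\mp\lambda\,\mu^\pm_e\,\hat v\cdot\tilde{\textbf E}\;\pm\;\mu^\pm_e\,D^\pm(\hat v_\varphi E_\varphi),
\]
pairs this with $\overline{F^\pm_{od}}/|\mu^\pm_e|$, sums over $\pm$, and takes the imaginary part. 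After using the Maxwell equations to rewrite the right-hand side one obtains
\[
\Re\lambda\,\Im\lambda\sum_\pm\int_\Omega\!\!\int_{\mathbb R^3}\frac{|F^\pm_{od}|^2}{|\mu^\pm_e|}\,dv\,dx
=-\,\Re\lambda\,\Im\lambda\int_\Omega\bigl(|B_\varphi|^2+|E_\varphi|^2\bigr)\,dx,
\]
and the opposite signs force $\Im\lambda=0$ once $\Re\lambda\neq0$.

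Your route has a real gap: the ``telescoping'' step does not produce new information. Your intermediate identity $\sum_\pm\Im\langle k^\pm,D^\pm k^\pm\rangle_{\mathcal H^\pm}=-2\,\Im\lambda\,\|\textbf E\|^2$ is correct, but if you then project $(\lambda+D^\pm)k^\pm=\pm\hat v\cdot\textbf E$ onto $(\ker D^\pm)^\perp$, pair with $g^\pm$, and feed back both the Maxwell identity for $\int\textbf j\cdot\overline{\textbf E}$ and $\mathcal I=0$, the resulting relation collapses to the tautology $-2\,\Im\lambda\,\|\textbf E\|^2=-2\,\Im\lambda\,\|\textbf E\|^2$, not to $\Im\lambda\sum_\pm\|\mathcal P^\pm k^\pm\|^2=0$. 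The reason is structural: pairing the Vlasov equation with $k^\pm$ and pairing its projected version with $g^\pm$ are the \emph{same} identity (the $\mathcal P^\pm k^\pm$ contributions cancel exactly, via $\lambda\mathcal P^\pm k^\pm=\pm\mathcal P^\pm(\hat v\cdot\textbf E)$), so nothing independent is gained. The parenthetical appeal to a matrix identity in the language of Section~6 does not help either, since those operators are only set up for real $\lambda$. What the paper's parity splitting buys is precisely an \emph{independent} quadratic identity --- restricting to the odd-in-$(v_r,v_z)$ subspace singles out the $E_\varphi,B_\varphi$ components on the field side with a definite sign --- and that is the idea your argument is missing.
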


\begin{proof}
The proof is basically the same as in Section 3.2 in \cite{NS1}, except that when expanding $\int_\Omega |\textbf{B}|^2 dx$ we obtain a term $\int_\Omega \frac{1}{r^2} |A_\varphi|^2 dx$, which turns out to be finite since $A_\varphi$ is in $L^2_{1/r^2}$. We start with splitting $F^\pm $ into its even and odd parts (denoted by $F^\pm_{ev}$ and $F^\pm_{od}$, respectively) with respect to the variable $(v_r, v_z)$. Note that $D^\pm$ map even functions to odd functions and vice versa, so that we obtain the equation for $F^\pm_{od}$:
$$
(\lambda^2 - D^{\pm 2}) F^\pm_{od} = \mp \lambda \mu^\pm_e \hat{v} \cdot \tilde{\textbf{E}} \pm \mu^\pm_e D^\pm ( \hat{v}_\varphi E_\varphi)  \ .
$$
We multiply this equation by $\frac{1}{|\mu^\pm_e|} | \overline{F}^\pm_{od}|$, integrate over $\Omega \times \mathbb{R}^3$, then add up the $+$ and $-$ identities, and examine the imaginary part of the resulting identity. In the end we obtain
$$
Re \lambda Im \lambda \sum_\pm \int_\Omega \int_{\mathbb{R}^3} \big( \frac{1}{|\mu^\pm_e|} | F^\pm_{od}|^2  \big)  dv dx = - Re \lambda Im \lambda \int_\Omega  (  |B_\varphi|^2 + |E_\varphi|^2   ) dx \ .
$$
The opposite signs of the integrals together with the assumption that $Re \lambda \neq 0$ imply that $\lambda$ must be real.
\end{proof}

Next, fixing $\textbf{A} \in L^{2, \tau} (\Omega)$, we define 
\begin{equation}
\mathcal{J}_{\textbf{A}} (F^+, F^-) = \sum_\pm \int_\Omega \int_{\mathbb{R}^3} \frac{1}{|\mu^\pm_e |}  |F^\pm|^2 dv dx + \int_\Omega |\nabla \phi |^2 dx \ ,
\end{equation}
where the electric scalar potential $\phi$ satisfies the following Poisson equation
\begin{equation} \label{PoissonEqn}
-\Delta \phi = \rho = \int_{\mathbb{R}^3} (F^+-F^-) dv  + r  A_\varphi \int_{\mathbb{R}^3} (\mu^+_p +\mu^-_p  ) dv, \qquad \phi |_{\partial\Omega} =0  \ .
\end{equation}
The right side in the Poisson equation is in $L^2$ so $\phi$ is well-defined (exists and is uniquely determined). Define
\begin{equation}
\mathcal{F}_\textbf{A} =\{ (F^+, F^-) \in (L^2_{1/|\mu^\pm_e|} ( \Omega \times \mathbb{R}^3))^2  \cap (L^{2, \tau} )^2 :  \int_\Omega \int_{\mathbb{R}^3} (F^\pm \mp \mu^\pm_e \hat{v} \cdot \textbf{ A}  ) g dv dx =0, \  \forall g \in ker D^\pm      \}
\end{equation}
for fixed $\textbf{A}  \in L^{2, \tau} (\Omega)$. In particular, 
\begin{equation}
\mathcal{F}_0 =\{ (h^+, h^-) \in (L^2_{1/|\mu^\pm_e|} ( \Omega \times \mathbb{R}^3))^2 \cap (L^{2, \tau} )^2 :  \int_\Omega \int_{\mathbb{R}^3} h^\pm   g dv dx =0, \  \forall g \in ker D^\pm      \}  \ .
\end{equation}
Then $\mathcal{J}_\textbf{A}$ is well-defined and nonnegative on $\mathcal{F}_\textbf{A}$, and its infimum over $\mathcal{F}_\textbf{A}$ is finite. We want to minimize $\mathcal{J}_\textbf{A}$ on the manifold $\mathcal{F}_\textbf{A}$.

\begin{lemma} \label{minimization1}
For fixed $\textbf{A} \in L^{2, \tau} (\Omega)$, there exists a minimizer $(F^+_*, F^-_* )$ of the functional $\mathcal{J}_\textbf{A}$ on $\mathcal{F}_\textbf{A}$. Furthermore, if $\phi_* \in H^2 (\Omega)$ is the solution of the Poisson equation (\ref{PoissonEqn}) with respect to $(F^+_*, F^-_* )$, then 
\begin{equation}  \label{minimizerproperty1}
F^\pm_* = \pm \mu^\pm_e (1- \mathcal{P}^\pm) \phi_* \pm \mu^\pm_e \mathcal{P}^\pm (\hat{v} \cdot \textbf{A})  \ .
\end{equation}
\end{lemma}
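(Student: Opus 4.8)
The plan is a standard direct-method argument on the convex functional $\mathcal{J}_\textbf{A}$ over the affine-type constraint set $\mathcal{F}_\textbf{A}$, followed by a Lagrange-multiplier / first-variation computation to identify the minimizer. First I would check that $\mathcal{F}_\textbf{A}$ is nonempty, closed, and convex in the Hilbert space $(L^2_{1/|\mu^\pm_e|}(\Omega\times\mathbb{R}^3))^2$: nonemptiness follows by exhibiting an explicit element (e.g. $F^\pm = \pm\mu^\pm_e\mathcal{P}^\pm(\hat v\cdot\textbf A)$ plus a projection correction so that the orthogonality constraints against $\ker D^\pm$ hold), convexity because each constraint $\int\int(F^\pm\mp\mu^\pm_e\hat v\cdot\textbf A)g\,dv\,dx=0$ is affine in $F^\pm$, and closedness because these are continuous linear conditions on $(F^+,F^-)$ in the weighted-$L^2$ norm. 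Then I would argue that $\mathcal{J}_\textbf{A}$ is coercive and weakly lower semicontinuous: the term $\sum_\pm\|F^\pm\|^2_{L^2_{1/|\mu^\pm_e|}}$ is exactly the squared norm, hence coercive and weakly l.s.c., while the extra term $\int_\Omega|\nabla\phi|^2\,dx$ is nonnegative and, since $\phi$ depends on $(F^+,F^-)$ through the \emph{linear} solution operator of the Poisson equation \eqref{PoissonEqn} (bounded from $L^2(\Omega)$ to $H^1_0(\Omega)$), it is a continuous convex function of $(F^+,F^-)$, hence weakly l.s.c. as well. A minimizing sequence is therefore bounded, has a weakly convergent subsequence with limit in $\mathcal{F}_\textbf{A}$ (closedness + convexity give weak closedness), and weak lower semicontinuity yields that the weak limit $(F^+_*,F^-_*)$ attains the infimum.

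For the Euler--Lagrange characterization \eqref{minimizerproperty1}, I would compute the first variation of $\mathcal{J}_\textbf{A}$ at $(F^+_*,F^-_*)$ in an admissible direction $(h^+,h^-)$, i.e. a direction in the tangent space of $\mathcal{F}_\textbf{A}$, which is precisely $\mathcal{F}_0$ (directions satisfying $\int\int h^\pm g\,dv\,dx=0$ for all $g\in\ker D^\pm$). Differentiating, and using that the variation $\delta\phi$ of $\phi$ solves $-\Delta\,\delta\phi=\int(h^+-h^-)\,dv$ with zero boundary data, an integration by parts gives
\begin{equation}
\frac{d}{dt}\Big|_{t=0}\mathcal{J}_\textbf{A}(F^\pm_*+th^\pm) = 2\sum_\pm\int_\Omega\int_{\mathbb{R}^3}\frac{1}{|\mu^\pm_e|}F^\pm_* h^\pm\,dv\,dx + 2\int_\Omega\nabla\phi_*\cdot\nabla(\delta\phi)\,dx,
\end{equation}
and the last integral equals $-\sum_\pm\int_\Omega\int_{\mathbb{R}^3}\phi_* (\pm h^\pm)\,dv\,dx$ after using the Poisson equation for $\delta\phi$ and integrating by parts again (with $\phi_*|_{\partial\Omega}=0$). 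Setting the variation to zero for all $(h^+,h^-)\in\mathcal{F}_0$ forces $\frac{1}{|\mu^\pm_e|}F^\pm_* \mp \phi_*$ to be orthogonal to $\mathcal{F}_0$, i.e. to lie in the $\mathcal{H}^\pm$-orthogonal complement of $\ker D^\pm$'s annihilator, which means $\frac{1}{|\mu^\pm_e|}F^\pm_* \mp \phi_* \in \overline{\ker D^\pm}$ modulo the weight bookkeeping; combining this with the constraint defining $\mathcal{F}_\textbf{A}$ itself (which says the same quantity paired against $\ker D^\pm$ reproduces $\pm\hat v\cdot\textbf A$-moments) and applying the orthogonal projection $\mathcal{P}^\pm$ yields exactly \eqref{minimizerproperty1}.

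The step I expect to be the main obstacle is the clean passage from "the first variation vanishes on $\mathcal{F}_0$" to the explicit formula \eqref{minimizerproperty1}: one must correctly identify the orthogonal decomposition of $\mathcal{H}^\pm$ relative to the subspace $\ker D^\pm$ (on which $\mathcal{P}^\pm$ projects), keep careful track of the weight $1/|\mu^\pm_e|$ versus the weight $|\mu^\pm_e|$ defining $\mathcal{H}^\pm$, and reconcile the stationarity condition with the membership condition $(F^+_*,F^-_*)\in\mathcal{F}_\textbf{A}$ so that the $\mathcal{P}^\pm(\hat v\cdot\textbf A)$ term emerges with the right sign. A secondary technical point is justifying that the minimizing sequence genuinely has its $\int|\nabla\phi|^2$ term under control and that $\phi_*\in H^2(\Omega)$ (which follows from $\rho\in L^2(\Omega)$ and elliptic regularity with Dirichlet data, as already noted in the excerpt); the interchange of differentiation and integration in the first-variation computation is routine given the decay assumption \eqref{decayassumption}.
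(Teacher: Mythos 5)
Your proposal follows essentially the same route as the paper: the direct method (bounded minimizing sequence, weak limit in the weakly closed affine constraint set) for existence, then vanishing of the first variation over $\mathcal{F}_0$, converted via the Poisson equation and an integration by parts into the statement that $\tfrac{1}{|\mu^\pm_e|}F^\pm_* \pm \phi_*$ lies in $\ker D^\pm$, after which applying $\mathcal{P}^\pm$ together with the constraint $\mathcal{P}^\pm\bigl(\tfrac{F^\pm_*}{|\mu^\pm_e|}\pm\hat v\cdot\textbf{A}\bigr)=0$ yields \eqref{minimizerproperty1}; the paper merely packages the same computation through the substitution $h^\pm=F^\pm\mp\mu^\pm_e\hat v\cdot\textbf{A}$. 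The only slip is the sign in your intermediate identity (one gets $\int_\Omega\nabla\phi_*\cdot\nabla(\delta\phi)\,dx=+\sum_\pm(\pm)\int_\Omega\int_{\mathbb{R}^3}\phi_*h^\pm\,dv\,dx$, not its negative), which is precisely the bookkeeping you flagged and does not affect the method.
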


\begin{proof}
Take a minimizing sequence $(F^+_n, F^-_n ) $ in $\mathcal{F}_\textbf{A}$ of $\mathcal{J}_\textbf{A}$. Note that $F^\pm_n$ are bounded in $L^2_{1/|\mu^\pm_e|}$, we can take weak limits (extracting subsequences if neccesary) $F^\pm_*$, which is also in $\mathcal{F}_\textbf{A}$. Therefore $(F^+_*, F^-_* )$ is a minimizer.

Let $\phi_*$ be as described in the lemma. We want to derive the (\ref{minimizerproperty1}). Denote $h^\pm = F^\pm \mp \mu^\pm_e \hat{v} \cdot \textbf{A} $, in particular,  $h^\pm_* = F^\pm_* \mp \mu^\pm_e \hat{v} \cdot \textbf{A} $. Then $(F^+, F^-) \in \mathcal{F}_\textbf{A}$ iff $(h^+, h^-) \in \mathcal{F}_0 $. 

We observe
\begin{equation}
\begin{split}
\int_{\mathbb{R}^3} (h^+ - h^-) dv 
& = \int_{\mathbb{R}^3} (F^+ - F^-) dv - \int_{\mathbb{R}^3} ( \mu^+_e -\mu^-_e  ) \hat{v} \cdot \textbf{A}   dv \\
& = \int_{\mathbb{R}^3} (F^+ - F^-) dv - \int_{\mathbb{R}^3} ( \mu^+_e -\mu^-_e  ) \hat{v}_\varphi  A_\varphi   dv \\
& = \int_{\mathbb{R}^3} (F^+ - F^-) dv + r  A_\varphi  \int_{\mathbb{R}^3} ( \mu^+_p -\mu^-_p  )   dv  \ .
\end{split}
\end{equation}
Here we used that $ \mu^\pm_e (\hat{v}_r A_r + \hat{v}_z A_z  )$ is odd in $(v_r, v_z)$ (the second line of computation), and $ \partial_{v_\varphi} (\mu^\pm) = \mu^\pm_e \hat{v}_\varphi + r \mu^\pm_p $ (the third line of computation). Therefore $\phi$ is independent of the change of variables $F^\pm \rightarrow h^\pm$. $(F^+_*, F^-_*)$ is a minimizer of $\mathcal{J}_\textbf{A} (F^+, F^-) $ on $\mathcal{F}_\textbf{A}$ iff $(h^+_*, h^-_*)$ is a minimizer of the following functional $\mathcal{J}_0 $ on $\mathcal{F}_0$:
\begin{equation}
\mathcal{J}_0 (h^+, h^-) = \sum_\pm \int_\Omega \int_{\mathbb{R}^3} \frac{1}{| \mu^\pm_e |} | h^\pm \pm \mu^\pm_e \hat{v} \cdot \textbf{A} |^2 dv dx + \int_\Omega |\nabla \phi |^2 dx  \ .
\end{equation}

Now we turn to minimize $\mathcal{J}_0$ on $\mathcal{F}_0$. We write down its first variation
\begin{equation}
\sum_\pm \int_\Omega \int_{\mathbb{R}^3} \frac{1}{|\mu^\pm_e|} (h^\pm_* \pm \mu^\pm_e \hat{v}\cdot \textbf{A} ) h^\pm dv dx  + \int_\Omega \nabla \phi_* \cdot \nabla \phi dx 
\end{equation}
and let it be $0$ for all $(h^+, h^-) \in \mathcal{F}_0$. Using the Dirichlet boundary condition on $\phi$, we compute 
\begin{equation}
\begin{split}
0 
& = \sum_\pm \int_\Omega \int_{\mathbb{R}^3} \frac{1}{|\mu^\pm_e|} (h^\pm_* \pm \mu^\pm_e \hat{v}\cdot \textbf{A} ) h^\pm dv dx  + \int_\Omega \nabla \phi_* \cdot \nabla \phi dx \\
& = \sum_\pm \int_\Omega \int_{\mathbb{R}^3} \frac{1}{|\mu^\pm_e|} (h^\pm_* \pm \mu^\pm_e \hat{v}\cdot \textbf{A} ) h^\pm dv dx - \int_\Omega \phi_* \Delta \phi dx \\
& = \sum_\pm \int_\Omega \int_{\mathbb{R}^3} \frac{1}{|\mu^\pm_e|} (h^\pm_* \pm \mu^\pm_e \hat{v}\cdot \textbf{A} ) h^\pm dv dx + \int_\Omega \int_{\mathbb{R}^3} \phi_* (h^+ - h^-) dv dx  \\
& = \sum_\pm \int_\Omega \int_{\mathbb{R}^3} \frac{1}{|\mu^\pm_e|} (h^\pm_* \pm \mu^\pm_e \hat{v}\cdot \textbf{A}  \mp \mu^\pm_e \phi_*  ) h^\pm dv dx  \ .
\end{split}
\end{equation}

Taking $h^\mp =0$, we obtain the identity for $h^\pm_*$. The computation then implies that $\frac{1}{|\mu^\pm_e|} (h^\pm_* \pm \mu^\pm_e \hat{v}\cdot \textbf{A})  \in {(ker D^\pm) ^\perp }^\perp = ker D^\pm$. Therefore, 
\begin{equation}
D^\pm ( F^\pm_* \mp \mu^\pm_e  \phi_*) = -\mu^\pm_e D^\pm ( \frac{1}{|\mu^\pm_e|} (h^\pm_* \pm \mu^\pm_e \hat{v}\cdot \textbf{A}) =0  \ .
\end{equation}
Hence
\begin{equation}
(1-\mathcal{P}^\pm) (F^\pm_* \mp \mu^\pm_e  \phi_*) =0 \ .
\end{equation}
Note that the constraint $  \int_\Omega \int_{\mathbb{R}^3} (F^\pm \mp \mu^\pm_e \hat{v} \cdot \textbf{ A}  ) g dv dx =0$, $  \forall g \in ker D^\pm  $ in the definition of $\mathcal{F}_\textbf{A}$ is equivalent to $\mathcal{P}^\pm (F^\pm_* \mp \mu^\pm_e \hat{v} \cdot \textbf{A} ) =0 $. Combining this together with $D^\pm ( F^\pm_* \mp \mu^\pm_e  \phi_*) = -\mu^\pm_e D^\pm ( \frac{1}{|\mu^\pm_e|} (h^\pm_* \pm \mu^\pm_e \hat{v}\cdot \textbf{A}) =0$, we obtain the desired identity (\ref{minimizerproperty1}).

\end{proof}

Next, we derive a connection between the minimizer of $\mathcal{J}_\textbf{A}$ and the operator $\mathcal{L}_0$.

\begin{lemma} \label{minimization2}
For fixed $\textbf{A} \in L^{2, \tau} (\Omega)$, and $A_\varphi \in \mathcal{X}$, let $F^\pm_*$ be a minimizer of $\mathcal{J}_\textbf{A}$ on $\mathcal{F}_\textbf{A}$, then 
\begin{equation}
\mathcal{J}_\textbf{A} (F^+_*, F^-_*)= - ( \mathcal{B}^0 (\mathcal{A}^0_1)^{-1} (\mathcal{B}^0)^* A_\varphi, A_\varphi ) + \sum_\pm \|\mathcal{P}^\pm (\hat{v} \cdot \textbf{A}) \|^2_{\mathcal{H}^\pm } \ .
\end{equation}
\end{lemma}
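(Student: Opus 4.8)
The plan is to substitute the explicit form of the minimizer from Lemma~\ref{minimization1} into the functional $\mathcal{J}_\textbf{A}$ and simplify. Write $F^\pm_* = \pm\mu^\pm_e(1-\mathcal{P}^\pm)\phi_* \pm \mu^\pm_e \mathcal{P}^\pm(\hat v\cdot\textbf{A})$, so that $h^\pm_* := F^\pm_* \mp \mu^\pm_e\hat v\cdot\textbf{A} = \pm\mu^\pm_e(1-\mathcal{P}^\pm)\phi_* \mp \mu^\pm_e(1-\mathcal{P}^\pm)(\hat v\cdot\textbf{A})$, using $\mathcal{P}^\pm + (1-\mathcal{P}^\pm) = \mathrm{id}$. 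Since $\mathcal{P}^\pm$ is an orthogonal projection in $\mathcal{H}^\pm$, the cross terms split cleanly: $\tfrac{1}{|\mu^\pm_e|}|F^\pm_*|^2$ decomposes into the $(1-\mathcal{P}^\pm)$-part and the $\mathcal{P}^\pm$-part with no interaction, which immediately accounts for the $\sum_\pm\|\mathcal{P}^\pm(\hat v\cdot\textbf{A})\|^2_{\mathcal{H}^\pm}$ summand. What remains is to show the $(1-\mathcal{P}^\pm)$-contributions together with $\int_\Omega|\nabla\phi_*|^2\,dx$ collapse to $-(\mathcal{B}^0(\mathcal{A}^0_1)^{-1}(\mathcal{B}^0)^*A_\varphi, A_\varphi)$.

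The key algebraic step is to identify $\phi_*$. Plugging the minimizer identity into the Poisson equation \eqref{PoissonEqn} and using $\mathcal{P}^\pm(\hat v\cdot\textbf{A})$ is even in $(v_r,v_z)$ while the odd parts integrate to the $r A_\varphi\int\mu^\pm_p$ terms — more precisely, following the same computation as in Lemma~\ref{minimization1} relating $h^\pm$ and $F^\pm$ — one finds that $\phi_*$ satisfies $\mathcal{A}^0_1 \phi_* = -(\mathcal{B}^0)^* A_\varphi$ (the $\Delta\phi_*$ from $\mathcal{A}^0_1$ matching $-\rho$, the $\int\mu^\pm_e(1-\mathcal{P}^\pm)\phi_*$ term arising from the $(1-\mathcal{P}^\pm)\phi_*$ part of $F^\pm_*$, and $-(\mathcal{B}^0)^*A_\varphi$ collecting the $r\mu^\pm_p A_\varphi$ and $\mu^\pm_e\mathcal{P}^\pm(\hat v_\varphi A_\varphi)$ terms — recall $\hat v\cdot\textbf{A} = \hat v_\varphi A_\varphi$ when projected, and $(\mathcal{B}^0)^*h = \sum_\pm\int(\mu^\pm_e r h + \mu^\pm_e\mathcal{P}^\pm(\hat v_\varphi h))dv$). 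Hence $\phi_* = -(\mathcal{A}^0_1)^{-1}(\mathcal{B}^0)^*A_\varphi$, which exists by Lemma~\ref{A01invertibility}.

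Given this, I would compute the $(1-\mathcal{P}^\pm)$-part of the functional by pairing: using the Poisson equation and the Dirichlet boundary condition, $\int_\Omega|\nabla\phi_*|^2 = -\int_\Omega\phi_*\Delta\phi_* = \int_\Omega\phi_*\rho = \sum_\pm\int\phi_*(h^+_* - h^-_*)dv\,dx$ plus the $rA_\varphi$ correction. Combining with $\sum_\pm\tfrac{1}{|\mu^\pm_e|}|(1-\mathcal{P}^\pm)(\mu^\pm_e(\phi_* - \hat v_\varphi A_\varphi))|^2 = \sum_\pm|\mu^\pm_e|\,|(1-\mathcal{P}^\pm)(\phi_* - \hat v_\varphi A_\varphi)|^2$ and expanding the square, the quadratic-in-$\phi_*$ terms telescope against $\langle\mathcal{A}^0_1\phi_*,\phi_*\rangle = -\langle(\mathcal{B}^0)^*A_\varphi,\phi_*\rangle$, leaving exactly $\langle\phi_*, (\mathcal{B}^0)^*A_\varphi\rangle = -\langle(\mathcal{A}^0_1)^{-1}(\mathcal{B}^0)^*A_\varphi, (\mathcal{B}^0)^*A_\varphi\rangle = -(\mathcal{B}^0(\mathcal{A}^0_1)^{-1}(\mathcal{B}^0)^*A_\varphi, A_\varphi)$, where the last equality uses self-adjointness of $(\mathcal{A}^0_1)^{-1}$ and the definition of the adjoint $(\mathcal{B}^0)^*$. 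I expect the main obstacle to be bookkeeping: correctly tracking the $\hat v\cdot\textbf{A}$ versus $\hat v_\varphi A_\varphi$ reductions (the $A_r, A_z$ components being odd in $(v_r,v_z)$ and dropping out of even integrands, but surviving inside $\mathcal{P}^\pm$ which need not preserve this parity), and making sure the $rA_\varphi\int\mu^\pm_p\,dv$ source terms in the Poisson equation are consistently absorbed into $(\mathcal{B}^0)^*$ rather than double-counted. Care with signs $\pm$ and the orientation of the pairing $\langle\cdot,\cdot\rangle$ will be essential.
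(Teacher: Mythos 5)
Your proposal is correct and follows essentially the same route as the paper: substitute the minimizer formula \eqref{minimizerproperty1} into $\mathcal{J}_\textbf{A}$, use orthogonality of $\mathcal{P}^\pm$ to isolate $\sum_\pm \|\mathcal{P}^\pm(\hat v\cdot\textbf{A})\|^2_{\mathcal{H}^\pm}$ and recognize the remaining terms as $-\la\mathcal{A}^0_1\phi_*,\phi_*\ra_{L^2}$, then identify $\phi_*=-(\mathcal{A}^0_1)^{-1}(\mathcal{B}^0)^*A_\varphi$ from the Poisson equation (using the oddness of $\mathcal{P}^\pm(\hat v_rA_r+\hat v_zA_z)$ in $(v_r,v_z)$) and conclude by adjointness. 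One small slip: in your final assembly you write the $(1-\mathcal{P}^\pm)$-contribution as $|\mu^\pm_e|\,|(1-\mathcal{P}^\pm)(\phi_*-\hat v_\varphi A_\varphi)|^2$, which is $|h^\pm_*|^2/|\mu^\pm_e|$ rather than the quantity $|F^\pm_*|^2/|\mu^\pm_e|$ appearing in $\mathcal{J}_\textbf{A}$; your first paragraph's decomposition, which yields $\|(1-\mathcal{P}^\pm)\phi_*\|^2_{\mathcal{H}^\pm}$, is the correct one and already gives the result.
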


\begin{proof}
Plugging in $F^\pm_* = \pm \mu^\pm_e (1- \mathcal{P}^\pm) \phi_* \pm \mu^\pm_e \mathcal{P}^\pm (\hat{v} \cdot \textbf{A})$ into the expression of $\mathcal{J}_\textbf{A}$, we obtain

\begin{equation}
\begin{split}
\mathcal{J}_\textbf{A} (F^+_*, F^-_*)
& = \sum_\pm \|(1-\mathcal{P}^\pm) \phi_* \|^2_{\mathcal{H}^\pm} + \int_\Omega |\nabla \phi_*|^2 dx  + \sum_\pm \|\mathcal{P}^\pm (\hat{v} \cdot \textbf{A}) \|^2_{\mathcal{H}^\pm} \\
& = -\la \mathcal{A}^1_0 \phi_*, \phi_* \ra_{L^2} + \sum_\pm \|\mathcal{P}^\pm (\hat{v} \cdot \textbf{A}) \|^2_{\mathcal{H}^\pm}  \ .
\end{split}
\end{equation}
Now it suffices to show that $\phi_* = - (\mathcal{A}^0_1)^{-1} (\mathcal{B}^0)^* A_\varphi$. This can be done by plugging in $F^\pm_* = \pm \mu^\pm_e (1- \mathcal{P}^\pm) \phi_* \pm \mu^\pm_e \mathcal{P}^\pm (\hat{v} \cdot \textbf{A})$ into the Poisson equation:
\begin{equation}
\begin{split}
-\Delta \phi_* 
& = \sum_\pm ( \int_{\mathbb{R}^3}  \mu^\pm_e (1-\mathcal{P}^\pm)\phi_* dv +   \int_{\mathbb{R}^3}  r  \mu^\pm_p dv A_\varphi  +  \int_{\mathbb{R}^3} \mu^\pm_e \mathcal{P}^\pm (\hat{v}\cdot \textbf{A}) dv  ) \\
& = \sum_\pm ( \int_{\mathbb{R}^3}  \mu^\pm_e (1-\mathcal{P}^\pm)\phi_* dv +   \int_{\mathbb{R}^3}  r   \mu^\pm_p dv A_\varphi  +  \int_{\mathbb{R}^3} \mu^\pm_e \mathcal{P}^\pm (\hat{v}_\varphi A_\varphi) dv  )
\end{split} 
\end{equation}
which is equivalent to $- \mathcal{A}^0_1 \phi_* = (\mathcal{B}^0)^* A_\varphi $. Therefore  $\phi_* = - (\mathcal{A}^0_1)^{-1} (\mathcal{B}^0)^* A_\varphi$ since $\mathcal{A}^0_1$ is invertible. (Here in the last line of the computation we have used the fact that $\mathcal{P}^\pm (\hat{v}_r A_r + \hat{v}_z A_z)$ is odd in $(v_r, v_z)$.)

\end{proof}

Now we are in a position to prove the linear stability result:

\begin{proposition}
$\mathcal{L}^0 \geq 0$ implies that there is no growing mode $(e^{\lambda t} f^\pm, e^{\lambda t} \textbf{E}, e^{\lambda t} \textbf{B} )$ with $Re \lambda >0$.
\end{proposition}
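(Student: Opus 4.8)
The plan is to argue by contradiction, pitting the two invariants of the linearized flow — the linearized energy $\mathcal{I}$ of Lemma \ref{invarianceI} and the Casimirs $\mathcal{K}^\pm_g$ of Lemma \ref{invarianceKg} — against the minimization identities of Lemmas \ref{minimization1} and \ref{minimization2}. So suppose a growing mode $(e^{\lambda t}f^\pm, e^{\lambda t}\textbf{E}, e^{\lambda t}\textbf{B})$ with $\Re\lambda>0$ exists. I would first invoke Lemma \ref{regularity}, which gives $\textbf{E},\textbf{B}\in H^1(\Omega;\mathbb{R}^3)$, $A_\varphi\in H^{2\dagger}(\Omega)\subset L^2_{1/r^2}(\Omega)$ (hence $A_\varphi\in\mathcal{X}$, using the Dirichlet conditions \eqref{fullboundarycondition}), and $\int_\Omega\int_{\mathbb{R}^3}\frac{1}{|\mu^\pm_e|}(|F^\pm|^2+|D^\pm F^\pm|^2)\,dv\,dx<\infty$ for $F^\pm=f^\pm\mp r\mu^\pm_p A_\varphi$; these are exactly the hypotheses of Lemmas \ref{invarianceI}, \ref{invarianceKg} (time regularity being automatic for an exponential mode) and of Proposition \ref{purelygrowing}, and the last lets me assume $\lambda>0$ is real. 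Since $\mathcal{I}$ is a sum of quantities that scale like $e^{2\lambda t}$ and is time-invariant, evaluating at $t=0$ forces $\mathcal{I}=0$; since each $\mathcal{K}^\pm_g$ scales like $e^{\lambda t}$, is time-invariant, and $\lambda\neq0$, I get $\int_\Omega\int_{\mathbb{R}^3}(F^\pm\mp\mu^\pm_e\hat{v}\cdot\textbf{A})\,g\,dv\,dx=0$ for every $g\in\ker D^\pm$, that is, $(F^+,F^-)\in\mathcal{F}_\textbf{A}$ for the vector potential $\textbf{A}$ of the mode.

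Next I would rewrite the field energy appearing in $\mathcal{I}$ in terms of $(\phi,\textbf{A})$. From $\textbf{E}=-\nabla\phi-\lambda\textbf{A}$, the Coulomb gauge $\nabla\cdot\textbf{A}=0$ together with $\phi|_{\partial\Omega}=0$ makes $\int_\Omega\nabla\phi\cdot\overline{\textbf{A}}\,dx=0$, hence $\int_\Omega|\textbf{E}|^2\,dx=\int_\Omega|\nabla\phi|^2\,dx+\lambda^2\int_\Omega|\textbf{A}|^2\,dx$. For $\textbf{B}=\nabla\times\textbf{A}$ one has $B_r=-\partial_z A_\varphi$, $B_z=\frac{1}{r}\partial_r(rA_\varphi)$, $B_\varphi=\partial_z A_r-\partial_r A_z$; using the identity \eqref{Deltagexp}, the Dirichlet condition $A_\varphi|_{\partial\Omega}=0$, and the fact that $A_\varphi e^{i\varphi}\in H^2(\Omega)$ is continuous and hence vanishes on the $z$-axis (so that the cross term $\int\partial_r(|A_\varphi|^2)$ integrates to zero), I get $\int_\Omega(|B_r|^2+|B_z|^2)\,dx=\int_\Omega|\nabla(A_\varphi e^{i\varphi})|^2\,dx$, finite because $A_\varphi\in L^2_{1/r^2}$. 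Since the charge density of the mode is $\rho=\int_{\mathbb{R}^3}(F^+-F^-)\,dv+rA_\varphi\int_{\mathbb{R}^3}(\mu^+_p+\mu^-_p)\,dv$ (as in Lemma \ref{regularity}), the mode's $\phi$ is exactly the one attached to $(F^+,F^-)$ in \eqref{PoissonEqn}, so that $\sum_\pm\int_\Omega\int_{\mathbb{R}^3}\frac{1}{|\mu^\pm_e|}|F^\pm|^2\,dv\,dx+\int_\Omega|\nabla\phi|^2\,dx=\mathcal{J}_\textbf{A}(F^+,F^-)$, and $\mathcal{I}=0$ becomes
\[
0=\mathcal{J}_\textbf{A}(F^+,F^-)+\lambda^2\int_\Omega|\textbf{A}|^2\,dx+\int_\Omega|\nabla(A_\varphi e^{i\varphi})|^2\,dx+\int_\Omega|B_\varphi|^2\,dx-\sum_\pm\int_\Omega\int_{\mathbb{R}^3}r\mu^\pm_p\hat{v}_\varphi|A_\varphi|^2\,dv\,dx .
\]

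Then I would bring in the operator $\mathcal{L}^0$. Since $(F^+,F^-)\in\mathcal{F}_\textbf{A}$, Lemma \ref{minimization1} gives $\mathcal{J}_\textbf{A}(F^+,F^-)\ge\mathcal{J}_\textbf{A}(F^+_*,F^-_*)$, and by Lemma \ref{minimization2} the minimum equals $-(\mathcal{B}^0(\mathcal{A}^0_1)^{-1}(\mathcal{B}^0)^*A_\varphi,A_\varphi)+\sum_\pm\|\mathcal{P}^\pm(\hat{v}\cdot\textbf{A})\|^2_{\mathcal{H}^\pm}$. Writing $\mathcal{B}^0(\mathcal{A}^0_1)^{-1}(\mathcal{B}^0)^*=\mathcal{A}^0_2-\mathcal{L}^0$, expanding (via \eqref{Deltagexp} and the self-adjointness of $\mathcal{P}^\pm$, recalling $\mu^\pm_e<0$)
\[
(\mathcal{A}^0_2 A_\varphi,A_\varphi)=\int_\Omega|\nabla(A_\varphi e^{i\varphi})|^2\,dx-\sum_\pm\int_\Omega\int_{\mathbb{R}^3}r\mu^\pm_p\hat{v}_\varphi|A_\varphi|^2\,dv\,dx+\sum_\pm\|\mathcal{P}^\pm(\hat{v}_\varphi A_\varphi)\|^2_{\mathcal{H}^\pm},
\]
and using that $\mathcal{P}^\pm$ preserves parity in $(v_r,v_z)$, so that $\|\mathcal{P}^\pm(\hat{v}\cdot\textbf{A})\|^2_{\mathcal{H}^\pm}=\|\mathcal{P}^\pm(\hat{v}_r A_r+\hat{v}_z A_z)\|^2_{\mathcal{H}^\pm}+\|\mathcal{P}^\pm(\hat{v}_\varphi A_\varphi)\|^2_{\mathcal{H}^\pm}$, I obtain
\[
\mathcal{J}_\textbf{A}(F^+_*,F^-_*)=(\mathcal{L}^0 A_\varphi,A_\varphi)-\int_\Omega|\nabla(A_\varphi e^{i\varphi})|^2\,dx+\sum_\pm\int_\Omega\int_{\mathbb{R}^3}r\mu^\pm_p\hat{v}_\varphi|A_\varphi|^2\,dv\,dx+\sum_\pm\|\mathcal{P}^\pm(\hat{v}_r A_r+\hat{v}_z A_z)\|^2_{\mathcal{H}^\pm}.
\]
Substituting into the previous identity, the two copies of $\int_\Omega|\nabla(A_\varphi e^{i\varphi})|^2\,dx$ and the two $\mu^\pm_p$-integrals cancel, so that
\[
0\ \ge\ (\mathcal{L}^0 A_\varphi,A_\varphi)+\int_\Omega|B_\varphi|^2\,dx+\sum_\pm\|\mathcal{P}^\pm(\hat{v}_r A_r+\hat{v}_z A_z)\|^2_{\mathcal{H}^\pm}+\lambda^2\int_\Omega|\textbf{A}|^2\,dx .
\]
If $\mathcal{L}^0\ge0$, every term on the right is nonnegative, hence each vanishes; since $\lambda\neq0$ this forces $\textbf{A}\equiv0$, so $\textbf{B}=\nabla\times\textbf{A}=0$, and then $\mathcal{I}=0$ collapses to $\mathcal{J}_\textbf{A}(F^+,F^-)=0$, which forces $F^\pm\equiv0$ and $\nabla\phi\equiv0$, i.e. $\phi\equiv0$ by the Dirichlet condition. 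Consequently $f^\pm\equiv0$, $\textbf{E}\equiv0$, $\textbf{B}\equiv0$, so the alleged growing mode is trivial — a contradiction.

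The only genuinely delicate point I anticipate is that the cancellation in the last step must be \emph{exact}. It relies on three things fitting together with no residue: the Coulomb gauge together with $\phi|_{\partial\Omega}=0$, so that $\int_\Omega|\textbf{E}|^2$ is exactly $\int_\Omega|\nabla\phi|^2+\lambda^2\int_\Omega|\textbf{A}|^2$; the identity \eqref{Deltagexp} together with $A_\varphi\in L^2_{1/r^2}\cap H^{2\dagger}$ vanishing both on the $z$-axis and on $\partial\Omega$, so that $\int_\Omega(|B_r|^2+|B_z|^2)$ coincides with the principal part of $(\mathcal{A}^0_2 A_\varphi,A_\varphi)$ with no boundary or axis defect — this is precisely where the possibly axis-meeting geometry has to be handled, in contrast to the solid torus of \cite{NS1}; and the parity of the kinetic integrands in $(v_r,v_z)$, which lets one discard the $\hat{v}_r A_r+\hat{v}_z A_z$ contributions and isolate the $\mathcal{P}^\pm(\hat{v}_\varphi A_\varphi)$ pieces. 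None of these is a hard estimate on its own; the care lies in verifying that they combine so that everything but the manifestly nonnegative quantities drops out.
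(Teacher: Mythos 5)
Your proposal is correct and follows essentially the same route as the paper: vanishing of the invariants $\mathcal{I}$ and $\mathcal{K}^\pm_g$, the minimization Lemmas \ref{minimization1}--\ref{minimization2}, the expansion of $(\mathcal{A}^0_2 A_\varphi, A_\varphi)$ and the parity splitting of $\|\mathcal{P}^\pm(\hat{v}\cdot\textbf{A})\|^2_{\mathcal{H}^\pm}$, leading to the same final inequality $0\ge\la\mathcal{L}^0 A_\varphi,A_\varphi\ra_{L^2}+\int_\Omega(\lambda^2|\textbf{A}|^2+|B_\varphi|^2)\,dx+\sum_\pm\|\mathcal{P}^\pm(\hat{v}_r A_r+\hat{v}_z A_z)\|^2_{\mathcal{H}^\pm}$. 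The only differences are cosmetic (you write $\int_\Omega|\nabla(A_\varphi e^{i\varphi})|^2\,dx$ where the paper writes $\int_\Omega(|\nabla A_\varphi|^2+|A_\varphi|^2/r^2)\,dx$, and you spell out the axis/boundary bookkeeping more explicitly), so no further comparison is needed.
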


\begin{proof}
We argue by contradiction. Assuming the contrary, we know that $\lambda \in \mathbb{R}$, so we can assume that $(f^\pm, \textbf{E}, \textbf{B} )$ is real-valued. Since $\mathcal{I} (f^\pm, \textbf{E}, \textbf{B} ) $ and $\mathcal{K}_g^\pm (f^\pm, \textbf{A})$ is invariant in time, they then must be $0$ all the time since we have the factor $e^{\lambda t}$. We can write
\begin{equation}
\begin{split}
0 = \mathcal{I} (f^\pm, \textbf{E}, \textbf{B} ) 
& =  \sum_\pm \int_\Omega \int_{\mathbb{R}^3} ( \frac{1}{|\mu^\pm_e|} |F^\pm|^2 - r  \mu^\pm_p \hat{v}_\varphi |A_\varphi|^2 ) dv dx + \int_\Omega (|\textbf{E}|^2 + |\textbf{B}|^2 ) dx  \\
& = \mathcal{J}_\textbf{A} (F^+, F^-) - \sum_\pm \int_\Omega \int_{\mathbb{R}^3} r   \mu^\pm_p  \hat{ v}_\varphi |A_\varphi|^2 dv dx + \int_\Omega ( \lambda^2 |\textbf{A}|^2 + |\textbf{B}|^2  ) dx   \ .
\end{split}
\end{equation}
Here we used the connection between $\phi$ and $\textbf{A}$, and the Coulomb gauge $\nabla \cdot \textbf{A} =0$ as well as the Dirichlet boundary condition on $\phi$.

Since $\mathcal{K}_g^\pm (f^\pm, \textbf{A}) =0 $ actually gives the constraint in the definition of $\mathcal{F}_\textbf{A}$, we can now minimize $\mathcal{J}_\textbf{A} (F^+, F^- ) $ on $\mathcal{F}_\textbf{A}$. Let $(F^+_*, F^-_*) $ be the minimizer, then by the previous lemmas, we have
\begin{equation}
\begin{split}
0 = \mathcal{I} (f^\pm, \textbf{E}, \textbf{B} ) 
& \geq  - ( \mathcal{B}^0 (\mathcal{A}^0_1)^{-1} (\mathcal{B}^0)^* A_\varphi, A_\varphi ) + \sum_\pm \|\mathcal{P}^\pm (\hat{v} \cdot \textbf{A}) \|^2_{\mathcal{H}^\pm} \\
&  - \sum_\pm \int_\Omega \int_{\mathbb{R}^3} r   \mu^\pm_p  \hat{ v}_\varphi |A_\varphi|^2 dv dx + \int_\Omega ( \lambda^2 |\textbf{A}|^2 + |\textbf{B}|^2  ) dx   \ .
\end{split}
\end{equation}
We compute, as in the proof of Proposition \ref{purelygrowing}:
\begin{equation}
\int_\Omega |\textbf{B}|^2 dx = \int_\Omega ( |\nabla A_\varphi|^2 + \frac{|A_\varphi|^2}{r^2 } + |B_\varphi|^2  ) dx  \ .
\end{equation}
Also, we have
\begin{equation}
\begin{split}
(\mathcal{A}^0_2 A_\varphi, A_\varphi)_{L^2} 
& = \int_\Omega ( |\nabla A_\varphi|^2 + \frac{|A_\varphi|^2}{r^2 }  ) dx + \sum_\pm \| \mathcal{P}^\pm (\hat{v}_\varphi A_\varphi)\|^2_{\mathcal{H}^\pm}  \\
& - \sum_\pm \int_\Omega \int_{\mathbb{R}^3} r  \mu^\pm_p  \hat{ v}_\varphi |A_\varphi|^2 dv dx   
\end{split}
\end{equation}
and
\begin{equation}
\begin{split}
\|\mathcal{P}^\pm (\hat{v} \cdot \textbf{A}) \|^2_{\mathcal{H}^\pm}
& = \| \mathcal{P}^\pm (\hat{v}_\varphi A_\varphi)\|^2_{\mathcal{H}^\pm} + \| \mathcal{P}^\pm (\hat{v}_r A_r + \hat{v}_z A_z)\|^2_{\mathcal{H}^\pm} + \la \mathcal{P}^\pm (\hat{v}_\varphi A_\varphi), \mathcal{P}^\pm (\hat{v}_r A_r + \hat{v}_z A_z) \ra_{\mathcal{H}^\pm} \\
& = \| \mathcal{P}^\pm (\hat{v}_\varphi A_\varphi)\|^2_{\mathcal{H}^\pm} + \| \mathcal{P}^\pm (\hat{v}_r A_r + \hat{v}_z A_z)\|^2_{\mathcal{H}^\pm } \ .
\end{split}
\end{equation}
(Here we used the Dirichlet boundary condition on $A_\varphi$.) Plugging all the three into the $ \mathcal{I} (f^\pm, \textbf{E}, \textbf{B} ) $ inequality, we obtain
\begin{equation}
\begin{split}
0 = \mathcal{I} (f^\pm, \textbf{E}, \textbf{B} ) 
& \geq  \la  \mathcal{L}^0 A_\varphi, A_\varphi \ra_{L^2} + \int_\Omega ( \lambda^2 |\textbf{A}|^2 + |B_\varphi|^2) dx \\
& + \sum_\pm  \| \mathcal{P}^\pm (\hat{v}_r A_r + \hat{v}_z A_z)\|^2_{\mathcal{H}^\pm}  \ .
\end{split}
\end{equation}
If $\mathcal{L}^0 \geq 0$, this implies $\textbf{A} =0$. Using that $\mathcal{I} (f^\pm, \textbf{E}, \textbf{B}) =0$, we deduce $f^\pm =0$, $\textbf{E}=0$. A contradiction.

\end{proof}

\section{Linear Instability}

We now turn to the instability part of Theorem \ref{mainresult}.

We build up the particle trajectories by solving the ODE \eqref{particletrajectoryODE}. For each $(x,v) \in \Omega \times \mathbb{R}^3$, we define its trajectory on $[0, T_1]$ by \eqref{particletrajectoryODE}, where $T_1$ is the first time it hits $\partial \Omega$. Then the trajectory is continued by the rule \eqref{particlereflection}, and after that the construction process is continued via solving \eqref{particletrajectoryODE} again. If a particle hits the spatial boundary infinitely many times in a finite time interval $[0, T]$, then we might not be able to continue the trajectory via \eqref{particletrajectoryODE} and \eqref{particlereflection} for $t > T$. Fortunately, we can actually prove that for almost every particle in $\Omega \times \mathbb{R}^3$, the trajectory is well-defined and hits the boundary at most finitely many times in each finite time interval. For detailed discussion of this observation, see Appendix C. Hence we can define trajectories globally in time for almost every particle, and these trajectories are piecewise differentiable.

We have the following lemmas as in \cite{NS1}:

\begin{lemma}
Let $\partial \Omega$ be $C^1$. For almost $(x, v) \in \Omega \times \mathbb{R}^3$, the particle trajectories $(X^\pm, V^\pm)$ are piecewise $C^1$ in $s \in \mathbb{R}$. For each $s$, the map $ (x,v) \rightarrow (X^\pm (s;x,v), V^\pm (s;x,v))$ is 1 - 1 and differentiable with Jacobian $1$ at all points $(x,v)$ such that $X^\pm (s;x,v)$ are not on the boundary. Moreover, we have the change-of-variable formula
\begin{equation} \label{changeofvariable}
\int_\Omega \int_{\mathbb{R}^3} g(x,v) dx dv = \int_\Omega \int_{\mathbb{R}^3} g( X^\pm (-s;y,w), V^\pm (-s;y,w))  dw dy
\end{equation}
\end{lemma}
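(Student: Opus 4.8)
The plan is to follow the scheme of the analogous lemma in \cite{NS1}; that argument uses only that $\partial\Omega$ is $C^1$, so passing to cylindrical coordinates and to a general axisymmetric $\partial\Omega$ changes nothing essential. Write $S^\pm_s$ for the phase-space map $(x,v)\mapsto(X^\pm(s;x,v),V^\pm(s;x,v))$. First I would invoke Appendix C to produce a Lebesgue-null set $\mathcal N\subset\Omega\times\mathbb R^3$ such that, for every $(x,v)\notin\mathcal N$, the trajectory is defined for all $s\in\mathbb R$, its collision times with $\partial\Omega$ form a locally finite set $\{t_k\}_{k\in\mathbb Z}$, and each collision is transversal, i.e. $\hat{V}^\pm(t_k;x,v)\cdot e_n\neq0$. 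For a fixed $s$ one also discards the null set $\{(x,v):X^\pm(s;x,v)\in\partial\Omega\}$. It then suffices to prove all three assertions off these null sets, since they contribute nothing to the integrals in \eqref{changeofvariable}.

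For such good data, on each interval $(t_k,t_{k+1})$ the trajectory solves the autonomous ODE \eqref{particletrajectoryODE}, whose right-hand side is of class $C^1$ on $\Omega\times\mathbb R^3$ (this is where the regularity of the equilibrium potentials $\phi^0$, $A^0_\varphi$ is used), so it is $C^1$ there and the interior flow is a $C^1$ diffeomorphism with $C^1$ inverse. At a transversal collision the implicit function theorem makes the hitting time $t_k$ and the hitting point depend $C^1$-ly on $(x,v)$ locally, while the reflection rule \eqref{particlereflection} is, for a fixed boundary point, a linear orthogonal map in $v$ (it merely flips $v_n$); hence near any $(x,v)\notin\mathcal N$ with $X^\pm(s;x,v)\notin\partial\Omega$ the map $S^\pm_s$ is a finite composition of $C^1$ maps, which yields both the piecewise-$C^1$-in-$s$ claim and the differentiability of $S^\pm_s$. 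For the Jacobian I would note that the driving field $(\hat{v},\pm(\textbf{E}^0+\hat{v}\times\textbf{B}^0))$ is divergence free on $\Omega\times\mathbb R^3$: $\nabla_x\cdot\hat{v}=0$, $\nabla_v\cdot\textbf{E}^0=0$, and $\nabla_v\cdot(\hat{v}\times\textbf{B}^0)=0$ because $\partial_{v_i}\hat{v}_j=\la v\ra^{-1}\delta_{ij}-\la v\ra^{-3}v_iv_j$ is symmetric in $(i,j)$ whereas the cross product pairs it against the antisymmetric $\varepsilon_{ijk}$. By Liouville's theorem every interior piece has Jacobian $1$, and each reflection fixes $x$ and acts on $v$ as a reflection (determinant $-1$), so $\det DS^\pm_s=\pm1$ and in particular $|\det DS^\pm_s|=1$ wherever $S^\pm_s$ is differentiable, which is what \eqref{changeofvariable} needs.

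Running \eqref{particletrajectoryODE} backwards, and using that \eqref{particlereflection} is an involution, $S^\pm_{-s}$ is built by exactly the same recipe and satisfies $S^\pm_{-s}\circ S^\pm_s=\mathrm{id}$ off $\mathcal N$, so $S^\pm_s$ is $1$–$1$ there; the change-of-variables identity \eqref{changeofvariable} is then the ordinary one for the $C^1$ bijection $S^\pm_{-s}$ between full-measure subsets of $\Omega\times\mathbb R^3$ with $|\det DS^\pm_{-s}|\equiv1$. I expect the only genuinely delicate point to be the reduction in the first paragraph — namely that grazing collisions ($\hat{V}^\pm\cdot e_n=0$) and accumulation of collision times in a finite interval occur only for a null set of initial data — which rests on a Sard/transversality-type argument using the $C^1$ regularity of $\partial\Omega$ and is deferred to Appendix C; granting that, everything else is the standard Liouville bookkeeping.
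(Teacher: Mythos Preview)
Your proposal is correct and follows essentially the same approach as the paper, which itself simply cites Lemma~4.1 of \cite{NS1} and sketches: finite-collision reduction via Appendix~C, piecewise $C^1$ from the ODE, one-to-one from time-reversibility, and Jacobian constant by a direct (Liouville) calculation. Your write-up is more explicit on two points the paper glosses over---the divergence-free computation and the fact that each specular reflection contributes a factor $-1$ to the determinant so that only $|\det DS^\pm_s|=1$ holds (which is all that \eqref{changeofvariable} needs); this is a useful clarification rather than a different argument.
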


\begin{proof}
The proof is basically identical to the one to Lemma 4.1 in \cite{NS1} since it has nothing to do with the geometry. Let $(x, v)$ be an arbitrary point in $\Omega \times \mathbb{R}^3$ so that the trajectory $(X^\pm (s;x,v), V^\pm (s;x,v))$ hits the boundary at most a finite number of times in each finite time interval. Except when it hits the boundary, the trajectory is $C^1$ in time. So the first assertion is clear. Given $s$, let $\mathcal{S}$ be the set of the points $(x, v)$ such that $X^\pm (s;x,v) \notin \partial \Omega$. Clearly, $\mathcal{S}$ is open and its complement in $\Omega \times \mathbb{R}^3$ has Lebesgue measure zero. For each $s$, the trajectory map is one-to-one on $\mathcal{S}$ since the ODE \eqref{particletrajectoryODE} and \eqref{particlereflection} is time-reversible and well-defined. In addition, a direct calculation shows that the Jacobian determinant is time-independent and is therefore equal to one. The change-of-variable formula
\eqref{changeofvariable} holds on the open set $\mathcal{S}$ and so on $\mathbb{R}^3$, as claimed.
\end{proof}

Recall that for any given $(X^\pm, V^\pm) \in \partial \Omega \times \mathbb{R}^3$, we can decompose $V^\pm$ into its $n$-component, $tg$-component and $\varphi$-component, and define $V_*^\pm = - V^\pm_n e_n + V^\pm_{tg} e_{tg} + V^\pm_\varphi e_\varphi $ as in \eqref{reflectedvelocity}. We have 

\begin{lemma}
Let $g(x,v)$ be a $C^1$ radial function on $\overline{\Omega} \times \mathbb{R}^3$. If $g$ is specular on $\partial \Omega$ then for all $s$, $g( X^\pm (s;x,v), V^\pm (s;x,v)  )$ is continuous and also specular on $\partial \Omega$, i.e.
\begin{equation}
g( X^\pm (s;x,v), V^\pm (s;x,v)  ) =g( X^\pm (s;x,v_* ), V^\pm (s;x,v_*)  )
\end{equation}
for almost every $(x,v) \in \partial \Omega \times \mathbb{R}^3$. $v_*$ is defined as in \eqref{reflectedvelocity}.
\end{lemma}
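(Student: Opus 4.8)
The plan is to reduce the statement to an analysis of what happens each time the trajectory strikes $\partial\Omega$, exploiting the reflection rule \eqref{particlereflection} together with the specular symmetry of $g$ on the initial data. First I would fix a point $(x,v)\in\partial\Omega\times\mathbb{R}^3$ in the full-measure set where the trajectory $(X^\pm(s;x,v),V^\pm(s;x,v))$ is well-defined for all $s$ and hits $\partial\Omega$ only finitely often on each bounded time interval (Appendix C), and likewise for the trajectory starting from $(x,v_*)$. Since $x\in\partial\Omega$, the reflected datum $v_*$ is obtained by flipping the normal component $v_n\mapsto -v_n$ while keeping $v_{tg},v_\varphi$. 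The key observation is that the two trajectories issuing from $(x,v)$ and from $(x,v_*)$ are in fact \emph{the same curve}, traversed with a phase shift dictated by whether we are immediately before or immediately after a specular bounce: indeed at $s=0$ both $(x,v)$ and $(x,v_*)$ lie on $\partial\Omega$, and \eqref{particlereflection} says precisely that hitting the boundary with incoming velocity $v$ continues with outgoing velocity $v_*$. So I would argue that, away from the (measure-zero, and for fixed $s$ also closed) set of $s$ at which $X^\pm$ is on the boundary, one has the identity
\begin{equation}
\big(X^\pm(s;x,v_*),V^\pm(s;x,v_*)\big)=\big(X^\pm(s;x,v),\,R\,V^\pm(s;x,v)\big),
\end{equation}
where $R$ denotes the reflection $w\mapsto -w_n e_n+w_{tg}e_{tg}+w_\varphi e_\varphi$ about the tangent plane at the (boundary) point $X^\pm(s;x,v)$ — in other words, the two trajectories agree in position at every time, and their velocities differ exactly by the specular flip relative to the local boundary geometry.

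Granting that identity, the conclusion is immediate: since $g$ is specular, $g(X^\pm,V^\pm)=g(X^\pm,RV^\pm)$ whenever $X^\pm\in\partial\Omega$, and more to the point $g$ evaluated along the two trajectories coincides because at each time $s$ the arguments $(X^\pm(s;x,v),V^\pm(s;x,v))$ and $(X^\pm(s;x,v_*),V^\pm(s;x,v_*))$ are related by this boundary reflection, under which $g$ is invariant \emph{on the boundary}; and since $X^\pm(s;x,v)$ stays on $\partial\Omega$ exactly when we are evaluating at a bounce time, the identification propagates to all $s$ by continuity of $s\mapsto g(X^\pm(s;x,v),V^\pm(s;x,v))$. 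For the continuity claim itself, the trajectory is $C^1$ between bounces, and at a bounce time $s_0$ the position $X^\pm$ is continuous through $s_0$ while the velocity jumps from $V^\pm(s_0-)$ to $V^\pm_*(s_0-)=RV^\pm(s_0-)$; since $X^\pm(s_0)\in\partial\Omega$ and $g$ is specular there, $g(X^\pm(s_0-),V^\pm(s_0-))=g(X^\pm(s_0),RV^\pm(s_0-))=g(X^\pm(s_0+),V^\pm(s_0+))$, so $s\mapsto g(X^\pm(s;x,v),V^\pm(s;x,v))$ has no jump — this is exactly where the specular hypothesis is used, and it is what makes the composed function continuous despite the velocity discontinuity.

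The main obstacle I anticipate is the bookkeeping near bounce times and the handling of the measure-zero exceptional set. One has to be careful that: (a) the set of $(x,v)$ for which \emph{both} the $v$-trajectory and the $v_*$-trajectory are well-behaved is still full measure (a finite intersection of full-measure sets); (b) for a given $s$, the set of $(x,v)$ with $X^\pm(s;x,v)\in\partial\Omega$ is negligible, so the pointwise identity above holds a.e.\ in $(x,v)$ for each fixed $s$, which is all the lemma asserts; and (c) grazing trajectories, or trajectories tangent to $\partial\Omega$, do not cause trouble — this is precisely what Appendix C is invoked to exclude. The argument is, I expect, "essentially identical to the corresponding lemma in \cite{NS1}" since it is purely about the reflection dynamics and does not see the axisymmetric geometry beyond the decomposition $v=v_ne_n+v_{tg}e_{tg}+v_\varphi e_\varphi$ already set up in \eqref{reflectedvelocity}; so in the write-up I would state the reflection identity carefully, verify the continuity-through-a-bounce computation once, and then pass to the a.e.\ statement, referring to \cite{NS1} for the routine parts.
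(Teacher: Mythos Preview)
Your continuity argument through a bounce time is correct and matches the paper. But your claimed trajectory identity is wrong, and this muddles the specularity half of the proof. You write
\[
\big(X^\pm(s;x,v_*),V^\pm(s;x,v_*)\big)=\big(X^\pm(s;x,v),\,R\,V^\pm(s;x,v)\big)
\]
with $R$ the reflection about the tangent plane at $X^\pm(s;x,v)$. This cannot be right as stated: for generic $s$ the point $X^\pm(s;x,v)$ is in the interior of $\Omega$, so there is no tangent plane and $R$ is undefined. More importantly, even the intended statement is false: the two trajectories do not differ by a reflection at interior times --- they are \emph{identical}. Since $x\in\partial\Omega$, the data $(x,v)$ and $(x,v_*)$ are the ``just before'' and ``just after'' of the same bounce at $s=0$, so by the reflection rule \eqref{particlereflection} and uniqueness of the ODE \eqref{particletrajectoryODE} one has $X^\pm(s;x,v)=X^\pm(s;x,v_*)$ for all $s$, and $V^\pm(s;x,v)=V^\pm(s;x,v_*)$ whenever $X^\pm(s;x,v)\notin\partial\Omega$ (with the normal components possibly differing by a sign at the isolated bounce times). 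This is exactly what the paper records.

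Once you have the correct identity, the specularity of $g(X^\pm(s;\cdot),V^\pm(s;\cdot))$ is immediate --- the two sides are literally equal for a.e.\ $(x,v)$ --- and you do not need to invoke the specular hypothesis on $g$ at all for that part. The specular hypothesis on $g$ is used \emph{only} where you already used it correctly: to show continuity in $s$ across each bounce, since the velocity jump $V^\pm(s_0-)\mapsto V^\pm_*(s_0-)$ is absorbed by $g$'s invariance at the boundary point $X^\pm(s_0)\in\partial\Omega$. So the fix is simply to replace your reflected-velocity identity by the equality of the two trajectories, after which the argument goes through cleanly.
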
 

\begin{proof}
From the equation of the particle trajectory we know that for almost every $x \in \partial \Omega $, the trajectory is unaffected by whether it starts with $v$ or $v_*$. Therefore for all $s$ we have

$$ X^\pm (s;x,v) =  X^\pm (s;x,v_*) $$
$$ V_n^\pm (s;x,v) =  V_n^\pm (s;x,v_*) \ if \ X^\pm (s;x,v) \notin \partial \Omega $$
$$ V_n^\pm (s;x,v) =  -V_n^\pm (s;x,v_*) \ if \ X^\pm (s;x,v) \in \partial \Omega $$
$$ V_j^\pm (s;x,v) =  V_j^\pm (s;x,v_*) \ for \ j=tg, \varphi $$

Since $g$ is specular, on the boundary it takes the same value at $v_n$ and $-v_n$. Hence $g(X^\pm(s), V^\pm (s))$ is a continuous function of $s$ at the points of reflection. Also, it is specular because of the rule \eqref{particlereflection}.

\end{proof}

We already know from Proposition \ref{purelygrowing} that a growing mode must be purely growing. Consider any growing mode $(e^{\lambda t} f^\pm, e^{\lambda t} \textbf{E}, e^{\lambda t} \textbf{B}  )$. Our goal is to find such a solution for the linearized equation for some $\lambda >0$.

Plugging $(e^{\lambda t} f^\pm, e^{\lambda t} \textbf{E}, e^{\lambda t} \textbf{B}  )$ into the linearized Vlasov-Maxwell system, we obtain 
 
\begin{equation}
\lambda f^\pm + D^\pm f^\pm =  \pm (  \mu^\pm_e D^\pm \phi +  \mu^\pm_e \hat{v} \cdot \lambda \textbf{A} + r \mu^\pm_p \lambda A_\varphi + \mu^\pm_p D^\pm (r A_\varphi)  )
\end{equation}
 
Multiplying this equation by $e^{\lambda s}$ and integrating along the particle trajectories $(X^\pm (s; x,v), V^\pm (s; x, v))$ from $s = -\infty$ to $0$, we obtain
\begin{equation}  \label{linearizedfformula}
f^\pm (x, v) = \pm \mu^\pm_p r A_\varphi \pm \mu^\pm_e \phi \pm \mu^\pm_e \mathcal{Q}^\pm_\lambda (\hat{v} \cdot \textbf{A} - \phi)
\end{equation}
where $ \mathcal{Q}^\pm_\lambda (g) (x,v) = \int^0_{-\infty} \lambda e^{\lambda s} g(X^\pm (s; x,v), V^\pm (s;x,v) ) ds $. Plug (\ref{linearizedfformula}) into the Maxwell system, we obtain
\begin{equation} \label{system1}
-\Delta \phi = \sum_\pm \int_{\mathbb{R}^3} ( \mu^\pm_e (1-\mathcal{Q}^\pm_\lambda) \phi  + \mu^\pm_p r  A_\varphi + \mu^\pm_e \mathcal{Q}^\pm_\lambda (\hat{v} \cdot \textbf{A})  ) dv \ ,
\end{equation}
\begin{equation} \label{system2}
(\lambda^2 -\Delta + \frac{1}{r^2}) A_\varphi  = \sum_\pm \int_{\mathbb{R}^3} \hat{v}_\varphi (\mu^\pm_e (1-\mathcal{Q}^\pm_\lambda) \phi  + \mu^\pm_p r A_\varphi + \mu^\pm_e \mathcal{Q}^\pm_\lambda (\hat{v} \cdot \textbf{A})  ) dv \ ,
\end{equation}
\begin{equation} \label{system3}
(\lambda^2 -\Delta ) \tilde{\textbf{A}} + \lambda \nabla \phi  = \sum_\pm \int_{\mathbb{R}^3}  \frac{\tilde{v}}{\la v \ra} ( \mu^\pm_e (1-\mathcal{Q}^\pm_\lambda) \phi  + \mu^\pm_p r  A_\varphi + \mu^\pm_e \mathcal{Q}^\pm_\lambda (\hat{v} \cdot \textbf{A}) ) dv \ .
\end{equation}

We formally define 
\begin{equation}
\mathcal{A}^\lambda_1 h = \Delta h + \sum_\pm \int_{\mathbb{R}^3} \mu^\pm_e (1- \mathcal{Q}^\pm_\lambda) h dv \ ,
\end{equation}
\begin{equation}
\mathcal{A}^\lambda_2 h = ( \lambda^2 - \Delta + \frac{1}{r^2 } ) h - \sum_\pm \int_{\mathbb{R}^3}  \hat{v}_\varphi \big( \mu^\pm_p r  h + \mu^\pm_e \mathcal{Q}^\pm_\lambda (\hat{v}_\varphi h) \big) dv \ ,
\end{equation}
\begin{equation}
\mathcal{B}^\lambda h = - \sum_\pm \int_{\mathbb{R}^3}  \hat{v}_\varphi \mu^\pm_e (1- \mathcal{Q}^\pm_\lambda) h dv \ ,
\end{equation}
\begin{equation}
\begin{split}
& (\mathcal{B}^\lambda )^* h =  \sum_\pm \int_{\mathbb{R}^3}  ( \mu^\pm_p r  h + \mu^\pm_e \mathcal{Q}^\pm_\lambda ( \hat{v}_\varphi h )   )   dv  \ ,  \\
\end{split}
\end{equation}
\begin{equation}
\mathcal{T}^\lambda_1 h  = -\lambda \nabla h - \sum_\pm \int_{\mathbb{R}^3 }  \frac{\tilde{v}}{\la v \ra}  \mu^\pm_e \mathcal{Q}^\pm_\lambda h dv \ ,
\end{equation}
\begin{equation}
\mathcal{T}^\lambda_2 h = \sum_\pm \int_{\mathbb{R}^3} \frac{\tilde{v}}{\la v \ra}  ( \mu^\pm_e \mathcal{Q}^\pm_\lambda ( \hat{v}_\varphi A_\varphi ) + 2 \mu^\pm_p r   \mathcal{Q}^\pm_\lambda A_\varphi  ) dv \ ,
\end{equation}
\begin{equation}
(\mathcal{T}^\lambda_1 )^* \tilde{\textbf{h}} = \sum_\pm \int_{\mathbb{R}^3} \mu^\pm_e \mathcal{Q}^\pm_\lambda (\hat{v} \cdot \tilde{\textbf{h}})  dv +  \lambda \nabla \cdot \tilde{\textbf{h}} \ ,
\end{equation}
\begin{equation}
(\mathcal{T}^\lambda_2)^* \tilde{\textbf{h}} = - \sum_\pm \int_{\mathbb{R}^3}  \hat{v}_\varphi \mu^\pm_e \mathcal{Q}^\pm_\lambda (\hat{v} \cdot \tilde{\textbf{h}} ) dv  \ ,
\end{equation}
\begin{equation}
\mathcal{S}^\lambda \tilde{\textbf{h}} = (- \lambda^2 + \Delta ) \tilde{\textbf{h}} + \sum_\pm \int_{\mathbb{R}^3} \frac{\hat{v}}{\la v \ra} \mu^\pm_e  \mathcal{Q}^\pm_\lambda ( \hat{v} \cdot \tilde{\textbf{h}}) dv \ .
\end{equation}
The adjointness will be proved in Lemma \ref{OperatorAdjointness}. Then the equations (\ref{system1}), (\ref{system2}), (\ref{system3}) can be written as 

\begin{equation}  \label{MaxwellMatrixEqn}
\left( \begin{array}{ccc} 
\mathcal{A}^\lambda_1 & ( \mathcal{B}^\lambda  )^*  &  ( \tilde{\mathcal{T}}^\lambda_1 )^*   \\
 \mathcal{B}^\lambda & \mathcal{A}^\lambda_2 & ( \tilde{\mathcal{T}}^\lambda_2 )^*  \\
 \tilde{\mathcal{T}}^\lambda_1  &  \tilde{\mathcal{T}}^\lambda_2  &  \tilde{\mathcal{S}}^\lambda 
 \end{array} \right)
\left( \begin{array}{ccc}
\phi \\
A_\varphi \\
\tilde{\textbf{A}} 
\end{array} \right)
\begin{array}{ccc}
=0  \ . \end{array} 
\end{equation}

Motivated by \eqref{fullboundarycondition}, we define
$$
\tilde{\mathcal{Y}} = \{  \tilde{\textbf{h}} \in H^{2, \tau} (\Omega; \tilde{\mathbb{R}^2}) :  \nabla \cdot \tilde{\textbf{h}} =0 \  in \  \Omega \  ;  \tilde{r} \tilde{z}' \partial_r h_n - \tilde{r} \tilde{r}' \partial_z h_n + \tilde{z}' h_n =0 , h_{tg} =0 \ on \ \partial \Omega   \}  \ .
$$
Then $\tilde{\textbf{A}}$ is in $\tilde{\mathcal{Y}} $. We now study the properties of the operators $\mathcal{A}^\lambda_1$, $\mathcal{A}^\lambda_2$, etc..

\begin{lemma}
Let $0 < \lambda < +\infty$, then
\vskip 0.1 cm
(i) $\mathcal{Q}^\pm_\lambda$ is bounded from $\mathcal{H}^\pm$ to itself with operator norm one to one.
\vskip 0.1 cm
(ii) $\mathcal{A}^\lambda_1 - \Delta$, $\mathcal{A}^\lambda_2 -\lambda^2 + \Delta$ and $\mathcal{B}^\lambda$ are bounded from $\mathcal{X}$ to $L^{2, \tau} (\Omega)$.   
\vskip 0.1 cm
(iii) $\tilde{\mathcal{S}}^\lambda + \lambda^2  -\Delta $ is bounded from $L^{2, \tau} (\Omega; \tilde{\mathbb{R}^2})$ to $L^{2, \tau} (\Omega; \tilde{\mathbb{R}^2})$.
\vskip 0.1 cm
(iv) $\tilde{\mathcal{T}}^\lambda_1 + \lambda \nabla $ and $ \tilde{\mathcal{T}}^\lambda_2 $ are bounded from $L^{2, \tau} (\Omega)$ to $L^{2, \tau} (\Omega; \tilde{\mathbb{R}^2})$. 
All the operator norms here are independent of $\lambda$.
\end{lemma}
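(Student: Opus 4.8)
The plan is to prove all four boundedness statements simultaneously by reducing everything to the single observation in (i) that $\mathcal{Q}^\pm_\lambda$ is a contraction on $\mathcal{H}^\pm$, together with the uniform decay bound $|\mu^\pm_e|+|\mu^\pm_p|\le C_\mu(1+|e^\pm|)^{-\gamma}$ with $\gamma>3$. For (i), observe that $\mathcal{Q}^\pm_\lambda g(x,v)=\int_{-\infty}^0\lambda e^{\lambda s}g(X^\pm(s),V^\pm(s))\,ds$ is an average of the values of $g$ along the (measure-preserving) trajectory flow with respect to the probability measure $\lambda e^{\lambda s}\,ds$ on $(-\infty,0]$. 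By Jensen (or Minkowski's integral inequality) in the $(x,v)$ variables, followed by the change-of-variables formula \eqref{changeofvariable} which says the flow preserves $dx\,dv$ and—since $|\mu^\pm_e|$ depends only on the conserved quantities $e^\pm,p^\pm$—preserves the weight $|\mu^\pm_e|$, one gets $\|\mathcal{Q}^\pm_\lambda g\|_{\mathcal{H}^\pm}\le\int_{-\infty}^0\lambda e^{\lambda s}\|g\|_{\mathcal{H}^\pm}\,ds=\|g\|_{\mathcal{H}^\pm}$. That the norm is exactly one follows by testing on functions constant along trajectories (e.g. $g\in\ker D^\pm$), for which $\mathcal{Q}^\pm_\lambda g=g$. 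This bound is manifestly independent of $\lambda$.

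For (ii)–(iv), the strategy is: each operator, after subtracting its obvious differential-operator part ($\Delta$, $\lambda^2-\Delta$, $\lambda\nabla$), is a finite sum over $\pm$ of velocity integrals of the schematic form $h\mapsto\int_{\mathbb{R}^3}\theta(\hat v)\,\mu^\pm_{e\text{ or }p}\,\big(\text{[mult.\ by }r\text{] or }\mathcal{Q}^\pm_\lambda[\psi(\hat v)\,\cdot\,]\big)\,dv$, where $\theta,\psi$ are bounded components of $\hat v$ (entries of $\hat v_\varphi$, $\tilde v/\langle v\rangle$, $\hat v/\langle v\rangle$). So it suffices to show a generic estimate: for fixed $x$, the map $h\mapsto\int_{\mathbb{R}^3}\theta(\hat v)\mu^\pm_e\mathcal{Q}^\pm_\lambda(\psi(\hat v)h)\,dv$ is bounded $L^{2,\tau}(\Omega)\to L^{2,\tau}(\Omega)$ with $\lambda$-independent norm, and similarly for the $\mu^\pm_p r$ multiplication term. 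For the $\mathcal{Q}^\pm_\lambda$ term, write the velocity integral as $\int\theta\mu^\pm_e\mathcal{Q}^\pm_\lambda(\psi h)\,dv=\int\theta\sqrt{|\mu^\pm_e|}\cdot\big(\sqrt{|\mu^\pm_e|}\,\mathcal{Q}^\pm_\lambda(\psi h)\big)\,dv$, apply Cauchy–Schwarz in $v$ using that $\int_{\mathbb{R}^3}\theta^2|\mu^\pm_e|\,dv\le\int_{\mathbb{R}^3}C_\mu(1+|e^\pm|)^{-\gamma}\,dv\le C$ uniformly in $x$ (this is where $\gamma>3$ enters: the $v$-integral of $(1+\langle v\rangle)^{-\gamma}$ converges), and then integrate $|\mathcal{Q}^\pm_\lambda(\psi h)|^2|\mu^\pm_e|$ over $\Omega\times\mathbb{R}^3$ and invoke (i) plus $\|\psi\|_\infty\le1$ to bound it by $\|h\|_{\mathcal{H}^\pm}^2\lesssim\|h\|_{L^{2,\tau}}^2$ (the last step using $\sup_x\int|\mu^\pm_e|\,dv<\infty$). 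The multiplication-by-$r$ terms are even simpler: $\int_{\mathbb{R}^3}\theta\mu^\pm_p r\,h\,dv$ is pointwise bounded by $\big(\sup_x\int|\mu^\pm_p|\,dv\big)\,r|h(x)|$; here one uses that on an axisymmetric domain $r$ is bounded above, or more carefully that the target space weight absorbs powers of $r$ appropriately—I will track the $r$-powers so that the $1/r^2$ weight in $Y$/$\mathcal{X}$ and the factor $r$ cancel consistently, exactly as in the $-\Delta+\tfrac1{r^2}$ vs.\ $-\Delta$ correspondence \eqref{Deltagexp}. Statements (iii) and (iv) are the same computation with $\theta,\psi$ taken to be the relevant vector components and $h$ replaced by $\tilde{\mathbf h}$; no new idea is needed, and the $\lambda\nabla$ and $\lambda^2-\Delta$ pieces are exactly the parts we subtracted off.

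The main obstacle I anticipate is purely bookkeeping rather than conceptual: keeping the powers of $r$ straight across the three function spaces ($L^{2,\tau}$, $\mathcal{X}\subset H^{2,\tau}\cap H^{2\dagger}$, $\mathcal{H}^\pm=L^2_{|\mu^\pm_e|}$, and $L^{2,\tau}(\Omega;\tilde{\mathbb{R}^2})$) so that each "bounded $\mathcal{X}\to L^{2,\tau}$" or "$L^{2,\tau}\to L^{2,\tau}$" claim really holds with the stated domains and codomains, and making sure $\hat v_\varphi$, $\tilde v/\langle v\rangle$ etc.\ are genuinely bounded by $1$ (they are, since $|\hat v|<1$). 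A secondary point requiring a line of care is that $\mathcal{Q}^\pm_\lambda$ is a priori defined by an integral along trajectories that may hit $\partial\Omega$ infinitely often; but by the earlier lemmas trajectories are well-defined and piecewise $C^1$ for a.e.\ $(x,v)$ and the flow is measure-preserving, so the Jensen/change-of-variables argument in (i) goes through on the full-measure set and that is all that is needed for $L^2$-type bounds. Once (i) is in hand, (ii)–(iv) follow by the uniform template above, and the $\lambda$-independence is immediate because the only $\lambda$-dependent ingredient, $\mathcal{Q}^\pm_\lambda$, contributes the constant $1$.
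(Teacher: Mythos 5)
Your proposal is correct and follows essentially the same route as the paper: part (i) via the fact that $\lambda e^{\lambda s}\,ds$ is a probability measure on $(-\infty,0]$ combined with the measure-preserving change of variables along trajectories (and the invariance of the weight $|\mu^\pm_e|$, which is exactly what makes the paper's Cauchy--Schwarz computation close), and parts (ii)--(iv) by subtracting the differential parts and using the uniform-in-$x$ bound $\sup_x\int_{\mathbb{R}^3}(|\mu^\pm_e|+|\mu^\pm_p|)\,dv<\infty$ from the decay assumption together with (i). The only cosmetic difference is that you estimate $\|Th\|_{L^2}$ directly by Cauchy--Schwarz in $v$ while the paper bounds the quadratic forms $\la Th,h\ra$; both yield the stated $\lambda$-independent bounds.
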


\begin{proof}
$\forall g, h \in \mathcal{H}^\pm$, 
\begin{equation}
\begin{split}
& | \int_\Omega \int_{\mathbb{R}^3}  \mu^\pm_e g \mathcal{Q}^\pm_\lambda h dv dx  |  \\
&  = | \int^0_{-\infty}  \int_\Omega \int_{\mathbb{R}^3} \lambda e^{\lambda s} \mu^\pm_e g(x,v) h(X^\pm (s), V^\pm (s)  ) dv dx  ds    |   \\
& \leq (\int^0_{-\infty} \lambda e^{\lambda s} \int_\Omega \int_{\mathbb{R}^3} |\mu^\pm_e |  |g(x,v)|^2  dv dx ds   )^{1/2}   (\int^0_{-\infty} \lambda e^{\lambda s} \int_\Omega \int_{\mathbb{R}^3} |\mu^\pm_e |  |h( X^\pm (s), V^\pm (s) )|^2  dv dx ds   )^{1/2}  \\
& \leq \|g \|_{\mathcal{H}^\pm} \|h \|_{\mathcal{H}^\pm}  \ .
\end{split}
\end{equation}
Here we changed the variables $(x,v) = (X^\pm (s; x,v), V^\pm (s; x,v) )$, with Jacobian determinant one. This gives that $\|\mathcal{Q}^\pm_\lambda \|_{\mathcal{H}^\pm \rightarrow \mathcal{H}^\pm} \leq 1$. Note that $\mathcal{Q}^\pm_\lambda (1) =1$, so $\|\mathcal{Q}^\pm_\lambda \|_{\mathcal{H}^\pm \rightarrow \mathcal{H}^\pm} = 1$. (i) is proved.

For $\mathcal{A}^\lambda_1 - \Delta $, we have (for $h \in L^{2, \tau} (\Omega)$)
\begin{equation}
\begin{split}
| \la  (\mathcal{A}^\lambda_1 - \Delta) h, h   \ra_{L^2}  | 
& = | \sum_\pm \int_\Omega \int_{\mathbb{R}^3}  \mu^\pm_e (1 - \mathcal{Q}^\pm_\lambda) h^2 dv dx  |  \\
& \leq 2 \sum_\pm \|h \|^2_{\mathcal{H}^\pm}  \\
& \leq 2 \sup_x ( \sum_\pm \int_{\mathbb{R}^3} |\mu^\pm_e| dv ) \|h\|^2_{L^2} \ .
\end{split}
\end{equation}
Hence $\mathcal{A}^\lambda_1 - \Delta $ is $L^{2, \tau} (\Omega) $ to $L^{2, \tau} (\Omega)$. Similarly, $\mathcal{B}^\lambda$ is bounded from $L^{2, \tau} (\Omega)$ to $L^{2, \tau} (\Omega)$, and $\mathcal{A}^\lambda_2 -\lambda^2 + \Delta$ is bounded from $L^{2, \tau} (\Omega) \cap Y$ to $L^{2, \tau} (\Omega)$ since there is the part including $1/r^2 $. (ii) is proved.

(iii) and (iv) follow directly from the definitions and (i). 

\end{proof}

Therefore we deduce

\begin{lemma}
Let $0< \lambda < +\infty$, then
\vskip 0.1 cm
(i) $\mathcal{A}^\lambda_1$ and $\mathcal{A}^\lambda_2$ are well-defined operators from $\mathcal{X}  \subset L^{2, \tau} (\Omega)$ to $L^{2, \tau} (\Omega)$. 
\vskip 0.1 cm
(ii) $\tilde{\mathcal{S}}^\lambda$ is well-defined from $\tilde{\mathcal{Y}} \subset L^{2, \tau} (\Omega; \tilde{\mathbb{R}^2})$ into $L^{2, \tau} (\Omega; \tilde{\mathbb{R}^2})$. 
\vskip 0.1 cm
(iii) $\tilde{\mathcal{T}}^\lambda_1$ is well-defined from $\mathcal{X}_1 := \{ h \in H^{1, \tau}  (\Omega)  | h=0, \forall h \in \partial \Omega   \} $ into $L^{2, \tau} (\Omega; \tilde{\mathbb{R}^2}) $.
\end{lemma}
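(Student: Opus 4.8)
The plan is to write each operator as a principal differential part plus a lower-order remainder which the preceding lemma has already shown to be bounded into the relevant $L^{2,\tau}$ space. Since each domain in question ($\mathcal{X}$, $\tilde{\mathcal{Y}}$, $\mathcal{X}_1$) embeds continuously into $L^{2,\tau}$, well-definedness reduces to checking that the principal parts carry these domains into $L^{2,\tau}$, which is routine Sobolev mapping except for the terms singular on the $z$-axis, which are handled by the identity \eqref{Deltagexp}.

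For (i), write $\mathcal{A}^\lambda_1 = \Delta + (\mathcal{A}^\lambda_1 - \Delta)$. For $h \in \mathcal{X} \subset H^{2,\tau}(\Omega)$ one has $\Delta h \in L^{2,\tau}(\Omega)$, since the Laplacian is bounded $H^2 \to L^2$ and commutes with rotations, hence preserves axisymmetry; and $(\mathcal{A}^\lambda_1 - \Delta)h \in L^{2,\tau}(\Omega)$ by part (ii) of the previous lemma. For $\mathcal{A}^\lambda_2$, decompose $\mathcal{A}^\lambda_2 = (\lambda^2 - \Delta + \frac{1}{r^2}) + \mathcal{R}^\lambda$, where $\mathcal{R}^\lambda h = -\sum_\pm \int_{\mathbb{R}^3} \hat{v}_\varphi\big(\mu^\pm_p r h + \mu^\pm_e \mathcal{Q}^\pm_\lambda(\hat{v}_\varphi h)\big)\,dv$ is bounded $L^{2,\tau}(\Omega) \to L^{2,\tau}(\Omega)$, exactly as in the previous lemma (using part (i)). The term $\lambda^2 h$ lies in $L^{2,\tau}(\Omega)$ since $\mathcal{X} \subset L^2$; and for the singular part, $h \in \mathcal{X} \subset H^{2\dagger}(\Omega)$ means $h e^{i\varphi} \in H^2(\Omega)$, hence $-\Delta(h e^{i\varphi}) \in L^2(\Omega)$, and by \eqref{Deltagexp} this equals $\big((-\Delta + \frac{1}{r^2})h\big)e^{i\varphi}$; since $|e^{i\varphi}| \equiv 1$ we conclude $(-\Delta + \frac{1}{r^2})h \in L^{2,\tau}(\Omega)$. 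Summing, $\mathcal{A}^\lambda_2 h \in L^{2,\tau}(\Omega)$.

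For (ii) and (iii) the pattern is the same. Write $\tilde{\mathcal{S}}^\lambda = (-\lambda^2 + \Delta) + (\tilde{\mathcal{S}}^\lambda + \lambda^2 - \Delta)$: for $\tilde{\textbf{h}} \in \tilde{\mathcal{Y}} \subset H^{2,\tau}(\Omega;\tilde{\mathbb{R}^2})$ the principal part lies in $L^{2,\tau}(\Omega;\tilde{\mathbb{R}^2})$ by the mapping property of the (vector) Laplacian — in Cartesian components it acts componentwise, $H^2 \to L^2$, while any $1/r^2$ curvature term appearing in the cylindrical-frame expression is absorbed exactly as in (i) via \eqref{Deltagexp} — and the remainder is bounded into $L^{2,\tau}(\Omega;\tilde{\mathbb{R}^2})$ by part (iii) of the previous lemma. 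Write $\tilde{\mathcal{T}}^\lambda_1 = -\lambda\nabla + (\tilde{\mathcal{T}}^\lambda_1 + \lambda\nabla)$: for $h \in \mathcal{X}_1 \subset H^{1,\tau}(\Omega)$ one has $\nabla h = (\partial_r h)e_r + (\partial_z h)e_z \in L^{2,\tau}(\Omega;\tilde{\mathbb{R}^2})$, and $\tilde{\mathcal{T}}^\lambda_1 + \lambda\nabla$ is bounded $L^{2,\tau}(\Omega) \to L^{2,\tau}(\Omega;\tilde{\mathbb{R}^2})$ by part (iv) of the previous lemma. The divergence-free and boundary constraints defining $\tilde{\mathcal{Y}}$ and $\mathcal{X}_1$ — like the Dirichlet condition in $\mathcal{X}$ — play no role for well-definedness; they merely pin down the domains on which adjointness and self-adjointness will be established later.

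The only step beyond bookkeeping is the singular term $\frac{1}{r^2}$ in $\mathcal{A}^\lambda_2$ (and potentially in $\tilde{\mathcal{S}}^\lambda$) when $\Omega$ touches the $z$-axis; this is precisely the difficulty the spaces $H^{k\dagger}$ were designed to neutralise, via \eqref{Deltagexp}, so that one never needs $\Delta h$ and $h/r^2$ separately in $L^2$, but only their combination $(-\Delta + \frac{1}{r^2})h = e^{-i\varphi}\big(-\Delta(h e^{i\varphi})\big)$, which is an honest $L^2$ function by elliptic regularity for $h e^{i\varphi}$.
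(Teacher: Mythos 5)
Your proposal is correct and is essentially the argument the paper intends: the lemma is stated as an immediate consequence of the preceding boundedness lemma, obtained precisely by splitting each operator into its differential principal part (mapped into $L^{2,\tau}$ by the regularity built into $\mathcal{X}$, $\tilde{\mathcal{Y}}$, $\mathcal{X}_1$, with the $1/r^2$ term absorbed via \eqref{Deltagexp} and the space $H^{2\dagger}$) plus the remainder already shown to be bounded. Your write-up fills in the routine details the paper leaves implicit, but the route is the same.
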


For the proof of the adjointness of the operators, we begin with an adjoint formula for $\mathcal{Q}^\pm_\lambda$:

\begin{lemma}
For all $h(x,v)$, $g(x, v)$ that are independent of the $\varphi$-component of $x$, we have
\begin{equation}
\int_\Omega \int_{\mathbb{R}^3} \mu^\pm_e h (x,v) \mathcal{Q}^\pm_\lambda (g (x,v)) dv dx = \int_\Omega \int_{\mathbb{R}^3} \mu^\pm_e g (x,\mathcal{R} v) \mathcal{Q}^\pm_\lambda (h (x, \mathcal{R} v)) dv dx
\end{equation}
where $\mathcal{R} v :=  -v_r e_r - v_z e_z + v_\varphi  e_\varphi  $ for $v = v_r e_r + v_z e_z + v_\varphi  e_\varphi   $, $g$, $h \in \mathcal{H}^\pm$. 
\end{lemma}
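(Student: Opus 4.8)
The plan is to prove the adjoint formula by unwinding the definition of $\mathcal{Q}^\pm_\lambda$, performing the change of variables along the particle trajectories provided by the Liouville-type lemma, and then exploiting the time-reversal symmetry of the trajectory ODE \eqref{particletrajectoryODE}-\eqref{particlereflection}. Concretely, I would start from
$$
\int_\Omega \int_{\mathbb{R}^3} \mu^\pm_e h(x,v)\, \mathcal{Q}^\pm_\lambda(g)(x,v)\, dv\, dx
= \int^0_{-\infty} \lambda e^{\lambda s} \int_\Omega \int_{\mathbb{R}^3} \mu^\pm_e h(x,v)\, g\big(X^\pm(s;x,v), V^\pm(s;x,v)\big)\, dv\, dx\, ds \ ,
$$
which is justified by Fubini together with the $\mathcal{H}^\pm$ bound on $\mathcal{Q}^\pm_\lambda$ from the previous lemma. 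In the inner integral I would substitute $(y,w) = (X^\pm(s;x,v), V^\pm(s;x,v))$; by the change-of-variable formula \eqref{changeofvariable} the Jacobian is $1$, and since $\mu^\pm_e$ is a function of the invariants $e^\pm, p^\pm$ it is constant along trajectories, so $\mu^\pm_e(x,v) = \mu^\pm_e(y,w)$. Writing $(x,v) = (X^\pm(-s;y,w), V^\pm(-s;y,w))$, the inner integral becomes $\int_\Omega\int_{\mathbb{R}^3} \mu^\pm_e\, h\big(X^\pm(-s;y,w), V^\pm(-s;y,w)\big)\, g(y,w)\, dw\, dy$.

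The next step is to rewrite the backward-in-time trajectory $(X^\pm(-s;y,w), V^\pm(-s;y,w))$ in terms of a forward-in-time trajectory, using the reversibility of the system. The ODE \eqref{particletrajectoryODE} has the symmetry that if $(X(\sigma), V(\sigma))$ is a solution then $(X(-\sigma), -V(-\sigma))$ is also a solution (because $\hat v$ is odd in $v$ and $\hat v \times \mathbf{B}^0$ is even), and one checks that the specular reflection rule \eqref{particlereflection} is compatible with this sign flip. Hence $X^\pm(-s;y,w) = X^\pm(s;y,-w)$ and $V^\pm(-s;y,w) = -V^\pm(s;y,-w)$. However, the statement involves $\mathcal{R}v = -v_r e_r - v_z e_z + v_\varphi e_\varphi$, not $-v$; the discrepancy in the $\varphi$-component is exactly the point where the axisymmetry enters. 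Since all functions involved are $\varphi$-independent in $x$, the relevant trajectory lives effectively in the reduced $(r,z,v_r,v_\varphi,v_z)$ phase space, where the conserved angular momentum $p^\pm = r(v_\varphi \pm A^0_\varphi)$ forces the sign of $v_\varphi$ to be irrelevant in the following sense: flipping $(v_r, v_z) \to (-v_r,-v_z)$ while keeping $v_\varphi$ fixed produces the time-reversed trajectory, because the azimuthal motion is reconstructed from $p^\pm$ and is insensitive to the overall time direction once the poloidal velocity is reversed. I would make this precise by checking directly that $\sigma \mapsto (r(-\sigma), z(-\sigma), -v_r(-\sigma), v_\varphi(-\sigma), -v_z(-\sigma))$ solves the reduced trajectory equations with the reflected initial data.

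Granting this, the backward trajectory evaluated at initial data $(y,w)$ equals the forward trajectory evaluated at $(y,\mathcal{R}w)$ with its poloidal velocity reversed again, so that $h(X^\pm(-s;y,w), V^\pm(-s;y,w)) = h(X^\pm(s;y,\mathcal{R}w), V^\pm(s;y,\mathcal{R}w))$ after using once more that $h$ is even in $v_n$ on the boundary (the specular property) and, crucially, relabeling via $w \mapsto \mathcal{R}w$ in the outer integral, which is measure-preserving and leaves $\mu^\pm_e$ invariant since $e^\pm$ and $p^\pm$ are invariant under $v \mapsto \mathcal{R}v$. Reassembling, the $s$-integral reconstitutes $\mathcal{Q}^\pm_\lambda(h(x,\mathcal{R}v))$ and one lands on the right-hand side of the claimed identity. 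I expect the main obstacle to be the bookkeeping in the third paragraph: correctly identifying which reflection ($-v$ versus $\mathcal{R}v$) implements time reversal in the reduced axisymmetric phase space, and verifying that the specular reflection rule does not introduce spurious sign errors at the boundary crossings. Once the reversibility identity $X^\pm(-s;y,w) = X^\pm(s;y,\mathcal{R}w)$, $V^\pm(-s;y,w) = \mathcal{R}\,V^\pm(s;y,\mathcal{R}w)$ is nailed down, the rest is the routine Fubini/change-of-variables manipulation sketched above.
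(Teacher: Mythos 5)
Your proposal follows essentially the same route as the paper's proof: Fubini plus the Jacobian-one change of variables along trajectories, the key reversibility identity $X^\pm_{r,z}(-s;y,w)=X^\pm_{r,z}(s;y,\mathcal{R}w)$, $V^\pm(-s;y,w)=\mathcal{R}V^\pm(s;y,\mathcal{R}w)$ verified by a direct computation on the reduced axisymmetric system, and the final relabeling $w\mapsto\mathcal{R}w$ using that $e^\pm$, $p^\pm$ (hence $\mu^\pm_e$) are $\mathcal{R}$-invariant and that $g$, $h$ are $\varphi$-independent. One caveat: your intermediate assertion that $(X(-\sigma),-V(-\sigma))$ solves \eqref{particletrajectoryODE} is false when $\textbf{B}^0\neq 0$ (the magnetic force is even in $v$, so full velocity reversal flips its sign relative to $\dot V$), but this does not matter since you discard it in favor of the correct $\mathcal{R}$-symmetry, which is exactly the one the paper uses.
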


\begin{proof}

We do the change of variables, which has Jacobian $1$:
$$
(y, w) := (X^\pm (s;x,v) , V^\pm (s;x,v)  ), (x, v) := (X^\pm (-s; y, w), V^\pm (-s; y, w))  \ .
$$
Therefore the left side of the desired formula can be written as
\begin{equation}
\begin{split}
& \int^0_{-\infty} \int_\Omega \int_{\mathbb{R}^3} \lambda e^{\lambda s} \mu_e^\pm h(x,v) g(X^\pm (s;x,v), V^\pm (s;x,v)  ) dv dx ds  \\
& = \int^0_{-\infty} \int_\Omega \int_{\mathbb{R}^3} \lambda e^{\lambda s} \mu_e^\pm h(X^\pm (-s;y,w), V^\pm (-s;y,w)  ) g(y,w) dw dy ds   \ .  \\
\end{split}
\end{equation}
Note that the ODE for the characteristics and the specular condition are invariant under the transformation $s \rightarrow -s$, $r \rightarrow r$, $\varphi \rightarrow \varphi$, $z \rightarrow z$, $v_r \rightarrow -v_r$, $v_z \rightarrow -v_z$, $v_\varphi \rightarrow v_\varphi$. Let $X_{r, z}$ be the projection on $(r, z)$-plane of $X$. Note that $\textbf{E}^0$ and $\textbf{B}^0$ has no $e_\varphi$-component. For $x \notin \partial \Omega$, a direct computation gives 
$$
X^\pm_{r,z} (-s;x,v) = X^\pm_{r,z} (s;x, \mathcal{R}v ) , V^\pm (-s;x,v) = \mathcal{R} V^\pm (s;x, \mathcal{R}v )   \ .
$$
Hence
\begin{equation}
\begin{split}
& \int^0_{-\infty} \int_\Omega \int_{\mathbb{R}^3} \lambda e^{\lambda s} \mu_e^\pm h(x,v) g(X^\pm (s;x,v), V^\pm (s;x,v)  ) dv dx ds  \\
& = \int^0_{-\infty} \int_\Omega \int_{\mathbb{R}^3} \lambda e^{\lambda s} \mu_e^\pm h(X^\pm (s;y, \mathcal{R} w), \mathcal{R} V^\pm (-s;y, \mathcal{R} w)  ) g(y,w) dw dy ds \\
& = \int^0_{-\infty} \int_\Omega \int_{\mathbb{R}^3} \lambda e^{\lambda s} \mu_e^\pm h (X^\pm (s;x,v), \mathcal{R} V^\pm (s;x,v)  ) g (x, \mathcal{R} v)  dv dx ds    \\
\end{split}
\end{equation}
since $h(x,v)$, $g(x, v)$ that are independent of the $\varphi$-component of $x$($X$). This gives the formula we want. 

\end{proof}

Now we have 

\begin{lemma}  \label{OperatorAdjointness}
Let $0 < \lambda < +\infty$, then
\vskip 0.1 cm
(i) $\mathcal{A}^\lambda_1$ and $\mathcal{A}^\lambda_2$ are self-adjoint operators on  $\mathcal{X}$.
\vskip 0.1 cm 
(ii) $\tilde{\mathcal{S}}^\lambda$ is self-adjoint on $L^{2, \tau} (\Omega; \tilde{\mathbb{R}}^2)$ with domain $\tilde{\mathcal{Y}}_1 = \{  \tilde{\textbf{h}} \in H^{2, \tau} (\Omega; \tilde{\mathbb{R}^2}) :  \tilde{r} \tilde{z}' \partial_r h_n -  \tilde{r} \tilde{r}' \partial_z h_n + \tilde{z}' h_n =0 , h_{tg} =0, \forall x \in \partial \Omega   \}$.
\vskip 0.1 cm
(iii) The adjoint of $\tilde{\mathcal{T}}^\lambda_1$, $\tilde{\mathcal{T}}^\lambda_2$, $\mathcal{B}^\lambda$ are as stated before. $( \tilde{\mathcal{T}}^\lambda_1 )^* $, $(\tilde{\mathcal{T}}^\lambda_2  )^* $, $( \mathcal{B}^\lambda )^*$ are well-defined on $\{ \tilde{\textbf{h}} \in L^{2, \tau} (\Omega; \tilde{\mathbb{R}}^2) : \nabla \cdot \tilde{\textbf{h}} =0  \} $, $ L^{2, \tau} (\Omega; \tilde{\mathbb{R}}^2 ) $ and $L^{2, \tau} (\Omega)$, respectively. $(\tilde{\mathcal{T}}^\lambda_2  )^* $, $( \mathcal{B}^\lambda )^*$, and $( \tilde{\mathcal{T}}^\lambda_1 + \lambda \nabla )^*  $ are bounded operators. 
\end{lemma}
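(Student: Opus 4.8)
The plan is to split each operator into a \emph{differential part} — a Laplacian-type operator that carries the domain and boundary conditions — and a \emph{kinetic part} — the $v$-integrals built from $\mu^\pm_e$, $\mu^\pm_p$ and $\mathcal{Q}^\pm_\lambda$, which by the preceding boundedness lemma is a bounded operator on the relevant $L^{2,\tau}$ space. For the diagonal operators $\mathcal{A}^\lambda_1$, $\mathcal{A}^\lambda_2$, $\tilde{\mathcal{S}}^\lambda$ it then suffices to show that the differential part is self-adjoint on the stated domain and that the kinetic part is bounded and symmetric, since a bounded symmetric perturbation changes neither. For the off-diagonal operators $\mathcal{B}^\lambda$, $\tilde{\mathcal{T}}^\lambda_1$, $\tilde{\mathcal{T}}^\lambda_2$ it suffices to verify the claimed identity $\langle \mathcal{B}^\lambda h_1, h_2\rangle_{L^2} = \langle h_1, (\mathcal{B}^\lambda)^* h_2\rangle_{L^2}$ (and the analogues) on smooth functions and extend by boundedness. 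The whole argument parallels the earlier lemma on the $\lambda=0$ operators, the only structural change being that $\mathcal{P}^\pm$ is self-adjoint while $\mathcal{Q}^\pm_\lambda$ only satisfies the twisted adjoint formula of the previous lemma, in which $\mathcal{R}v = -v_r e_r - v_z e_z + v_\varphi e_\varphi$ is inserted.

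For the differential parts: the differential part of $\mathcal{A}^\lambda_1$ is the Dirichlet Laplacian on $\mathcal{X}$, which is classically self-adjoint; the differential part of $\mathcal{A}^\lambda_2$ is $-\Delta + \frac{1}{r^2}$, which, via the identity \eqref{Deltagexp} and the isometry $g \mapsto g e^{i\varphi}$ from $L^{2,\tau}(\Omega)$ onto the $m=1$ Fourier mode $\{g e^{i\varphi}\} \subset L^2(\Omega)$ (a reducing subspace for the Dirichlet Laplacian, since $\Omega$ is axisymmetric), is unitarily equivalent to the Dirichlet Laplacian restricted to that subspace, hence self-adjoint on $\mathcal{X} = H^{2,\tau} \cap H^{2\dagger}$ exactly as in the $\lambda=0$ case. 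The differential part of $\tilde{\mathcal{S}}^\lambda$ is $\Delta$ acting on $\tilde{\textbf{h}}$ with the boundary conditions defining $\tilde{\mathcal{Y}}_1$; here the key computation is that, since $\partial_n = (\tilde{z}'\partial_r - \tilde{r}'\partial_z)/\sqrt{\tilde{z}'^2 + \tilde{r}'^2}$, the Robin condition $\tilde{r}\tilde{z}'\partial_r h_n - \tilde{r}\tilde{r}'\partial_z h_n + \tilde{z}' h_n = 0$ is precisely $\partial_n h_n = -\frac{\tilde{z}'}{\tilde{r}\sqrt{\tilde{z}'^2 + \tilde{r}'^2}} h_n$, so after two integrations by parts the boundary integrand $\partial_n h_{1,n} h_{2,n} - h_{1,n}\partial_n h_{2,n}$ vanishes pointwise, while the tangential part of the boundary term dies because $h_{tg} = 0$; at the finitely many points where $\partial\Omega$ meets the $z$-axis one has $\tilde{z}' = 0$ by \eqref{curveconstraint}, so the boundary contributions near the axis are harmless.

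For the kinetic parts: the multiplication terms such as $h \mapsto r\int_{\mathbb{R}^3}\hat{v}_\varphi \mu^\pm_p \, dv \, h$ are multiplication by a bounded real function (because $\Omega$ is bounded and $\sup_x\int|\mu^\pm_p|\,dv < \infty$), hence symmetric; the $\mathcal{Q}^\pm_\lambda$-terms are handled by the twisted adjoint formula, using that $\mathcal{R}$ fixes $\hat{v}_\varphi$ while it flips $\tilde{v}$. Thus in $\mathcal{A}^\lambda_2$ the term $\int \hat{v}_\varphi \mu^\pm_e \mathcal{Q}^\pm_\lambda(\hat{v}_\varphi h)\,dv$ is symmetric because the $\hat{v}_\varphi$ factors are untouched by $\mathcal{R}$, and in $\tilde{\mathcal{S}}^\lambda$ the vector kinetic term is symmetric because its $\tilde{v}/\langle v\rangle$-type factor changes sign \emph{twice} under $\mathcal{R}$. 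For the off-diagonal identities one runs the same computation and additionally uses $\partial_{v_\varphi}\mu^\pm = \mu^\pm_e \hat{v}_\varphi + r\mu^\pm_p$ from \eqref{epdefinition} together with $\int_{\mathbb{R}^3}\partial_{v_\varphi}\mu^\pm\,dv = 0$ to rewrite $\int \hat{v}_\varphi \mu^\pm_e\,dv = -r\int\mu^\pm_p\,dv$; this is exactly what produces the $\mu^\pm_p r h$ terms in $(\mathcal{B}^\lambda)^*$, $(\tilde{\mathcal{T}}^\lambda_1)^*$, $(\tilde{\mathcal{T}}^\lambda_2)^*$, while the $\lambda\nabla\cdot\tilde{\textbf{h}}$ term in $(\tilde{\mathcal{T}}^\lambda_1)^*$ comes from integrating $-\lambda\nabla$ by parts against the divergence-free and boundary conditions of $\tilde{\mathcal{Y}}$. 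Boundedness of $(\mathcal{B}^\lambda)^*$, $(\tilde{\mathcal{T}}^\lambda_2)^*$ and $(\tilde{\mathcal{T}}^\lambda_1 + \lambda\nabla)^*$, uniformly in $\lambda$, then follows from $\|\mathcal{Q}^\pm_\lambda\|_{\mathcal{H}^\pm \to \mathcal{H}^\pm} \le 1$ and $\sup_x\int(|\mu^\pm_e| + |\mu^\pm_p|)\,dv < \infty$.

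The main obstacle I expect is the self-adjointness — not merely symmetry — of $\tilde{\mathcal{S}}^\lambda$ on $\tilde{\mathcal{Y}}_1$: one has to check that the vector Laplacian reduces on $\varphi$-independent fields to an operator to which the above boundary cancellation applies (the $r$-component picks up a $\frac{1}{r^2}$-type contribution that must be tracked as in \eqref{Deltagexp}), and that the mixed Robin–Dirichlet realization is a genuinely self-adjoint, not just symmetric, elliptic operator, which one gets from standard elliptic theory once the boundary bilinear form is shown to vanish and the resolvent is seen to land in $H^{2,\tau}$ — together with the usual care near the $z$-axis. Everything else is bookkeeping with integration by parts and the $\mathcal{Q}^\pm_\lambda$ adjoint formula.
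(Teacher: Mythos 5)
Your proposal is correct and follows essentially the same route as the paper: symmetry of the kinetic terms via the twisted $\mathcal{Q}^\pm_\lambda$ adjoint formula (using that $\mathcal{R}$ fixes $\hat v_\varphi$ and flips $\tilde v$), the identity $\int_{\mathbb{R}^3}\partial_{v_\varphi}\mu^\pm\,dv=0$ to produce the $r\mu^\pm_p$ terms in the off-diagonal adjoints, and integration by parts with the Dirichlet/Robin boundary conditions (plus the observation that $dS_x$ absorbs the $1/\tilde r$ near the axis) for the differential parts. Your explicit ``self-adjoint differential part plus bounded symmetric perturbation'' framing is just a cleaner packaging of what the paper does implicitly, so no substantive difference.
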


\begin{proof}
For $h$, $g \in L^{2, \tau} (\Omega)$ (note that they are just functions of $x$), we have
\begin{equation}
\la ( \mathcal{A}^\lambda_1 - \Delta) h, g \ra_{L^2} = \sum_\pm \la (1- \mathcal{Q}^\pm_\lambda) h, g \ra_{\mathcal{H}^\pm} = \sum_\pm \la h, (1- \mathcal{Q}^\pm_\lambda)  g \ra_{\mathcal{H}^\pm} = \la h, ( \mathcal{A}^\lambda_1 - \Delta)  g \ra_{L^2}   \ ,
\end{equation}
\begin{equation}
\begin{split}
\la ( \mathcal{A}^\lambda_2 + \Delta) h, g \ra_{L^2} 
& = \la \frac{1}{r^2  } h, g \ra_{L^2}   -\sum_\pm \int_\Omega \int_{\mathbb{R}^3} \hat{v}_\varphi ( \mu^\pm_e r  h g + \mu^\pm_e \mathcal{Q}^\pm_\lambda (\hat{v}_\varphi h ) g ) dv dx \\
& =  \la \frac{1}{r^2 } g, h \ra_{L^2} -\sum_\pm \int_\Omega \int_{\mathbb{R}^3} \hat{v}_\varphi ( \mu^\pm_e r  h g + \mu^\pm_e \mathcal{Q}^\pm_\lambda (\hat{v}_\varphi g ) h ) dv dx \\
& = \la ( \mathcal{A}^\lambda_2 + \Delta) g, h \ra_{L^2}   \ .
\end{split}
\end{equation}
From this we obtain that $\mathcal{A}^\lambda_1 -\Delta $ and $\mathcal{A}^\lambda_2 +\Delta$ are self-adjoint. Since we have the Dirichlet boundary condition, it then follows that $\mathcal{A}^\lambda_1 $ and $\mathcal{A}^\lambda_2$ are self-adjoint on $\mathcal{X}$.

The adjointness of $\mathcal{B}^\lambda$ and $(\mathcal{B}^\lambda )^* $ also follows from the previous lemma because
\begin{equation}
\begin{split}
\la \mathcal{B}^\lambda  h, g  \ra_{L^2} 
& =  - \sum_\pm\int_\Omega \int_{\mathbb{R}^3}  \hat{v}_\varphi \mu^\pm_e (1- \mathcal{Q}^\pm_\lambda) h \cdot g dv dx \\
& =  \sum_\pm \int_{\mathbb{R}^3}  h ( \mu^\pm_e r  g + \mu^\pm_e \mathcal{Q}^\pm_\lambda ( \hat{v}_\varphi g ) )  dv dx   \ . \\
\end{split}
\end{equation}

For $\tilde{\mathcal{T}}^\lambda_1$ and $\tilde{\mathcal{T}}^\lambda_2$, we have ($h \in \mathcal{X}_1$, $\tilde{\textbf{g}} \in L^2 (\Omega; \tilde{\mathbb{R}}^2) $)
$$
\la ( \tilde{\mathcal{T}}^\lambda_1 + \lambda \nabla  )h, \tilde{\textbf{g}}  \ra_{L^2} = \sum_\pm \la  \mathcal{Q}^\pm_\lambda h, \hat{v} \cdot \tilde{\textbf{g}} \ra_{\mathcal{H}^\pm} = - \sum_\pm \la h, \mathcal{Q}^\pm_\lambda (\hat{v} \cdot \tilde{\textbf{g}}) \ra_{\mathcal{H}^\pm } \ , 
$$
$$
\la \tilde{\mathcal{T}}^\lambda_2 h, \tilde{\textbf{g}}  \ra_{L^2} =  - \sum_\pm \la  \mathcal{Q}^\pm_\lambda (\hat{v}_\varphi h), \hat{v} \cdot \tilde{\textbf{g}} \ra_{\mathcal{H}^\pm} =  \sum_\pm \la h,  \hat{v}_\varphi \mathcal{Q}^\pm_\lambda (\hat{v} \cdot \tilde{\textbf{g}}) \ra_{\mathcal{H}^\pm } \ .
$$
From the Dirichlet boundary condition on $\phi$, there holds
$$
\la \nabla h, \tilde{\textbf{g}} \ra_{L^2} = \la h, \nabla \cdot \tilde{\textbf{g}} \ra_{L^2}  + \int_{\partial \Omega} h \tilde{\textbf{g}} \cdot e_r d S_x = - \la h, \nabla \cdot \tilde{\textbf{g}} \ra_{L^2} \ .
$$
Hence,
$$
\la  \tilde{\mathcal{T}}^\lambda_1 h, \tilde{\textbf{g}}  \ra_{L^2} = - \sum_\pm \la h, \mathcal{Q}^\pm_\lambda (\hat{v} \cdot \tilde{\textbf{g}}) \ra_{\mathcal{H}^\pm } + \lambda \la h, \nabla \cdot \tilde{\textbf{g}} \ra_{L^2} = \la   h, (\tilde{\mathcal{T}}^\lambda_1)^* \tilde{\textbf{g}}  \ra_{L^2}   \ ,
$$
$$
\la \tilde{\mathcal{T}}^\lambda_2 h, \tilde{\textbf{g}}  \ra_{L^2}  =  \sum_\pm \la h,  \hat{v}_\varphi \mathcal{Q}^\pm_\lambda (\hat{v} \cdot \tilde{\textbf{g}}) \ra_{\mathcal{H}^\pm } = \la h, ( \tilde{\mathcal{T}}^\lambda_2)^* \tilde{\textbf{g}}  \ra_{L^2}  \ .
$$
The adjointness of $\tilde{\mathcal{T}}^\lambda_1$ and $(\tilde{\mathcal{T}}^\lambda_1 )^*$, $\tilde{\mathcal{T}}^\lambda_2$ and $ (\tilde{\mathcal{T}}^\lambda_2 )^* $ are therefore verified. 

Also, since $\mathcal{Q}^\pm_\lambda$ is bounded with norm one, it follows that all these operators are bounded. 

For $\tilde{\mathcal{S}}^\lambda$, we have (let $\tilde{\textbf{h}} \in \tilde{\mathcal{Y}}_1$)
$$
\la (\tilde{\mathcal{S}}^\lambda - \Delta ) \tilde{\textbf{h}}, \tilde{\textbf{g}}  \ra_{L^2} = -\lambda^2 \la \tilde{\textbf{h}}, \tilde{\textbf{g}}\ra_{L^2} - \sum_\pm \la \mathcal{Q}^\pm_\lambda (\hat{v} \cdot \tilde{\textbf{h}} ), \hat{v} \cdot \tilde{\textbf{g}} \ra_{\mathcal{H}^\pm} \ . 
$$
Therefore $\tilde{\mathcal{S}}^\lambda - \Delta$ is self-adjoint on $\tilde{\mathcal{Y}}_1$. Recall that $\partial_n$ means taking derivative in the normal direction, and $g_r$, $g_z$, $h_r$, $h_z$ are the $r$, $z$-components of $\tilde{g}$, $\tilde{h} $. For the operator $\Delta$, if $\tilde{\textbf{h}} \in \tilde{\mathcal{Y}}_1$, we have
\begin{equation}
\begin{split} 
& \la  \Delta \tilde{\textbf{h}}, \tilde{\textbf{g}}  \ra_{L^2} -  \la \tilde{\textbf{h}}, \Delta \tilde{\textbf{g}}  \ra_{L^2}  \\
& = \int_{\partial \Omega} (\partial_n \tilde{\textbf{h}} \cdot \tilde{\textbf{g}} - \partial_n \tilde{\textbf{g}} \cdot \tilde{\textbf{h}} ) d S_x   \\ 
& =  \int_{\partial \Omega} (\partial_n h_n \cdot g_n - \partial_n g_n \cdot h_n + \partial_n h_{tg} \cdot g_{tg}  - \partial_n g_{tg} \cdot h_{tg} ) d S_x  \\ 
& =  \int_{\partial \Omega} ( - \frac{\tilde{z}'}{\tilde{r} \sqrt{\tilde{z}'^2 + \tilde{r}'^2}} h_n   g_n - \partial_n  g_n  h_n + \partial_n h_{tg} \cdot g_{tg}  - \partial_n g_{tg} \cdot h_{tg} ) d S_x   \ . \\ 
\end{split} 
\end{equation} 
This requires exactly that $\tilde{\textbf{g}}$ should also lie in $\tilde{\mathcal{Y}}_1$ if we want it to be zero for arbitrary $\tilde{\textbf{h}}$:
\begin{equation}
\begin{split} 
& \la  \Delta \tilde{\textbf{h}}, \tilde{\textbf{g}}  \ra_{L^2} -  \la \tilde{\textbf{h}}, \Delta \tilde{\textbf{g}}  \ra_{L^2}  \\
& =  \int_{\partial \Omega} ( - \frac{\tilde{z}'}{\tilde{r} \sqrt{ \tilde{z}'^2 + \tilde{r}'^2 }} h_n   g_n + \frac{\tilde{z}'}{\tilde{r} \sqrt{ \tilde{z}'^2 + \tilde{r}'^2 }}  g_n  h_n + \partial_n h_{tg} \cdot g_{tg}  - \partial_n g_{tg} \cdot h_{tg} ) d S_x  \\ 
& = 0   \ .\\ 
\end{split} 
\end{equation} 
Here the integral $\int_{\partial \Omega} \frac{\tilde{z}'}{\tilde{r} \sqrt{ \tilde{z}'^2 + \tilde{r}'^2 }} h_n g_n dS_x$ is finite, since $dS_x = r d\varphi dx_{tg} = \tilde{r} \sqrt{1 + \tilde{r}'^2/\tilde{z}'^2} d \varphi dz    $, which eliminates the $1/r$ factor. Also, 
$$
\int_{\partial \Omega} \partial_n g_{tg} \cdot h_{tg} dS_x = \lim_{\epsilon \rightarrow 0} \int_{x \in \partial \Omega, \epsilon \leq z \leq 1- \epsilon} \partial_n g_{tg} \cdot h_{tg} \tilde{r} \sqrt{1 + \tilde{r}'^2 / \tilde{z}'^2} d \varphi dz  =0 
$$
and the analogue happens to $\int_{\partial \Omega} \partial_n h_{tg} \cdot g_{tg} dS_x $.

Therefore we can conclude that  $\tilde{\mathcal{S}}^\lambda$ is self-adjoint.

\end{proof}

Next we study the signs of certain operators.

\begin{lemma}
(i) Let $0< \lambda < +\infty$. The operator $\mathcal{A}^\lambda_1$ is negative definite on $L^{2, \tau} (\Omega)$ with domain $\mathcal{X}_0$ and it is $1-1$ from $\mathcal{X}_0$ to $ L^{2, \tau} (\Omega)$. The inverse $ (\mathcal{A}^\lambda_1)^{-1}$ maps $L^{2, \tau} (\Omega)$ into $\mathcal{X}$ with operator bound independent of $\lambda$. Here $\mathcal{X}_0$ is the space consisting of scalar functions in $H^{2, \tau} (\Omega)$ which satisfy the Dirichlet boundary condition.
\vskip 0.1 cm
(ii) For $\lambda$ sufficiently large, $\tilde{\mathcal{S}}^\lambda$ as well as $\tilde{\mathcal{S}}^0$ are negative definite on $L^{2, \tau} (\Omega; \tilde{\mathbb{R}}^2) $ with domain $\tilde{\mathcal{Y}}$.   
\end{lemma}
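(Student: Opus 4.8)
\emph{Proof proposal.} For part (i) I would test $\mathcal{A}^\lambda_1$ against $h\in\mathcal{X}_0$. Using $\mu^\pm_e=-|\mu^\pm_e|$ and the Dirichlet condition, Green's identity gives
\[
\la \mathcal{A}^\lambda_1 h, h \ra_{L^2} = - \|\nabla h\|^2_{L^2} - \sum_\pm \la (1-\mathcal{Q}^\pm_\lambda) h, h \ra_{\mathcal{H}^\pm} \le - \|\nabla h\|^2_{L^2} \le - c_P^{-1} \|h\|^2_{L^2},
\]
the middle inequality coming from $\|\mathcal{Q}^\pm_\lambda\|_{\mathcal{H}^\pm\to\mathcal{H}^\pm}\le 1$ (so $\la(1-\mathcal{Q}^\pm_\lambda)h,h\ra_{\mathcal{H}^\pm}\ge0$) and the last from the Poincar\'e inequality, with $c_P^{-1}$ independent of $\lambda$. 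Together with the self-adjointness of $\mathcal{A}^\lambda_1$ already obtained in Lemma \ref{OperatorAdjointness} (equivalently, a Lax--Milgram argument, since $\mathcal{A}^\lambda_1-\Delta$ is bounded), this makes $\mathcal{A}^\lambda_1$ negative definite and one-to-one, with $\|(\mathcal{A}^\lambda_1)^{-1}\|_{L^{2,\tau}(\Omega)\to L^{2,\tau}(\Omega)}\le c_P$ uniformly in $\lambda$. To see the image lies in $\mathcal{X}$ with a $\lambda$-uniform bound, rewrite $-\Delta h = -\mathcal{A}^\lambda_1 h + \sum_\pm\int_{\mathbb{R}^3}\mu^\pm_e(1-\mathcal{Q}^\pm_\lambda)h\,dv$; the right-hand side is in $L^{2,\tau}(\Omega)$ with norm $\le\|\mathcal{A}^\lambda_1 h\|_{L^2}+C\|h\|_{L^2}$ ($C$ independent of $\lambda$ by the boundedness of $\mathcal{A}^\lambda_1-\Delta$), and standard Dirichlet elliptic regularity (cf.\ Lemma \ref{regularity}) closes it.

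For part (ii) I would test $\tilde{\mathcal{S}}^\lambda$ against $\tilde{\textbf{h}}\in\tilde{\mathcal{Y}}$. By Green's identity together with the boundary conditions built into $\tilde{\mathcal{Y}}$ ($h_{tg}=0$ and the Robin condition $\partial_n h_n=-\kappa h_n$, exactly as in the self-adjointness computation),
\[
\la \Delta \tilde{\textbf{h}}, \tilde{\textbf{h}} \ra_{L^2} = - \|\nabla \tilde{\textbf{h}}\|^2_{L^2} - \int_{\partial \Omega} \kappa\, h_n^2 \, dS_x, \qquad \kappa := \frac{\tilde{z}'}{\tilde{r}\sqrt{\tilde{z}'^2+\tilde{r}'^2}}.
\]
The key point is that, thanks to \eqref{curveconstraint}, $\kappa$ does not blow up as the generating curve meets the axis (away from the axis $\tilde{r}$ is bounded below, and at $\beta=0,1$ the quotient $\tilde{z}'/\tilde{r}$ stays finite), so $\kappa\in L^\infty(\partial\Omega)$ and the boundary term is dominated via the trace inequality: $\big|\int_{\partial\Omega}\kappa h_n^2\,dS_x\big|\le\|\kappa\|_{L^\infty}\|h_n\|^2_{L^2(\partial\Omega)}\le\frac{1}{2}\|\nabla\tilde{\textbf{h}}\|^2_{L^2}+C\|\tilde{\textbf{h}}\|^2_{L^2}$. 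The non-local part, paired with $\tilde{\textbf{h}}$ and rearranged by the adjoint identity for $\mathcal{Q}^\pm_\lambda$, equals $-\sum_\pm\la\hat{v}\cdot\tilde{\textbf{h}},\mathcal{Q}^\pm_\lambda(\hat{v}\cdot\tilde{\textbf{h}})\ra_{\mathcal{H}^\pm}$, whose absolute value is $\le\sum_\pm\|\hat{v}\cdot\tilde{\textbf{h}}\|^2_{\mathcal{H}^\pm}\le C\|\tilde{\textbf{h}}\|^2_{L^2}$ uniformly in $\lambda$. Collecting the three pieces,
\[
\la \tilde{\mathcal{S}}^\lambda \tilde{\textbf{h}}, \tilde{\textbf{h}} \ra_{L^2} \le - \lambda^2 \|\tilde{\textbf{h}}\|^2_{L^2} - \tfrac{1}{2}\|\nabla\tilde{\textbf{h}}\|^2_{L^2} + C\|\tilde{\textbf{h}}\|^2_{L^2} \le -(\lambda^2-C)\|\tilde{\textbf{h}}\|^2_{L^2},
\]
which is negative definite as soon as $\lambda^2>C$.

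For $\tilde{\mathcal{S}}^0$ the dominant $-\lambda^2$ term is absent, so one cannot afford to discard the sign of the non-local term. Instead I would keep it: in the $\lambda\to0$ limit $\mathcal{Q}^\pm_0=\mathcal{P}^\pm$ is an orthogonal projection, so the non-local contribution is exactly $-\sum_\pm\|\mathcal{P}^\pm(\hat{v}\cdot\tilde{\textbf{h}})\|^2_{\mathcal{H}^\pm}\le0$, leaving $\la\tilde{\mathcal{S}}^0\tilde{\textbf{h}},\tilde{\textbf{h}}\ra_{L^2}\le-\|\nabla\tilde{\textbf{h}}\|^2_{L^2}+\int_{\partial\Omega}|\kappa|h_n^2\,dS_x$. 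It then remains to absorb the boundary integral into the Dirichlet energy; here one combines the trace inequality with a Poincar\'e inequality $\|\tilde{\textbf{h}}\|_{L^2}\le C_P(\Omega)\|\nabla\tilde{\textbf{h}}\|_{L^2}$ on $\tilde{\mathcal{Y}}$, which holds because the gradient has trivial kernel on axisymmetric $\tilde{\mathbb{R}^2}$-valued divergence-free fields with vanishing tangential trace (Rellich compactness). I expect this last absorption to be the main obstacle: $\kappa$ is sign-indefinite on a general axisymmetric domain (already on a torus), so it does not come for free and one must weigh the trace constant against $C_P(\Omega)$ carefully; the smallness of $\kappa$ near the axis is precisely what lets the estimate close when $\Omega$ meets the $z$-axis.
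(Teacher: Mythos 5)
Your argument for (i) and for $\tilde{\mathcal{S}}^\lambda$ at large $\lambda$ is essentially the paper's: the same coercivity estimate $-\la\mathcal{A}^\lambda_1h,h\ra_{L^2}\ge\|\nabla h\|^2_{L^2}\ge c_P^{-1}\|h\|^2_{L^2}$ deduced from $\|\mathcal{Q}^\pm_\lambda\|_{\mathcal{H}^\pm\to\mathcal{H}^\pm}\le1$, and the same Green's-identity expansion of $-\la\tilde{\mathcal{S}}^\lambda\tilde{\textbf{h}},\tilde{\textbf{h}}\ra_{L^2}$ into $\lambda^2\|\tilde{\textbf{h}}\|^2_{L^2}+\|\nabla\tilde{\textbf{h}}\|^2_{L^2}+\int_{\partial\Omega}\frac{\tilde z'}{\tilde r\sqrt{\tilde z'^2+\tilde r'^2}}h_n^2\,dS_x$ plus a nonlocal term of size $O(\|\tilde{\textbf{h}}\|^2_{L^2})$ uniformly in $\lambda$. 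The only difference in (i) is cosmetic: the paper obtains invertibility from discreteness of the spectrum (relative compactness of $\mathcal{A}^\lambda_1-\Delta$ with respect to $\Delta$) together with triviality of the kernel, where you invoke Lax--Milgram plus elliptic regularity; both routes are standard and equivalent here.

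Two remarks on (ii). First, for $\tilde{\mathcal{S}}^0$ the paper writes exactly your identity, except that the nonlocal part is completed to the square $\sum_\pm\|\mathcal{P}^\pm(\hat v_rh_r)-\mathcal{P}^\pm(\hat v_zh_z)\|^2_{\mathcal{H}^\pm}$ rather than your $\sum_\pm\|\mathcal{P}^\pm(\hat v\cdot\tilde{\textbf{h}})\|^2_{\mathcal{H}^\pm}$ (the cross term acquires the opposite sign through the reflection $\mathcal{R}$ in the adjoint formula for $\mathcal{Q}^\pm_\lambda$, under which $\hat v_r,\hat v_z$ are odd); either way this contribution to $-\la\tilde{\mathcal{S}}^0\tilde{\textbf{h}},\tilde{\textbf{h}}\ra_{L^2}$ is a nonnegative square, which is all that is used. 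Second, the step you flag as the main obstacle --- absorbing the sign-indefinite boundary integral $\int_{\partial\Omega}\frac{\tilde z'}{\tilde r\sqrt{\tilde z'^2+\tilde r'^2}}h_n^2\,dS_x$ into the Dirichlet energy --- is not carried out in the paper either: the proof there only verifies that this integral is finite (the factor $\tilde r$ in $dS_x$ cancels the $1/\tilde r$, consistent with \eqref{curveconstraint}) and then asserts negativity, implicitly treating the term as harmless. Your concern is legitimate (already on a torus $\tilde z'$ changes sign along the inner half of the generating curve, so the term can be negative), and your proposed remedy --- a trace inequality weighed against a Poincar\'e inequality on $\tilde{\mathcal{Y}}$ --- is the natural way to close it; but neither you nor the paper performs that quantitative comparison. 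So the proposal matches the paper's argument step for step and is not missing anything the paper actually supplies; the one point you leave open is a point the paper glosses over.
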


\begin{proof}
(i) Since $\mathcal{Q}^\pm_\lambda$ is bounded with norm one, we have 
\begin{equation}
\begin{split}
& \sum_\pm  \int_\Omega \int_{\mathbb{R}^3} \mu^\pm_e h (1- \mathcal{Q}^\pm_\lambda) h dv \\
& =  \sum_\pm  \int_\Omega \int_{\mathbb{R}^3} \mu^\pm_e h^2 dv - \sum_\pm  \int_\Omega \int_{\mathbb{R}^3} \mu^\pm_e h  \mathcal{Q}^\pm_\lambda (h) dv   \\
&  \leq 0  \ .
\end{split}
\end{equation}
Therefore $\mathcal{A}^\lambda_1$ is negative definite. Note that $\| \mathcal{Q}^\pm_\lambda (h) \|_{\mathcal{H}^\pm}= \| h \|_{\mathcal{H}^\pm} $ iff $h = const.$,. Hence $\mathcal{A}^\lambda_1 h =0 $ iff $h = const.$. But we have the Dirichlet boundary condition, so $\mathcal{A}^\lambda_1 h =0 $ iff $h = 0$. $\mathcal{A}^\lambda_1$ has discrete spectrum since it is relatively compact with respect to $\Delta$, and it has trivial kernel. Therefore it is invertible.

(ii) For $\tilde{\textbf{h}} = h_r e_r+ h_z e_z  \in \tilde{\mathcal{Y}}$, we have
\begin{equation}
\begin{split}
- \la \tilde{\mathcal{S}}^\lambda \tilde{\textbf{h}} , \tilde{\textbf{h}} \ra_{L^2} 
& = \la (\lambda^2 -\Delta) \tilde{\textbf{h}} , \tilde{\textbf{h}} \ra_{L^2 } \\
&  +\sum_\pm (  \la \mathcal{Q}^\pm_\lambda (\hat{v}_r h_r), \hat{v}_r h_r \ra_{\mathcal{H}^\pm}  + \la \mathcal{Q}^\pm_\lambda (\hat{v}_z h_z), \hat{v}_z h_z \ra_{\mathcal{H}^\pm} -2  \la \mathcal{Q}^\pm_\lambda (\hat{v}_r h_r), \hat{v}_z h_z \ra_{\mathcal{H}^\pm}  )  \ .
\end{split}
\end{equation}
Using the constraints on the space $\tilde{\mathcal{Y}}$, we compute
\begin{equation}
\begin{split}
& \la (\lambda^2 -\Delta) \tilde{\textbf{h}} , \tilde{\textbf{h}} \ra_{L^2 } \\
& =\lambda^2 \|  \tilde{\textbf{h}}  \|^2_{L^2} + \| \nabla  \tilde{\textbf{h}} \|^2_{L^2} - \int_{\partial \Omega} \partial_ n ( h_n e_n + h_{tg} e_{tg}) \cdot  ( h_n e_n + h_{tg} e_{tg}) dS_x \\
& =\lambda^2 \|  \tilde{\textbf{h}}  \|^2_{L^2} + \| \nabla  \tilde{\textbf{h}} \|^2_{L^2} + \int_{\partial \Omega}  \frac{\tilde{z}'}{\tilde{r} \sqrt{\tilde{z}'^2 + \tilde{r}'^2}} h_n^2 dS_x   \ . \\
\end{split}
\end{equation}
Hence we have
\begin{equation}
\begin{split}
- \la \tilde{\mathcal{S}}^\lambda \tilde{\textbf{h}} , \tilde{\textbf{h}} \ra_{L^2} 
& = \lambda^2 \|  \tilde{\textbf{h}}  \|^2_{L^2} + \| \nabla  \tilde{\textbf{h}} \|^2_{L^2} + \int_{\partial \Omega}  \frac{\tilde{z}'}{\tilde{r} \sqrt{\tilde{z}'^2 + \tilde{r}'^2}} h_n^2 dS_x  \\
&  +\sum_\pm (  \la \mathcal{Q}^\pm_\lambda (\hat{v}_r h_r), \hat{v}_r h_r \ra_{\mathcal{H}^\pm}  + \la \mathcal{Q}^\pm_\lambda (\hat{v}_z h_z), \hat{v}_z h_z \ra_{\mathcal{H}^\pm} -2  \la \mathcal{Q}^\pm_\lambda (\hat{v}_r h_r), \hat{v}_z h_z \ra_{\mathcal{H}^\pm}   ) \\
& \geq \lambda^2 \|  \tilde{\textbf{h}}  \|^2_{L^2} + \| \nabla  \tilde{\textbf{h}} \|^2_{L^2} + \int_{\partial \Omega}  \frac{\tilde{z}'}{\tilde{r} \sqrt{\tilde{z}'^2 + \tilde{r}'^2}} h_n^2 dS_x - C  \|  \tilde{\textbf{h}}  \|^2_{L^2}    
\end{split}
\end{equation}
where $C$ only depends on $ \mu^\pm_e$. Therefore if $\lambda >0$ is large enough, $\la \tilde{\mathcal{S}}^\lambda \tilde{\textbf{h}} , \tilde{\textbf{h}} \ra_{L^2} \leq 0$, so that $\tilde{\mathcal{S}}^\lambda$ is negative definite.  

As for $\tilde{\mathcal{S}}^0$, we compute  
\begin{equation}
\begin{split}
- \la \mathcal{S}^0 \tilde{\textbf{h}} , \tilde{\textbf{h}} \ra_{L^2} 
& = \la ( -\Delta) \tilde{\textbf{h}} , \tilde{\textbf{h}} \ra_{L^2 } \\
&  +\sum_\pm (  \la \mathcal{P}^\pm (\hat{v}_r h_r), \hat{v}_r h_r \ra_{\mathcal{H}^\pm}  + \la \mathcal{P}^\pm (\hat{v}_z h_z), \hat{v}_z h_z \ra_{\mathcal{H}^\pm} -2  \la \mathcal{P}^\pm (\hat{v}_r h_r), \hat{v}_z h_z \ra_{\mathcal{H}^\pm}   ) \\
& =  \| \nabla  \tilde{\textbf{h}} \|^2_{L^2} + \int_{\partial \Omega}  \frac{\tilde{z}'}{\tilde{r} \sqrt{\tilde{z}'^2 +\tilde{r}'^2}} h_n^2 dS_x  \\
&  +\sum_\pm (  \| \mathcal{P}^\pm ( \hat{v}_r h_r)\|^2_{\mathcal{H}^\pm}  +  \| \mathcal{P}^\pm ( \hat{v}_z h_z)\|^2_{\mathcal{H}^\pm}  - 2 \la \mathcal{P}^\pm (\hat{v}_r h_r), \mathcal{P}^\pm ( \hat{v}_z h_z )  \ra_{\mathcal{H}^\pm}  ) \\
& = \| \nabla  \tilde{\textbf{h}} \|^2_{L^2} + \int_{\partial \Omega}  \frac{\tilde{z}'}{\tilde{r} \sqrt{\tilde{z}'^2 +\tilde{r}'^2}} h_n^2 dS_x  \\
&  +\sum_\pm  \| \mathcal{P}^\pm ( \hat{v}_r h_r) - \mathcal{P}^\pm ( \hat{v}_z h_z)\|^2_{\mathcal{H}^\pm}  \ . \\
\end{split}
\end{equation}
Here the integral $\int_{\partial \Omega} \frac{\tilde{z}'}{\tilde{r} \sqrt{\tilde{z}'^2 +\tilde{r}'^2}} h_n^2  dS_x$ is finite, since $dS_x = r d\varphi dx_{tg} = \tilde{r} \sqrt{ 1+ \tilde{r}'^2/\tilde{z}'^2} d \varphi dz   $, which eliminates the $1/r$ factor. Also, 
$$
\int_{\partial \Omega} \partial_n h_{tg} \cdot h_{tg} dS_x = \lim_{\epsilon \rightarrow 0} \int_{x \in \partial \Omega, \epsilon \leq z \leq 1- \epsilon} \partial_n h_{tg} \cdot h_{tg} \tilde{r} \sqrt{ 1+ \tilde{r}'^2/\tilde{z}'^2} d \varphi dz  =0
$$
from the Dirichlet boundary condition. Therefore $\tilde{\mathcal{S}}^0$ is also negative definite.  

\end{proof}

We introduce a lemma on the behavior of $\mathcal{Q}^\pm_\lambda$ when $\lambda =0 $ and $\lambda \rightarrow +\infty$. 

\begin{lemma}
Let $\lambda$, $ \mu >0 $, $g \in \mathcal{H}^\pm$, then
\vskip 0.1 cm
(i) $\lim_{\lambda \rightarrow 0^+} \|  \mathcal{Q}^\pm_\lambda g - \mathcal{P}^\pm g  \|_{\mathcal{H}^\pm} =0 $.
\vskip 0.1 cm
(ii) $\lim_{\lambda \rightarrow +\infty} \|  \mathcal{Q}^\pm_\lambda g -  g  \|_{\mathcal{H}^\pm} =0 $.
\vskip 0.1 cm
(iii) $ \|  \mathcal{Q}^\pm_\lambda  - \mathcal{Q}^\pm_\mu   \|_{\mathcal{H}^\pm \rightarrow \mathcal{H}^\pm}  \leq 2 | \log \lambda - \log \mu | $.
\end{lemma}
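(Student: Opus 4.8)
The plan is to recognize $\mathcal{Q}^\pm_\lambda$ as a rescaled resolvent of the skew-adjoint generator $D^\pm$, and then to read off all three statements from standard one-parameter semigroup facts together with one explicit elementary integral for part (iii). First I would set up the group picture: for a.e.\ $(x,v)$ the trajectory flow $\Phi^\pm_s:(x,v)\mapsto(X^\pm(s;x,v),V^\pm(s;x,v))$ is defined for all $s\in\mathbb{R}$, preserves Lebesgue measure with Jacobian one, and leaves $|\mu^\pm_e|$ invariant (since $e^\pm,p^\pm$ are conserved along trajectories), so $U^\pm(s)g:=g\circ\Phi^\pm_s$ is a strongly continuous group of unitaries on $\mathcal{H}^\pm$ whose generator is exactly $D^\pm$, already known to be skew-adjoint with the stated domain. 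Substituting $t=-s$ in the defining integral then gives
\begin{equation}
\mathcal{Q}^\pm_\lambda g=\int_0^\infty \lambda e^{-\lambda t}\,U^\pm(-t)g\,dt=\lambda\,(\lambda+D^\pm)^{-1}g,
\end{equation}
the second equality being the Laplace-transform formula for the resolvent of $-D^\pm$, valid for every $\lambda>0$ because $\sigma(D^\pm)\subset i\mathbb{R}$; in particular $\|(\lambda+D^\pm)^{-1}\|\le 1/\lambda$, which re-proves $\|\mathcal{Q}^\pm_\lambda\|\le 1$.

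For (ii), if $g\in\operatorname{dom}(D^\pm)$ then $\mathcal{Q}^\pm_\lambda g-g=(\lambda+D^\pm)^{-1}\big(\lambda g-(\lambda+D^\pm)g\big)=-(\lambda+D^\pm)^{-1}D^\pm g$, so $\|\mathcal{Q}^\pm_\lambda g-g\|\le\lambda^{-1}\|D^\pm g\|\to0$, and since $\operatorname{dom}(D^\pm)$ is dense and $\|\mathcal{Q}^\pm_\lambda-I\|\le2$ the convergence extends to all of $\mathcal{H}^\pm$. For (i) I would use the orthogonal decomposition $\mathcal{H}^\pm=\ker D^\pm\oplus\overline{\operatorname{ran}D^\pm}$ (valid because $D^\pm$ is skew-adjoint, so $(\overline{\operatorname{ran}D^\pm})^\perp=\ker(D^\pm)^*=\ker D^\pm$), which identifies $\mathcal{P}^\pm$ with the orthogonal projection onto $\ker D^\pm$. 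On $\ker D^\pm$ one has $\mathcal{Q}^\pm_\lambda g=\lambda\cdot\lambda^{-1}g=g=\mathcal{P}^\pm g$; on $\operatorname{ran}D^\pm$, writing $g=D^\pm h$ gives $\mathcal{Q}^\pm_\lambda g=\lambda\big(h-\mathcal{Q}^\pm_\lambda h\big)$, of norm at most $2\lambda\|h\|\to0=\mathcal{P}^\pm g$; a density argument (again using $\|\mathcal{Q}^\pm_\lambda\|\le1$) covers $\overline{\operatorname{ran}D^\pm}$, and adding the two pieces gives $\mathcal{Q}^\pm_\lambda g\to\mathcal{P}^\pm g$ for every $g$.

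For (iii), unitarity of $U^\pm(-t)$ and the integral representation give the scalar bound
\begin{equation}
\|\mathcal{Q}^\pm_\lambda-\mathcal{Q}^\pm_\mu\|_{\mathcal{H}^\pm\to\mathcal{H}^\pm}\le\int_0^\infty\big|\lambda e^{-\lambda t}-\mu e^{-\mu t}\big|\,dt .
\end{equation}
Assuming $0<\lambda<\mu$, the integrand changes sign exactly once, at $t_0=(\log\mu-\log\lambda)/(\mu-\lambda)$ (negative on $(0,t_0)$, positive afterwards), and computing the two pieces with the antiderivative $-e^{-\lambda t}+e^{-\mu t}$ yields $\int_0^\infty|\lambda e^{-\lambda t}-\mu e^{-\mu t}|\,dt=2\big(e^{-\lambda t_0}-e^{-\mu t_0}\big)$. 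Finally $e^{-\lambda t_0}-e^{-\mu t_0}=\int_\lambda^\mu t_0 e^{-at_0}\,da\le t_0(\mu-\lambda)=\log\mu-\log\lambda$, which gives the claimed bound $2|\log\lambda-\log\mu|$.

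The only genuinely delicate point is part (i): one must be certain the limit is the orthogonal projection $\mathcal{P}^\pm$ and not merely some weaker mean, and this is exactly where skew-adjointness of $D^\pm$ is used, through the clean $\ker\oplus\overline{\operatorname{ran}}$ splitting (equivalently, the mean ergodic theorem for the unitary group $U^\pm$); the remaining parts are bookkeeping, with the sharp constant $2$ in (iii) coming from the explicit crossover computation. If one wished to avoid invoking Stone's theorem, (i) and (ii) could instead be derived directly from the trajectory representation via the von Neumann mean ergodic theorem and dominated convergence, at the cost of a somewhat longer argument.
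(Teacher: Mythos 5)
Your proof is correct and follows essentially the same route as the argument the paper defers to (Lemma 4.7 of [NS1]): the kernel $\ker D^\pm\oplus\overline{\operatorname{ran}D^\pm}$ splitting plus density for (i), the resolvent/strong-continuity estimate for (ii), and the explicit evaluation of $\int_0^\infty|\lambda e^{-\lambda t}-\mu e^{-\mu t}|\,dt=2(e^{-\lambda t_0}-e^{-\mu t_0})\le 2|\log\lambda-\log\mu|$ for (iii). Packaging $\mathcal{Q}^\pm_\lambda$ as the rescaled resolvent $\lambda(\lambda+D^\pm)^{-1}$ is only a cosmetic difference from working directly with the trajectory integral, and your identification of the group generator with the skew-adjoint $D^\pm$ (via measure preservation and the invariance of $|\mu^\pm_e|$ along trajectories) is exactly what the paper's framework presupposes.
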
 
 
The proof is identical to the one for Lemma 4.7 in \cite{NS1}.

Recall from \eqref{MaxwellMatrixEqn} that we want to solve the Maxwell equation
\begin{equation}
\left( \begin{array}{ccc}
\mathcal{A}^\lambda_1 & ( \mathcal{B}^\lambda  )^*  &  ( \tilde{\mathcal{T}}^\lambda_1 )^*   \\
 \mathcal{B}^\lambda & \mathcal{A}^\lambda_2 & ( \tilde{\mathcal{T}}^\lambda_2 )^*  \\
 \tilde{\mathcal{T}}^\lambda_1  &  \tilde{\mathcal{T}}^\lambda_2  &  \tilde{\mathcal{S}}^\lambda 
 \end{array} \right)
\left( \begin{array}{ccc}
\phi \\
A_\varphi \\
\tilde{\textbf{A}} 
\end{array} \right)
\begin{array}{ccc}
=0  \ .   \end{array}   
\end{equation}
We can eliminate $\phi$ from the first line of the equation since $\mathcal{A}^\lambda_1$ is invertible:
\begin{equation}
\phi := - (\mathcal{A}^\lambda_1 )^{-1} (  (\mathcal{B}^\lambda)^* A_\varphi + ( \tilde{\mathcal{T}}^\lambda_1  )^* \tilde{\textbf{A}}  )  \ .
\end{equation}
Substituting this into the second and the third line, we obtain the following equation
\begin{equation}     \label{reducedMaxwellMatrixEqn}
\begin{array}{ccc}
\mathcal{M}^\lambda
 \end{array} 
\left( \begin{array}{ccc}
A_\varphi \\
\tilde{\textbf{A}} 
\end{array} \right)
 \begin{array}{ccc}
=
 \end{array} 
\left( \begin{array}{ccc}
\mathcal{L}^\lambda  &  ( \tilde{\mathcal{V}}^\lambda )^*  \\
\tilde{\mathcal{V}}^\lambda  &  \tilde{\mathcal{U}}^\lambda 
\end{array} \right) 
\left( \begin{array}{ccc}
A_\varphi \\
\tilde{\textbf{A}} 
\end{array} \right)
 \begin{array}{ccc}
=0   \end{array} 
\end{equation}
where
\begin{equation}
\begin{split}
& \mathcal{L}^\lambda := \mathcal{A}^\lambda_2 - \mathcal{B}^\lambda (\mathcal{A}^\lambda_1)^{-1} (\mathcal{B}^\lambda)^*   \ , \\
& \tilde{\mathcal{V}}^\lambda := \tilde{\mathcal{T} }^\lambda_2 - \tilde{\mathcal{T} }^\lambda_1 (\mathcal{A}^\lambda_1)^{-1} (\mathcal{B}^\lambda)^* \ ,  \\
&  \tilde{\mathcal{U}}^\lambda  := \tilde{\mathcal{S}}^\lambda - \tilde{\mathcal{T} }^\lambda_1 (\mathcal{A}^\lambda_1)^{-1}  (\tilde{\mathcal{T} }^\lambda_1 )^*   \ .
\end{split}
\end{equation}
$\mathcal{L}^\lambda$ is well-defined from $\mathcal{X} $ to $L^{2, \tau} (\Omega)$ and it is self-adjoint on $L^{2, \tau} (\Omega)$. $\tilde{\mathcal{U}}^\lambda$ is well-defined from $\tilde{\mathcal{Y}}$ to $L^{2, \tau} (\Omega; \tilde{\mathbb{R}}^2)$ and it is self-adjoint on $L^{2, \tau} (\Omega; \tilde{\mathbb{R}}^2)$. $\tilde{\mathcal{V}}^\lambda$ is well-defined from $L^{2, \tau} (\Omega)$ to $L^{2, \tau} (\Omega; \tilde{\mathbb{R}}^2)$. Therefore, $\mathcal{M}^\lambda$ is self-adjoint on $L^{2, \tau} (\Omega) \times L^{2, \tau} (\Omega; \tilde{\mathbb{R}}^2)$ with domain $\mathcal{X} \times \tilde{\mathcal{Y}}$.

We shall study the behavior of these operators when $\lambda$ is close to $0$ and when $\lambda$ is large. With all the ingredients provided in the previous several lemmas, we are able to repeat the process in \cite{NS1} and obtain

\begin{lemma}
Let $0 < \lambda < +\infty$, then
\vskip 0.1 cm
(i) $\mathcal{L}^\lambda \geq 0$ for $\lambda$ sufficiently large. $\forall h \in \mathcal{X} $, $\lim_{\lambda \rightarrow 0^+}  \| (\mathcal{L}^\lambda - \mathcal{L}^0) h \|_{L^2 (\Omega)} =0$. 
\vskip 0.1 cm
(ii) Let $\kappa^\lambda := \inf_{h \in \mathcal{X} , \|h \|_{L^2} =1}  \la \mathcal{L}^\lambda h, h \ra_{L^2 (\Omega)}   $ be the smallest eigenvalue of $\mathcal{L}^\lambda$, then $\kappa^\lambda$ is continuous as a function of $\lambda$ when $\lambda >0$.
\end{lemma}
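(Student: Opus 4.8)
The plan is to reduce both parts of the statement to the already–established quantitative behaviour of the building blocks $\mathcal{Q}^\pm_\lambda$ and hence of $\mathcal{A}^\lambda_1,\mathcal{A}^\lambda_2,\mathcal{B}^\lambda,\mathcal{L}^\lambda$. The ingredients I will lean on are: $\|\mathcal{Q}^\pm_\lambda\|_{\mathcal{H}^\pm\to\mathcal{H}^\pm}\le 1$ uniformly in $\lambda$; $\mathcal{A}^\lambda_1$ is negative definite with $(\mathcal{A}^\lambda_1)^{-1}$ bounded $L^{2,\tau}(\Omega)\to\mathcal{X}$ uniformly in $\lambda$; $\mathcal{A}^\lambda_2-\lambda^2+\Delta$ and $\mathcal{B}^\lambda$ are bounded $\mathcal{X}\to L^{2,\tau}(\Omega)$ uniformly in $\lambda$; $\mathcal{Q}^\pm_\lambda g\to\mathcal{P}^\pm g$ in $\mathcal{H}^\pm$ as $\lambda\to0^+$; and the log-Lipschitz bound $\|\mathcal{Q}^\pm_\lambda-\mathcal{Q}^\pm_\mu\|_{\mathcal{H}^\pm\to\mathcal{H}^\pm}\le 2|\log\lambda-\log\mu|$. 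I will repeatedly use the elementary inequality $\|\int_{\mathbb{R}^3}\hat v_\varphi\mu^\pm_e g\,dv\|_{L^2(\Omega)}^2\le\sup_x(\int_{\mathbb{R}^3}|\mu^\pm_e|\,dv)\,\|g\|_{\mathcal{H}^\pm}^2$ (Cauchy--Schwarz in $L^2_{|\mu^\pm_e|}(\mathbb{R}^3_v)$), together with the analogous estimate for $(\mathcal{B}^\lambda)^*$, to convert $\mathcal{H}^\pm$-convergence of the inner functions into $L^2(\Omega)$-convergence of the velocity integrals; note also that multiplication by $\hat v_\varphi$ is bounded $L^{2,\tau}(\Omega)\to\mathcal{H}^\pm$ by the same computation.

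For the positivity in (i): since $\mathcal{A}^\lambda_1<0$ we have $(\mathcal{A}^\lambda_1)^{-1}<0$, hence $\la-\mathcal{B}^\lambda(\mathcal{A}^\lambda_1)^{-1}(\mathcal{B}^\lambda)^*h,h\ra_{L^2}=-\la(\mathcal{A}^\lambda_1)^{-1}(\mathcal{B}^\lambda)^*h,(\mathcal{B}^\lambda)^*h\ra_{L^2}\ge0$, so $\la\mathcal{L}^\lambda h,h\ra_{L^2}\ge\la\mathcal{A}^\lambda_2h,h\ra_{L^2}$ for all $h\in\mathcal{X}$. Expanding $\la\mathcal{A}^\lambda_2h,h\ra_{L^2}=\lambda^2\|h\|_{L^2}^2+\|\nabla h\|_{L^2}^2+\|h/r\|_{L^2}^2-\sum_\pm\int_\Omega\int_{\mathbb{R}^3}\hat v_\varphi\big(\mu^\pm_p r\,h^2+\mu^\pm_e\mathcal{Q}^\pm_\lambda(\hat v_\varphi h)h\big)\,dv\,dx$ and estimating the last term by $C\|h\|_{L^2}^2$ with $C=C\big(\sup_x\int(|\mu^\pm_e|+|\mu^\pm_p|)\,dv,\ \sup_\Omega r\big)$ independent of $\lambda$ (using $\|\mathcal{Q}^\pm_\lambda\|\le1$), I get $\la\mathcal{L}^\lambda h,h\ra_{L^2}\ge(\lambda^2-C)\|h\|_{L^2}^2\ge0$ once $\lambda^2\ge C$.

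For the $\lambda\to0^+$ convergence in (i), write $\mathcal{L}^\lambda-\mathcal{L}^0=(\mathcal{A}^\lambda_2-\mathcal{A}^0_2)-\big(\mathcal{B}^\lambda(\mathcal{A}^\lambda_1)^{-1}(\mathcal{B}^\lambda)^*-\mathcal{B}^0(\mathcal{A}^0_1)^{-1}(\mathcal{B}^0)^*\big)$ and fix $h\in\mathcal{X}$. Since $(\mathcal{A}^\lambda_2-\mathcal{A}^0_2)h=\lambda^2h-\sum_\pm\int_{\mathbb{R}^3}\hat v_\varphi\mu^\pm_e(\mathcal{Q}^\pm_\lambda-\mathcal{P}^\pm)(\hat v_\varphi h)\,dv$ and $(\mathcal{Q}^\pm_\lambda-\mathcal{P}^\pm)(\hat v_\varphi h)\to0$ in $\mathcal{H}^\pm$, the basic estimate gives $(\mathcal{A}^\lambda_2-\mathcal{A}^0_2)h\to0$ in $L^2(\Omega)$; likewise $(\mathcal{B}^\lambda)^*h\to(\mathcal{B}^0)^*h$ and $\mathcal{A}^\lambda_1\psi\to\mathcal{A}^0_1\psi$ in $L^2(\Omega)$ for each fixed $\psi$. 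To handle the composite term, set $\phi^\lambda:=(\mathcal{A}^\lambda_1)^{-1}(\mathcal{B}^\lambda)^*h$, $\phi^0:=(\mathcal{A}^0_1)^{-1}(\mathcal{B}^0)^*h$; from $\mathcal{A}^\lambda_1(\phi^\lambda-\phi^0)=\big((\mathcal{B}^\lambda)^*-(\mathcal{B}^0)^*\big)h-(\mathcal{A}^\lambda_1-\mathcal{A}^0_1)\phi^0\to0$ in $L^2(\Omega)$ and the uniform bound on $(\mathcal{A}^\lambda_1)^{-1}$, I conclude $\phi^\lambda\to\phi^0$ in $\mathcal{X}$; then $\mathcal{B}^\lambda\phi^\lambda-\mathcal{B}^0\phi^0=\mathcal{B}^\lambda(\phi^\lambda-\phi^0)+(\mathcal{B}^\lambda-\mathcal{B}^0)\phi^0\to0$ in $L^2(\Omega)$ by the uniform bound on $\mathcal{B}^\lambda$ and the basic estimate. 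Combining, $\|(\mathcal{L}^\lambda-\mathcal{L}^0)h\|_{L^2(\Omega)}\to0$.

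For (ii), the crucial point is that for $\lambda$ bounded away from $0$ all these convergences become \emph{operator-norm} convergences via the log-Lipschitz bound. Fix $\lambda_0>0$. As $\lambda\to\lambda_0$, $\|\mathcal{Q}^\pm_\lambda-\mathcal{Q}^\pm_{\lambda_0}\|_{\mathcal{H}^\pm\to\mathcal{H}^\pm}\le 2|\log\lambda-\log\lambda_0|\to0$, so $\mathcal{A}^\lambda_1\to\mathcal{A}^{\lambda_0}_1$, $\mathcal{B}^\lambda\to\mathcal{B}^{\lambda_0}$, $(\mathcal{B}^\lambda)^*\to(\mathcal{B}^{\lambda_0})^*$ in $\mathcal{B}(L^{2,\tau}(\Omega))$ (the $-\Delta$ and $1/r^2$ parts cancel in the differences, and $\lambda^2-\lambda_0^2\to0$); the resolvent identity $(\mathcal{A}^\lambda_1)^{-1}-(\mathcal{A}^{\lambda_0}_1)^{-1}=(\mathcal{A}^\lambda_1)^{-1}(\mathcal{A}^{\lambda_0}_1-\mathcal{A}^\lambda_1)(\mathcal{A}^{\lambda_0}_1)^{-1}$ plus the uniform bound gives $(\mathcal{A}^\lambda_1)^{-1}\to(\mathcal{A}^{\lambda_0}_1)^{-1}$ in $\mathcal{B}(L^{2,\tau}(\Omega))$. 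Hence $\mathcal{L}^\lambda-\mathcal{L}^{\lambda_0}$ is bounded $L^{2,\tau}(\Omega)\to L^{2,\tau}(\Omega)$ with norm tending to $0$. Since $\mathcal{L}^\lambda$ is a relatively compact perturbation of $-\Delta+\tfrac1{r^2}$ it has discrete spectrum, so $\kappa^\lambda=\inf_{\|h\|_{L^2}=1}\la\mathcal{L}^\lambda h,h\ra_{L^2}$ is attained; the min--max formula shows $|\kappa^\lambda-\kappa^{\lambda_0}|\le\|\mathcal{L}^\lambda-\mathcal{L}^{\lambda_0}\|_{\mathcal{B}(L^{2,\tau}(\Omega))}\to0$, which is the asserted continuity. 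The subtlety to watch throughout is precisely the strong-versus-norm dichotomy: at $\lambda\to0^+$ only strong convergence is available (because $\mathcal{Q}^\pm_\lambda\to\mathcal{P}^\pm$ is only strong), so the composite term in (i) must be pushed through the resolvent-type identity using the uniform bound on $(\mathcal{A}^\lambda_1)^{-1}$ rather than a naive product rule, whereas for $\lambda>0$ the log-Lipschitz estimate upgrades everything to norm convergence and (ii) is then immediate.
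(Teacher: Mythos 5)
Your argument is correct and follows the route the paper itself takes: the paper does not write out this proof but defers to the corresponding lemmas of \cite{NS1}, and the steps you supply (dropping the nonnegative term $-\mathcal{B}^\lambda(\mathcal{A}^\lambda_1)^{-1}(\mathcal{B}^\lambda)^*$ and using $\lambda^2\|h\|_{L^2}^2$ to absorb the $\lambda$-uniformly bounded zero-order terms for large $\lambda$; strong convergence via $\mathcal{Q}^\pm_\lambda\to\mathcal{P}^\pm$ pushed through the uniformly bounded inverse $(\mathcal{A}^\lambda_1)^{-1}$ for $\lambda\to0^+$; operator-norm continuity from the log-Lipschitz bound plus the resolvent identity, which the paper records as the subsequent continuity lemma, giving $|\kappa^\lambda-\kappa^{\lambda_0}|\le\|\mathcal{L}^\lambda-\mathcal{L}^{\lambda_0}\|$) are exactly the intended ones. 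Your closing remark on the strong-versus-norm dichotomy correctly identifies why the composite term must be handled through $\mathcal{A}^\lambda_1(\phi^\lambda-\phi^0)$ rather than a naive product rule at $\lambda=0$.
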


From the strong convergence of $\mathcal{L}^\lambda$ to $\mathcal{L}^0$, we obtain

\begin{corollary}
If $\mathcal{L}^0$ is not positive definite, then $\exists \lambda_3 >0$, s.t. $\forall 0 \leq \lambda \leq \lambda_3$, $\mathcal{L}^\lambda$ is not positive definite.
\end{corollary}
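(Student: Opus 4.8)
The plan is to exploit the strong convergence $\mathcal{L}^\lambda \to \mathcal{L}^0$ recorded in part (i) of the preceding lemma: the failure of positivity of $\mathcal{L}^0$ is witnessed by a single fixed function, and that same function keeps witnessing the failure of positivity of $\mathcal{L}^\lambda$ as long as $\lambda$ is small. First I would unpack the hypothesis. Since $\mathcal{L}^0$ is self-adjoint on $L^{2,\tau}(\Omega)$ with domain $\mathcal{X}$ and bounded below, the operator domain is a form core, so
\begin{equation*}
\kappa^0 \;=\; \inf_{\,h \in \mathcal{X},\ \|h\|_{L^2(\Omega)} = 1} \la \mathcal{L}^0 h, h \ra_{L^2(\Omega)} \;=\; \inf \operatorname{spec}(\mathcal{L}^0) ,
\end{equation*}
and the assumption that $\mathcal{L}^0$ is not positive definite means precisely $\mathcal{L}^0 \ngeq 0$, i.e. $\kappa^0 < 0$. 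Hence one can fix $h_0 \in \mathcal{X}$ with $\|h_0\|_{L^2(\Omega)} = 1$ and $\la \mathcal{L}^0 h_0, h_0 \ra_{L^2(\Omega)} = -\delta < 0$; since $\mathcal{L}^0$ has compact resolvent ($\mathcal{X}$ embeds compactly into $L^{2,\tau}(\Omega)$), one may even take $h_0$ to be a ground-state eigenfunction, though this is not needed.

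Next I would feed $h_0$ into the strong convergence. That lemma's part (i) gives $\|(\mathcal{L}^\lambda - \mathcal{L}^0)h\|_{L^2(\Omega)} \to 0$ as $\lambda \to 0^+$ for each fixed $h \in \mathcal{X}$. Applying this with $h = h_0$ and using Cauchy--Schwarz,
\begin{equation*}
\big| \la \mathcal{L}^\lambda h_0, h_0 \ra_{L^2(\Omega)} - \la \mathcal{L}^0 h_0, h_0 \ra_{L^2(\Omega)} \big| \;\le\; \|(\mathcal{L}^\lambda - \mathcal{L}^0)h_0\|_{L^2(\Omega)}\,\|h_0\|_{L^2(\Omega)} \;\longrightarrow\; 0
\end{equation*}
as $\lambda \to 0^+$, so $\la \mathcal{L}^\lambda h_0, h_0 \ra_{L^2(\Omega)} \to -\delta$. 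Consequently there is $\lambda_3 > 0$ such that
\begin{equation*}
\kappa^\lambda \;\le\; \la \mathcal{L}^\lambda h_0, h_0 \ra_{L^2(\Omega)} \;\le\; -\tfrac{\delta}{2} \;<\; 0 \qquad \text{for all } 0 < \lambda \le \lambda_3 ,
\end{equation*}
which says that $\mathcal{L}^\lambda$ is not positive definite on that range, $h_0 \ne 0$ lying in the common domain $\mathcal{X}$ of all the $\mathcal{L}^\lambda$; and at $\lambda = 0$ the conclusion is the hypothesis. One could also package this via part (ii): $\kappa^\lambda$ is continuous on $(0,\infty)$ and $\limsup_{\lambda \to 0^+} \kappa^\lambda \le \kappa^0 < 0$.

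The whole proof is short because all the analytic content has already been built into the preceding lemma; the single point that needs care is that $\mathcal{L}^\lambda \to \mathcal{L}^0$ holds only in the strong (pointwise on $\mathcal{X}$) sense and not in operator norm, so one cannot transfer the operator inequality $\mathcal{L}^0 \ngeq 0$ uniformly in a neighbourhood of $\lambda = 0$; one must commit to the fixed test direction $h_0$ \emph{before} passing to the limit. A minor second point is to record that $h_0$ lies in the $\lambda$-independent domain $\mathcal{X}$, so that $\la \mathcal{L}^\lambda h_0, h_0 \ra_{L^2(\Omega)}$ makes sense for every $\lambda$; with these two observations the displayed bound is immediate.
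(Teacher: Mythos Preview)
Your proof is correct and is precisely the argument the paper has in mind: the paper states only that the corollary follows ``from the strong convergence of $\mathcal{L}^\lambda$ to $\mathcal{L}^0$,'' and your fixed-test-function argument is the standard way to cash that in. Your care about committing to $h_0\in\mathcal{X}$ before taking $\lambda\to 0^+$, and about strong versus norm convergence, is exactly the right emphasis.
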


Next we state some continuity results for the operators. The proof is identical to the one for Lemma 4.9 in \cite{NS1}. 

\begin{lemma}
For fixed $\mu >0$,
\vskip 0.1 cm
(i) $\lim_{\lambda \rightarrow \mu} \| \mathcal{L}^\lambda - \mathcal{L}^\mu \|_{L^2 \rightarrow L^2}  =0 $. Similar results hold for $\tilde{\mathcal{S}}^\lambda $, $\tilde{\mathcal{T}}^\lambda_1 + \lambda \nabla $ and $\tilde{\mathcal{T}}^\lambda_2$.
\vskip 0.1 cm
(ii) $\forall \tilde{\textbf{h}}, \tilde{\textbf{g}} \in \tilde{\mathcal{Y}}$, $\lim_{\lambda \rightarrow \mu} \la (\tilde{\mathcal{U}}^\lambda - \tilde{\mathcal{U}}^\mu) \tilde{\textbf{h}}, \tilde{\textbf{g}}  \ra_{L^2} =0$, $\lim_{\lambda \rightarrow \mu} \| (\tilde{\mathcal{V}}^\lambda - \tilde{\mathcal{V}}^\mu)^*  \tilde{\textbf{h}}  \|_{L^2} =0$. The same thing holds for $\tilde{\mathcal{V}}^0 =0$ with $\mu =0$.
\vskip 0.1 cm
(iii) $\forall h \in \mathcal{X}$, $\tilde{\textbf{g}} \in \tilde{\mathcal{Y}}$, $ \lim_{\lambda \rightarrow +\infty} \la  \tilde{\mathcal{V}}^\lambda h, \tilde{\textbf{g}}  \ra_{L^2} =0$.
\end{lemma}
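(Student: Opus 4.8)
The plan is to reduce every assertion to the three facts about the single family $\mathcal{Q}^\pm_\lambda$ established in the previous lemma, together with the uniform bound $\|(\mathcal{A}^\lambda_1)^{-1}\|_{L^{2,\tau}(\Omega)\to\mathcal{X}}\le c_P$. All the operators in the statement differ from $\lambda$-independent ones only through (a) composition with $\mathcal{Q}^\pm_\lambda$ inside fixed, $\lambda$-independent, bounded integral operators carrying the weights $\mu^\pm_e,\mu^\pm_p$ and the moments $\hat v_\varphi$, $\tilde v/\la v\ra$; (b) the scalar multipliers $\lambda^2$ (in $\mathcal{A}^\lambda_2$, $\tilde{\mathcal{S}}^\lambda$); (c) the term $\lambda\nabla$ (in $\tilde{\mathcal{T}}^\lambda_1$); and (d) the resolvent $(\mathcal{A}^\lambda_1)^{-1}$. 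For a fixed $\mu>0$ the relevant input is the operator-norm bound $\|\mathcal{Q}^\pm_\lambda-\mathcal{Q}^\pm_\mu\|_{\mathcal{H}^\pm\to\mathcal{H}^\pm}\le 2|\log\lambda-\log\mu|$; at the endpoints $\mu=0$ and $\mu=+\infty$ only the strong limits $\mathcal{Q}^\pm_\lambda g\to\mathcal{P}^\pm g$ and $\mathcal{Q}^\pm_\lambda g\to g$ are available, so there the argument must be run at the level of bilinear forms against fixed vectors.

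For (i): first, $\mathcal{A}^\lambda_2-\mathcal{A}^\mu_2$, $\mathcal{B}^\lambda-\mathcal{B}^\mu$, $\tilde{\mathcal{T}}^\lambda_2-\tilde{\mathcal{T}}^\mu_2$, $(\tilde{\mathcal{T}}^\lambda_1+\lambda\nabla)-(\tilde{\mathcal{T}}^\mu_1+\mu\nabla)$ and $\tilde{\mathcal{S}}^\lambda-\tilde{\mathcal{S}}^\mu$ (the $\Delta$ cancels) are each of the form ``bounded integral operator $\circ\,(\mathcal{Q}^\pm_\lambda-\mathcal{Q}^\pm_\mu)$'' plus ``$(\lambda^2-\mu^2)$ times a bounded multiplier'', hence tend to $0$ in $L^2\to L^2$ operator norm as $\lambda\to\mu$. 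Second, the second resolvent identity
\[
(\mathcal{A}^\lambda_1)^{-1}-(\mathcal{A}^\mu_1)^{-1}=(\mathcal{A}^\lambda_1)^{-1}\big(\mathcal{A}^\mu_1-\mathcal{A}^\lambda_1\big)(\mathcal{A}^\mu_1)^{-1},\qquad \mathcal{A}^\mu_1-\mathcal{A}^\lambda_1=\sum_\pm\int_{\mathbb{R}^3}\mu^\pm_e\big(\mathcal{Q}^\pm_\lambda-\mathcal{Q}^\pm_\mu\big)(\cdot)\,dv,
\]
gives $\|(\mathcal{A}^\lambda_1)^{-1}-(\mathcal{A}^\mu_1)^{-1}\|\lesssim|\log\lambda-\log\mu|$. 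Telescoping $\mathcal{B}^\lambda(\mathcal{A}^\lambda_1)^{-1}(\mathcal{B}^\lambda)^*-\mathcal{B}^\mu(\mathcal{A}^\mu_1)^{-1}(\mathcal{B}^\mu)^*$ into three differences, each having one factor converging and the others uniformly bounded, yields $\|\mathcal{L}^\lambda-\mathcal{L}^\mu\|_{L^2\to L^2}\to 0$; the same bookkeeping handles $\tilde{\mathcal{S}}^\lambda$, $\tilde{\mathcal{T}}^\lambda_1+\lambda\nabla$ and $\tilde{\mathcal{T}}^\lambda_2$.

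For (ii): since $\tilde{\mathcal{U}}^\lambda=\tilde{\mathcal{S}}^\lambda-\tilde{\mathcal{T}}^\lambda_1(\mathcal{A}^\lambda_1)^{-1}(\tilde{\mathcal{T}}^\lambda_1)^*$ is unbounded I only prove bilinear-form convergence. In $\la(\tilde{\mathcal{U}}^\lambda-\tilde{\mathcal{U}}^\mu)\tilde{\textbf{h}},\tilde{\textbf{g}}\ra_{L^2}$ with fixed $\tilde{\textbf{h}},\tilde{\textbf{g}}\in\tilde{\mathcal{Y}}$ the $-\Delta$ terms cancel and the $\tilde{\mathcal{S}}^\lambda$ part reduces to $-(\lambda^2-\mu^2)\la\tilde{\textbf{h}},\tilde{\textbf{g}}\ra_{L^2}-\sum_\pm\la(\mathcal{Q}^\pm_\lambda-\mathcal{Q}^\pm_\mu)(\hat v\cdot\tilde{\textbf{h}}),\hat v\cdot\tilde{\textbf{g}}\ra_{\mathcal{H}^\pm}\to 0$ (for $\mu=0$ use the strong limit of $\mathcal{Q}^\pm_\lambda(\hat v\cdot\tilde{\textbf{h}})$). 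The crucial point for the $\tilde{\mathcal{T}}^\lambda_1(\mathcal{A}^\lambda_1)^{-1}(\tilde{\mathcal{T}}^\lambda_1)^*$ piece is that $\tilde{\textbf{g}}\in\tilde{\mathcal{Y}}$ is divergence-free, so $(\tilde{\mathcal{T}}^\lambda_1)^*\tilde{\textbf{g}}=\sum_\pm\int_{\mathbb{R}^3}\mu^\pm_e\mathcal{Q}^\pm_\lambda(\hat v\cdot\tilde{\textbf{g}})\,dv$ carries no $\lambda\nabla$, while $(\mathcal{A}^\lambda_1)^{-1}$ maps into $\mathcal{X}_0$, whose elements have zero Dirichlet trace, so the $-\lambda\nabla$ in $\tilde{\mathcal{T}}^\lambda_1$ also disappears on integration by parts against $\tilde{\textbf{g}}$; thus the piece equals $\la(\mathcal{A}^\lambda_1)^{-1}(\tilde{\mathcal{T}}^\lambda_1)^*\tilde{\textbf{h}},(\tilde{\mathcal{T}}^\lambda_1)^*\tilde{\textbf{g}}\ra_{L^2}$, to which the $\lambda$-continuity of $(\mathcal{A}^\lambda_1)^{-1}$ and $\mathcal{Q}^\pm_\lambda$ (or the strong limit, for $\mu=0$) applies. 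The estimate $\|(\tilde{\mathcal{V}}^\lambda-\tilde{\mathcal{V}}^\mu)^*\tilde{\textbf{h}}\|_{L^2}\to 0$ follows identically from $(\tilde{\mathcal{V}}^\lambda)^*=(\tilde{\mathcal{T}}^\lambda_2)^*-\mathcal{B}^\lambda(\mathcal{A}^\lambda_1)^{-1}(\tilde{\mathcal{T}}^\lambda_1)^*$. For $\mu=0$ one checks $\tilde{\mathcal{V}}^0=0$ by parity: $\tilde v/\la v\ra$ is odd in $(v_r,v_z)$ while $\mu^\pm_e,\mu^\pm_p$ (since $e^\pm,p^\pm$ depend on $v$ only through the even quantities $\la v\ra,v_\varphi$) and $\mathcal{P}^\pm(\hat v_\varphi h)$, $\mathcal{P}^\pm h$ for scalar $h$ (since $\mathcal{P}^\pm$ commutes with this reflection) are even, so $\tilde{\mathcal{T}}^0_2$ and $\tilde{\mathcal{T}}^0_1|_{\text{scalars}}$ vanish.

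For (iii): write $\la\tilde{\mathcal{V}}^\lambda h,\tilde{\textbf{g}}\ra_{L^2}=\la h,(\tilde{\mathcal{T}}^\lambda_2)^*\tilde{\textbf{g}}\ra-\la(\mathcal{A}^\lambda_1)^{-1}(\mathcal{B}^\lambda)^*h,(\tilde{\mathcal{T}}^\lambda_1)^*\tilde{\textbf{g}}\ra$, once more dropping the $\lambda\nabla$ in $(\tilde{\mathcal{T}}^\lambda_1)^*$ because $\nabla\cdot\tilde{\textbf{g}}=0$; as $\lambda\to+\infty$ the strong limit $\mathcal{Q}^\pm_\lambda g\to g$ sends $(\tilde{\mathcal{T}}^\lambda_2)^*\tilde{\textbf{g}}$ and $(\tilde{\mathcal{T}}^\lambda_1)^*\tilde{\textbf{g}}$ to $-\sum_\pm\int\hat v_\varphi\mu^\pm_e(\hat v\cdot\tilde{\textbf{g}})\,dv$ and $\sum_\pm\int\mu^\pm_e(\hat v\cdot\tilde{\textbf{g}})\,dv$ in $L^{2,\tau}(\Omega)$, both of which vanish by the same parity argument, while $(\mathcal{A}^\lambda_1)^{-1}(\mathcal{B}^\lambda)^*h$ and $h$ stay bounded, so both inner products tend to $0$. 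The main obstacle throughout is the bookkeeping around the unbounded parts: the $\lambda\nabla$ in $\tilde{\mathcal{T}}^\lambda_1$ is a priori dangerous as $\lambda\to+\infty$, and everything hinges on observing that it is annihilated whenever paired with a divergence-free field of $\tilde{\mathcal{Y}}$ or applied to a zero-trace element of the range of $(\mathcal{A}^\lambda_1)^{-1}$; combined with the fact that for $\tilde{\mathcal{U}}^\lambda$ and at the endpoints $\mu\in\{0,+\infty\}$ only strong/weak, not operator-norm, convergence of $\mathcal{Q}^\pm_\lambda$ is available, so the argument must be carried out on bilinear forms against fixed test vectors rather than on operator norms.
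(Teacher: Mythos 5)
Your proof is correct and follows exactly the route the paper intends: the paper itself omits the argument, deferring to Lemma 4.9 of \cite{NS1}, and your reconstruction uses precisely the ingredients assembled here — the operator-norm bound $\|\mathcal{Q}^\pm_\lambda-\mathcal{Q}^\pm_\mu\|\le 2|\log\lambda-\log\mu|$ for $\mu>0$, the strong limits of $\mathcal{Q}^\pm_\lambda$ at $0$ and $+\infty$, the uniform bound on $(\mathcal{A}^\lambda_1)^{-1}$ with the second resolvent identity, and the parity (oddness of $\tilde v/\la v\ra$ in $(v_r,v_z)$) that makes $\tilde{\mathcal{V}}^0=0$ and kills the limiting moments in (iii). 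Your observation that the $\lambda\nabla$ terms are annihilated against divergence-free fields of $\tilde{\mathcal{Y}}$ and against the zero-trace range of $(\mathcal{A}^\lambda_1)^{-1}$, and that the endpoint cases must be run on bilinear forms rather than operator norms, is exactly the right bookkeeping.
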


The next lemma gives a bound on $\tilde{\mathcal{U}}^\lambda$.

\begin{lemma}
(i) $\exists \ 0 < \lambda_1 \leq \lambda_2 <+\infty$, s.t. $\forall \  0< \lambda \leq \lambda_1$ and $\lambda \geq \lambda_2$, the operator $\tilde{\mathcal{U}}^\lambda$ is $1-1$ and onto from $\tilde{\mathcal{Y}}$ to $L^2 (\Omega; \tilde{\mathbb{R}}^2)$. 
\vskip 0.1 cm
(ii) Furthermore, there holds 
\begin{equation}
- \la \tilde{\mathcal{U}}^\lambda \tilde{\textbf{h}}, \tilde{\textbf{h}} \ra_{L^2} \geq C_0 (  \| \tilde{\textbf{h}} \|^2_{L^2} +  \| \nabla \tilde{\textbf{h}} \|^2_{L^2}   ), \  \forall \tilde{\textbf{h}} \in \tilde{\mathcal{Y}} \ .
\end{equation}
Here $C_0$ is some positive constant independent of $\lambda$ within the intervals $0< \lambda \leq \lambda_1$ and $\lambda \geq \lambda_2$.
\end{lemma}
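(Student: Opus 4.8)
The plan is to derive (i) from (ii) and then concentrate all the work on (ii). For (i): $\tilde{\mathcal{U}}^\lambda=\tilde{\mathcal{S}}^\lambda-\tilde{\mathcal{T}}^\lambda_1(\mathcal{A}^\lambda_1)^{-1}(\tilde{\mathcal{T}}^\lambda_1)^*$ is self-adjoint on $L^{2,\tau}(\Omega;\tilde{\mathbb{R}}^2)$ with domain $\tilde{\mathcal{Y}}$, and $\tilde{\mathcal{U}}^\lambda+\Delta$ is bounded, so $\tilde{\mathcal{U}}^\lambda$ has compact resolvent and discrete spectrum; the coercivity in (ii) then puts $0$ in the resolvent set, so $\tilde{\mathcal{U}}^\lambda:\tilde{\mathcal{Y}}\to L^{2,\tau}(\Omega;\tilde{\mathbb{R}}^2)$ is a bijection. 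Thus everything reduces to (ii).

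For (ii), I would start from the Schur-complement identity
\[
-\la\tilde{\mathcal{U}}^\lambda\tilde{\textbf{h}},\tilde{\textbf{h}}\ra_{L^2}=-\la\tilde{\mathcal{S}}^\lambda\tilde{\textbf{h}},\tilde{\textbf{h}}\ra_{L^2}+\la(\mathcal{A}^\lambda_1)^{-1}(\tilde{\mathcal{T}}^\lambda_1)^*\tilde{\textbf{h}},(\tilde{\mathcal{T}}^\lambda_1)^*\tilde{\textbf{h}}\ra_{L^2}.
\]
On $\tilde{\mathcal{Y}}$ we have $\nabla\cdot\tilde{\textbf{h}}=0$, so the $\lambda\nabla\cdot\tilde{\textbf{h}}$ piece of $(\tilde{\mathcal{T}}^\lambda_1)^*$ drops and $(\tilde{\mathcal{T}}^\lambda_1)^*\tilde{\textbf{h}}=\sum_\pm\int_{\mathbb{R}^3}\mu^\pm_e\mathcal{Q}^\pm_\lambda(\hat v\cdot\tilde{\textbf{h}})\,dv$, which by $\|\mathcal{Q}^\pm_\lambda\|_{\mathcal{H}^\pm\to\mathcal{H}^\pm}\le1$ and the decay of $\mu^\pm_e$ is bounded in $L^2$ by $C\|\tilde{\textbf{h}}\|_{L^2}$ with $C$ independent of $\lambda$; since $(\mathcal{A}^\lambda_1)^{-1}$ is negative with operator bound $\le c_P$ uniformly in $\lambda$, the last term lies in $[-c_PC^2\|\tilde{\textbf{h}}\|^2_{L^2},0]$, hence $-\la\tilde{\mathcal{U}}^\lambda\tilde{\textbf{h}},\tilde{\textbf{h}}\ra\ge-\la\tilde{\mathcal{S}}^\lambda\tilde{\textbf{h}},\tilde{\textbf{h}}\ra-C_1\|\tilde{\textbf{h}}\|^2_{L^2}$. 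Inserting the formula for $-\la\tilde{\mathcal{S}}^\lambda\tilde{\textbf{h}},\tilde{\textbf{h}}\ra$ obtained earlier, whose boundary integral reduces to $\int\sgn(\tilde z')h_n^2\,d\varphi\,dz$ and is absorbed by $\tfrac12\|\nabla\tilde{\textbf{h}}\|^2_{L^2}+C_\epsilon\|\tilde{\textbf{h}}\|^2_{L^2}$ (trace theorem plus interpolation), while the $\mathcal{Q}^\pm_\lambda$-terms are $\ge-C\|\tilde{\textbf{h}}\|^2_{L^2}$, gives
\[
-\la\tilde{\mathcal{U}}^\lambda\tilde{\textbf{h}},\tilde{\textbf{h}}\ra\ \ge\ \lambda^2\|\tilde{\textbf{h}}\|^2_{L^2}+\tfrac12\|\nabla\tilde{\textbf{h}}\|^2_{L^2}-C_2\|\tilde{\textbf{h}}\|^2_{L^2},
\]
with $C_1,C_2$ independent of $\lambda$. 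Choosing $\lambda_2^2\ge C_2+1$ settles (ii) on $[\lambda_2,+\infty)$ with $C_0=\tfrac12$.

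The regime $0<\lambda\le\lambda_1$ is the crux, since there the gain $\lambda^2\|\tilde{\textbf{h}}\|^2$ is worthless and one needs $-\tilde{\mathcal{U}}^0$ itself to be coercive on $\tilde{\mathcal{Y}}$. This does not follow from the crude bound above; instead I would complete the square in the eliminated variable $\phi$, exactly as in Lemmas \ref{minimization1}--\ref{minimization2}: with $\phi_*=-(\mathcal{A}^0_1)^{-1}(\tilde{\mathcal{T}}^0_1)^*\tilde{\textbf{h}}$ and $\mathcal{A}^0_1\phi_*=-(\tilde{\mathcal{T}}^0_1)^*\tilde{\textbf{h}}$ one rewrites $-\la\tilde{\mathcal{U}}^0\tilde{\textbf{h}},\tilde{\textbf{h}}\ra$ as $\int_\Omega|B_\varphi|^2\,dx$ ($B_\varphi$ the $e_\varphi$-component of $\nabla\times\tilde{\textbf{h}}$, obtained from the $\|\nabla\tilde{\textbf{h}}\|^2$ term via the div--curl identity and $\nabla\cdot\tilde{\textbf{h}}=0$) plus a controllable boundary term plus a manifestly non-negative sum of squared $\mathcal{H}^\pm$-norms of $\mathcal{P}^\pm$-projections. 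Since on $\tilde{\mathcal{Y}}$ one has $h_{tg}=0$ on $\partial\Omega$ (equivalently $\tilde{\textbf{h}}\times e_n=0$ there) and $\nabla\cdot\tilde{\textbf{h}}=0$, the Gaffney/div--curl inequality gives $\|\tilde{\textbf{h}}\|_{H^1}\lesssim\|B_\varphi\|_{L^2}+\|\tilde{\textbf{h}}\|_{L^2}$, and the only such field with $B_\varphi=0$ is $\tilde{\textbf{h}}=0$ (a constant $\mathbb{R}^3$-vector with no $e_\varphi$-component is $ce_z$, and $h_{tg}=c\,e_z\cdot e_{tg}\equiv0$ on $\partial\Omega$ forces $c=0$ because $\tilde z'\not\equiv0$); a standard compactness argument then upgrades this to $\|\tilde{\textbf{h}}\|_{H^1}\lesssim\|B_\varphi\|_{L^2}$ on $\tilde{\mathcal{Y}}$. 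Hence $-\tilde{\mathcal{U}}^0$ has a positive spectral gap, $-\la\tilde{\mathcal{U}}^0\tilde{\textbf{h}},\tilde{\textbf{h}}\ra\ge c_0\|\tilde{\textbf{h}}\|^2_{H^1}$.

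Finally I would promote this endpoint estimate to a uniform one on $(0,\lambda_1]$ by contradiction, since $\tilde{\mathcal{U}}^\lambda\to\tilde{\mathcal{U}}^0$ only strongly. If it failed there would be $\lambda_n\to0^+$ and $\tilde{\textbf{h}}_n\in\tilde{\mathcal{Y}}$ with $\|\tilde{\textbf{h}}_n\|_{H^1}=1$ and $-\la\tilde{\mathcal{U}}^{\lambda_n}\tilde{\textbf{h}}_n,\tilde{\textbf{h}}_n\ra\to0$; passing to a subsequence, $\tilde{\textbf{h}}_n\rightharpoonup\tilde{\textbf{h}}$ in $H^1$ and $\tilde{\textbf{h}}_n\to\tilde{\textbf{h}}$ in $L^2$ (and in $L^2(\partial\Omega)$ for traces). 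Using $\mathcal{Q}^\pm_{\lambda_n}\to\mathcal{P}^\pm$ and $(\mathcal{A}^{\lambda_n}_1)^{-1}\to(\mathcal{A}^0_1)^{-1}$ strongly, every term of $-\la\tilde{\mathcal{U}}^{\lambda_n}\tilde{\textbf{h}}_n,\tilde{\textbf{h}}_n\ra$ except $\|\nabla\tilde{\textbf{h}}_n\|^2_{L^2}$ passes to the limit, and $\liminf_n\|\nabla\tilde{\textbf{h}}_n\|^2_{L^2}\ge\|\nabla\tilde{\textbf{h}}\|^2_{L^2}$ by weak lower semicontinuity, so $0\ge\la-\tilde{\mathcal{U}}^0\tilde{\textbf{h}},\tilde{\textbf{h}}\ra\ge c_0\|\tilde{\textbf{h}}\|^2_{H^1}$, forcing $\tilde{\textbf{h}}=0$; but $\|\tilde{\textbf{h}}_n\|^2_{L^2}=1-\|\nabla\tilde{\textbf{h}}_n\|^2_{L^2}$, and if $\|\nabla\tilde{\textbf{h}}_n\|^2_{L^2}\to1$ then the lower bound $-\la\tilde{\mathcal{U}}^{\lambda_n}\tilde{\textbf{h}}_n,\tilde{\textbf{h}}_n\ra\ge\tfrac12\|\nabla\tilde{\textbf{h}}_n\|^2_{L^2}-C_3\|\tilde{\textbf{h}}_n\|^2_{L^2}\to\tfrac12$ contradicts $\to0$, while if $\|\nabla\tilde{\textbf{h}}_n\|^2_{L^2}\to\delta^2<1$ then $\|\tilde{\textbf{h}}\|^2_{L^2}=1-\delta^2>0$ contradicts $\tilde{\textbf{h}}=0$. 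The single real obstacle is thus the square-completion that exhibits $-\tilde{\mathcal{U}}^0$ as the sum of $\int|B_\varphi|^2$, a boundary term, and $\mathcal{P}^\pm$-squares (so that the div--curl inequality delivers coercivity and the spectral gap); once that is in place, (i), the large-$\lambda$ regime, and the uniformity are routine.
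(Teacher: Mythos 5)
Your proposal is essentially correct and shares the paper's two structural pillars: for large $\lambda$ the explicit quadratic form $-\la\tilde{\mathcal{U}}^\lambda\tilde{\textbf{h}},\tilde{\textbf{h}}\ra=\lambda^2\|\tilde{\textbf{h}}\|^2_{L^2}+\|\nabla\tilde{\textbf{h}}\|^2_{L^2}+(\text{boundary})+(\text{terms bounded by }C\|\tilde{\textbf{h}}\|^2_{L^2})$ gives coercivity once $\lambda^2>C$, and for small $\lambda$ a compactness/contradiction argument with $\lambda_j\to0^+$, $\mathcal{Q}^\pm_{\lambda_j}\to\mathcal{P}^\pm$, weak $H^1$/strong $L^2$ limits. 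You deviate in two places. First, for (i) you invoke self-adjointness plus compact resolvent plus coercivity, whereas the paper runs Lax--Milgram for the bilinear form $B^\lambda$ on the $H^1$-version $\tilde{\mathcal{Y}}_2$ of the space and then upgrades the weak solution to $\tilde{\mathcal{Y}}$ by elliptic regularity; both are legitimate, and yours is arguably shorter, though either way one must be careful that the divergence-free constraint is built into the domain. Second, and more substantially, in the small-$\lambda$ regime you interpose a spectral-gap estimate $-\la\tilde{\mathcal{U}}^0\tilde{\textbf{h}},\tilde{\textbf{h}}\ra\ge c_0\|\tilde{\textbf{h}}\|^2_{H^1}$ proved via a div--curl (Gaffney) inequality, and then transfer it to $0<\lambda\le\lambda_1$ by a second contradiction argument. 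The paper does this in one pass: in the limit the functional is a sum of non-negative pieces one of which is literally $\|\nabla\tilde{\textbf{h}}^0\|^2_{L^2}$, so its vanishing forces $\tilde{\textbf{h}}^0$ to be constant and the boundary condition $h_{tg}=0$ kills it — no curl identity is needed. Your detour is not wrong, but note one slip in it: $B_\varphi=0$ together with $\nabla\cdot\tilde{\textbf{h}}=0$ does \emph{not} make $\tilde{\textbf{h}}$ a constant vector; it makes $\tilde{\textbf{h}}$ a harmonic field (locally $\nabla\psi$ on the $(r,z)$ cross-section), and one must then use the vanishing tangential trace plus the topology of the cross-section/boundary to conclude $\tilde{\textbf{h}}=0$. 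Finally, your absorption of the boundary integral $\int_{\partial\Omega}\tilde{z}'(\tilde{r}\sqrt{\tilde{z}'^2+\tilde{r}'^2})^{-1}h_n^2\,dS_x$ via the trace theorem is actually more careful than the paper, which treats that term as if it were non-negative even though $\tilde{z}'$ changes sign along a closed generating curve.
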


\begin{proof}
We first prove (ii). For all $\tilde{\textbf{h}} \in \tilde{\mathcal{Y}}$, by definition, integrat by parts and use the boundary condition on $\tilde{\textbf{h}}$, we have
\begin{equation}
\begin{split}
& - \la \tilde{\mathcal{U}}^\lambda  \tilde{\textbf{h}}, \tilde{\textbf{h}}   \ra_{L^2}  \\
& = \lambda^2 \| \tilde{\textbf{h}}\|^2_{L^2}  +  \| \nabla  \tilde{\textbf{h}}\|^2_{L^2} + \int_{\partial \Omega} \frac{\tilde{z}'}{\tilde{r} \sqrt{\tilde{z}'^2 + \tilde{r}'^2}} h^2_n d S_x  + \la  (\mathcal{A}^\lambda_1)^{-1} (\tilde{\mathcal{T}}^\lambda_1)^*  \tilde{\textbf{h}},   (\tilde{\mathcal{T}}^\lambda_1)^*  \tilde{\textbf{h}}  \ra_{L^2}  \\
& + \sum_\pm (  \la \mathcal{Q}^\pm_\lambda (\hat{v}_r h_r), \hat{v}_r h_r  \ra_{\mathcal{H}^\pm}  +  \la \mathcal{Q}^\pm_\lambda (\hat{v}_z h_z), \hat{v}_z h_z  \ra_{\mathcal{H}^\pm}  -2 \la \mathcal{Q}^\pm_\lambda (\hat{v}_r h_r), \hat{v}_z h_z  \ra_{\mathcal{H}^\pm} ) \ .
\end{split}
\end{equation}
By the boundedness of $\mathcal{Q}^\pm_\lambda$ and $(\mathcal{A}^\lambda_1)^{-1}$, for large $\lambda$ we have
\begin{equation}
\begin{split}
 - \la \tilde{\mathcal{U}}^\lambda  \tilde{\textbf{h}}, \tilde{\textbf{h}}   \ra_{L^2}  
 &  \geq ( \lambda^2 -C) \| \tilde{\textbf{h}}\|^2_{L^2}  +  \| \nabla  \tilde{\textbf{h}}\|^2_{L^2} + \int_{\partial \Omega}  \frac{\tilde{z}'}{\tilde{r} \sqrt{\tilde{z}'^2 + \tilde{r}'^2}} h^2_n d S_x \\
 &  \geq C_0 (  \| \tilde{\textbf{h}} \|^2_{L^2} +  \| \nabla \tilde{\textbf{h}} \|^2_{L^2}  )  \ .  \\
\end{split}
\end{equation}
For small $\lambda$, we argue by contradiction. Suppose that there are sequences $\lambda_j \rightarrow 0^+$, $ \tilde{\textbf{h}}^j  \in \tilde{\mathcal{Y}} $, s.t. $  \| \tilde{\textbf{h}}^j\|^2_{L^2}  +  \| \nabla  \tilde{\textbf{h}}^j\|^2_{L^2} =1$, but $ \la \tilde{\mathcal{U}}^{\lambda_j} \tilde{\textbf{h}}^j, \tilde{\textbf{h}}^j \ra_{L^2} \rightarrow 0$. 

By extracting subsequence if necessary, $ \tilde{\textbf{h}}^j $ converges weakly to some $\tilde{\textbf{h}}^0$ in $H^1$ and strongly in $L^2$ as $n \rightarrow +\infty$, and $\tilde{\textbf{h}}^0$ satisfies $\tilde{\textbf{h}}^0 \in \tilde{\mathcal{Y}}$ and $ \| \tilde{\textbf{h}}^0\|^2_{L^2}  +  \| \nabla  \tilde{\textbf{h}}^0\|^2_{L^2} =1$. Also, we have $\| (\tilde{\mathcal{T}}^{\lambda_j}_1)^* \tilde{\textbf{h}}^j \|_{L^2} \rightarrow 0$. Therefore $ \la  (\mathcal{A}^{\lambda_j}_1)^{-1} (\tilde{\mathcal{T}}^{\lambda_j}_1)^*  \tilde{\textbf{h}}^j,   (\tilde{\mathcal{T}}^{\lambda_j}_1)^*  \tilde{\textbf{h}}^j  \ra_{L^2} \rightarrow 0$.

Moreover, 
\begin{equation}
\begin{split}
& \sum_\pm (  \la \mathcal{Q}^\pm_{\lambda_j} (\hat{v}_r h^j_{r}), \hat{v}_r h^j_{r}  \ra_{\mathcal{H}^\pm}  +  \la \mathcal{Q}^\pm_{\lambda_j} (\hat{v}_z h^j_{z } ), \hat{v}_z h^j_{z }  \ra_{\mathcal{H}^\pm}  -2 \la \mathcal{Q}^\pm_{\lambda_j} (\hat{v}_r h^j_{r}), \hat{v}_z h^j_{z }  \ra_{\mathcal{H}^\pm}    )  \\
& \rightarrow\sum_\pm ( \| \mathcal{P}^\pm (\hat{v}_r h^0_{r} )\|^2_{\mathcal{H}^\pm} +  \| \mathcal{P}^\pm (\hat{v}_z h^0_{z } )\|^2_{\mathcal{H}^\pm} - 2 \la  \mathcal{P}^\pm (\hat{v}_r h^0_{r} ), \mathcal{P}^\pm (\hat{v}_z h^0_{z } ) \ra_{\mathcal{H}^\pm}  ) \\
& \geq 0  
\end{split}
\end{equation}
as $n \rightarrow +\infty$. Therefore, $ \| \nabla \tilde{\textbf{h}}^0 \|^2_{L^2} +  \int_{\partial \Omega}  \frac{\tilde{z}'}{\tilde{r} \sqrt{\tilde{z}'^2 + \tilde{r}'^2}} (h^0_n)^2 d S_x =0$, which implies $\tilde{\textbf{h}}^0 =0$ by the boundary conditions. This is a contradiction to $ \| \tilde{\textbf{h}}^0\|^2_{L^2}  +  \| \nabla  \tilde{\textbf{h}}^0\|^2_{L^2} =1$.

Now we shall prove (i). From (ii) we know that $\tilde{\mathcal{U}}^\lambda$ is 1-1 from $\tilde{\mathcal{Y}}$ to $L^2 (\Omega ; \tilde{\mathbb{R}}^2)$. We use the Lax-Milgram Theorem to prove this. Let $\tilde{\mathcal{Y}}_2 :=  \{  \tilde{\textbf{h}} \in H^{1, \tau} (\Omega; \tilde{\mathbb{R}^2}) | \nabla \cdot \tilde{\textbf{h}} =0, \forall x \in \Omega ; \tilde{r}  \tilde{z}' \partial_r h_n - \tilde{r} \tilde{r}' \partial_z h_n +  \tilde{r} \tilde{z}'  h_n =0 , h_{tg} =0, \forall x \in \partial \Omega   \}$. 

From the definition of $\tilde{\mathcal{U}}^\lambda$, we introduce the corresponding bilinear form
\begin{equation}
\begin{split}
B^\lambda ( \tilde{\textbf{h}} , \tilde{\textbf{g}})
& := \int_\Omega (\lambda^2 \tilde{\textbf{h}} \cdot \tilde{\textbf{g}} + \nabla \tilde{\textbf{h}} \cdot \nabla \tilde{\textbf{g}}  ) dx + \int_{\partial \Omega}  \frac{\tilde{z}'}{\tilde{r} \sqrt{\tilde{z}'^2 + \tilde{r}'^2}} h_n g_n d S_x \\
& + \sum_\pm \la \mathcal{Q}^\pm_\lambda (\hat{v} \cdot \tilde{\textbf{h}}), \hat{v} \cdot \tilde{\textbf{g}} \ra_{\mathcal{H}^\pm} + \la ( \mathcal{A}^\lambda_1)^{-1} ( \tilde{\mathcal{T}}^\lambda_1)^* \tilde{\textbf{h}},  ( \tilde{\mathcal{T}}^\lambda_1)^* \tilde{\textbf{g}} \ra_{L^2}  \ .
\end{split}
\end{equation}
From (ii), $B^\lambda$ is coercive on $\tilde{\mathcal{Y}}_2 \times \tilde{\mathcal{Y}}_2$ when $\lambda$ is small enough or large enough. Hence by Lax-Milgram there exists an $ \tilde{\textbf{h}} \in \tilde{\mathcal{Y}}_2$ for each $f \in L^2 (\Omega ; \tilde{\mathbb{R}}^2)$ such that $\tilde{\mathcal{U}}^\lambda \tilde{\textbf{h}} =f$. From the definition of $\tilde{\mathcal{U}}^\lambda$, $\Delta \tilde{\textbf{h}} \in L^2 (\Omega; \tilde{\mathbb{R}}^2)$. Thus, $\tilde{\textbf{h}} \in H^2 (\Omega; \tilde{\mathbb{R}}^2) \cap \tilde{\mathcal{Y}}_2 = \tilde{\mathcal{Y}}$. Hence $\tilde{\mathcal{U}}^\lambda$ is 1-1 and onto from $\tilde{\mathcal{Y}}$ to $L^2 (\Omega ; \tilde{\mathbb{R}}^2)$.

\end{proof}

From \eqref{reducedMaxwellMatrixEqn}, our goal is to prove that 
\begin{equation}
\begin{array}{ccc}
\mathcal{M}^\lambda
 \end{array} 
\left( \begin{array}{ccc}
k \\
\tilde{\textbf{h}} 
\end{array} \right)
 \begin{array}{ccc}
=
 \end{array} 
\left( \begin{array}{ccc}
\mathcal{L}^\lambda  &  ( \tilde{\mathcal{V}}^\lambda )^*  \\
\tilde{\mathcal{V}}^\lambda  &  \tilde{\mathcal{U}}^\lambda 
\end{array} \right) 
\left( \begin{array}{ccc}
k \\
\tilde{\textbf{h}} 
\end{array} \right)
 \begin{array}{ccc}
=0   \end{array} 
\end{equation}
admits a nonzero solution in $\mathcal{X} \times \tilde{\mathcal{Y}}$ for some $\lambda > 0$. This is done by repeating the process in \cite{NS1}: first truncate it into a finite-dimensional problem and compare the number of negative eigenvalues for the cases when $\lambda$ is small and large, obtain a solution for the finite-dimensional system, and then pass to the limit, attaining a solution to the infinite-dimensional system, and then naturally recover the growing mode. We omit this part of the proof. For details, see Lemma 4.11 and 4.12 in \cite{NS1}. In the end we have

\begin{lemma} \label{fullequationsolving}
If $\mathcal{L}^0$ is NOT positive definite, then there exists $ \lambda_0 >0$ and a non-zero vector function $ ( k_0 , \tilde{\textbf{h}}_0  ) \in \mathcal{X} \times \tilde{\mathcal{Y}}$ such that 
\begin{equation}
\begin{array}{ccc}
\mathcal{M}^{\lambda_0}
 \end{array} 
\left( \begin{array}{ccc}
k_0 \\
\tilde{\textbf{h}}_0
\end{array} \right)
 \begin{array}{ccc}
=
 \end{array} 
 \begin{array}{ccc}
0  \ . \end{array} 
\end{equation}
\end{lemma}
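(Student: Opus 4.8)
The plan is to carry out the continuation/Morse-index argument of \cite{NS1} (Lemmas 4.11--4.12), adapted to the self-adjoint family $\mathcal{M}^\lambda$ on $\mathcal{X}\times\tilde{\mathcal{Y}}$. The guiding idea is that the negative spectral subspace of $\mathcal{M}^\lambda$ is strictly larger for small $\lambda$ than for large $\lambda$ once $\mathcal{L}^0$ fails to be positive definite, so by continuity of the spectrum some eigenvalue of $\mathcal{M}^\lambda$ must vanish at an intermediate $\lambda_0$. First I would record the two endpoint regimes. On $(0,\lambda_1]\cup[\lambda_2,\infty)$ the operator $\tilde{\mathcal{U}}^\lambda$ is negative definite and invertible from $\tilde{\mathcal{Y}}$ onto $L^{2,\tau}(\Omega;\tilde{\mathbb{R}}^2)$, so on these ranges the block structure of $\mathcal{M}^\lambda$ lets one eliminate the $\tilde{\textbf{A}}$-component and reduce $\mathcal{M}^\lambda(k,\tilde{\textbf{h}})^{\mathrm T}=0$ to the Schur-complement equation $\mathcal{N}^\lambda k = 0$, $\tilde{\textbf{h}}=-(\tilde{\mathcal{U}}^\lambda)^{-1}\tilde{\mathcal{V}}^\lambda k$, where $\mathcal{N}^\lambda := \mathcal{L}^\lambda - (\tilde{\mathcal{V}}^\lambda)^*(\tilde{\mathcal{U}}^\lambda)^{-1}\tilde{\mathcal{V}}^\lambda$. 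Since $(\tilde{\mathcal{U}}^\lambda)^{-1}\le 0$, the correction term is positive semidefinite, hence $\mathcal{N}^\lambda\ge\mathcal{L}^\lambda$; combined with $\mathcal{L}^\lambda\ge 0$ for large $\lambda$ this gives $\mathcal{N}^\Lambda\ge 0$ for some fixed large $\Lambda\ge\lambda_2$. At the other end, $\tilde{\mathcal{V}}^0=0$ gives $\mathcal{N}^0=\mathcal{L}^0$, which is not positive definite by hypothesis, so by the strong convergences $\mathcal{L}^\lambda\to\mathcal{L}^0$, $(\tilde{\mathcal{V}}^\lambda)^*\to 0$ as $\lambda\to 0^+$ (and the Corollary following from them), $\mathcal{N}^\lambda$ is not positive definite for $0<\lambda\le\lambda_3$.

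Next I would truncate on the full (unreduced) system. Let $\{\psi_j\}\subset\mathcal{X}$ and $\{\tilde{\boldsymbol\psi}_j\}\subset\tilde{\mathcal{Y}}$ be orthonormal bases of $L^{2,\tau}(\Omega)$ and $L^{2,\tau}(\Omega;\tilde{\mathbb{R}}^2)$ consisting of Dirichlet-type eigenfunctions of $-\Delta$, and let $\Pi_n$, $\tilde{\Pi}_m$ be the projections onto their first $n$, resp.\ $m$, members. The truncation $\mathcal{M}^\lambda_{n,m}:=(\Pi_n\oplus\tilde{\Pi}_m)\,\mathcal{M}^\lambda\,(\Pi_n\oplus\tilde{\Pi}_m)$ is a symmetric matrix whose entries depend continuously on $\lambda\in(0,\infty)$ by the $\lambda$-continuity lemmas for $\mathcal{L}^\lambda$, $\tilde{\mathcal{S}}^\lambda$, $\tilde{\mathcal{T}}^\lambda_1+\lambda\nabla$, $\tilde{\mathcal{T}}^\lambda_2$; crucially we work with $\mathcal{M}^\lambda$ itself, not $\mathcal{N}^\lambda$, so no inversion of $\tilde{\mathcal{U}}^\lambda$ enters and continuity holds across all of $(0,\infty)$, including the gap $(\lambda_1,\lambda_2)$. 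By Haynsworth inertia additivity applied at $\lambda=\lambda_3$ and $\lambda=\Lambda$ (where $\tilde{\Pi}_m\tilde{\mathcal{U}}^\lambda\tilde{\Pi}_m$ is negative definite), the number of negative eigenvalues of $\mathcal{M}^\lambda_{n,m}$ equals $m$ plus the number of negative eigenvalues of $\Pi_n\mathcal{N}^\lambda\Pi_n$; for $n$ large this is $\ge m+1$ at $\lambda_3$ and $=m$ at $\Lambda$. Since the eigenvalues of $\mathcal{M}^\lambda_{n,m}$ move continuously with $\lambda$, some eigenvalue vanishes at a point $\lambda_{n,m}\in[\lambda_3,\Lambda]$; pick a unit kernel vector $(k_{n,m},\tilde{\textbf{h}}_{n,m})$, so $(\Pi_n\oplus\tilde{\Pi}_m)\mathcal{M}^{\lambda_{n,m}}(k_{n,m},\tilde{\textbf{h}}_{n,m})^{\mathrm T}=0$ with $\|k_{n,m}\|_{L^2}^2+\|\tilde{\textbf{h}}_{n,m}\|_{L^2}^2=1$.

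Finally I would let $n,m\to\infty$. The $\lambda_{n,m}$ lie in the fixed compact interval $[\lambda_3,\Lambda]\subset(0,\infty)$, so along a subsequence $\lambda_{n,m}\to\lambda_0>0$. Writing $\mathcal{M}^\lambda=\mathrm{diag}(-\Delta,\,-\Delta+\tfrac{1}{r^2},\,-\Delta)+\mathcal{R}^\lambda$ with $\mathcal{R}^\lambda$ bounded on $L^{2,\tau}(\Omega)\times L^{2,\tau}(\Omega;\tilde{\mathbb{R}}^2)$ uniformly for $\lambda\in[\lambda_3,\Lambda]$, the truncated equation inverts to give a uniform $H^2$-type bound on $(k_{n,m},\tilde{\textbf{h}}_{n,m})$ (in the $H^{2\dagger}$ sense for the $A_\varphi$-component, via the identity \eqref{Deltagexp} exactly as in Lemma \ref{regularity}); by the compact embeddings $\mathcal{X}\hookrightarrow L^{2,\tau}(\Omega)$, $\tilde{\mathcal{Y}}\hookrightarrow L^{2,\tau}(\Omega;\tilde{\mathbb{R}}^2)$, a further subsequence converges strongly in $L^2$ to $(k_0,\tilde{\textbf{h}}_0)$ with $\|k_0\|_{L^2}^2+\|\tilde{\textbf{h}}_0\|_{L^2}^2=1$, so the limit is nonzero. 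Using $\Pi_n\to\mathrm{Id}$, $\tilde{\Pi}_m\to\mathrm{Id}$ strongly and the $\lambda$-continuity of the pieces of $\mathcal{M}^\lambda$ together with the strong $L^2$-convergence of $(k_{n,m},\tilde{\textbf{h}}_{n,m})$, one passes to the limit in the truncated equation and obtains $\mathcal{M}^{\lambda_0}(k_0,\tilde{\textbf{h}}_0)^{\mathrm T}=0$; elliptic regularity then upgrades $(k_0,\tilde{\textbf{h}}_0)\in\mathcal{X}\times\tilde{\mathcal{Y}}$.

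The main obstacle is the limiting step, in two respects. One must keep $\lambda_{n,m}$ bounded away from $0$ and from $+\infty$ — this is why it is essential that the jump in the number of negative eigenvalues is confined to the fixed interval $[\lambda_3,\Lambda]$ and why the truncation is done on the full $\mathcal{M}^\lambda$ rather than on the Schur complement $\mathcal{N}^\lambda$, which is undefined on $(\lambda_1,\lambda_2)$. And one must rule out that the normalized kernel vectors converge weakly to $0$, which relies on the relative compactness of $\mathcal{M}^\lambda-\mathrm{diag}(-\Delta,-\Delta+\tfrac{1}{r^2},-\Delta)$ and on the uniform $H^2$/$H^{2\dagger}$ bounds obtained from the equation. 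All of this is precisely what is carried out in Lemmas 4.11--4.12 of \cite{NS1}; the only new features here, the singular term $1/r^2$ and the modified boundary operator defining $\tilde{\mathcal{Y}}$, are absorbed into the $H^{2\dagger}$ framework and the self-adjointness and coercivity lemmas already proved above, so the argument transfers with no essential change.
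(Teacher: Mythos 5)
Your proposal is correct and follows essentially the same route as the paper, which itself omits the details and defers to Lemmas 4.11--4.12 of \cite{NS1}: truncate the full self-adjoint family $\mathcal{M}^\lambda$ to finite dimensions, compare the count of negative eigenvalues at a small $\lambda_3$ (where the Schur complement inherits the non-positivity of $\mathcal{L}^0$) with the count at a large $\Lambda$ (where $\mathcal{L}^\Lambda\geq 0$ and $\tilde{\mathcal{U}}^\Lambda$ is negative definite), locate a vanishing eigenvalue by continuity, and pass to the limit using the uniform coercivity and compactness to rule out a trivial weak limit. The only (harmless) imprecisions are that the inertia identity involves the Schur complement of the truncated matrix rather than the truncation of $\mathcal{N}^\lambda$, and that the reduced operator $\mathcal{M}^\lambda$ has a $2\times 2$ rather than $3\times 3$ block-diagonal leading part.
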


Now we can recover a growing mode of the linearized Vlasov-Maxwell system with its boundary conditions and prove the instability result.

\begin{theorem}
Assume that $\mathcal{L}^0$ is NOT positive definite, then there exists a growing mode $( e^{\lambda_0 t} f^\pm, e^{\lambda_0 t} \textbf{E}, e^{\lambda_0 t} \textbf{B}   )$ of the linearized Vlasov-Maxwell system. Here $\lambda_0 > 0$, $ f^\pm \in \mathcal{H}^\pm$, $\textbf{E}$, $\textbf{B} \in H^1 (\Omega ; \mathbb{R}^3)$.
\end{theorem}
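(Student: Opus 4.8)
The plan is to build the growing mode directly out of the pair produced by Lemma~\ref{fullequationsolving}. Let $\lambda_0>0$ and $(k_0,\tilde{\textbf{h}}_0)\in\mathcal{X}\times\tilde{\mathcal{Y}}$, $(k_0,\tilde{\textbf{h}}_0)\neq0$, satisfy $\mathcal{M}^{\lambda_0}(k_0,\tilde{\textbf{h}}_0)=0$. I would set $A_\varphi:=k_0$, $\tilde{\textbf{A}}:=\tilde{\textbf{h}}_0$, $\textbf{A}:=A_\varphi e_\varphi+\tilde{\textbf{A}}$, and recover the scalar potential by $\phi:=-(\mathcal{A}^{\lambda_0}_1)^{-1}\big((\mathcal{B}^{\lambda_0})^*A_\varphi+(\tilde{\mathcal{T}}^{\lambda_0}_1)^*\tilde{\textbf{A}}\big)$; this is legitimate since $\mathcal{A}^{\lambda_0}_1$ is invertible with $(\mathcal{A}^{\lambda_0}_1)^{-1}\colon L^{2,\tau}(\Omega)\to\mathcal{X}$, and since $(\mathcal{B}^{\lambda_0})^*A_\varphi$ and $(\tilde{\mathcal{T}}^{\lambda_0}_1)^*\tilde{\textbf{A}}$ lie in $L^{2,\tau}(\Omega)$ by Lemma~\ref{OperatorAdjointness} (for the second, using $\nabla\cdot\tilde{\textbf{A}}=0$). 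Reversing the block elimination of $\phi$ that led from \eqref{MaxwellMatrixEqn} to \eqref{reducedMaxwellMatrixEqn}, the relation $\mathcal{M}^{\lambda_0}(k_0,\tilde{\textbf{h}}_0)=0$ together with the definition of $\phi$ shows that $(\phi,A_\varphi,\tilde{\textbf{A}})$ solves the full potential system \eqref{system1}--\eqref{system3}. Then I would define $f^\pm$ by the trajectory formula \eqref{linearizedfformula} with $\lambda=\lambda_0$, and set $\textbf{E}:=-\nabla\phi-\lambda_0\textbf{A}$, $\textbf{B}:=\nabla\times\textbf{A}$; the candidate is $(e^{\lambda_0 t}f^\pm,e^{\lambda_0 t}\textbf{E},e^{\lambda_0 t}\textbf{B})$.

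Next I would check the regularity. Since $A_\varphi=k_0\in\mathcal{X}$ gives $A_\varphi e^{i\varphi}\in H^2(\Omega)$ via \eqref{Deltagexp}, $\tilde{\textbf{A}}\in\tilde{\mathcal{Y}}\subset H^{2,\tau}(\Omega;\tilde{\mathbb{R}}^2)$, and $\phi\in\mathcal{X}\subset H^{2,\tau}(\Omega)$ by construction, one obtains $\textbf{A}\in H^2(\Omega;\mathbb{R}^3)$ and hence $\textbf{E},\textbf{B}\in H^1(\Omega;\mathbb{R}^3)$. For $f^\pm\in\mathcal{H}^\pm$ I would estimate the three terms of \eqref{linearizedfformula} separately: the terms $\pm\mu^\pm_p r A_\varphi$ and $\pm\mu^\pm_e\phi$ using $\sup_x\int_{\mathbb{R}^3}(|\mu^\pm_e|^3+|\mu^\pm_p|^2|\mu^\pm_e|)\,dv<\infty$, the boundedness of $r$ on $\bar\Omega$, and $A_\varphi,\phi\in L^2(\Omega)$; and the term $\pm\mu^\pm_e\mathcal{Q}^\pm_{\lambda_0}(\hat v\cdot\textbf{A}-\phi)$ using boundedness of $\mu^\pm_e$ and $\|\mathcal{Q}^\pm_{\lambda_0}\|_{\mathcal{H}^\pm\to\mathcal{H}^\pm}\le1$, which bound it by $\|\hat v\cdot\textbf{A}-\phi\|_{\mathcal{H}^\pm}\lesssim\|\textbf{A}\|_{L^2}+\|\phi\|_{L^2}<\infty$, exactly as in Lemma~\ref{regularity}. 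The same estimates give $D^\pm F^\pm\in\mathcal{H}^\pm$, where $F^\pm=f^\pm\mp r\mu^\pm_p A_\varphi$.

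The core of the argument is verifying that this candidate solves the linearized Vlasov--Maxwell system with its boundary conditions. For the Vlasov equation I would use the identity $(\lambda_0+D^\pm)\mathcal{Q}^\pm_{\lambda_0}g=\lambda_0 g$ (which is precisely how \eqref{linearizedfformula} was obtained) together with $D^\pm\mu^\pm_e=D^\pm\mu^\pm_p=0$ (invariance of $e^\pm,p^\pm$): applying $\lambda_0+D^\pm$ to \eqref{linearizedfformula}, the two $\mu^\pm_e\phi$ contributions cancel and one recovers $\lambda_0 f^\pm+D^\pm f^\pm=\pm(\mu^\pm_e D^\pm\phi+\lambda_0\mu^\pm_e\hat v\cdot\textbf{A}+\lambda_0 r\mu^\pm_p A_\varphi+\mu^\pm_p D^\pm(rA_\varphi))$. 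For the Maxwell equations, $\nabla\cdot\textbf{B}=0$ and $\lambda_0\textbf{B}+\nabla\times\textbf{E}=0$ are automatic from $\textbf{E}=-\nabla\phi-\lambda_0\textbf{A}$, $\textbf{B}=\nabla\times\textbf{A}$; the Coulomb gauge holds because $\nabla\cdot(A_\varphi e_\varphi)=\tfrac1r\partial_\varphi A_\varphi=0$ and $\nabla\cdot\tilde{\textbf{A}}=0$, whence $\nabla\cdot\textbf{E}=-\Delta\phi=\rho$ by \eqref{system1}; and $\lambda_0\textbf{E}-\nabla\times\textbf{B}=-(\lambda_0^2-\Delta)\textbf{A}-\lambda_0\nabla\phi$, whose $e_\varphi$- and $\tilde{}$-components equal $-j_\varphi$ and $-\tilde{\textbf{j}}$ by \eqref{system2}--\eqref{system3}, $\rho,j_\varphi,\tilde{\textbf{j}}$ being exactly the velocity moments of $f^\pm$ by the very derivation of those equations. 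The perfect-conductor conditions \eqref{perfectconductorboundary} are recovered by reversing Section~4 from $\phi=0$, $A_\varphi=0$ on $\partial\Omega$ (Dirichlet, as $\phi,A_\varphi\in\mathcal{X}$) and $A_{tg}=0$, $\tilde z'\partial_rA_n-\tilde r'\partial_zA_n+\tfrac{\tilde z'}{\tilde r}A_n=0$ on $\partial\Omega$ (as $\tilde{\textbf{A}}\in\tilde{\mathcal{Y}}$), i.e.\ \eqref{fullboundarycondition}. The specular condition for $f^\pm$ I would get by noting that on $\partial\Omega$ one has $A_\varphi=\phi=0$, so $f^\pm(x,v)=\pm\mu^\pm_e(x,v)\mathcal{Q}^\pm_{\lambda_0}(\hat v\cdot\textbf{A})(x,v)$ there, while for $x\in\partial\Omega$ both $\mu^\pm_e(x,\cdot)$ and the backward trajectory through $(x,v)$ are invariant under $v\mapsto v-2(v\cdot e_n(x))e_n(x)$.

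Finally, nontriviality: if $\textbf{E}\equiv\textbf{B}\equiv0$, then $\textbf{B}=0$ with \eqref{Brepresentation} forces $\partial_zA_\varphi=\partial_r(rA_\varphi)=0$, hence $A_\varphi\equiv0$ by the Dirichlet condition, and $\textbf{E}=0$ then gives $\tilde{\textbf{A}}=-\lambda_0^{-1}\nabla\phi$ with $\Delta\phi=-\lambda_0\nabla\cdot\tilde{\textbf{A}}=0$ and $\phi|_{\partial\Omega}=0$, so $\phi\equiv0$ and $\tilde{\textbf{A}}\equiv0$, contradicting $(k_0,\tilde{\textbf{h}}_0)\neq0$. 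I expect the main obstacle to be the low-regularity bookkeeping behind the Vlasov step --- making rigorous sense of $\mathcal{Q}^\pm_{\lambda_0}$, of $D^\pm f^\pm$, of the identity $(\lambda_0+D^\pm)\mathcal{Q}^\pm_{\lambda_0}g=\lambda_0 g$, and of the specular condition, when $f^\pm$ lies only in the weighted space $\mathcal{H}^\pm$ and the characteristics are merely piecewise $C^1$. This is carried out exactly as in \cite{NS1}; the genuinely new content here is only the cylindrical-coordinate boundary-condition accounting, which is routine given Section~4 and the spaces $\mathcal{X}$, $\tilde{\mathcal{Y}}$.
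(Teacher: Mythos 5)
Your construction is exactly the paper's: the same $\phi$, $\textbf{A}$, $f^\pm$, $\textbf{E}$, $\textbf{B}$ built from $(k_0,\tilde{\textbf{h}}_0)$ of Lemma \ref{fullequationsolving}, with the same regularity and boundary-condition checks; the only (cosmetic) difference is that the paper makes the Vlasov step rigorous by testing $(\lambda_0+D^\pm)g^\pm=\lambda_0 h^\pm$ against specular test functions via the adjoint formula for $\mathcal{Q}^\pm_{\lambda_0}$ and the reflection $\mathcal{R}$, which is precisely the low-regularity bookkeeping you defer to \cite{NS1}. The proposal is correct and takes essentially the same approach.
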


\begin{proof}
Let $\lambda_0$, $k_0$, $\tilde{\textbf{h}}_0$ be as constructed in the previous lemma. Define
\begin{equation}
\begin{split}
& \phi := - (\mathcal{A}^{\lambda_0}_1)^{-1} ( (\mathcal{B}^{\lambda_0})^* k_0 + (\tilde{\mathcal{T}}^{\lambda_0}_1 )^* \tilde{\textbf{h}}_0  )  \ , \\
& \textbf{A} := \tilde{\textbf{h}}_0 + k_0 e_\varphi  \ ,  \\
& f^\pm (x,v) : =  \pm \mu^\pm_e (1- \mathcal{Q}^\pm_{\lambda_0} ) \phi \pm r   \mu^\pm_p A_\varphi \pm \mu^\pm_e  \mathcal{Q}^\pm_{\lambda_0} (\hat{v} \cdot \textbf{A})  \ , \\
& \textbf{E} := -\nabla \phi - \lambda_0 \textbf{A}  \ , \\
& \textbf{B} := \nabla \times \textbf{A}  \ . \\
\end{split}
\end{equation}
Then by definition $e^{\lambda_0 t} \phi$, $e^{\lambda_0 t}  \textbf{A}$ solves the equations $- \Delta \phi = \rho$ and $ \partial_t^2 \textbf{A} - \Delta \textbf{A} + \partial_t \nabla \phi = \textbf{j}$, $e^{\lambda_0 t} \textbf{E}$, $e^{\lambda_0 t} \textbf{B}$ solves the linearized Maxwell system. Since $\phi$, $A_\varphi \in \mathcal{X}$ and $\tilde{\textbf{A}} \in \tilde{\mathcal{Y}}$, we have $\textbf{E}$, $\textbf{B} \in H^1 (\Omega)$ and they satisfy the specular boundary condition. Also, since $\mathcal{Q}^\pm_{\lambda_0} (g)$ satisfies the specular boundary condition as long as $g$ does, we obtain that $f^\pm $ also satisfies the specular boundary condition.

Now it suffices to check the Vlasov equations for $e^{\lambda_0 t} f^\pm$. We do it for $f^+$ since $f^-$ goes similarly.

Let $g^+ := f^+ -\mu^+_e \phi - r   \mu^+_p A_\varphi $, $h^+ := \mu^+_e (\hat{v} \cdot \textbf{A} - \phi ) $, then the formula for $f^+$ above is equivalent to
\begin{equation}
g^+ = \mathcal{Q}^+_{\lambda_0} h^+  \ .
\end{equation}
We can rewrite the Vlasov equation for $f^+$ as
\begin{equation} \label{VlasovRewritten}
(\lambda_0 + D^+) g^+ = \lambda_0 h^+  
\end{equation}
in the distributional sense.

Recall that for $v= v_r e_r + v_z e_z +v_\varphi e_\varphi$ and $\mathcal{R} v = -v_r e_r - v_z e_z +v_\varphi e_\varphi $, we have $ ( \mathcal{R} g ) (x,v) = g (x, \mathcal{R} v) $. For each test function $k(x,v)$ in $C^1_c (\Omega \times \mathbb{R}^3) $ that satisfies the axisymmetry and the specular condition, using the adjoint formula for $\mathcal{Q}^+_{\lambda_0}$, and the facts that $\mathcal{R}^2 = Id $, $\mathcal{R} D^+ \mathcal{R} = -D^+$, and $\mathcal{Q}^+_{\lambda_0} (\lambda_0 +D^+) k = \lambda_0 k$, we compute
\begin{equation}
\begin{split}
& \la (\lambda_0 + D^+) g^+ , k \ra_{\mathcal{H}^\pm} =\la g^+, (\lambda_0 - D^+) k \ra_{\mathcal{H}^\pm} = \la \mathcal{Q}^+_{\lambda_0} h^+, (\lambda_0 - D^+) k \ra_{\mathcal{H}^\pm} \\
& = \la \mathcal{R} h^+, \mathcal{Q}^+_{\lambda_0} \mathcal{R} (\lambda_0 -D^+) k \ra_{\mathcal{H}^\pm} = \la \mathcal{R} h^+,  \mathcal{Q}^+_{\lambda_0}  (\lambda_0 +D^+) \mathcal{R} k   \ra_{\mathcal{H}^\pm}  \\
& \la \mathcal{R} h^+, \lambda_0 \mathcal{R} k \ra_{\mathcal{H}^\pm} = \la \lambda_0 h^+ , k \ra_{\mathcal{H}^\pm} \ .
\end{split}
\end{equation}
The Vlasov equation \eqref{VlasovRewritten} is verified. This completes the proof of Theorem \ref{mainresult} (iii). 

\end{proof}

\section{Example: Stable Equilibria}

In this section, we construct a family of stable equilibria and prove the first part of Theorem \ref{mainresultexample}. We assume that $\mu^\pm$ satisfies a slightly stronger decay assumption than \eqref{decayassumption}:
\begin{equation}  \label{decayassumptionexamplepart}
\mu^\pm_e (e, p) <  0, \  |\mu^\pm_e (e, p)| + |\mu^\pm_p (e, p)| \leq \frac{C_\mu }{1+ |e|^\gamma} \ , \ \gamma > 4 \ .
\end{equation}

Recall that by definition, for any $h \in \mathcal{X}$:
\begin{equation}
\la \mathcal{L}^0 h, h\ra_{L^2} = \la \mathcal{A}^0_2 h, h\ra_{L^2} - \la  (\mathcal{A}^0_1)^{-1} (\mathcal{B}^0 )^* h, (\mathcal{B}^0 )^* h \ra_{L^2}   \ . 
\end{equation}
Since $\mathcal{A}^0_1$ is negative definite, the second term above is non-negative. We now focus on the first term.

Using the Dirichlet boundary condition, we have
\begin{equation} \label{stableexamplecomputation}
\begin{split}
\la \mathcal{A}^0_2 h, h\ra_{L^2} =
& \int_\Omega ( |\nabla h|^2  +  \frac{1}{r^2  } |h|^2 ) dx \\
& - \sum_\pm \int_\Omega \int_{\mathbb{R}^3} r  \mu^\pm_p  \hat{v}_\varphi |h|^2 dv dx  + \sum_\pm \|\mathcal{P}^\pm (\hat{v}_\varphi h) \|^2_{\mathcal{H}^\pm}  \ . 
\end{split}
\end{equation}
Only the second term on the right side does not have a definite sign, and the others are all non-negative.

The main result of this section is the following result, which gives Theorem \ref{mainresultexample} (i).

\begin{theorem} \label{stableexample}
Let $(\mu^{\pm}, \phi^0, A^0_\varphi )$ be an inhomogeneous equilibrium. Suppose $\mu^\pm$ satisfy
\begin{equation}
p \mu^\pm_p (e, p) \leq 0 \ .
\end{equation}
Then the equilibrium is spectrally stable provided either (i) or (ii) holds.

(i) 
\begin{equation}  \label{stableexamplecondition01}
\sup_x  \big( |A^0_\varphi (x) | r(x) (2+ 2^\gamma | \phi^{0}  (x) |^\gamma ) \big) \leq C \cdot C_\mu^{-1}  c_P^{-1} , \quad \forall x \in \Omega
\end{equation}
where $C = C(\gamma) = ( \frac{8}{3} \pi + \frac{4}{\gamma} \pi^2 )^{-1}  \leq  (2  \int_{\mathbb{R}^3} \la v \ra^{-1} \frac{1}{1+ \la v \ra^\gamma} dv )^{-1} $ is a constant only depends on $\gamma$. The constant $c_P$ is the square of the Poincar\'{e} constant, therefore it depends on the boundary $\partial \Omega$. 

(ii)   
\begin{equation}   \label{stableexamplecondition02}
\sup_x \big(  |A^0_\varphi (x)|  r^3 (x) (2+ 2^\gamma | \phi^{0}  (x) |^\gamma ) \big) \leq  C_\mu^{-1} /2   , \quad \forall x \in \Omega
\end{equation}
\end{theorem}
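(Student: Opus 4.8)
The plan is to invoke Theorem~\ref{mainresult}, which reduces spectral stability to the operator inequality $\mathcal{L}^0 \geq 0$; so the goal is to show $\langle \mathcal{L}^0 h, h\rangle_{L^2} \geq 0$ for every $h \in \mathcal{X}$. Since $\mathcal{L}^0 = \mathcal{A}^0_2 - \mathcal{B}^0(\mathcal{A}^0_1)^{-1}(\mathcal{B}^0)^*$ and, by Lemma~\ref{A01invertibility}, $\mathcal{A}^0_1$ — hence $(\mathcal{A}^0_1)^{-1}$ — is self-adjoint and negative definite on $L^{2,\tau}(\Omega)$, the term $-\langle (\mathcal{A}^0_1)^{-1}(\mathcal{B}^0)^* h, (\mathcal{B}^0)^* h\rangle_{L^2}$ is nonnegative, so $\langle \mathcal{L}^0 h, h\rangle_{L^2} \geq \langle \mathcal{A}^0_2 h, h\rangle_{L^2}$. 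It therefore suffices to prove $\mathcal{A}^0_2 \geq 0$ under hypothesis (i) or under hypothesis (ii).

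For this I start from the identity \eqref{stableexamplecomputation}. The terms $\int_\Omega(|\nabla h|^2 + r^{-2}|h|^2)\,dx$ and $\sum_\pm \|\mathcal{P}^\pm(\hat{v}_\varphi h)\|^2_{\mathcal{H}^\pm}$ are nonnegative; I keep the first and discard the second, so the only contribution without a sign is $-\sum_\pm \int_\Omega\int_{\mathbb{R}^3} r \mu^\pm_p \hat{v}_\varphi |h|^2\,dv\,dx$, and this is where the hypothesis $p\mu^\pm_p \leq 0$ is used. Writing $r\hat{v}_\varphi = \langle v\rangle^{-1}(p^\pm \mp r A^0_\varphi)$ from $p^\pm = r(v_\varphi \pm A^0_\varphi)$, one gets
\[
-\sum_\pm \int_\Omega\int_{\mathbb{R}^3} r\mu^\pm_p\hat{v}_\varphi |h|^2\,dv\,dx = -\sum_\pm \int_\Omega\int_{\mathbb{R}^3} \frac{p^\pm\mu^\pm_p}{\langle v\rangle}|h|^2\,dv\,dx + \int_\Omega\int_{\mathbb{R}^3}\frac{r A^0_\varphi(\mu^+_p-\mu^-_p)}{\langle v\rangle}|h|^2\,dv\,dx ,
\]
where the first term on the right is nonnegative since $p^\pm\mu^\pm_p \leq 0$. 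Hence, abbreviating $\Psi(x) := \int_{\mathbb{R}^3}\langle v\rangle^{-1}(|\mu^+_p| + |\mu^-_p|)\,dv$,
\[
\langle \mathcal{A}^0_2 h, h\rangle_{L^2} \;\geq\; \int_\Omega \Big( |\nabla h|^2 + \frac{|h|^2}{r^2} - r|A^0_\varphi|\,\Psi\,|h|^2\Big)\,dx .
\]

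The technical core is a pointwise-in-$x$ estimate for $\Psi$. Using the decay bound $|\mu^\pm_p| \leq C_\mu(1+|e^\pm|^\gamma)^{-1}$ of \eqref{decayassumptionexamplepart} together with $e^\pm = \langle v\rangle \pm \phi^0(x)$, I would compare $1+|e^\pm|^\gamma$ with $1+\langle v\rangle^\gamma$ by elementary inequalities — splitting the $v$-integral according to whether $\langle v\rangle \geq 2|\phi^0(x)|$ (on which $|e^\pm| \geq \frac{1}{2}\langle v\rangle$, so $1+|e^\pm|^\gamma \gtrsim 2^{-\gamma}(1+\langle v\rangle^\gamma)$) or $\langle v\rangle < 2|\phi^0(x)|$ (on which the integrand is $\leq 1$ and the region has volume $\lesssim |\phi^0(x)|^3 \leq 1+|\phi^0(x)|^\gamma$) — to arrive at a bound of the shape
\[
\Psi(x) \;\leq\; C_\mu\,\big(2 + 2^\gamma |\phi^0(x)|^\gamma\big)\cdot 2\!\int_{\mathbb{R}^3}\frac{dv}{\langle v\rangle(1+\langle v\rangle^\gamma)} ,
\]
the constant $C(\gamma) = (8\pi/3 + 4\pi^2/\gamma)^{-1}$ being chosen so that $2\int_{\mathbb{R}^3}\langle v\rangle^{-1}(1+\langle v\rangle^\gamma)^{-1}\,dv \leq C(\gamma)^{-1}$, which follows from a crude radial integral estimate (here $\gamma>4$ is convenient). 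I expect this bookkeeping — pinning the velocity integral to exactly this form with constants matching \eqref{stableexamplecondition01}–\eqref{stableexamplecondition02} — to be the main obstacle; everything else is a sign count.

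Finally I would close the estimate by absorbing $r|A^0_\varphi|\Psi|h|^2$ into whichever positive quadratic term the hypothesis allows. Under (i), the bound on $\Psi$ and \eqref{stableexamplecondition01} give $r(x)|A^0_\varphi(x)|\Psi(x) \leq c_P^{-1}$ for all $x\in\Omega$; since $h$ has zero trace, the Poincar\'e inequality $\int_\Omega|\nabla h|^2 \geq c_P^{-1}\int_\Omega |h|^2$ then yields $\langle \mathcal{A}^0_2 h,h\rangle_{L^2} \geq \int_\Omega r^{-2}|h|^2\,dx \geq 0$. Under (ii), \eqref{stableexamplecondition02} gives $r(x)|A^0_\varphi(x)|\Psi(x) \leq r(x)^{-2}$ for all $x\in\Omega$, so the singular-weight term $\int_\Omega r^{-2}|h|^2\,dx$ absorbs the bad term and $\langle \mathcal{A}^0_2 h,h\rangle_{L^2} \geq \int_\Omega |\nabla h|^2\,dx \geq 0$. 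In either case $\mathcal{A}^0_2 \geq 0$, hence $\mathcal{L}^0 \geq 0$, and Theorem~\ref{mainresult} gives the asserted spectral stability.
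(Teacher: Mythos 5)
Your proposal follows the paper's proof essentially step for step: reduce to $\mathcal{A}^0_2\ge 0$ via the negativity of $\mathcal{A}^0_1$, split $r\hat v_\varphi\mu^\pm_p$ using $p^\pm=r(v_\varphi\pm A^0_\varphi)$ so that the hypothesis $p\mu^\pm_p\le 0$ kills the first piece, control the remaining velocity integral by the pointwise comparison $1+\la v\ra^\gamma\le(2+2^\gamma|\phi^0|^\gamma)(1+|e^\pm|^\gamma)$ (the paper's Lemma \ref{integralbound} with $\zeta=-1$, proved by exactly the splitting you describe), and absorb the result into $\|\nabla h\|_{L^2}^2$ via Poincar\'e for (i) or into $\int_\Omega r^{-2}|h|^2$ for (ii). This is correct and matches the paper's argument.
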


In order to prove Theorem \ref{stableexample}, we first introduce the following lemma, which will be useful throughout Section 7 and 8.

\begin{lemma} \label{integralbound}
For each $x \in \Omega$, $-\infty < \zeta \leq 1$, there holds
\begin{equation}
 \int_{\mathbb{R}^3} \la v \ra^{\zeta} 
 \frac{1}{ 1+ |\la v \ra \pm \phi^{ 0} (x) |^\gamma} dv  \leq  (2+ 2^\gamma | \phi^{ 0}  (x) | ^\gamma ) \int_{\mathbb{R}^3} \la v \ra^{\zeta}   \frac{1}{ 1+ \la v\ra^\gamma} dv
\end{equation}
\end{lemma}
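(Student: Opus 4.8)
The plan is to reduce to a single genuinely delicate case and then obtain the stated constant by splitting the velocity integral. Fix $x$ and set $c:=|\phi^{0}(x)|$; the case $c=0$ is trivial, so assume $c>0$. The quantity $|\langle v\rangle\pm\phi^{0}(x)|$ equals either $\langle v\rangle+c$ or $|\langle v\rangle-c|$. In the first case $1+|\langle v\rangle\pm\phi^{0}(x)|^{\gamma}\ge 1+\langle v\rangle^{\gamma}$, so the integrand on the left is pointwise $\le\langle v\rangle^{\zeta}(1+\langle v\rangle^{\gamma})^{-1}$ and the inequality holds even with constant $1$; hence it remains to handle $|\langle v\rangle\pm\phi^{0}(x)|=|\langle v\rangle-c|$. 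Here there is an easy but lossy pointwise bound: from the triangle inequality $\langle v\rangle\le|\langle v\rangle-c|+c$ and the convexity estimate $(s+t)^{\gamma}\le 2^{\gamma-1}(s^{\gamma}+t^{\gamma})$ one gets $1+\langle v\rangle^{\gamma}\le 2^{\gamma}(1+c^{\gamma})\bigl(1+|\langle v\rangle-c|^{\gamma}\bigr)$, which after multiplying by $\langle v\rangle^{\zeta}(1+\langle v\rangle^{\gamma})^{-1}\ge0$ and integrating (both integrals converge since $\zeta\le1<\gamma-3$) already yields the claim with the weaker constant $2^{\gamma}+2^{\gamma}c^{\gamma}$.

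To sharpen $2^{\gamma}$ down to $2$, I would split $\mathbb R^{3}_{v}=\{\langle v\rangle\le 2c\}\cup\{\langle v\rangle>2c\}$ (the first set empty when $2c<1$). On $\{\langle v\rangle\le 2c\}$ use $(1+|\langle v\rangle-c|^{\gamma})^{-1}\le 1$ together with $\langle v\rangle^{\gamma}\le 2^{\gamma}c^{\gamma}$, i.e. $1\le(1+2^{\gamma}c^{\gamma})(1+\langle v\rangle^{\gamma})^{-1}$, so this piece is at most $(1+2^{\gamma}c^{\gamma})\int_{\mathbb R^{3}}\langle v\rangle^{\zeta}(1+\langle v\rangle^{\gamma})^{-1}\,dv$. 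On $\{\langle v\rangle>2c\}$ one has $\langle v\rangle-c>\tfrac12\langle v\rangle$ and $\langle v\rangle-c>c$; pass to radial coordinates ($\langle v\rangle=\tau$, $dv=4\pi\tau\sqrt{\tau^{2}-1}\,d\tau$) and substitute $\sigma=\tau-c$, which rewrites this piece as $4\pi\int_{\max(c,\,1-c)}^{\infty}(\sigma+c)^{\zeta+1}\sqrt{(\sigma+c)^{2}-1}\,(1+\sigma^{\gamma})^{-1}\,d\sigma$. Using $c<\sigma$, $\zeta\le1$, and the identity $\sqrt{(\sigma+c)^{2}-1}=\sqrt{\sigma^{2}-1}\,\bigl(1+\tfrac{2c\sigma+c^{2}}{\sigma^{2}-1}\bigr)^{1/2}$, one compares this with the reference integrand $\sigma^{\zeta+1}\sqrt{\sigma^{2}-1}\,(1+\sigma^{\gamma})^{-1}$ and bounds the piece by $\int_{\mathbb R^{3}}\langle v\rangle^{\zeta}(1+\langle v\rangle^{\gamma})^{-1}\,dv$. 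Adding the two contributions produces the constant $2+2^{\gamma}c^{\gamma}$.

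The main obstacle is exactly the estimate on $\{\langle v\rangle>2c\}$: a crude pointwise comparison of $(1+|\langle v\rangle-c|^{\gamma})^{-1}$ with a multiple of $(1+\langle v\rangle^{\gamma})^{-1}$ only returns the constant $2^{\gamma}$ (since on this set $(\langle v\rangle-c)^{\gamma}>\langle v\rangle^{\gamma}/2^{\gamma}$ and nothing better is true for $\langle v\rangle$ of order $c$), so the gain must come from genuinely exploiting translation invariance of Lebesgue measure through the substitution $\sigma=\tau-c$. The one point requiring care in that substitution is the degeneration of the Jacobian factor $\sqrt{(\sigma+c)^{2}-1}$ as $\sigma\to1^{+}$; this singularity is integrable and is matched by the vanishing of $\sqrt{\tau^{2}-1}$ at $\tau=1$ in the reference integral, so it does not affect the bound once the two integrals are compared near $\sigma=1$ directly rather than integrand‑by‑integrand.
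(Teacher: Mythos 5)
Your reduction to the case $|\la v\ra\pm\phi^{0}(x)|=|\la v\ra-c|$ with $c=|\phi^{0}(x)|$ is fine, and your preliminary pointwise bound $1+\la v\ra^{\gamma}\le 2^{\gamma}(1+c^{\gamma})(1+|\la v\ra-c|^{\gamma})$ is correct. The gap is in the step that is supposed to produce the sharp constant: the claimed bound of the piece over $\{\la v\ra>2c\}$ by $1\cdot\int_{\mathbb R^{3}}\la v\ra^{\zeta}(1+\la v\ra^{\gamma})^{-1}dv$ is false. After the substitution $\sigma=\tau-c$ your integrand is $(\sigma+c)^{\zeta+1}\sqrt{(\sigma+c)^{2}-1}\,(1+\sigma^{\gamma})^{-1}$; for $-1\le\zeta\le 1$ both factors $(\cdot)^{\zeta+1}$ and $\sqrt{(\cdot)^{2}-1}$ are increasing, so this integrand is pointwise \emph{larger} than the reference integrand $\sigma^{\zeta+1}\sqrt{\sigma^{2}-1}\,(1+\sigma^{\gamma})^{-1}$, and your lower limit $\max(c,1-c)$ is $\le 1$ whenever $c\le 1$, so the domain is also at least as large. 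Translation invariance thus works against you, and no care near $\sigma=1$ can repair a comparison whose sign is wrong away from $\sigma=1$ as well. Concretely, take $c=1/2$, $\gamma=5$, $\zeta=1$: then $\{\la v\ra\le 2c\}$ is a null set, so your second piece is the entire left-hand side, which is numerically about $2.7$ times the reference integral --- consistent with the lemma's constant $2+2^{5}(1/2)^{5}=3$, but not with a bound by constant $1$.

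For comparison, the paper argues pointwise: it claims $1+\la v\ra^{\gamma}\le(2+2^{\gamma}c^{\gamma})(1+|\la v\ra\pm\phi^{0}(x)|^{\gamma})$ by splitting on whether $|\la v\ra\pm\phi^{0}(x)|$ exceeds $\tfrac12\la v\ra$; on the complementary set one gets $\la v\ra\le 2c$, hence $1+\la v\ra^{\gamma}\le 1+2^{\gamma}c^{\gamma}$, and the denominator is dropped. The essential difference from your split is that the cases are dictated by the size of the shifted energy relative to $\la v\ra$, not by the size of $\la v\ra$ relative to $c$; in your decomposition all the mass can sit in the region where you have no gain. Be aware, though, that the sharp constant is genuinely delicate: the pointwise inequality with constant $2+2^{\gamma}c^{\gamma}$ already fails at $\la v\ra=1.5$, $c=1/2$, $\gamma=5$ (where $1+1.5^{5}\approx 8.59>6$), so the paper's own case $D_{1}$ (with constant $2$ there) cannot be taken at face value either; a safe fix is either a genuinely integral argument or settling for your correct constant $2^{\gamma}(1+c^{\gamma})$, which however changes the explicit numerical constants used later in \eqref{muebound} and Lemma \ref{potentialAbound}, where the additive form $2+2^{\gamma}|\phi^{0}|^{\gamma}$ together with $|\phi^{0}|\le\tfrac12$ is exploited.
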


\begin{proof}
For any $x \in \Omega$, $\zeta \leq 1$, the integral $ \int_{\mathbb{R}^3} \la v \ra^{\zeta}   \frac{1}{ 1+ \la v\ra^\gamma} dv$ is convergent because of \eqref{decayassumptionexamplepart}.
For any $x$, it suffices to prove 
\begin{equation}
1+ \la v \ra^\gamma \leq (2+ 2^\gamma | \phi^{ 0}  (x) | ^\gamma )  (1+ |\la v\ra \pm \phi^{ 0} (x) |^\gamma )  \ . 
\end{equation}
Let $D_1 : = \{ v \in \mathbb{R}^3 :  |\la v\ra \pm \phi^{0} (x) | > \frac{1}{2} \la v \ra  \}$ , $D_2 : =   \{ v \in \mathbb{R}^3 :  |\la v\ra \pm \phi^{0} (x) | \leq  \frac{1}{2} \la v \ra  \} $. We will show that the claim holds true on both sets. Indeed, on $D_1$,
\begin{equation}
1+ \la v \ra^\gamma \leq 2 (1+ |\la v\ra \pm \phi^{0} (x) |^\gamma )   \ .
\end{equation}
On $D_2$, note that $|\la v\ra \pm \phi^{0} (x) | \leq  \frac{1}{2} \la v \ra$ implies $1 \leq \la v \ra \leq  2 | \phi^{0}  (x) |$. Therefore 
\begin{equation}
1 + \la v \ra^\gamma \leq 1+ 2^\gamma | \phi^{0}  (x) |^\gamma  \ .
\end{equation}
The lemma is verified.
\end{proof}

Now we prove Theorem \ref{stableexample}. 

\begin{proof}
To prove (i), it suffices to make sure that the term $- \sum_\pm \int_\Omega \int_{\mathbb{R}^3} r  \mu^\pm_p  \hat{v}_\varphi |h|^2 dv dx $ is controlled by the other terms.
Recalling \eqref{epdefinition}, we estimate 
\begin{equation}  \label{stableexamplecomputation2}
\begin{split}
& - \sum_\pm \int_\Omega \int_{\mathbb{R}^3} r  \mu^\pm_p \hat{v}_\varphi |h|^2 dv dx \\
& = - \sum_\pm \int_\Omega \int_{\mathbb{R}^3} p^\pm \mu^\pm_p (e^\pm, p^\pm) |h|^2 / \la v \ra dv dx + \sum_\pm \int_\Omega \int_{\mathbb{R}^3} r  A^0_\varphi \mu^\pm_p (e^\pm, p^\pm)  |h|^2 / \la v \ra dv dx \\
& \geq  \sum_\pm \int_\Omega \int_{\mathbb{R}^3} r  A^0_\varphi \mu^\pm_p (e^\pm, p^\pm)  |h|^2 / \la v \ra dv dx \\
& \geq -  \sum_\pm  \int_\Omega    |A^0_\varphi| (  \int_{\mathbb{R}^3} \la v \ra^{-1} |\mu^\pm_p (e^\pm, p^\pm) |   dv )   r   |h|^2   dx \\
& \geq - C_\mu  \sum_\pm \int_\Omega   |A^0_\varphi| (  \int_{\mathbb{R}^3} \la v \ra^{-1} \frac{1}{1+| \la v \ra \pm \phi^{ 0} (x)    |^\gamma} dv )   r   |h|^2   dx  \ .  \\
\end{split}
\end{equation}
Using Lemma \ref{integralbound} with $\zeta = -1$, the last line is bounded from below by
$$
- 2 C_\mu \sup_x \big( |A^0_\varphi (x) |  r(x) (2+ 2^\gamma | \phi^{ 0}  (x) |^\gamma )   \big) \big(  \int_{\mathbb{R}^3} \la v \ra^{-1} \frac{1}{1+\la v \ra^\gamma } dv \big) \big( \int_\Omega |h|^2 dx \big)  \ .
$$
Hence by \eqref{stableexamplecondition01},  
$$
- \sum_\pm \int_\Omega \int_{\mathbb{R}^3} r  \mu^\pm_p \hat{v}_\varphi |h|^2 dv dx   \geq  - \|\nabla h \|^2_{L^2}   \ . 
$$
Therefore, by \eqref{stableexamplecomputation2}, $\la \mathcal{A}^0_2 h, h\ra_{L^2} \geq 0$, from which we deduce $\la \mathcal{L}^0 h, h\ra_{L^2} \geq 0$, i.e. the equilibrium is spectrally stable.

The proof of (ii) is identical to the one of (i), except that in the last line of \eqref{stableexamplecomputation2}, we bound the term 
$$ - C_\mu   \sum_\pm \int_{\Omega}    |A^0_\varphi| (  \int_{\mathbb{R}^3} \la v \ra^{-1}  \frac{1}{1+|\la v \ra \pm \phi^{ 0} (x) |^\gamma} dv )   r   |h|^2   dx $$
by $- \int_{\Omega} \frac{1}{r^2} |h|^2 dx$ from below.
\end{proof}

By estimating the Poincar\'{e} constant, we can obtain a detailed description of the condition \eqref{stableexamplecondition01}, and deduce the following lemma. This sheds some light on our understanding of the role of geometry of the boundary on the stability of the equilibrium.

\begin{corollary} \label{stableexampleshape}
Let $L_1 = \sup_{x \in \Omega} z(x) - \inf_{x \in \Omega} z(x) $, $L_2 =\sup_{x \in \Omega} r(x) - \inf_{x \in \Omega} r(x) $. Then the Poincar\'{e} constant on $\Omega$ for $\varphi$-independent functions (with homogeneous Dirichlet boundary condition) is bounded by $ \min \{ \big( \pi^{-2} \sup_{x \in \Omega} r(x) \frac{L_1^2 L_2^2}{L_1^2 + L_2^2} \big)^{1/2} , L_1^{1/2} \}$. Therefore the condition for stability as derived in Theorem \ref{stableexample} can be expressed as
\begin{equation}
p \mu^\pm_p (e, p) \leq 0
\end{equation}
and
\begin{equation}
\sup_x  |A^0_\varphi| \sup_x (r(x) (2+ 2^\gamma | \phi^{0}  (x) |^\gamma ) ) \leq (\frac{8}{3} \pi + \frac{4}{\gamma} \pi^2 )^{-1}   \max \{  \pi^{2}  \inf_{x \in \Omega} r(x)^{-1}   \frac{L^2_1 + L^2_2}{L^2_1  L^2_2}, L^{-1}_1 \}  , \quad \forall x \in \Omega \ .
\end{equation}
\end{corollary}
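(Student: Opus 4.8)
The goal is to replace the implicit constant $c_P^{-1}$ on the right of \eqref{stableexamplecondition01} by an explicit geometric quantity. The hypothesis $p\mu^\pm_p\le0$ is inherited verbatim from Theorem \ref{stableexample}, so everything reduces to bounding $c_P$ from above, where (fixing the obvious typo in Lemma \ref{A01invertibility}) $c_P$ is the square of the Poincar\'e constant associated with $\mathcal X$, i.e. the best constant in $\|h\|_{L^2(\Omega)}^2\le c_P\|\nabla h\|_{L^2(\Omega)}^2$ over $\varphi$-independent $h$ vanishing on $\partial\Omega$. Once $c_P\le\Lambda$ is known one has $c_P^{-1}\ge\Lambda^{-1}$, and combining with $\sup_x\big(|A^0_\varphi|\,r\,(2+2^\gamma|\phi^0|^\gamma)\big)\le\big(\sup_x|A^0_\varphi|\big)\big(\sup_x r(2+2^\gamma|\phi^0|^\gamma)\big)$ shows that the displayed product inequality implies \eqref{stableexamplecondition01}, hence spectral stability by Theorem \ref{stableexample}(i). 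Thus the only real work is the estimate on $c_P$.

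First I would pass to the meridian cross-section $D\subset\{\varphi=0\}$ enclosed by $\mathcal C$. Since $h$ is $\varphi$-independent, $\nabla h=\partial_r h\,e_r+\partial_z h\,e_z$ and $dx=r\,dr\,d\varphi\,dz$, so
\[
c_P=\sup_{h\ne 0}\frac{\int_D|h|^2\,r\,dr\,dz}{\int_D\big(|\partial_r h|^2+|\partial_z h|^2\big)\,r\,dr\,dz},
\]
the supremum over functions vanishing on $\partial D$ and, when $D$ meets the $z$-axis, also on $\{r=0\}$ (this last point follows because $h\in H^{2\dagger}(\Omega)$ forces $\int_\Omega|h|^2r^{-2}\,dx<\infty$). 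I would then bound this weighted Rayleigh quotient in two ways. Bound A, by slicing in $z$: for a.e. $r$ the slice $D\cap(\{r\}\times\mathbb R)$ is a finite union of intervals of length $\le L_1$ with $h$ vanishing at their endpoints, so the one-dimensional Dirichlet Poincar\'e inequality gives $\int|h|^2\,dz\le(L_1/\pi)^2\int|\partial_z h|^2\,dz$ on each; multiplying by $r$, integrating in $r$, and dropping $|\partial_r h|^2\ge0$ yields $c_P\le L_1^2/\pi^2$. This holds for every axisymmetric $\Omega$, including those touching the axis, and the weight plays no role.

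Bound B, the sharper, geometry-sensitive one, uses the $r$-direction as well. Each slice $D\cap(\mathbb R\times\{z\})$ is a finite union of intervals of length $\le L_2$ with $h$ vanishing at the endpoints, so $\int|h|^2r\,dr\le(\sup_D r)\int|h|^2\,dr\le(\sup_D r)(L_2/\pi)^2\int|\partial_r h|^2\,dr\le\frac{\sup_D r}{\inf_D r}(L_2/\pi)^2\int|\partial_r h|^2\,r\,dr$. Combining this weighted radial inequality with the weighted $z$-inequality underlying Bound A through the elementary interpolation $\int|h|^2r=t\int|h|^2r+(1-t)\int|h|^2r$ and optimizing $t$ gives $c_P\le\frac{ab}{a+b}$ with $a=\frac{\sup_D r}{\inf_D r}\frac{L_2^2}{\pi^2}$ and $b=\frac{L_1^2}{\pi^2}$, hence $c_P\le\pi^{-2}\,\frac{\sup_D r}{\inf_D r}\,\frac{L_1^2L_2^2}{L_1^2+L_2^2}$; in the regime $r\ge1$ on $D$ this is the form stated in the corollary. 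Taking the minimum of Bounds A and B and inverting produces the $\max\{\cdot,\cdot\}$ appearing in the corollary, and substituting into \eqref{stableexamplecondition01} completes the proof (with the factor $C_\mu$, which in the operative normalization is $\le1$, carried along).

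The main obstacle is the degeneracy of the weight $r$ at the symmetry axis: when $\partial\Omega$ touches the $z$-axis, the step "pull the weight out from below by $\inf_D r$" in Bound B fails on the radial slices that reach $r=0$. There one must either invoke a Hardy-type inequality (using that $h$ vanishes on the axis, which itself has to be extracted first from the finiteness of $\int_\Omega|h|^2r^{-2}\,dx$ coming from membership in $H^{2\dagger}$), or simply fall back on Bound A, which is immune to the degeneracy; this is exactly why Bound B is naturally stated under a lower bound on $r$ on $D$ — consistent with the arclength normalization of $\mathcal C$, which keeps $r$ of order one, and with the regime $\sup_\Omega r>1$ that appears elsewhere in Theorem \ref{mainresultexample}.
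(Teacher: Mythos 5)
Your argument is correct in substance but reaches the two Poincar\'{e} bounds by a genuinely different route from the paper. For the first bound the paper extends $h$ by zero from the meridian cross-section $\Omega_0$ to a circumscribing rectangle with sides $L_1,L_2$ and invokes that rectangle's first Dirichlet eigenvalue $\pi^2(L_1^{-2}+L_2^{-2})$, pulling the weight $r$ out by $\sup_\Omega r$ on the numerator side; you instead recover the identical constant $\pi^{-2}L_1^2L_2^2/(L_1^2+L_2^2)$ from one-dimensional Dirichlet--Poincar\'{e} inequalities on the $r$- and $z$-slices glued by the convex-combination optimization $ab/(a+b)$. Your version has the merit of making explicit where the weight enters: you are forced to pay $\sup r/\inf r$ on the radial slices, whereas the paper writes $\sup r$ but its step $\int_{\Omega_0'}|\nabla u|^2\,dr\,dz\le(2\pi)^{-1}\|\nabla u\|^2_{L^2(\Omega)}$ silently requires $r\ge1$ on $\Omega_0$, so both arguments carry the same hidden hypothesis and your explicit fallback to the $z$-slicing bound near the axis is a genuine improvement in the degenerate case. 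For the second bound the paper uses the fundamental theorem of calculus plus Cauchy--Schwarz along $x_3$ in Cartesian coordinates, arriving at $\|u\|_{L^2}\le L_1\|\nabla u\|_{L^2}$, while your $z$-slicing gives the sharper $\|u\|_{L^2}\le(L_1/\pi)\|\nabla u\|_{L^2}$ with no extra work. The one caveat is that your constants do not literally reproduce those in the statement: you prove $c_P\le L_1^2/\pi^2$ and $c_P\le\pi^{-2}(\sup r/\inf r)L_1^2L_2^2/(L_1^2+L_2^2)$, whereas the corollary asserts (via ``$L_1^{1/2}$'' for the Poincar\'{e} constant) $c_P\le L_1$ and has $\sup r$ in place of $\sup r/\inf r$; but the paper's own computation yields $c_P\le L_1^2$, not $L_1$, so this mismatch is a defect of the stated corollary (and of its dropped $C_\mu^{-1}$ factor relative to \eqref{stableexamplecondition01}) rather than of your argument.
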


\begin{proof}  
It suffices to work out the bound for the Poincar\'{e} constant. Take $\Omega_0 :  = \{ x \in \Omega : \varphi (x) =0  \}$. Let $\Omega'_0$ be a rectangle with $\Omega$ inscribed in it, and the length of the side parallel to the $z$-axis being $L_1$, and the length of the side perpendicular to it being $L_2$. For any function defined on $\Omega_0$ with homogeneous Dirichlet boundary condition, we do zero extension for $g$, making it into a function defined on $\Omega'_0$.
We compute, using that the first Dirichlet eigenvalue on the rectangle $\Omega'_0$ is $\pi^2 (\frac{1}{L_1^2} + \frac{1}{L_2^2})$:
\begin{equation}
\begin{split}
\| u \|^2_{L^2 (\Omega)} 
& = \int_{\Omega} |u|^2 dx \\
& = 2 \pi \int_{\Omega_0} |u (r, z)|^2 r dr dz \\
& \leq 2 \pi  \sup_{x \in \Omega} r(x)  \int_{\Omega'_0} |u (r, z)|^2  dr dz \\
& \leq 2 \pi  \sup_{x \in \Omega} r(x)  \pi^{-2} \frac{L_1^2 L_2^2}{ L_1^2 + L_2^2} \| \nabla u (r, z) \|^2_{L^2_{r, z} (\Omega'_0)} \\
& \leq 2 \pi  \sup_{x \in \Omega} r(x)  \pi^{-2} \frac{L_1^2 L_2^2}{ L_1^2 + L_2^2} (2 \pi)^{-1} \| \nabla u (r, z) \|^2_{L^2 (\Omega)} \\
& =  \pi^{-2}  \sup_{x \in \Omega} r(x)  \frac{L_1^2 L_2^2}{L_1^2 + L_2^2}  \| \nabla u (r, z) \|^2_{L^2 (\Omega)}  \ . \\
\end{split}
\end{equation}
On the other hand, we can use $(x_1, x_2, x_3)$ to denote the Cartesian coordinates. Choose the coordinate frame such that $\inf_x x_3 \geq - L_1/2 $, $\sup_x x_3 \leq L_1/2 $. We compute     
$$
u (x_1, x_2, x_3) = \int^{x_3}_{- L_1/2} \partial_{x_3} u (x_1, x_2, x_3) d x'_3 \ .
$$
Hence
$$
| u (x_1, x_2, x_3) | \leq  L^2_1 \|  \nabla u (x_1, x_2, \cdot) \|_{L^2_{x_3}}  \ .
$$
Integrating in $(x_1, x_2, x_3) $, by Cauchy-Schwarz we obtain
\begin{equation}
\| u  (x_1, x_2, x_3) \|_{L^2_\Omega} \leq L_1 \| \nabla u  (x_1, x_2, x_3) \|_{L^2_\Omega}  \ .
\end{equation}
The bound for the Poincar\'{e} constant hence follows. Noticing that $ ( 2 \int_{\mathbb{R}^3} \la v \ra^{-1} \frac{1}{1+ \la v \ra^\gamma} dv )^{-1}  \leq (\frac{8}{3} \pi + \frac{4}{\gamma} \pi^2 )^{-1}$, the result then follows from Theorem \ref{stableexample}.

\end{proof}

\textit{Remark} Based on this theorem, we can begin to discuss the role of the shape of the domain. Suppose we have a thin torus with large radius $a_0$ with respect to the axis of rotation, and the size of the cross-section is small compared to $a_0$, then as $a_0$ gets large, the constraint on $|A^0_\varphi|$ required for stability gets stronger, and hence the instability increases.

\section{Example: Unstable Equilibria}

Now we construct a family of unstable equilibria and prove the second part of Theorem \ref{mainresultexample}. Again we assume that $\mu^\pm$ satisfies the additional decay assumption \eqref{decayassumptionexamplepart}.

Our main task in this section is to prove Theorem \ref{mainresultexample} (ii). Firstly, we find some conditions on the equilibrium that imply instability. The main condition is the strong dependence on the angular momentum. Later in Theorem \ref{unstableexampleexistence}, we prove that such an equilibrium exists under certain circumstances. Here we do not attempt to make the conditions on $\mu^\pm$ or all the constants sharp.

\begin{proposition} \label{unstableexample2}
Denote $b : = \sup_{x \in \Omega} r(x) >1$. Let $\delta$, $\epsilon$ be constants in $(0, 1)$ such that $\delta > \epsilon$, $ 0 < \epsilon + \delta <1$. Let $\mu^\pm$ be such that 
\begin{equation}
p \mu^\pm_p (e, p) \geq C'_\mu |p| \la p  \ra^{-\epsilon} \nu (e) 
\end{equation}
for some positve function $\nu (e)$, with 
\begin{equation}  \label{nucondition}
\nu (e) \geq C_\nu \exp (- e) 
\end{equation}
for some constant $C_\nu >0$, and that $\mu^\pm $ satisfies \eqref{decayassumptionexamplepart}. For each $K \geq 1$, let 
$$
\mu^{K, \pm } (e, p) = \frac{1}{K^\delta} \mu^\pm (e, Kp) \ .
$$
Suppose for some $K  \geq 1$ that $(\phi^{K, 0}, A_\varphi^{K, 0})$ is a pair of solutions to the following coupled system
\begin{equation}   \label{phiK0equation}
- \Delta \phi^{K, 0} = \int_{\mathbb{R}^3} ( \mu^{K, +} (e^{K, +}, p^{K, +}) -  \mu^{K, -} (e^{K, -}, p^{K, -}) ) dv
\end{equation}
\begin{equation}   \label{AK0equation}
(-\Delta +\frac{1}{r^2 })  A_\varphi^{K, 0} = \int_{\mathbb{R}^3} \hat{v}_\varphi ( \mu^{K, + } (e^{K, +}, p^{K, +}) - \mu^{K, - } (e^{K, -}, p^{K, -}) ) dv
\end{equation}  
where 
$$e^{K, \pm} =  \la v \ra \pm \phi^{K, 0} (x)  , \ p^{K, \pm} = r   (\hat{v}_\varphi \pm A_\varphi^{K, 0} (x))  \ , $$
and 
$$ A_\varphi^{K, 0} =0 , \ \phi^{K, 0} =0 \ on \ \partial \Omega \ , $$
such that $  \| \phi^{K, 0} \|_{L^\infty} \leq 1/2$, and $  \| A_\varphi^{K, 0}\|_{L^\infty} \leq  1/2 $. If $K$ is large enough, then the equilibrium $( \mu^{K, \pm } (e, p),  \phi^{K, 0}, A_\varphi^{K, 0})$ is spectrally unstable. 

More precisely, let $h \in \mathcal{X}$ being normalized in such a way that $ \int_\Omega ( |\nabla h|^2 + \frac{1}{r^2 } |h|^2 ) dx  =1 $, and $K$ is so large that 
\begin{equation}  \label{unstableinequality}
1 - H_1 C_1 C'_\mu  K^{1-\delta -\epsilon}  + 120 \cdot  2^{\gamma} \pi^2  b^2  (H_1+H_2) C_\mu^2  K^{1- 2 \delta }  + 2^\gamma H_2 C_2  C_\mu K^{-\delta}  +   256 \pi^2 c_P C_\mu^2  H_2 K^{-2 \delta}   < 0 \ ,
\end{equation} 
holds, where 
$$H_1 =   \int_{r \geq 1}  r  |h|^2 dx \ ,$$
$$H_2 = \|h\|^2_{L^2} \ , $$ 
$$C_1  =    2^{-1-\epsilon/2} b^{-\epsilon}   \ , $$
$$C_2 =  \frac{8}{3} \pi + \frac{4}{\gamma -1} \pi^2 \ ,  $$
and $c_P$ is the square of the Poincar\'{e} constant of $\Omega$, $C_\nu$ is as defined in \eqref{nucondition}. 
Then the equilibrium $( \mu^{K, \pm } (e, p),  \phi^{K, 0}, A_\varphi^{K, 0})$ is spectrally unstable.  
\end{proposition}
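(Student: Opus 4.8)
The plan is to invoke Theorem~\ref{mainresult}(iii): since the scaled data $(\mu^{K,\pm},\phi^{K,0},A_\varphi^{K,0})$ is an equilibrium satisfying the hypotheses of Theorem~\ref{mainresult}, it suffices to produce, for $K$ large, some $h\in\mathcal X$ with $\la \mathcal L^0 h,h\ra_{L^2}<0$, and I would test with the $h$ singled out in the statement. Because $\mathcal A^0_1$ is negative definite with $\|(\mathcal A^0_1)^{-1}\|\le c_P$ (Lemma~\ref{A01invertibility}),
\[
\la \mathcal L^0 h,h\ra_{L^2}=\la \mathcal A^0_2 h,h\ra_{L^2}-\la (\mathcal A^0_1)^{-1}(\mathcal B^0)^* h,(\mathcal B^0)^* h\ra_{L^2}\le \la \mathcal A^0_2 h,h\ra_{L^2}+c_P\|(\mathcal B^0)^* h\|_{L^2}^2 ,
\]
so the whole estimate reduces to bounding $\la \mathcal A^0_2 h,h\ra_{L^2}$ and $\|(\mathcal B^0)^* h\|_{L^2}$ from above. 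By \eqref{stableexamplecomputation} and the normalization $\int_\Omega(|\nabla h|^2+\tfrac1{r^2}|h|^2)\,dx=1$,
\[
\la \mathcal A^0_2 h,h\ra_{L^2}=1-\sum_\pm\int_\Omega\int_{\mathbb R^3} r\,\mu^{K,\pm}_p\,\hat v_\varphi\,|h|^2\,dv\,dx+\sum_\pm\|\mathcal P^\pm(\hat v_\varphi h)\|_{\mathcal H^\pm}^2 ,
\]
and the first two summands on the right are nonnegative.

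The heart of the argument is the sign–indefinite term. Writing $r\hat v_\varphi = r v_\varphi/\la v\ra=(p^{K,\pm}\mp r A_\varphi^{K,0})/\la v\ra$ in terms of the angular–momentum invariant $p^{K,\pm}$, I split
\[
-\sum_\pm\int_\Omega\int_{\mathbb R^3} r\,\mu^{K,\pm}_p\,\hat v_\varphi\,|h|^2
=-\sum_\pm\int_\Omega\int_{\mathbb R^3}\frac{p^{K,\pm}\mu^{K,\pm}_p}{\la v\ra}|h|^2
\ \pm\ \sum_\pm\int_\Omega\int_{\mathbb R^3}\frac{r A_\varphi^{K,0}\,\mu^{K,\pm}_p}{\la v\ra}|h|^2 .
\]
The first sum is $\le 0$ since $p\mu^{K,\pm}_p\ge0$, and this is where the scaling is used: from $\mu^{K,\pm}_p(e,p)=K^{1-\delta}\mu^\pm_p(e,Kp)$ one gets $p\,\mu^{K,\pm}_p(e,p)=K^{-\delta}(Kp)\mu^\pm_p(e,Kp)\ge K^{1-\delta}C'_\mu|p|\,\la Kp\ra^{-\epsilon}\nu(e)\ge K^{1-\delta-\epsilon}C'_\mu|p|\,\la p\ra^{-\epsilon}\nu(e)$, using $\la Kp\ra\le K\la p\ra$ for $K\ge1$. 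Combining this with $\nu(e^{K,\pm})\ge C_\nu e^{-\la v\ra-1/2}$ (from \eqref{nucondition} and $\|\phi^{K,0}\|_\infty\le\tfrac12$) and restricting the velocity integral to a fixed compact set on which $|v_\varphi\pm A_\varphi^{K,0}|$, $\la v\ra^{-1}$, $\la p^{K,\pm}\ra^{-\epsilon}$ and $\nu(e^{K,\pm})$ are all bounded below independently of $K$ (using $r\le b$ and $\|A_\varphi^{K,0}\|_\infty\le\tfrac12$), the remaining spatial integral is bounded below by a constant multiple of $\int_{r\ge1}r|h|^2\,dx=H_1$, yielding $-\sum_\pm\int_\Omega\int_{\mathbb R^3}p^{K,\pm}\mu^{K,\pm}_p|h|^2/\la v\ra\le -C_1 C'_\mu K^{1-\delta-\epsilon}H_1$.

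Everything remaining is an error term of strictly lower order in $K$. The key device here is the chain–rule identity $r\mu^{K,\pm}_p=\partial_{v_\varphi}\mu^{K,\pm}-\mu^{K,\pm}_e\hat v_\varphi$ (already used in the proof of Lemma~\ref{minimization1}) together with $\int_{\mathbb R^3}\partial_{v_\varphi}\mu^{K,\pm}\,dv=0$: wherever $r\mu^{K,\pm}_p$ occurs integrated against a factor depending only on $x$ — namely in the cross term $\pm\int\int rA_\varphi^{K,0}\mu^{K,\pm}_p|h|^2/\la v\ra$ and in $(\mathcal B^0)^* h$ — an integration by parts in $v_\varphi$ trades the $K^{1-\delta}$ size of $\mu^{K,\pm}_p$ for the benign $O(K^{-\delta})$ behaviour of $\mu^{K,\pm}_e$ (and of $\mu^{K,\pm}$ itself), after which \eqref{decayassumptionexamplepart}, Lemma~\ref{integralbound}, $\|\phi^{K,0}\|_\infty,\|A_\varphi^{K,0}\|_\infty\le\tfrac12$ and $r\le b$ give the bounds. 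In this way $\sum_\pm\|\mathcal P^\pm(\hat v_\varphi h)\|^2_{\mathcal H^\pm}$, the cross term and $c_P\|(\mathcal B^0)^* h\|^2_{L^2}$ are dominated by the positive $K^{-\delta}$, $K^{1-2\delta}$ and $K^{-2\delta}$ terms of \eqref{unstableinequality} (with the indicated $H_1,H_2$ and numerical constants). Collecting all the estimates, $\la \mathcal L^0 h,h\ra_{L^2}$ is bounded above by the left side of \eqref{unstableinequality}, which is negative by hypothesis; hence $\mathcal L^0\not\ge0$ and spectral instability follows from Theorem~\ref{mainresult}.

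The main obstacle I anticipate is exactly this error–term bookkeeping: keeping every power of $K$ and every constant under control, and in particular isolating inside $-\sum_\pm\int\int r\mu^{K,\pm}_p\hat v_\varphi|h|^2$ the genuinely destabilizing piece (bounded below through the angular–momentum hypothesis) from the benign piece (tamed via the $\partial_{v_\varphi}$–identity), so that the negative $K^{1-\delta-\epsilon}H_1$ contribution is not overwhelmed. A secondary point is the uniform–in–$K$ lower bound on the restricted velocity integral, which hinges on the fact that $p^{K,\pm}$ and $e^{K,\pm}$ — hence $\la p^{K,\pm}\ra^{-\epsilon}$ and $\nu(e^{K,\pm})$ — stay bounded below on the chosen compact set regardless of $K$, while the $K$–gain is carried solely by $\mu^{K,\pm}_p$.
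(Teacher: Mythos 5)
Your proposal is correct and its skeleton coincides with the paper's: reduce to $\la \mathcal{L}^{0,K}h,h\ra_{L^2}<0$ for the normalized $h$, control $-\la(\mathcal{A}^{0,K}_1)^{-1}(\mathcal{B}^{0,K})^*h,(\mathcal{B}^{0,K})^*h\ra$ by $c_P\|(\mathcal{B}^{0,K})^*h\|^2$, split $\la\mathcal{A}^{0,K}_2h,h\ra=1+I+II+III$ with $I$ the $p^{K,\pm}\mu^{K,\pm}_p$ term, and extract the dominant $-C_1C'_\mu H_1K^{1-\delta-\epsilon}$ from $I$ exactly as the paper does (restricting to $r\ge1$, $|p^{K,\pm}|>1$ and a fixed $v_\varphi$-region where $\nu(e^{K,\pm})\ge C_\nu e^{-\la v\ra-1/2}$; note your constant must carry the $C_\nu$ you invoke there, as the paper's proof does). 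The one genuine divergence is the cross term $II=\pm\sum_\pm\int\!\!\int rA^{K,0}_\varphi\mu^{K,\pm}_p|h|^2\la v\ra^{-1}$: the paper tames it with a maximum-principle estimate $\|A^{K,0}_\varphi\|_{L^\infty},\|\phi^{K,0}\|_{L^\infty}\le 20\pi b^2C_\mu K^{-\delta}$ (Lemma \ref{potentialAbound}), which converts the $O(K^{1-\delta})$ size of $\int\la v\ra^{-1}|\mu^{K,\pm}_p|dv$ into $O(K^{1-2\delta})$, negligible precisely because $\delta>\epsilon$; you instead propose the identity $r\mu^{K,\pm}_p=\partial_{v_\varphi}\mu^{K,\pm}-\mu^{K,\pm}_e\hat v_\varphi$ plus integration by parts in $v_\varphi$. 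Your route works and even yields a better power ($O(K^{-\delta})$), but two caveats: (a) because of the extra $\la v\ra^{-1}$ weight the integration by parts produces a term $\int\mu^{K,\pm}\hat v_\varphi\la v\ra^{-2}dv$, so you need a pointwise decay bound on $\mu^\pm$ itself, which is not literally in \eqref{decayassumptionexamplepart} and must be derived from $\mu^\pm\ge0$, $\mu^\pm_e<0$ and integrability; and (b) your resulting constant is $\sim C_\mu H_2K^{-\delta}$ rather than the $120\cdot2^\gamma\pi^2b^2(H_1+H_2)C_\mu^2K^{1-2\delta}$ appearing in \eqref{unstableinequality}, so to conclude under the hypothesis exactly as stated you must additionally check your error is dominated by the corresponding term there (true for $K$ large, but an extra verification). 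For $(\mathcal{B}^{0,K})^*h$ your $\partial_{v_\varphi}$ device is not a departure at all — it is exactly how the paper's Lemma \ref{B0Kbound} obtains $\|(\mathcal{B}^{0,K})^*\|\lesssim C_\mu K^{-\delta}$. The remaining pieces (bound on $III$ by $2^\gamma H_2C_2C_\mu K^{-\delta}$ via Lemma \ref{integralbound}, and the final assembly into \eqref{unstableinequality}) match the paper.
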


\textit{Remark} The condition $p \mu^\pm_p \geq C'_\mu |p|  \la p  \ra^{-\epsilon} \nu (e)  $ together with that $|\mu^\pm_p (e, p)| \leq \frac{C_{\mu}}{1+ |e|^\gamma}$ (assumed throughout all discussion) implies that $|\nu (e) | \leq \frac{  \la p  \ra^{\epsilon}  }{1+ |e|^\gamma} $ with $\gamma >  4 $, which ensures the integrals in the proof are convergent.

Now we discuss the proof of Theorem \ref{unstableexample2}. We replace $\mu^\pm$ in the operators by $\mu^{K, \pm }$ and define
\begin{equation}
D^{K, \pm} = v \cdot \nabla_x \pm (\textbf{E}^{ K, 0} + \hat{v} \times \textbf{B}^{K, 0} ) \cdot \nabla_v  \ . 
\end{equation}
Here $\textbf{E}^{K, 0} = - \nabla \phi^{K, 0}$, $\textbf{B}^{K, 0} = \nabla \times \textbf{A}^{ K, 0} $, $\textbf{A}^{K, 0} = A^{K, 0}_\varphi e_\varphi $, where $A^{K, 0}_\varphi$ is a bounded solution to the equation
\begin{equation}
(-\Delta +\frac{1}{r^2  })  A_\varphi^{K, 0} = \int_{\mathbb{R}^3} \hat{v}_\varphi ( \mu^{K, + } (e^{K, +}, p^{K, +}) - \mu^{K, - } (e^{K, -}, p^{K, -}) ) dv
\end{equation}  
Let $\mathcal{H}^{K, \pm}$ be the space of functions of $x \in \Omega$ and $v \in \mathbb{R}^3$ with the norm
\begin{equation}
\| g (x, v) \|_{\mathcal{H}^{K, \pm}} : = \big( \int_\Omega \int_{\mathbb{R}^3}  | ( \mu^{K, \pm})_e (e^\pm, p^\pm) | g^2 (x,v)   dx dv  \big)^{1/2}
\end{equation}
and $\mathcal{P}^{K, \pm} $ be the orthogonal projection from $\mathcal{H}^{K, \pm} $ onto $ker D^{K, \pm} $.
In analogy with \eqref{A01definition}, \eqref{A02definition} and \eqref{B0definition}, we define    
\begin{equation}
\mathcal{A}^{0, K}_1 h := \Delta h + \sum_\pm \int_{\mathbb{R}^3} ( \mu^{K, \pm} )_e (1- \mathcal{P}^{K, \pm}) h dv
\end{equation}
\begin{equation}
\mathcal{A}^{0, K}_2 h := (-\Delta + \frac{1}{r^2 }) h - \sum_\pm \int_{\mathbb{R}^3} \hat{v}_\varphi \big( (\mu^{ K, \pm} )_p r   h  + ( \mu^{K, \pm})_e  \mathcal{P}^{K, \pm} (\hat{v}_\varphi h )  \big) dv
\end{equation}
\begin{equation}   \label{B0Kdefinition}
\mathcal{B}^{0, K} h := - \sum_\pm \int_{\mathbb{R}^3}  \hat{v}_\varphi (\mu^{ K, \pm})_e (1-\mathcal{P}^{K, \pm}) h dv
\end{equation}
\begin{equation}
\mathcal{L}^{0, K} := \mathcal{A}^{0, K}_2 - \mathcal{B}^{0, K} (\mathcal{A}^{0, K}_1)^{-1} (\mathcal{B}^{0, K} )^*  \ . 
\end{equation}

We have the following lemma analogous to Lemma \ref{A01invertibility}:

\begin{lemma}  \label{A0K1invertibility}
We have
\begin{equation}
\| \mathcal{A}^{0, K}_1 \|_{L^2 \rightarrow L^2} \geq c^{-1}_P \ .
\end{equation}
Hence for all $\tilde{g}  \in \mathcal{X}$, there holds
\begin{equation}
| \la ( \mathcal{A}^{0, K}_1 ) ^{-1} \tilde{g}, \tilde{g}  \ra_{L^2}| \leq c_P \|    \tilde{g} \|^2_{L^2}  \ .
\end{equation}
Here $c_P$ is the square of the Poincar\'{e} constant of $\Omega$.
\end{lemma}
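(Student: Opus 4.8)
The plan is to reproduce the argument behind Lemma \ref{A01invertibility}, once it is checked that the rescaled profiles inherit the structural properties of the original ones. First I would record that $(\mu^{K,\pm})_e(e,p) = K^{-\delta}\mu^\pm_e(e,Kp) < 0$ (since $\mu^\pm_e<0$) and $|(\mu^{K,\pm})_e(e,p)| \le K^{-\delta} C_\mu/(1+|e|^\gamma)$, so that $\mathcal{H}^{K,\pm}$ is a genuine weighted $L^2$-space, $\mathcal{P}^{K,\pm}$ is a well-defined orthogonal projection on it, and $\mathcal{A}^{0,K}_1-\Delta$, $\mathcal{B}^{0,K}$ are bounded from $\mathcal{X}$ into $L^{2,\tau}(\Omega)$, uniformly for $K\ge1$. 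Consequently every formal manipulation of Section 5 (in particular the integration-by-parts identity, the self-adjointness of $\mathcal{A}^{0,K}_1$ on $\mathcal{X}$, and the nonnegativity of $1-\mathcal{P}^{K,\pm}$) applies verbatim with $\mu^\pm$ replaced by $\mu^{K,\pm}$.

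The key coercivity estimate follows by integrating by parts and using the homogeneous Dirichlet boundary condition built into $\mathcal{X}$: for every $h\in\mathcal{X}$,
$$
-\la \mathcal{A}^{0,K}_1 h, h\ra_{L^2} = \|\nabla h\|^2_{L^2} + \sum_\pm \|(1-\mathcal{P}^{K,\pm})h\|^2_{\mathcal{H}^{K,\pm}} \ge \|\nabla h\|^2_{L^2} \ge c_P^{-1}\|h\|^2_{L^2},
$$
where $c_P$ is the square of the Poincar\'e constant of $\Omega$ for $\varphi$-independent functions, and the middle step is just that $1-\mathcal{P}^{K,\pm}$ is an orthogonal projector. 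By Cauchy--Schwarz, $\|\mathcal{A}^{0,K}_1 h\|_{L^2}\|h\|_{L^2} \ge |\la \mathcal{A}^{0,K}_1 h,h\ra_{L^2}| \ge c_P^{-1}\|h\|^2_{L^2}$, hence $\|\mathcal{A}^{0,K}_1 h\|_{L^2} \ge c_P^{-1}\|h\|_{L^2}$ for every nonzero $h\in\mathcal{X}$; this gives the first displayed inequality of the lemma, and also shows (as in Lemma \ref{A01invertibility}) that $\mathcal{A}^{0,K}_1$ is negative definite, self-adjoint, has trivial kernel, and therefore admits a bounded inverse $(\mathcal{A}^{0,K}_1)^{-1}\colon L^{2,\tau}(\Omega)\to\mathcal{X}$.

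For the second inequality I would put $h:=(\mathcal{A}^{0,K}_1)^{-1}\tilde g$ for $\tilde g\in\mathcal{X}$, so $\mathcal{A}^{0,K}_1 h=\tilde g$. Then $|\la (\mathcal{A}^{0,K}_1)^{-1}\tilde g,\tilde g\ra_{L^2}| = |\la h,\tilde g\ra_{L^2}| \le \|h\|_{L^2}\|\tilde g\|_{L^2}$, and the bound $\|\tilde g\|_{L^2}=\|\mathcal{A}^{0,K}_1 h\|_{L^2}\ge c_P^{-1}\|h\|_{L^2}$ from the previous step yields $\|h\|_{L^2}\le c_P\|\tilde g\|_{L^2}$, whence $|\la (\mathcal{A}^{0,K}_1)^{-1}\tilde g,\tilde g\ra_{L^2}| \le c_P\|\tilde g\|^2_{L^2}$; equivalently, $-\mathcal{A}^{0,K}_1\ge c_P^{-1}\mathrm{Id}$ in the quadratic-form sense, so $-(\mathcal{A}^{0,K}_1)^{-1}\le c_P\,\mathrm{Id}$. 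There is no genuine obstacle here: the only point requiring attention is that the scaling $\mu^{K,\pm}(e,p)=K^{-\delta}\mu^\pm(e,Kp)$ preserves the sign condition $(\mu^{K,\pm})_e<0$ and the decay bound uniformly in $K$, which it does, so the full toolbox of Section 5 transfers unchanged; note also that $c_P$ is the same constant as in Lemma \ref{A01invertibility} since it depends only on $\Omega$.
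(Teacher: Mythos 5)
Your proof is correct and follows essentially the same route as the paper, which simply omits the argument by reference to Lemma \ref{A01invertibility}: the coercivity estimate $-\la \mathcal{A}^{0,K}_1 h,h\ra_{L^2}\geq \|\nabla h\|^2_{L^2}\geq c_P^{-1}\|h\|^2_{L^2}$ via the Dirichlet boundary condition and the orthogonal projection, then Cauchy--Schwarz for the operator bound and the estimate on the inverse. Your preliminary check that the rescaling $\mu^{K,\pm}(e,p)=K^{-\delta}\mu^\pm(e,Kp)$ preserves the sign and decay hypotheses is a worthwhile addition that the paper leaves implicit.
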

 
The proof is analogous to the one in Lemma \ref{A01invertibility}, so we omit it.

We also need the following

\begin{lemma}  \label{B0Kbound}
For all $K \geq 1$, $\|(\mathcal{B}^{0, K})^*\|_{L^2 \rightarrow L^2} \leq  8 \sqrt{2} \pi C_\mu \frac{1}{K^\delta}  $.
\end{lemma}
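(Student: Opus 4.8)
\textbf{Proof plan for Lemma \ref{B0Kbound}.}
The goal is to bound the operator $(\mathcal{B}^{0,K})^*$ on $L^{2,\tau}(\Omega)$. Recall from \eqref{B0Kdefinition} and the adjoint formula that
$$
(\mathcal{B}^{0,K})^* h = \sum_\pm \int_{\mathbb{R}^3} \big( (\mu^{K,\pm})_p\, r\, h + (\mu^{K,\pm})_e\, \mathcal{P}^{K,\pm}(\hat v_\varphi h) \big)\, dv \ .
$$
So for a test function $g\in L^{2,\tau}(\Omega)$ we must estimate $\la (\mathcal{B}^{0,K})^* h, g\ra_{L^2}$ and show it is bounded by $8\sqrt 2\,\pi\, C_\mu\, K^{-\delta}\,\|h\|_{L^2}\|g\|_{L^2}$. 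The plan is to split into the two contributions, control each by $\sup_x\int_{\mathbb R^3}(\text{size of }\mu^{K,\pm}\text{-derivatives})\,dv$, and then track how the scaling $\mu^{K,\pm}(e,p)=K^{-\delta}\mu^\pm(e,Kp)$ affects these $v$-integrals, noting that the worst term loses only a factor $K^{-\delta}$ (not $K^{-\delta-\epsilon}$) since here we only want a crude bound.

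First I would compute the derivatives of the rescaled profile: from $\mu^{K,\pm}(e,p)=K^{-\delta}\mu^\pm(e,Kp)$ one gets $(\mu^{K,\pm})_e(e,p)=K^{-\delta}\mu^\pm_e(e,Kp)$ and $(\mu^{K,\pm})_p(e,p)=K^{1-\delta}\mu^\pm_p(e,Kp)$. The second looks dangerous (a factor $K^{1-\delta}$), but it always appears multiplied by $r\le b$ or integrated against the extra $v_\varphi$-type weight; more importantly the decay hypothesis \eqref{decayassumptionexamplepart} gives $|\mu^\pm_p(e,Kp)|\le C_\mu/(1+|e|^\gamma)$ \emph{uniformly in the momentum argument}, so after the change of variables $v_\varphi\mapsto K v_\varphi$ (which brings out $K^{-1}$ from $dv$) the net power of $K$ in the first term of $(\mathcal{B}^{0,K})^*$ is $K^{1-\delta}\cdot K^{-1}=K^{-\delta}$. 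Concretely: for the term $\sum_\pm\int r\,(\mu^{K,\pm})_p\,h\,dv$, write it as $K^{1-\delta}\sum_\pm\int r\,\mu^\pm_p(e^\pm,Kp^\pm)\,h\,dv$ and substitute so that the $K$-dependence collapses; use Lemma \ref{integralbound} with $\zeta=0$ (together with $\|\phi^{K,0}\|_{L^\infty}\le 1/2$, so the shift is harmless) to bound $\int_{\mathbb R^3}|\mu^\pm_p|\,dv \le C_\mu(2+2^\gamma(1/2)^\gamma)\int_{\mathbb R^3}\la v\ra^{0}(1+\la v\ra^\gamma)^{-1}dv$, giving a numerical constant times $C_\mu\,K^{-\delta}\,\|h\|_{L^2}\|g\|_{L^2}$ after Cauchy--Schwarz in $x$ (the factor $r$ producing a harmless $b$ which is absorbed into the numerical constant, or one keeps $r\,|h|^2$ and uses the $L^{2,\tau}$ structure).

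Second I would handle the term $\sum_\pm\int_{\mathbb R^3}(\mu^{K,\pm})_e\,\mathcal{P}^{K,\pm}(\hat v_\varphi h)\,dv$. Here $(\mu^{K,\pm})_e=K^{-\delta}\mu^\pm_e(e,Kp)$, so there is already an explicit $K^{-\delta}$ in front. Pairing against $g$ and moving $\mathcal{P}^{K,\pm}$ (self-adjoint on $\mathcal{H}^{K,\pm}$, norm $\le 1$) onto the other factor, bound $|\la (\mu^{K,\pm})_e\,\mathcal{P}^{K,\pm}(\hat v_\varphi h), g\ra_{L^2}| \le \|\hat v_\varphi h\|_{\mathcal H^{K,\pm}}\|g\|_{\mathcal H^{K,\pm}}$; then estimate the $\mathcal H^{K,\pm}$-norms by $L^2$-norms using $\int_{\mathbb R^3}|(\mu^{K,\pm})_e|\,dv = K^{-\delta}\int_{\mathbb R^3}|\mu^\pm_e(e,Kp)|\,dv \le K^{-\delta}\,C_\mu(2+2^\gamma(1/2)^\gamma)\int_{\mathbb R^3}(1+\la v\ra^\gamma)^{-1}dv$ via Lemma \ref{integralbound} again (with $\zeta=0$). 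Adding the $+$ and $-$ contributions and the two structurally different terms, collecting all numerical constants $2\pi$ (from $\int_\Omega\cdots = 2\pi\int\cdots\,r\,dr\,dz$), the factor $2+2^\gamma 2^{-\gamma}=3$, and the explicit value of $\int_{\mathbb R^3}(1+\la v\ra^\gamma)^{-1}dv$ (which for $\gamma>4$ is a fixed finite number bounded crudely by $\frac{4}{3}\pi+\cdots$), gives a total $\le 8\sqrt 2\,\pi\,C_\mu\,K^{-\delta}$, as claimed. The main obstacle is purely bookkeeping: making sure the seemingly bad $K^{1-\delta}$ in $(\mu^{K,\pm})_p$ is genuinely neutralized by the change of variables in $v_\varphi$ and the uniform-in-momentum decay bound, and then being generous enough with the elementary integral constants to land under $8\sqrt 2\,\pi$ — there is no real analytic difficulty, and since the statement explicitly says the constants are not meant to be sharp, one can afford crude estimates throughout.
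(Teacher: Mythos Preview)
There is a genuine gap in your treatment of the first term $\sum_\pm\int_{\mathbb R^3} r\,(\mu^{K,\pm})_p\,h\,dv$. You correctly observe that $(\mu^{K,\pm})_p=K^{1-\delta}\mu^\pm_p(e,Kp)$ carries the dangerous factor $K^{1-\delta}$, but your proposed remedy --- bound $|\mu^\pm_p(e,Kp)|\le C_\mu/(1+|e|^\gamma)$ uniformly in the momentum argument and then change variables $v_\varphi\mapsto Kv_\varphi$ to extract $K^{-1}$ from the Jacobian --- does not work. The point is that $e^{K,\pm}=\la v\ra\pm\phi^{K,0}$ depends on $v_\varphi$ through $\la v\ra=\sqrt{1+v_r^2+v_\varphi^2+v_z^2}$, and this dependence carries no $K$. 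If you bound first and then change variables, the integrand $C_\mu/(1+|e|^\gamma)$ no longer contains $K$ at all, so the substitution is vacuous and you are stuck with $K^{1-\delta}$. If instead you change variables first, then in the new variable $w_\varphi=Kv_\varphi$ the energy becomes $\sqrt{1+v_r^2+(w_\varphi/K)^2+v_z^2}\pm\phi^{K,0}$, whose decay in $w_\varphi$ is stretched over a scale of order $K$; integrating $1/(1+|e|^\gamma)$ over $w_\varphi$ then produces a compensating factor of $K$, and you again recover only $K^{1-\delta}$. Either way the bad power survives. (Incidentally, your suggestion to absorb the explicit $r$ into a factor $b=\sup r$ would also contaminate the constant, which in the stated bound is $b$-free.)

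The paper sidesteps this by a different, essentially algebraic, observation: since $\mu^{K,\pm}$ decays in $v$, one has
\[
0=\int_{\mathbb R^3}\partial_{v_\varphi}\mu^{K,\pm}\,dv=\int_{\mathbb R^3}\Big(\hat v_\varphi\,(\mu^{K,\pm})_e+r\,(\mu^{K,\pm})_p\Big)\,dv,
\]
so that $r\int_{\mathbb R^3}(\mu^{K,\pm})_p\,dv=-\int_{\mathbb R^3}\hat v_\varphi\,(\mu^{K,\pm})_e\,dv$. This converts the $p$-derivative (bad scaling $K^{1-\delta}$) into an $e$-derivative (good scaling $K^{-\delta}$), and simultaneously swallows the factor $r$ without any reference to $b$. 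With this identity in hand, both pieces of $(\mathcal B^{0,K})^*h$ are controlled by $\sup_x\int_{\mathbb R^3}|(\mu^{K,\pm})_e|\,dv\le 8\pi C_\mu K^{-\delta}$ (via Lemma~\ref{integralbound} with $\zeta=0$ and $\|\phi^{K,0}\|_{L^\infty}\le 1/2$), and then a direct $L^2$ estimate of $\|(\mathcal B^{0,K})^*h\|_{L^2}^2$ yields the constant $8\sqrt 2\,\pi$. Your handling of the second term (the $\mathcal P^{K,\pm}$ piece) is fine in spirit; it is only the first term that needs this integration-by-parts trick.
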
  

\begin{proof}
Firstly, we have, for each $x \in \Omega$,
\begin{equation}   \label{muebound}
\begin{split}
& \int_{\mathbb{R}^3} \|  (\mu^{K, \pm})_e  \|_{L^\infty_x} dv  \\
&  \leq  \frac{1}{K^\delta} \int_{\mathbb{R}^3} | \mu^\pm_e (e^{K, \pm}, Kp^{K, \pm}) | dv   \\
&   \leq C_\mu \frac{1}{K^\delta} \int_{\mathbb{R}^3} \frac{1}{1+ |e^{K, \pm}|^\gamma} dv  \\ 
&   \leq C_\mu \frac{1}{K^\delta} (2 + 2^\gamma |\phi^{K, 0}|^\gamma) \int_{\mathbb{R}^3} \frac{1}{1+ \la v \ra^\gamma} dv  \\ 
& \leq  3 C_\mu \frac{1}{K^\delta}  \int_{\mathbb{R}^3} \frac{1}{1+ \la v \ra^\gamma} dv  \\
&  \leq  \frac{8 \pi C_\mu}{K^\delta} \ . \\
\end{split}
\end{equation}
by Lemma \ref{integralbound} with $\zeta =0$. Therefore from $0 = \int_{\mathbb{R}^3} \partial_{v_\varphi} \mu^{K, \pm} dv = \int_{\mathbb{R}^3} r  (\mu^{K, \pm} )_p dv + \int_{\mathbb{R}^3} \hat{v}_\varphi (\mu^{K, \pm} )_e dv   $, we obtain
\begin{equation}   \label{mupbound}
| r   \int_{\mathbb{R}^3} (\mu^{K, \pm} )_p (e^{K, \pm}, p^{K, \pm}) dv |  = |\int_{\mathbb{R}^3} \hat{v}_\varphi (\mu^{K, \pm} )_e dv| \leq | \int_{\mathbb{R}^3}  (\mu^{K, \pm} )_e dv|  \leq  \int_{\mathbb{R}^3} \|  (\mu^{K, \pm})_e  \|_{L^\infty_x} dv  \leq  \frac{8 \pi C_\mu}{K^\delta} \ .
\end{equation}

Now, from the definition \eqref{B0Kdefinition} and the decay assumption \eqref{decayassumptionexamplepart}, we have, for all $h \in L^2 (\Omega)$,
\begin{equation}
\begin{split}
\frac{1}{2} \|(\mathcal{B}^{0, K})^* h\|^2_{L^2} 
& \leq \sum_\pm \int_\Omega ( \int_{\mathbb{R}^3}  (\mu^{K, \pm})_p r  h dv  )^2 dx+ \sum_\pm \int_\Omega ( \int_{\mathbb{R}^3}     ( \mu^{K, \pm})_e   \mathcal{P}^{K, \pm} (\hat{v}_\varphi h)  dv  )^2 dx  \\
& \leq \sum_\pm \int_\Omega ( \int_{\mathbb{R}^3}  r   (\mu^{K, \pm})_p  dv  )^2   |h|^2  dx  \\
& + \sum_\pm (   \int_{\mathbb{R}^3} ( \int_\Omega | (\mu^{K, \pm})_e  \mathcal{P}^\pm (\hat{v}_\varphi h ) |^2 dx )^{1/2} dv    )^2 \\
& \leq \sum_\pm \int_\Omega ( \int_{\mathbb{R}^3}  r   (\mu^{K, \pm})_p  dv  )^2  ( \sup_v |h|)^2  dx  \\  
& + \sum_\pm (   \int_{\mathbb{R}^3}  \| (\mu^{K, \pm })_e \|^{1/2}_{L^\infty_x} ( \int_\Omega | (\mu^{K, \pm})_e| |  \mathcal{P}^\pm (\hat{v}_\varphi h ) |^2 dx )^{1/2} dv    )^2   \\
& \leq \sum_\pm  \sup_x |\int_{\mathbb{R}^3}  r  (\mu^{K, \pm})_p dv  |^2   \int_\Omega   |h|^2  dx  \\
& + \sum_\pm    \int_{\mathbb{R}^3} ( \int_\Omega | (\mu^{K, \pm})_e | | \mathcal{P}^\pm (\hat{v}_\varphi h ) |^2 dx ) dv   \int_{\mathbb{R}^3} \|  (\mu^{K, \pm})_e  \|_{L^\infty_x} dv     \\
& = \sum_\pm \big(  \sup_x |\int_{\mathbb{R}^3}  r  (\mu^{K, \pm})_p dv  |^2 \| h \|^2_{L^2}  +  \int_{\mathbb{R}^3} \|  (\mu^{K, \pm})_e  \|_{L^\infty_x} dv   \|\mathcal{P}^\pm (\hat{v}_\varphi h)\|_{\mathcal{H}^{K, \pm}}^2   \big)  \\
& \leq \sum_\pm \big(  \sup_x |\int_{\mathbb{R}^3}  r   (\mu^{K, \pm})_p dv  |^2 \| h \|^2_{L^2}  +    \int_{\mathbb{R}^3} \|  (\mu^{K, \pm})_e  \|_{L^\infty_x} dv   \| (\hat{v}_\varphi h)\|_{\mathcal{H}^{K, \pm}}^2  \big)   \\
& \leq  \sum_\pm \big(  \sup_x |\int_{\mathbb{R}^3}  r   (\mu^{K, \pm})_p dv  |^2 \| h \|^2_{L^2}  +  (  \int_{\mathbb{R}^3} \|  (\mu^{K, \pm})_e  \|_{L^\infty_x} dv )^2   \| h\|_{L^2_x}^2 \big)    \\
& \leq  64 \pi^2  C_\mu^2 \frac{1}{K^{2 \delta}}  \|h \|_{L^2}^2   \ .
\end{split}
\end{equation}
In the last line we used \eqref{mupbound} and \eqref{muebound}. The lemma then follows.
\end{proof}

Using Lemma \ref{A0K1invertibility} and Lemma \ref{B0Kbound}, we obtain 
\begin{lemma}   \label{unstableexampletermA0K1}
For any $h \in \mathcal{X}$, there holds
\begin{equation} 
\begin{split} 
\la (\mathcal{A}^{0, K}_1)^{-1}  (\mathcal{B}^{0, K})^* h,  (\mathcal{B}^{0, K})^* h \ra_{L^2}  
&  \leq  c_P  \|(\mathcal{B}^{0, K})^* h \|^2_{L^2} \\
& \leq 2 c_P  \cdot  128  \pi^2 C_\mu^2 \frac{1}{K^{2 \delta}}   \|h\|^2_{L^2} \\
& = 256 \pi^2 c_P C_\mu^2  H_2  K^{-2 \delta}    \ .
\end{split}
\end{equation}
\end{lemma}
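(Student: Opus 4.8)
This is a two-line consequence of Lemma \ref{A0K1invertibility} and Lemma \ref{B0Kbound}, and the plan is simply to chain the two together. First I would note the bookkeeping: by Lemma \ref{B0Kbound} the operator $(\mathcal{B}^{0,K})^*$ is bounded on $L^{2,\tau}(\Omega)$, and by Lemma \ref{A0K1invertibility} the inverse $(\mathcal{A}^{0,K}_1)^{-1}$ maps $L^{2,\tau}(\Omega)$ boundedly into $\mathcal{X}$; hence for $h\in\mathcal{X}$ the vector $(\mathcal{A}^{0,K}_1)^{-1}(\mathcal{B}^{0,K})^*h$ lies in $\mathcal{X}$, and the pairing on the left-hand side of the asserted inequality is well defined.

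Applying the second estimate of Lemma \ref{A0K1invertibility} with $\tilde g := (\mathcal{B}^{0,K})^*h \in L^{2,\tau}(\Omega)$ gives
\[
\la (\mathcal{A}^{0,K}_1)^{-1}(\mathcal{B}^{0,K})^*h,\, (\mathcal{B}^{0,K})^*h\ra_{L^2}
\;\le\; \big|\, \la (\mathcal{A}^{0,K}_1)^{-1}(\mathcal{B}^{0,K})^*h,\, (\mathcal{B}^{0,K})^*h\ra_{L^2} \,\big|
\;\le\; c_P\,\|(\mathcal{B}^{0,K})^*h\|^2_{L^2},
\]
which is the first claimed inequality (in fact the left side is $\le 0$ because $\mathcal{A}^{0,K}_1$ is negative definite, but the displayed form is the one used in the proof of Proposition \ref{unstableexample2}). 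I would then substitute the operator-norm bound $\|(\mathcal{B}^{0,K})^*\|_{L^2\to L^2}\le 8\sqrt2\,\pi C_\mu K^{-\delta}$ from Lemma \ref{B0Kbound}, so that $\|(\mathcal{B}^{0,K})^*h\|^2_{L^2}\le 128\,\pi^2 C_\mu^2 K^{-2\delta}\|h\|^2_{L^2}$; multiplying by $c_P$ and recalling the notation $H_2=\|h\|^2_{L^2}$ yields the stated bound $256\,\pi^2 c_P C_\mu^2 H_2 K^{-2\delta}$, the extra factor $2$ being harmless slack.

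There is no real obstacle here; all the analytic content has already been absorbed into the two cited lemmas, which in turn rest on the Poincar\'{e} inequality on $\Omega$ and on the pointwise estimate of Lemma \ref{integralbound}. The only point worth a moment's attention is that the estimate of Lemma \ref{A0K1invertibility} is being invoked for the specific argument $(\mathcal{B}^{0,K})^*h$ rather than for a generic element of $\mathcal{X}$; this is legitimate because, exactly as for its model Lemma \ref{A01invertibility}, the estimate $|\la (\mathcal{A}^{0,K}_1)^{-1}\tilde g,\tilde g\ra_{L^2}|\le c_P\|\tilde g\|^2_{L^2}$ holds for every $\tilde g\in L^{2,\tau}(\Omega)$.
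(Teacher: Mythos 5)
Your proposal is correct and is exactly the argument the paper intends: the lemma is stated as an immediate consequence of Lemma \ref{A0K1invertibility} applied to $\tilde g=(\mathcal{B}^{0,K})^*h$ and the operator-norm bound of Lemma \ref{B0Kbound}, with the factor $2$ in the middle line being harmless slack, just as you observe. Your remark that the quadratic-form estimate extends from $\mathcal{X}$ to all of $L^{2,\tau}(\Omega)$ (as in Lemma \ref{A01invertibility}) correctly handles the only point needing care.
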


We will also need the following 
\begin{lemma}  \label{potentialAbound}
Suppose for some $K \geq 1$, there exists a pair of solution $(\phi^{K, 0}, A_\varphi^{K, 0})$ to the coupled system \eqref{phiK0equation} ~ \eqref{AK0equation}, and we have $\| \phi^{K, 0} \|_{L^\infty} \leq 1/2$, $\|A^{K, 0}_\varphi \|_{L^\infty} \leq 1/2$. Then there holds furthermore that 
\begin{equation} \label{potentialphiboundeqn}
\|\phi^{K, 0} \|_{L^\infty}  \leq \frac{20 \pi b^2 C_\mu}{K^\delta} \ ,
\end{equation} 
\begin{equation}  \label{potentialAboundeqn}
\|A^{K, 0}_\varphi \|_{L^\infty}  \leq \frac{20 \pi b^2 C_\mu}{K^\delta} \ . 
\end{equation}
\end{lemma}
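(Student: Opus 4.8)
The plan is to estimate the $L^\infty$ norms of $\phi^{K,0}$ and $A_\varphi^{K,0}$ directly from the elliptic equations \eqref{phiK0equation} and \eqref{AK0equation}, using the assumed a priori bounds $\|\phi^{K,0}\|_{L^\infty}\le 1/2$ and $\|A_\varphi^{K,0}\|_{L^\infty}\le 1/2$ to control the energies $e^{K,\pm}$ and then the source terms. First I would bound the right-hand sides in $L^\infty$. For \eqref{phiK0equation}, using $\mu^{K,\pm}(e,p)=K^{-\delta}\mu^\pm(e,Kp)$ together with the decay assumption \eqref{decayassumptionexamplepart} and Lemma \ref{integralbound} with $\zeta=0$, one gets (exactly as in \eqref{muebound})
\begin{equation}
\Big| \int_{\mathbb{R}^3} \big( \mu^{K,+}(e^{K,+},p^{K,+}) - \mu^{K,-}(e^{K,-},p^{K,-}) \big) dv \Big| \le \frac{C}{K^\delta} \int_{\mathbb{R}^3} \frac{dv}{1+\la v\ra^\gamma},
\end{equation}
where the factor $(2+2^\gamma|\phi^{K,0}|^\gamma)\le 3$ is absorbed using $\|\phi^{K,0}\|_{L^\infty}\le 1/2$. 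For \eqref{AK0equation} the source carries an extra $\hat v_\varphi$, which only helps since $|\hat v_\varphi|\le 1$, so the same $O(K^{-\delta})$ bound holds for its right-hand side.

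Next I would convert these source bounds into $L^\infty$ bounds on the potentials via the maximum principle / elliptic estimates. For $\phi^{K,0}$, since $-\Delta\phi^{K,0}=\rho^K$ with $\phi^{K,0}=0$ on $\partial\Omega$ and $\|\rho^K\|_{L^\infty}\le C_\mu K^{-\delta}\cdot\text{(const)}$, the standard comparison argument with a quadratic barrier on a ball of radius $b=\sup_\Omega r(x)$ (which contains $\Omega$ up to translation, since $\Omega$ is contained in the slab $r\le b$ — more precisely one uses that $\Omega$ fits inside a ball of radius comparable to $b$) gives $\|\phi^{K,0}\|_{L^\infty}\lesssim b^2\|\rho^K\|_{L^\infty}$, and tracking constants yields the claimed $\tfrac{20\pi b^2 C_\mu}{K^\delta}$. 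For $A_\varphi^{K,0}$, I would use the trick from \eqref{Deltagexp}: the equation $(-\Delta+\tfrac1{r^2})A_\varphi^{K,0}=j_\varphi^K$ is equivalent to $-\Delta(A_\varphi^{K,0}e^{i\varphi})=j_\varphi^K e^{i\varphi}$ on $\Omega$ with zero boundary data, so the same maximum-principle estimate applies to $A_\varphi^{K,0}e^{i\varphi}$ and hence to $|A_\varphi^{K,0}|$, giving the identical bound $\tfrac{20\pi b^2 C_\mu}{K^\delta}$.

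The main obstacle I anticipate is getting the explicit constant $20\pi b^2$ right and, relatedly, justifying that the maximum principle / barrier argument is legitimate for the singular operator $-\Delta+\tfrac1{r^2}$ on a domain that may touch the $z$-axis. The cleanest route around the singularity is precisely the substitution $A_\varphi^{K,0}\mapsto A_\varphi^{K,0}e^{i\varphi}$, which turns it into the ordinary Laplacian on the three-dimensional domain $\Omega$, where both the weak maximum principle and interior/boundary elliptic regularity (already invoked in Lemma \ref{regularity}) are standard; one must only note that $j_\varphi^K e^{i\varphi}\in L^\infty$ follows from the pointwise source bound above. A secondary bookkeeping point is that the bound $\|\rho^K\|_{L^\infty}\lesssim C_\mu K^{-\delta}$ really only requires the a priori hypothesis $\|\phi^{K,0}\|_{L^\infty}\le 1/2$ (to make $(2+2^\gamma|\phi^{K,0}|^\gamma)$ a harmless constant), and does \emph{not} need the a priori bound on $A_\varphi^{K,0}$ at all — the source of \eqref{phiK0equation} does not see $A_\varphi^{K,0}$ except through $p^{K,\pm}$, over which we integrate; and for \eqref{AK0equation} the $A_\varphi^{K,0}$-dependence again only enters through $p^{K,\pm}$ under the $dv$ integral, so it is harmless too. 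Once these pointwise source bounds are in hand the two displayed inequalities \eqref{potentialphiboundeqn} and \eqref{potentialAboundeqn} follow immediately, completing the proof.
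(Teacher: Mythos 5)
Your overall strategy is the same as the paper's: bound the right-hand sides of \eqref{phiK0equation} and \eqref{AK0equation} in $L^\infty$ by $O(C_\mu K^{-\delta})$ using the a priori bounds and Lemma \ref{integralbound}, pass to $-\Delta(A_\varphi^{K,0}e^{i\varphi})$ via \eqref{Deltagexp}, and conclude with a maximum principle and a quadratic barrier. However, two points in your write-up do not hold up as stated.

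First, the barrier. You propose a quadratic barrier ``on a ball of radius $b=\sup_\Omega r(x)$'' and assert that $\Omega$ ``fits inside a ball of radius comparable to $b$.'' That is false in general: $b$ only controls the cylindrical radius, and $\Omega$ may be arbitrarily long in the $z$-direction (a thin tall cylinder, say), so no ball of radius comparable to $b$ contains it, and a ball-based barrier would produce a constant depending on $\operatorname{diam}\Omega$, not on $b^2$. The paper instead uses the cylindrical barrier $w_1=-\tfrac{78\pi C_\mu}{4}\,r^2+20\pi b^2 C_\mu$ (and its negative), for which $-\Delta w_1=78\pi C_\mu$ because $r^2=x_1^2+x_2^2$ satisfies $\Delta r^2=4$, and $w_1\ge 0$ on $\partial\Omega$ precisely because $r\le b$ there. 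This is the only way the stated constant $20\pi b^2C_\mu$ comes out; you should replace the ball argument by this $z$-independent barrier.

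Second, the source bound. You cite \eqref{muebound}, but that estimate controls $\int|(\mu^{K,\pm})_e|\,dv$, whereas the sources in \eqref{phiK0equation}--\eqref{AK0equation} involve $\mu^{K,\pm}$ itself; the decay hypothesis \eqref{decayassumptionexamplepart} bounds only the derivatives $\mu^\pm_e,\mu^\pm_p$. The paper's proof bounds the integrand by $C_\mu K^{-\delta}\,|p|/(1+|e|^\gamma)$ and then applies Lemma \ref{integralbound} with $\zeta=1$ as well as $\zeta=0$; the extra factor $|p|=r\,|v_\varphi\pm A_\varphi^{K,0}|$ is exactly where the a priori bound $\|A_\varphi^{K,0}\|_{L^\infty}\le 1/2$ enters. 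So your claim that the hypothesis on $A_\varphi^{K,0}$ is never needed is not consistent with the estimate actually used, and as written your source bound is not justified by the lemma you invoke. Both issues are repairable, but as it stands the proposal does not establish the explicit constant in \eqref{potentialphiboundeqn}--\eqref{potentialAboundeqn}.
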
  

\begin{proof}
$\phi^{K, 0} $ and $A^{K, 0}_\varphi $ satisfies the elliptic equations \eqref{AK0equation} and \eqref{phiK0equation}. Using the assumption that $\| \phi^{K, 0} \|_{L^\infty} $ is bounded by $\frac{1}{2}$ as well as Lemma \ref{integralbound} (applied in ths cases $\zeta = 1$ and $\zeta = 0$), we obtain
\begin{equation}  \label{potentialAboundeqncomp}
\begin{split}
|(-\Delta +\frac{1}{r^2 })  A_\varphi^{K, 0} | 
& \leq 2 C_\mu  \frac{1}{K^\delta} \int_{\mathbb{R}^3} \frac{|p|}{1+|e|^\gamma} dv  \\
&  \leq 2 C_\mu  \frac{1}{K^\delta} (2 + 2^\gamma |\phi^{K, 0} (x)|^\gamma) \Big(  \int_{\mathbb{R}^3} \frac{|v_\varphi|}{1+\la v \ra^\gamma} dv +  \int_{\mathbb{R}^3} \frac{|A^{K, 0}_\varphi (x)|}{1+\la v \ra^\gamma} dv \Big)  \\
&  \leq 6 C_\mu  \frac{1}{K^\delta}  \Big(  \int_{\mathbb{R}^3} \frac{|v_\varphi|}{1+\la v \ra^\gamma} dv +  \int_{\mathbb{R}^3} \frac{1}{1+\la v \ra^\gamma} dv \Big)  \\
&  \leq 6 C_\mu  \frac{1}{K^\delta}  \Big(  \int_{|v| \geq 1} |v|^{-\gamma +1} dv +  \int_{|v| \geq 1} |v|^{-\gamma} dv + 2 \int_{|v| <1} 1 dv \Big)  \\
&  \leq 6 C_\mu  \frac{1}{K^\delta}  \Big(  \int_{|v| \geq 1} |v|^{-\gamma +1} dv +  \int_{|v| \geq 1} |v|^{-\gamma} dv + 2 \int_{|v| <1} 1 dv \Big)  \\
&  \leq  78 \pi C_\mu  \frac{1}{K^\delta} \ , \\
\end{split}
\end{equation}

Since $(-\Delta +\frac{1}{r^2 })  A_\varphi^{K, 0} = - \exp{( - i \varphi )} \Delta (A_\varphi^{K, 0} \exp{( i \varphi ) } ) $, we set $u = K^\delta  A_\varphi^{K, 0} \exp{( i \varphi )}$, and then for \eqref{potentialAboundeqn} it suffices to show that $ \sup_{x \in \Omega} | u (x) | \leq 20 \pi b^2 C_\mu$. In fact, mutiplying \eqref{potentialAboundeqncomp} by $K^\delta$, we obtain
$$
|- \Delta u| \leq 78 \pi C_\mu
$$
Recall $b = \sup_{x \in \Omega} r(x)$. Let $w_1 =  - r^2 \cdot \frac{78 \pi C_\mu}{4} + 20 \pi b^2 C_\mu$, so that $w_1 \geq 0$ on $\partial \Omega$. Then 
$$
- \Delta (u - w_1) \leq 0  \ .
$$
By the maximum principle, $u - w_1 \leq 0$, $u \leq w_1 \leq \sup_{x \in \Omega} w_1 (x)$.

On the other hand, let $w_2 =   r^2 \cdot \frac{78 \pi C_\mu}{4} - 20 \pi b^2 C_\mu$, such that $w_2 \leq 0$ on $\partial \Omega$. Then 
$$
- \Delta (u - w_2) \geq 0  \ .
$$
By the maximum principle, $u - w_2 \geq 0$, $u \geq w_2 \geq \inf_{x \in \Omega} w_2 (x)$.

Hence $ \sup_{x \in \Omega} | u (x) | \leq \max \{   \sup_{x \in \Omega} w_1 (x),  - \inf_{x \in \Omega} w_2 (x) \} \leq 20 \pi b^2 C_\mu$. Therefore $\|A^{K, 0}_\varphi \|_{L^\infty} \leq \frac{20 \pi b^2 C_\mu}{K^\delta} $, and similarly $\|\phi^{K, 0} \|_{L^\infty} \leq \frac{20 \pi b^2 C_\mu}{K^\delta} $ from \eqref{phiK0equation}. 

\end{proof}

Now we are ready to prove Proposition \ref{unstableexample2}.

\begin{proof}
For $h \in \mathcal{X}$, we first take care of the term $\la \mathcal{A}^{0, K}_2 h, h\ra_{L^2} $. For simplicity, we normalize $h$ such that $ \int_\Omega ( |\nabla h|^2 + \frac{1}{r^2 } |h|^2 ) dx  =1 $. Recall that $e^{K, \pm} =  \la v \ra \pm \phi^{K, 0} (x)  , \ p^{K, \pm} = r   (\hat{v}_\varphi \pm A_\varphi^{K, 0} (x))  $. 

We observe 
\begin{equation}
\begin{split}
\la \mathcal{A}^{0, K}_2 h, h \ra_{L^2} 
& = 1 - \sum_\pm \int_\Omega \int_{\mathbb{R}^3} \la v \ra^{-1} K \frac{1}{K^\delta} p^{K, \pm} \mu^\pm_{p} (e^{K, \pm}, Kp^{K, \pm})  |h|^2 dv dx  \\
&  + \sum_\pm  \int_\Omega \int_{\mathbb{R}^3} K \frac{1}{K^\delta} \frac{ \mu^\pm_{p} (e^{K, \pm}, K p^{K, \pm})}{\la v \ra} r A^{K, 0}_\varphi |h|^2 dv dx + \sum_\pm \| \mathcal{P}^{K, \pm} (\hat{v}_\varphi h)  \|^2_{\mathcal{H}^{K, \pm}} \\
& := 1+ I+II+III  \ .    \\
\end{split}
\end{equation}
We are going to show that when $K \geq 1 $ is large enough, the term $I$ dominates all the others and remains negative. We compute 
\begin{equation}
\begin{split}
I 
& = - \sum_\pm \int_\Omega \int_{\mathbb{R}^3} \la v \ra^{-1} K \frac{1}{K^\delta} p^{K, \pm} \mu^\pm_{p} (e^{K, \pm}, Kp^{K, \pm}) dv \  |h|^2 dx \\
& \leq - C'_\mu \sum_\pm \int_\Omega \int_{\mathbb{R}^3} \la v \ra^{-1} K^{1-\epsilon} \frac{1}{K^\delta} |p^{K, \pm}| \la p^{K, \pm} \ra^{-\epsilon} \nu (e^{K, \pm})  dv \ |h|^2 dx \\
& \leq -  C'_\mu  K^{1- \delta - \epsilon}  \sum_\pm \int_\Omega \int_{ | p^{K, \pm} (x, v) | >1}   \la v \ra^{-1}   | p^{K, \pm} |^{1-\epsilon}  \cdot 2^{-\epsilon/2}  \nu (e^{K, \pm}) dv  \  |h|^2 dx \\
& \leq - C'_\mu   K^{1- \delta -\epsilon }  M    \ , \\
\end{split}
\end{equation}
where
$$ M = 2^{-\epsilon/2}  \sum_\pm \int_{ r(x) \geq 1}   \inf_{x \in \Omega, r(x) \geq 1}  \Big[   \int_{ \{ v \in \mathbb{R}^3,  | p^{K, \pm} (x, v) | >1 \}} \la v \ra^{-1} |v_\varphi  \mp A^{K, 0}_\varphi (r, z) |^{1-\epsilon} \nu (e^{K, \pm})  dv \Big]  r^{1- \epsilon}  |h|^2 dx    \ . $$
Noting that $\|A^{K, 0}_\varphi \|_{L^\infty} \leq 1/2$ and $\|\phi^{K, 0} \|_{L^\infty} \leq 1/2$, we have
\begin{equation}
\begin{split}
M 
& =  2^{-\epsilon/2}   \sum_\pm \int_{ r(x) \geq 1}  \inf_{x \in \Omega, r(x) \geq 1} \Big[  \int_{ \{ v \in \mathbb{R}^3,  | p^{K, \pm} (x, v) | >1 \}} \la v \ra^{-1} |v_\varphi  \mp A^{K, 0}_\varphi (r, z) |^{1-\epsilon}  \nu (e^{K, \pm})  dv \Big]  r^{1- \epsilon}  |h|^2 dx      \\
& \geq  2^{-\epsilon/2}  \sum_\pm \int_{ r(x) \geq 1}   \inf_{x \in \Omega, r(x) \geq 1} \Big[  \int_{ \{ v \in \mathbb{R}^3,  |v_\varphi | > 2 + \frac{1}{r(x)} \}} \la v \ra^{-1} (1 +\frac{1}{r(x)})^{1-\epsilon}  \nu (  \la v \ra \pm \phi^{K, 0} )  dv \Big]   r^{1- \epsilon}  |h|^2 dx     \\
& \geq  2^{-\epsilon/2}  \sum_\pm \int_{  r(x) \geq 1}   \inf_{x \in \Omega, r(x) \geq 1} \Big[  \int_{ \{ v \in \mathbb{R}^3,  |v_\varphi | > 3 \}} \la v \ra^{-1}    \nu (  \la v \ra \pm \phi^{K, 0} )  dv \Big]   r^{1- \epsilon}  |h|^2 dx     \\
& \geq  2^{-\epsilon/2}  C_\nu   \sum_\pm \int_{  r(x) \geq 1}  \inf_{x \in \Omega, r(x) \geq 1}  \Big[  \int_{ \{ v \in \mathbb{R}^3,  |v_\varphi | > 3 \}} \la v \ra^{-1}   \exp (- \la v \ra \mp \phi^{K, 0} )  dv \Big]   r^{1- \epsilon}  |h|^2 dx      \\
& \geq  2^{-\epsilon/2}  C_\nu  \cdot 2 \exp(- \frac{1}{2})  \int_{ r(x) \geq 1}   \int_{ \{ v \in \mathbb{R}^3,  |v_\varphi | > 3 \}} \la v \ra^{-1}  \exp (- \la v \ra  )  dv   r^{1- \epsilon}  |h|^2 dx     \\
& \geq  2^{-\epsilon/2}  C_\nu  \cdot \frac{1}{2}    \int_{  r(x) \geq 1}     r^{1- \epsilon}  |h|^2 dx  \\
& \geq   2^{-\epsilon/2}  \cdot \frac{1}{2} b^{-\epsilon}  C_\nu   \int_{  r(x) \geq 1}     r   |h|^2 dx  \ , \\
\end{split}
\end{equation}
where $b = \sup_{x \in \Omega} r(x) >1$. Hence 
$$I \leq    - H_1 C_1 C_\nu C'_\mu K^{1- \delta -\epsilon}  \ , $$
where $H_1 =    \int_{r \geq 1}     r   |h|^2 dx $, $C_1 =      2^{-1-\epsilon/2}  b^{-\epsilon}  $. From Remark 1, we have $|\nu (e^{K, \pm}) | \leq \frac{  \la p^{K, \pm}  \ra^{\epsilon}  }{C(1+ |e^{K, \pm}|^\gamma)} $ with $\gamma >  4 $, which ensures that the integral $ \int_{\mathbb{R}^3} \la v \ra^{-1} v_\varphi^2  \nu (e^{K, \pm})  dv $ is convergent. In the end we obtain $I = O (K^{1-\delta -\epsilon})$.

$II$ vanishes if $A^{K, 0}_\varphi = 0$. If $A^{K, 0}_\varphi \neq 0$, we estimate, using Lemma \ref{potentialAbound},
\begin{equation}
\begin{split}
II 
& \leq 20 \pi b^2 C_\mu  (H_1 + H_2)   K^{1-\delta }  \frac{1}{K^\delta}    \sum_\pm    \sup_{x } ( \int_{\mathbb{R}^3} \la v \ra^{-1} |\mu^\pm_{p} (e^{K, \pm}, K p^{K, \pm})| dv) \\
& \leq  20 \pi b^2 C_\mu   (H_1 + H_2)    C_\mu K^{1-2 \delta  }    \sum_\pm  \sup_x     \int_{\mathbb{R}^3} \frac{1}{\la v \ra (1+ |e^{K, \pm}|^\gamma)} dv \\
& \leq 20 \pi b^2 C_\mu   (H_1 + H_2)     C_\mu K^{1-2 \delta  }   \sum_\pm     \sup_x \int_{\mathbb{R}^3} \frac{1}{\la v \ra (1+ |\la v\ra \pm \phi^{K, 0} (x)|^\gamma)} dv \\
& \leq 40 \cdot 2^{\gamma} \pi b^2   (H_1 + H_2)   C_\mu^2  K^{1-2 \delta } \int_{\mathbb{R}^3} \frac{1}{\la v \ra (1+ \la v\ra^\gamma)} dv \\
& \leq   40 \cdot  2^{\gamma} \pi  b^2   (H_1 + H_2)   C_\mu^2  K^{1- 2 \delta } \cdot 3 \pi       \\
& \leq   120  \cdot  2^{\gamma} \pi^2  b^2   (H_1 + H_2)  C_\mu^2  K^{1- 2 \delta }    \ .   \\
\end{split}
\end{equation}
Here $H_2 =  \int_\Omega  |h|^2 dx =  \|h \|^2_{L^2} $. Therefore the term $II$ is $O(K^{1- 2 \delta})$.

For $III= \sum_\pm \|\mathcal{P}^{K, \pm}(\hat{v}_\varphi h)\|^2_{\mathcal{H}^{K, \pm}}$, we estimate
\begin{equation}
\begin{split}
III = \|\mathcal{P}^{K}(\hat{v}_\varphi h)\|^2_{\mathcal{H}^{K, \pm}} 
& \leq   \frac{1}{K^\delta} \sum_\pm  \int_\Omega \sup_v |\mathcal{P}^{K, \pm} (\hat{v}_\varphi h)|^2 dx  \sup_x (\int_{\mathbb{R}^3} | \mu^\pm_e (e^{K, \pm}, Kp^{K, \pm})| dv)  \\
& \leq      \frac{1}{K^\delta}  H_2 \sum_\pm   \sup_x (\int_{\mathbb{R}^3} | \mu^\pm_e (e^{K, \pm}, Kp^{K, \pm})| dv)  \\
& \leq      \frac{1}{K^\delta}  H_2  \sum_\pm   \sup_x (\int_{\mathbb{R}^3} \frac{C_{\mu}}{1+ | \la v \ra \pm \phi^{K, 0} (x) |^\gamma} dv)  \\
& \leq 2^\gamma   H_2 C_\mu \cdot 2  \frac{1}{K^\delta} \int_{\mathbb{R}^3} \frac{1}{1+ \la v \ra^\gamma} dv \\
& \leq  2^\gamma  H_2 C_2 C_\mu  \frac{1}{K^\delta}  \ .  \\
\end{split}
\end{equation}  
Here $C_2 =  \frac{8}{3} \pi + \frac{4}{\gamma -1} \pi^2  $. Therefore $III = O (K^{-\delta})$.

Combining together all the estimates, as well as Lemma \ref{unstableexampletermA0K1}, we conclude 
\begin{equation} \label{unstableinequality0}
\begin{split}
& \la \mathcal{L}^{0, K} h, h \ra_{L^2} \\
& \leq 1 - H_1 C_1 C_\nu C'_\mu  K^{1-\delta -\epsilon}  + 120 \cdot  2^{\gamma} \pi^2  b^2  (H_1 +H_2) C_\mu^2  K^{1- 2 \delta }  + 2^\gamma H_2 C_2  C_\mu K^{-\delta} \\
& +   256 \pi^2 c_P C_\mu^2  H_2 K^{-2 \delta}    \ . \\
\end{split}
\end{equation}
Hence $( \mu^{K, \pm } ,  \phi^{K, 0}, A_\varphi^{K, 0})$ is spectrally unstable if $C_\mu$, $K$ and $h$ satisfies \eqref{unstableinequality}.

\end{proof}

The theorem below ensures that the unstable equilibria constructed in Proposition \ref{unstableexample2} do exist under certain circumstances when $\mu^\pm$ satisfy some smallness assumption, which gives Theorem \ref{mainresultexample} (ii). This result is obtained by solving a coupled elliptic system. There is a possibility to extend this existence result to more general cases without such assumption.

\begin{theorem}  \label{unstableexampleexistence}
Suppose that $\Omega$ satisfies $ b = \sup_{x \in \Omega} r(x) >1$ and the constant $C_\mu$ satisfies a certain smallness condition. Then there exists an unstable equilibrium as constructed in Theorem \ref{unstableexample2}.
\end{theorem}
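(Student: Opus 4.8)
The statement to prove is exactly that the hypothesis of Proposition \ref{unstableexample2} can be met: that the coupled elliptic system \eqref{phiK0equation}--\eqref{AK0equation} with homogeneous Dirichlet data admits, for a suitable $K$, a pair $(\phi^{K,0},A^{K,0}_\varphi)$ with $\|\phi^{K,0}\|_{L^\infty}\le 1/2$ and $\|A^{K,0}_\varphi\|_{L^\infty}\le 1/2$. Once such a pair exists for some $K$ large enough that \eqref{unstableinequality} holds, Proposition \ref{unstableexample2} directly gives spectral instability of $(\mu^{K,\pm},\phi^{K,0},A^{K,0}_\varphi)$. The plan is to produce the pair by a Schauder fixed-point argument on a ball in $C(\bar\Omega)\times C(\bar\Omega)$, with the smallness of $C_\mu$ (relative to $b=\sup_{x\in\Omega}r(x)$) used precisely to make the fixed-point map a self-map of that ball for every $K\ge 1$, and then to choose $K$ large so that \eqref{unstableinequality} is satisfied.

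\emph{Step 1 (the fixed-point map).} Set $\mathcal{B}=\{(\psi,a)\in C(\bar\Omega)^2:\ \|\psi\|_{L^\infty}\le 1/2,\ \|a\|_{L^\infty}\le 1/2\}$ and define $\Phi(\psi,a)=(\phi,A)$, where $\phi,A$ are the Dirichlet solutions of
\begin{equation}
-\Delta\phi=\int_{\mathbb{R}^3}\Big(\mu^{K,+}\big(\la v\ra+\psi,\,r(\hat v_\varphi+a)\big)-\mu^{K,-}\big(\la v\ra-\psi,\,r(\hat v_\varphi-a)\big)\Big)\,dv,
\end{equation}
\begin{equation}
\Big(-\Delta+\tfrac{1}{r^2}\Big)A=\int_{\mathbb{R}^3}\hat v_\varphi\Big(\mu^{K,+}\big(\la v\ra+\psi,\,r(\hat v_\varphi+a)\big)-\mu^{K,-}\big(\la v\ra-\psi,\,r(\hat v_\varphi-a)\big)\Big)\,dv .
\end{equation}
By the decay assumption \eqref{decayassumptionexamplepart} ($\gamma>4$) together with Lemma \ref{integralbound}, both velocity integrals converge, are bounded functions of $x$ with $L^\infty$ norm $O(C_\mu K^{-\delta})$ uniformly over $(\psi,a)\in\mathcal{B}$, and depend continuously on $(\psi,a)$ in $C(\bar\Omega)$ by dominated convergence. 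The singular equation for $A$ is treated as in Lemma \ref{regularity}: writing $-\Delta(Ae^{i\varphi})=(-\Delta+\tfrac{1}{r^2})A\,e^{i\varphi}$ from \eqref{Deltagexp}, it becomes an ordinary Dirichlet--Poisson problem for $A e^{i\varphi}$ on $\Omega$, solved by the standard elliptic theory with $A e^{i\varphi}\in W^{2,p}(\Omega)$ for every $p<\infty$. Hence $\Phi$ is well defined, continuous, and maps $\mathcal{B}$ into a bounded subset of $W^{2,p}(\Omega)^2$, hence into a precompact subset of $C(\bar\Omega)^2$.

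\emph{Step 2 (self-map, fixed point, and choice of $K$).} The barrier/maximum-principle computation of Lemma \ref{potentialAbound} applies verbatim to $\Phi$: using only $\|\psi\|_{L^\infty}\le 1/2$ one obtains $\|\phi\|_{L^\infty}+\|A\|_{L^\infty}\le 40\pi b^2 C_\mu K^{-\delta}$ for $(\phi,A)=\Phi(\psi,a)$ with $(\psi,a)\in\mathcal{B}$, the $K^{-\delta}$ coming from the prefactor in $\mu^{K,\pm}(e,p)=K^{-\delta}\mu^\pm(e,Kp)$. Thus, under the smallness hypothesis $C_\mu\le (80\pi b^2)^{-1}$, one has $\Phi(\mathcal{B})\subseteq\mathcal{B}$ for \emph{every} $K\ge 1$, and Schauder's fixed-point theorem yields a fixed point $(\phi^{K,0},A^{K,0}_\varphi)\in\mathcal{B}$ — a solution of \eqref{phiK0equation}--\eqref{AK0equation} with the required $L^\infty$ bounds, which moreover inherits $\|\phi^{K,0}\|_{L^\infty},\|A^{K,0}_\varphi\|_{L^\infty}\le 20\pi b^2 C_\mu K^{-\delta}$ by Lemma \ref{potentialAbound}. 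Now fix a normalized $h\in\mathcal{X}$ with $\supp h$ concentrated in $\{x\in\Omega:\ r(x)\ge 1\}$, so that $H_1=\int_{r\ge 1}r|h|^2\,dx>0$ (possible since $b>1$ makes $\Omega\cap\{r\ge 1\}$ of positive measure). Because $\delta>\epsilon$ forces $1-\delta-\epsilon>1-2\delta$, the term $-H_1 C_1 C_\nu C'_\mu K^{1-\delta-\epsilon}$ is the dominant one as $K\to\infty$ on the left side of \eqref{unstableinequality}, so \eqref{unstableinequality} holds for all $K$ past a threshold depending only on $\Omega$, $\mu^\pm$ and $h$. Taking such a $K$, Proposition \ref{unstableexample2} applies to $(\mu^{K,\pm},\phi^{K,0},A^{K,0}_\varphi)$ and gives spectral instability, completing the proof.

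\emph{Main obstacle.} The two delicate points are the $r=0$ singularity and the $K$-bookkeeping. The singularity is neutralized throughout the iteration by the $e^{i\varphi}$ device of \eqref{Deltagexp} together with the explicit barriers of Lemma \ref{potentialAbound}, which is why the resulting bound is proportional to $b^2 C_\mu$. As for the scaling: $\mu^{K,\pm}(e,p)=K^{-\delta}\mu^\pm(e,Kp)$ makes the amplitude of the source scale like $K^{-\delta}$ — favorable, and this is precisely what yields the self-map bound once $C_\mu$ is small relative to $b^2$ — but makes $\mu^{K,\pm}_p$ scale like $K^{1-\delta}$; this latter growth would destroy a contraction-based existence argument for the large $K$ required in \eqref{unstableinequality}, which is why the existence is routed through compactness (Schauder) rather than a contraction. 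Matching the range of $K$ for which the fixed point exists against the range in which the angular-momentum term dominates \eqref{unstableinequality} — equivalently, pinning down the precise form of the smallness condition on $C_\mu$ in terms of $b$ and the constants of $\mu^\pm$ — is the genuine content of the argument.
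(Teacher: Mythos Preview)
Your proof is correct and takes a genuinely different route from the paper's own argument. The paper solves the coupled system \eqref{phiK0equation}--\eqref{AK0equation} by a \emph{contraction} in $C^\alpha(\bar\Omega)\times C^\alpha(\bar\Omega)$: a Lipschitz estimate on the nonlinear source terms produces a Lipschitz constant of order $C_\mu K$ (the factor $K$ coming from $\mu^{K,\pm}_p$), so the fixed point exists whenever $C_\mu K$ is below an absolute constant depending on $\gamma$ and $\Omega$. This couples the smallness of $C_\mu$ to the size of $K$, and the paper resolves the coupling by first choosing $K$ large (so that the $K$-powers in \eqref{unstableinequality} balance correctly), then shrinking $C_\mu$ so that $C_\mu K$ is small, and finally choosing $h$ with $H_1$ large enough to make the negative term in \eqref{unstableinequality} dominate the constant $1$.

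Your Schauder argument avoids this coupling: because the self-map bound from the barrier computation of Lemma \ref{potentialAbound} scales like $C_\mu K^{-\delta}$ rather than $C_\mu K$, the map $\Phi$ sends $\mathcal{B}$ into itself for \emph{every} $K\ge 1$ once $C_\mu\lesssim b^{-2}$. Compactness in $C(\bar\Omega)$ via $W^{2,p}$ elliptic regularity then furnishes the fixed point with no restriction on $K$, so you can simply let $K\to\infty$ with $C_\mu$, $C'_\mu$, $h$ fixed and invoke $1-\delta-\epsilon>1-2\delta>0$ to make the dominant negative term in \eqref{unstableinequality} swamp the rest. This yields a cleaner and somewhat less restrictive smallness condition on $C_\mu$ (depending only on $b$), at the cost of losing the uniqueness of the equilibrium that the paper's contraction provides. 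Your remark that the $K^{1-\delta}$ growth of $\mu^{K,\pm}_p$ ``would destroy a contraction-based existence argument'' is accurate only if $C_\mu$ is held fixed; the paper escapes precisely by letting $C_\mu$ shrink with $K$. One small correction: the Lemma \ref{potentialAbound} computation uses both $\|\psi\|_{L^\infty}\le 1/2$ and $\|a\|_{L^\infty}\le 1/2$, not only the former---but both are available on $\mathcal{B}$, so this does not affect the argument.
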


\begin{proof}
Firstly we show that if $C_\mu K \leq \frac{1}{2C}$ for some constant $C$ which only depends on $\gamma$ and $\Omega$, then there exists a solution to the elliptic system \eqref{AK0equation}, \eqref{phiK0equation}, with the homogeneous Dirichlet boundary conditions as well as $  \| \phi^{K, 0} \|_{L^\infty} \leq 1/2$ and $  \| A_\varphi^{K, 0}\|_{L^\infty} \leq  1/2 $. For this, we adapt the idea from Appendix C in \cite{NS1}. 

For any fixed $0 < \alpha <1$, let $S := \{ (\phi, A_\varphi) \in C^\alpha (\overline{\Omega}) \times C^\alpha (\overline{\Omega}) : \sup_{x \in \Omega} |\phi (x)| \leq 1/2, \  \sup_{x \in \Omega} |A_\varphi (x) | \leq 1/2 \}$ furnished with the norm $ \|(\phi, A_\varphi) \|_{S} = \| \phi \|_{C^\alpha} + \|  A_\varphi  \|_{C^\alpha}$. Let $K \geq 1$. Denote the right hand side of the system by $F = (F_1, F_2)$:
\begin{equation}
F_1 (x, \phi, A_\varphi ) : = \int_{\mathbb{R}^3} ( \mu^{K, +} (e^{ +}, p^{+}) - \mu^{K, -} (e^{-} , p^{ -} )  ) dv \ , 
\end{equation}
\begin{equation}
F_2 (x, \phi, A_\varphi ) : = \int_{\mathbb{R}^3} \hat{v}_\varphi ( \mu^{K, +} (e^{ +}, p^{+}) - \mu^{K, -} (e^{-} , p^{ -} ) ) dv \ ,
\end{equation}
where $e^\pm$ and $p^\pm$ are defined as
$$e^\pm=  \la v \ra \pm \phi (x)  , \ p^\pm = r   ( v_\varphi \pm A_\varphi (x))  \ , $$
for any $(\phi, A_\varphi) \in S$.

We have
\begin{equation}
\begin{split}
& | F_1 (x, \phi (x),  A_\varphi (x) )  - F_1 (y, \phi (y), A_\varphi (y) )   |  \\
& \leq \sum_\pm \int_{\mathbb{R}^3}  |  \mu^{K, \pm} (e^{ \pm} (x, v), p^{ \pm} (x, v)) - \mu^{K, \pm} (e^{\pm} (y, v), p^{  \pm} (y, v) )   | dv \\
& \leq C_\mu \sum_\pm \int_{\mathbb{R}^3}  \frac{1}{1+ |\bar{e}^\pm (x, y, v)|^\gamma  }  \{  |\phi (x) - \phi (y) |  + K | r(x) v_\varphi -r(y) v_\varphi |  \\
& + K| r(x) A_\varphi (x) - r(y) A_\varphi (y)  | \} dv \ ,  \\
\end{split}
\end{equation}
where $ |\bar{e}^\pm (x, y, v) |= \min \{ | e^{ \pm} (x,v) | , | e^{\pm} (y, v) | \}$. The same bound is valid for $F_2$. 
For any $(\phi , A_\varphi) \in S$, there holds $|\bar{e}^\pm (x, y, v)| \geq \la v \ra - 1/2 \geq 1/2 $. Thus 
\begin{equation}
\| F(\cdot, \phi, A_\varphi ) \|_S \leq  \frac{8 \pi b}{\gamma} \cdot C_\mu K  \big(  1 + \| ( \phi, A_\varphi ) \|_S  \big)
\end{equation} 
Hence if $C_\mu K  \leq \frac{\gamma}{16 \pi b} $, then $F$ maps $S$ into $S$. 

For each $(\phi, A_\varphi  ) \in S$, standard elliptic theory implies that there exists a unique solution $(\phi^{1}, A^{1}_\varphi ) \in C^{2+ \alpha} (\bar{\Omega})  \times  C^{2+ \alpha} (\bar{\Omega})$ of the linear problem
\begin{equation}
- \Delta \phi^{ 1} = \int_{\mathbb{R}^3} ( \mu^{K, +} (e^{+}, p^{+}) -  \mu^{K, -} (e^{-}, p^{-}) ) dv
\end{equation}
\begin{equation}
(-\Delta +\frac{1}{r^2 })  A_\varphi^{ 1} = \int_{\mathbb{R}^3} \hat{v}_\varphi ( \mu^{K, + } (e^{+}, p^{ +}) - \mu^{K, - } (e^{ -}, p^{-}) ) dv
\end{equation}  
(note that $(-\Delta +\frac{1}{r^2 })  A_\varphi^{1} = - \exp{( - i \varphi )} \Delta (A_\varphi^{1} \exp{( i \varphi ) } ) $)
with homogeneous Dirichlet boundary conditions. Moreover,
\begin{equation}
\| (\phi^{1}, A^{1}_\varphi)  \|_{C^{\alpha +2} } \leq C_0 \cdot C_\mu K \| (F_1 ( \cdot , \phi, A_\varphi ), F_2 (  \cdot , \phi, A_\varphi  ) )  \|_{C^\alpha} \ .
\end{equation}
for a fixed constant $C_0$ which only depends on $\gamma$ and $\Omega$.

Now, we define $\mathcal{T} (\phi, A_\varphi  ) = (\phi^{1}, A^{1}_\varphi   ) $, then $\mathcal{T}$ maps $S$ into itself. Similarly, for any $(\phi^2, A_\varphi^2  ) \in S$, $(\phi^3, A_\varphi^3  ) \in S$,  there holds
\begin{equation}
\| \mathcal{T} (  \phi^2, A^2_\varphi ) - \mathcal{T} (  \phi^3, A^3_\varphi  )  \|_{S} \leq C_0 \cdot C_\mu K \|  (  \phi^2, A^2_\varphi ) -  (  \phi^3, A^3_\varphi  )   \|_{S} 
\end{equation}
If $C_\mu K   \leq \min \{ 1/(2 C_0), \frac{\gamma}{16 \pi b}  \} $, we can see that $\mathcal{T}$ is a contraction, and therefore it has a unique fixed point $(\phi^{K, 0}, A^{K, 0}_\varphi)$ in $S$, which is the solution pair we want.

To construct an unstable equilibrium, it suffices to find $C_\mu$, $K$ and $h \in \mathcal{X}$, such that both $C_\mu K   \leq \min \{ 1/(2 C_0), \frac{\gamma}{16 \pi b}  \} $ and \eqref{unstableinequality} hold. In order to achieve this, we first pick $K \geq  1$, such that 
\begin{equation}  \label{unstableinequality2}
\begin{split}
& K^{\delta - \epsilon} >  2000 \cdot 2^\gamma  \pi^2 b^2  C_1^{-1} C_\nu^{-1}    \ , \\
& K^{1-\epsilon} >  8 \cdot 2^\gamma C_1^{-1} C_2  C_\nu^{-1}   \ , \\
& K^{1 + \delta - \epsilon} >  2048 \pi^2   c_P  C_1^{-1} C_\nu^{-1}     \ . \\
\end{split}
\end{equation}
We pick $C_\mu \in (0, 1)$ and $C'_\mu \geq \frac{1}{2} C_\mu$ such that
$$C_\mu K   \leq \min \{ 1/(2 C_0), \frac{\gamma}{16 \pi b}  \} \ . $$
Since $\sup_{x \in \Omega} r(x) >1$, there exists $h \in \mathcal{X}$, with $ \int_\Omega ( |\nabla h|^2 + \frac{1}{r^2 } |h|^2 ) dx  =1 $, $H_2 = \int_\Omega |h|^2 dx \leq 1$,  and $H_1 = \int_{r \geq 1} r |h|^2 dx > 1$ large enough (hence $H_1 +H_2 \leq 2 H_1$), such that
\begin{equation} \label{unstableinequality3}
\begin{split}
&  K^{1 -\delta - \epsilon} > 4 C_1^{-1}  C_\nu^{-1} (C'_\mu )^{-1} H_1^{-1}  \ . \\
\end{split}
\end{equation}
\eqref{unstableinequality2} and \eqref{unstableinequality3} together with the conditions on $h$ and $ C_\mu \in (0, 1)$, $C'_\mu \geq \frac{1}{2} C_\mu$ imply \eqref{unstableinequality}.
Hence the corresponding equilibrium $( \mu^{K, \pm } (e, p),  \phi^{K, 0}, A_\varphi^{K, 0})$ is spectrally unstable.  
\end{proof}

\section{Appendix A}

For the readers' convenience, we provide the information of the derivatives under the cylindrical coordinates. Using $(x_1, x_2, x_3)$ to denote the Cartesian coordinates and $(r, \varphi, z)$ to be the cylindrical coordinates, we have
$$  
x_1 = r \cos \varphi, \qquad x_2 = r \sin \varphi, \qquad  x_3 = z 
$$
and therefore
$$
e_r = (\cos \varphi, \sin \varphi , 0), \qquad e_\varphi = (- \sin \varphi , \cos \varphi, 0), \qquad e_z =(0, 0, 1) \ .
$$
Hence for any function $f(r, \varphi, z)$ and any vector field $ \textbf{A} (r, \varphi, z)$, we have
$$
\nabla_x f = \frac{\partial f}{\partial r} e_r + \frac{1}{r} \frac{\partial f}{\partial \varphi} e_\varphi + \frac{\partial f }{\partial z } e_z \ ,
$$
$$
\Delta_x f = \frac{1}{r} \frac{\partial}{\partial r} ( r \frac{\partial f}{\partial r}) + \frac{1}{r^2} \frac{\partial^2 f}{\partial \varphi^2} + \frac{\partial^2 f}{\partial z^2}   \ ,
$$
$$
\nabla_x \cdot \textbf{A} = \frac{1}{r} \frac{\partial (r A_r)}{\partial r}  + \frac{1}{r} \frac{\partial A_\varphi}{\partial \varphi}  + \frac{\partial A_z }{\partial z }  \ ,
$$
$$
\nabla_x \times \textbf{A} = (\frac{1}{r} \frac{\partial A_z}{\partial \varphi} - \frac{\partial A_\varphi}{\partial z} ) e_r +( \frac{\partial A_r}{\partial z} - \frac{\partial A_z}{\partial r} ) e_\varphi + \frac{1}{r} ( \frac{\partial (r A_\varphi) }{\partial r } - \frac{\partial A_r}{\partial \varphi} ) e_z \ .
$$

\section{Appendix B}

In this section, we prove the invariance of $e^\pm (x,v) = \la v \ra \pm \phi^0 (r, z)  $ and $ p^\pm (x,v) = r  ( v_\varphi  \pm A^0_\varphi (r, z))  $ along the particle trajectories:
\begin{equation}
\dot{X}^\pm = \hat{V}^\pm,  \  \dot{V}^\pm =  \pm \textbf{E}^0 (X^\pm) \pm \hat{V}^\pm \times \textbf{B}^0 (X^\pm) \ .
\end{equation}

We only compute the $+$ case for simplicity. The $-$ case is similar. In fact, along the particle trajectories, we have 
\begin{equation}
\begin{split}
\dot{e}
&  = \hat{V} \cdot \dot{V} + \dot{X} \cdot \nabla \phi^0 \\
& =   \hat{V} \cdot (\textbf{E}^0 + \hat{V} \times \textbf{B}^0 )  + \hat{V} \cdot \nabla \phi^0  \\
& =     \hat{V} \cdot (-\nabla \phi^0 + \hat{V}  \times \textbf{B}^0 )  + \hat{V}\cdot \nabla \phi^0 \\
& =0  \ ,
\end{split}
\end{equation}
\begin{equation}
\begin{split}
\dot{p} 
& = \dot{X} \cdot \nabla_X p + \dot{V} \cdot \nabla_V p \\
& = \dot{X} \cdot \nabla_X ( r  ( v_\varphi + A^0_\varphi (r, z)) ) + \dot{V} \cdot \nabla_V ( r  ( v_\varphi  + A^0_\varphi (r, z)) )      \\
& =  \dot{X} \cdot \nabla_X (r v_\varphi) +\dot{ X} \cdot \nabla_X (r A^0_\varphi) + \dot{V} \cdot r  \nabla_V  v_\varphi       \\
& = \dot{X} \cdot \nabla_X (r v_\varphi) + \dot{X} \cdot A^0_\varphi e_r + \dot{ X} \cdot r \frac{\partial A^0_\varphi}{\partial r} e_r + \dot{X} \cdot r \frac{\partial A^0_\varphi}{\partial z} e_z + r ( \textbf{E}^0 +  \hat{V} \times \textbf{B}^0 ) \cdot e_\varphi    \\
& = \dot{X} \cdot \nabla_X (r v_\varphi) + \dot{X} \cdot A^0_\varphi e_r + \dot{ X} \cdot r \frac{\partial A^0_\varphi}{\partial r} e_r + \dot{X} \cdot r \frac{\partial A^0_\varphi}{\partial z} e_z \\
& + r ( -\nabla\phi^0 + \dot{X} \times \frac{\partial A^0_\varphi}{\partial z} e_r + \dot{X} \times \frac{1}{r} A^0_\varphi e_z +\dot{X} \times \frac{\partial A^0_\varphi}{\partial r} e_z    ) \cdot e_\varphi      \\
& = \dot{X} \cdot \nabla_X (r v_\varphi) + \dot{X} \cdot A^0_\varphi e_r + \dot{ X} \cdot r \frac{\partial A^0_\varphi}{\partial r} e_r + \dot{X} \cdot r \frac{\partial A^0_\varphi}{\partial z} e_z - r  \frac{\partial A^0_\varphi}{\partial z} \dot{X} \cdot  e_z - A^0_\varphi   \dot{X} \cdot e_r - r \frac{\partial A^0_\varphi}{\partial r}  \dot{X} \cdot e_r      \\
& =\dot{X} \cdot \nabla_X (r v_\varphi)  \ . \\
\end{split}
\end{equation}
(Here $r$, $\varphi$, $z$, $v_r$, $v_\varphi$, $v_z$ are components of $X$ and $V$.) In the computation above, we used that $\nabla \phi^0$ does not have $e_\varphi$-component. We evaluate $\dot{X} \cdot \nabla_X (r v_\varphi) $ using the Cartesian coordinates:
\begin{equation}
\begin{split}
\dot{X} \cdot \nabla_X (r v_\varphi) 
& = \dot{X} \cdot \nabla_X ( r V \cdot ( -\frac{X_2}{r} , \frac{X_1}{r} , 0 )) \\
& = \dot{X} \cdot \nabla_X (- X_2 V_1 + X_1 V_2) \\
& = \frac{1}{\la V \ra} (V_1, V_2, V_3 ) \cdot (V_2, -V_1, 0) \ . \\
& = 0
\end{split}
\end{equation}
Hence $\dot{p} =0$. The invariance along the particle trajectories is verified.

\section{Appendix C}

In this appendix, we show that for almost every particle in $\Omega \times \mathbb{R}^3$, the corresponding trajectory hits the spatial boundary at most a finite number of times in each finite time interval. We will follow the proof in Appendix D in \cite{NS1}. For simplicity, we drop the $\pm$ symbols (since the two cases are completely analogous) and denote the particle trajectories by 
\begin{equation*}
\Phi_s (x,v) : = (X(s;x,v), V(s;x,v)) 
\end{equation*}  
for each $(x, v) \in \bar{\Omega} \times \mathbb{R}^3$ and $s \in \mathbb{R}$. For each $s$, as long as the map $\Phi_s (\cdot )$ is well-defined, the Jacobian determinant is time-independent and therefore equal to $1$. Thus we obtain that $\Phi_s (\cdot )$ is measure-preserving in $\bar{\Omega} \times \mathbb{R}^3$ with respect to the Lebesgue measure $\mu$. Denote by $\sigma$ the induced surface measure on $\partial \Omega \times \mathbb{R}^3$. 

For each $e_0 \geq 0$, let $\Sigma_0 : = \{ (x,v ) \in \partial \Omega \times \mathbb{R}^3 : |e(x,v)| \leq e_0 , \ v_n \leq 0 \}$. Due to the fact that $\textbf{E}(t, x)$ and $\textbf{B}(t,x)$ are bounded functions, $\Sigma_0$ is a bounded set in $\mathbb{R}^6$ (since $|e(x,v)| \leq e_0 $ gives the boundedness of $|v|$). In particular, $\sigma (\Sigma_0) < \infty$. Now for any $(x,v ) \in \Sigma_0$, the particle trajectory starting from $(x, v)$ at $s=0$ can be continued by the ODE \eqref{particletrajectoryODE} for a certain time. We denote by $\alpha (x, v)$ the first positive time when the trajectory meets the boundary, i.e. $X(\alpha(x,v); x,v ) \in \partial \Omega$. Moreover, we introduce
\begin{equation*}
\Psi (x, v) := \bar{\Phi}_{\alpha(x, v)} (x, v)  , \ where \
 \bar{\Phi}_{s} (x, v) : = (X, V_*) (s; x, v) \ . 
\end{equation*}
Here $V_*$ is given by \eqref{reflectedvelocity}. $\Psi (x, v)$ describes the moment when, at the first positive time of the particle hitting the boundary, it gets reflected specularly with the same speed as just before the collision. We can define $\Psi^{-1} (x, v)$ according to the reversibility of the particle trajectories, and define $\Psi^0$ as the identity map. The set of the particles that hits the boundary with $v_n =0$ (meaning they touch $\partial \Omega$ grazingly) has measure zero. To exclude these particles,  we denote 
\begin{equation*}
\Sigma_1 : = \Sigma_0 \setminus \cup_{k\geq 0} \Psi^{-k} (\{ v_n =0 \}) \ . 
\end{equation*}  
Let $s_N = \sum_{k =0}^N \alpha (\Psi^k (x, v))$ be the time of the $N$-th collision. By the time-reversibility of the particle trajectories, the set $\Psi^{-N} (\{ v_n =0\}) = \Psi_{-s_N} (\{ v_n =0\}) $ has $\sigma$-measure zero for every $N \geq 0$, since $\{ v_n =0\}$ has measure zero. Thus, $\Psi$ is a well-defined map from $\Sigma_1$ to $\Sigma_1$. 

Since $\Phi_s (\cdot) $ is $\mu$-measure preserving, so is $\Psi (\cdot)$ with respect to the surface measure $\sigma$. Now, we define
\begin{equation*}
Z : = \{ (x, v) \in \Sigma_1 : \sum_{k \geq 0} \alpha (\Psi^k (x,v)) < \infty \} \ ,
\end{equation*}     
i.e. $Z$ is the set of particles whose trajectories bounce off $\partial \Omega$ infinitely many times within a finite time interval. We now claim $\sigma (Z) =0$. It suffices to prove that   
\begin{equation*}
Z_\epsilon : = \{ (x, v) \in \Sigma_1 : \epsilon < \sum_{k \geq 0} \alpha (\Psi^k (x,v)) < \infty \} 
\end{equation*} 
has $\sigma$-measure zero for arbitrarily small $\epsilon >0$. Indeed, since $\Psi (\cdot)$ is $\sigma$-measure preserving, the Poincar\'{e} recurrence theorem (see \cite{B1}, for example) tells us that either $Z_\epsilon$ has $\sigma$-measure zero, or there exists a point $(x,v ) \in Z_\epsilon$ such that $\Psi^{N_j} (x,v) \in Z_\epsilon$ for some increasing sequence of integers $N_j$ ($j \geq 1$). However, the latter case would yield, by definition of $Z_\epsilon$,
\begin{equation*}
\epsilon < \sum_{l\geq 0} \alpha (\Psi^l \Psi^{N_j} (x, v)) = \sum_{k \geq N_j} (\Psi^k (x, v)) 
\end{equation*}  
holds for each $N_j$. Thus the series $\sum_{k \geq 0} (\Psi^k (x, v)) $ diverges, which implies that $(x,v) \notin Z_\epsilon$ and hence gives a contradiction. Therefore $Z_\epsilon$ must have $\sigma$-measure zero, and the claim is proved. 

Finally, let $Z^*$ be the set of points in $\Omega \times \mathbb{R}^3$ whose trajectories hit $Z \cup ( \cup_{k\geq 0} \Psi^{-k} (\{v_n =0 \}) )$ within a finite interval. Since the electric and the magnetic fields are uniformly bounded, the trajectories in $Z^*$ have bounded lengths. Hence $Z^*$ has $\mu$-measure zero since $Z \cup ( \cup_{k\geq 0} \Psi^{-k} (\{v_n =0 \}) )$ has $\sigma$-measure zero. This proves that for almost $(x,v ) \in \Sigma_0$, the corresponding trajectory touches the boundary at most finitely many times in each finite time interval. Letting $e_0 = 1, \ 2, \ 3, \cdots$ and taking the union of the corresponding $\Sigma_0$'s, we conclude that for almost every particle in $\Omega \times \mathbb{R}^3$, the corresponding trajectory hits $\partial \Omega$ at most a finite number of times in each finite time interval.

\section{Acknowledgement}

The author thanks her advisor, Walter Strauss for all the guidance, encouragement and patience, without which this work would be impossible. Also, she thanks Benoit Pausader, Yan Guo, Justin Holmer and Toan Nguyen for helpful discussions.

\end{document}